\newtheorem{theorem}{\sc Theorem}[section]
\newtheorem{proposition}[theorem]{\sc Proposition}
\newtheorem{lemma}[theorem]{\sc Lemma}
\newtheorem{corollary}[theorem]{\sc Corollary}
\theoremstyle{definition}
\newtheorem{definition}[theorem]{\sc Definition}
\newtheorem{example}[theorem]{\sc Example}
\theoremstyle{remark}
\newtheorem{remark}[theorem]{\sc Remark}
\newtheorem{remarks}[theorem]{\sc Remarks}
\newcommand{\tensor}[1]{\otimes_{\scriptscriptstyle{#1}}}
\newcommand{\tril}[1]{\triangleleft_{\scriptscriptstyle{#1}}}
\newcommand{\trir}[1]{\triangleright_{\scriptscriptstyle{#1}}}
\newcommand{\Sf}[1]{\mathsf{#1}}
\newcommand{\fk}[1]{\mathfrak{#1}}
\newcommand{\cat}[1]{\mathcal{#1}}
\newcommand{\rmod}[1]{\Sf{Mod}_{#1}}
\renewcommand{\hom}[3]{\mathrm{Hom}_{#1}\left(#2,\,#3\right)}
\newcommand{\td}[1]{\widetilde{#1}}
\newcommand{\bara}[1]{\overline{#1}}
\newcommand{\Lr}[1]{\left[\underset{}{} #1 \right]}
\newcommand{\rcomod}[1]{ \mathsf{Comod}{}_{#1}}
\newcommand{\rhom}[3]{\mathrm{Hom}_{\scriptscriptstyle{\text{-}#1}}(#2,#3)}
\newcommand{\lhom}[3]{\mathrm{Hom}_{\scriptscriptstyle{#1\text{-}}}(#2,#3)}
\newcommand{\LR}[1]{\left\{\underset{}{} #1 \right\}}
\newcommand{\I}{\mathbb{I}}
\newcommand{\Z}[1]{\mathcal{Z}(#1)}
\newcommand{\ZR}[1]{\mathcal{Z}_{\scriptscriptstyle{R}}(#1)}
\newcommand{\ZS}[1]{\mathcal{Z}_{\scriptscriptstyle{S}}(#1)}
\newcommand{\Inv}[2]{\mathrm{Inv}_{#1}(#2)}
\newcommand{\Aut}[2]{\mathrm{Aut}_{#1}(#2)}
\newcommand{\B}[1]{\boldsymbol{#1}}
 \newcommand{\id}{\mathrm{Id}}
\newcommand{\mas}{m^{\scriptscriptstyle{S}}_{\scriptscriptstyle{A}}}
\newcommand{\mar}{m^{\scriptscriptstyle{R}}_{\scriptscriptstyle{A}}}
\newcommand{\lxr}{l^{\scriptscriptstyle{R}}_{\scriptscriptstyle{X}}}
\newcommand{\ryr}{r^{\scriptscriptstyle{R}}_{\scriptscriptstyle{X}}}
\newcommand{\lys}{l^{\scriptscriptstyle{S}}_{\scriptscriptstyle{Y}}}
\newcommand{\rxs}{r^{\scriptscriptstyle{S}}_{\scriptscriptstyle{X}}}
\newcommand{\lar}{l^{\scriptscriptstyle{R}}_{\scriptscriptstyle{A}}}
\newcommand{\rar}{r^{\scriptscriptstyle{R}}_{\scriptscriptstyle{A}}}
\newcommand{\mrrp}{m^{\scriptscriptstyle{R}}_{\scriptscriptstyle{R'}}}
\newcommand{\rrrp}{r^{\scriptscriptstyle{R}}_{\scriptscriptstyle{R'}}}
\newcommand{\lrrp}{l^{\scriptscriptstyle{R}}_{\scriptscriptstyle{R'}}}
\newcommand{\murpx}{\mu^{\scriptscriptstyle{R'}}_{\scriptscriptstyle{X}}}
\begin{document}
\allowdisplaybreaks

\title[Invertible Bimodules, Miyashita action and Azumaya Monoids.]{Invertible Bimodules, Miyashita Action in Monoidal Categories and Azumaya Monoids.}
\author{Alessandro Ardizzoni}
\address{University of Turin, Department of Mathematics ``Giuseppe Peano'', via Carlo Alberto 10, I-10123 Torino, Italy}  \email{alessandro.ardizzoni@unito.it}
\urladdr{www.unito.it/persone/alessandro.ardizzoni}
\author{Laiachi El Kaoutit}
\address{Universidad de Granada, Departamento de \'{A}lgebra. Facultad de Educaci\'{o}n, Econon\'ia y Tecnolog\'ia de Ceuta. Cortadura del Valle, s/n. E-51001 Ceuta, Spain}
\email{kaoutit@ugr.es}
\urladdr{http://www.ugr.es/~kaoutit/}
\date{\today}
\subjclass[2010]{Primary  18D10, 16H05; Secondary 14L30}
\thanks{This paper was written while the first author was member of
GNSAGA and partially supported by the research grant ``Progetti di
Eccellenza 2011/2012'' from the ``Fondazione Cassa di Risparmio di Padova e
Rovigo''. The second author was supported by grant MTM2010-20940-C02-01 from the Ministerio de
Educaci\'{o}n y Ciencia of Spain.  His stay, as a visiting
professor at the University of Ferrara on 2010, was supported by INdAM}

\begin{abstract}
In this paper we introduce and study Miyashita action in the context of monoidal categories aiming by this to provide a common framework of previous studies in the literature. We make a special emphasis of this action on Azumaya monoids. To this end, we develop  the theory of invertible bimodules over different monoids (a sort of Morita contexts) in general monoidal categories  as well as their corresponding  Miyashita action.
Roughly speaking, a Miyashita action is a  homomorphism of groups from the group of all isomorphic classes of invertible subobjects of a given monoid to its group of automorphisms. In the symmetric case, we show that for certain Azumaya monoids, which are abundant in practice, the corresponding Miyashita action is always an isomorphism of groups.  This generalizes Miyashita's classical result and sheds light on other applications of geometric nature which can not be treated  using the classical theory.
In  order to  illustrate our methods, we give a concrete application to  the category of comodules over commutative (flat) Hopf algebroids.  This obviously includes the special cases of split Hopf algebroids (action groupoids), which for instance cover the situation of the action of an affine algebraic group on an affine algebraic variety.

\end{abstract}

\keywords{Monoidal categories; Miyashita action; Invertible bimodules; Azumaya monoids;  Azumaya algebra bundles; Hopf algebroids; Comodules.}
\maketitle

\begin{small}
\tableofcontents
\end{small}

\pagestyle{headings}

\section*{Introduction}

\subsection{Motivation and overview}\label{ssec:0} The notion of what is nowadays  known as an Azumaya algebra  was first formulated by G. Azumaya in \cite[page 128]{Azumaya:51} where he introduced the Brauer group of a  local ring \cite[page 138]{Azumaya:51}, generalizing by this the classical notion of Brauer group of  a field  which was extremely important in developing the arithmetic study of fields.

For a general commutative base ring, this notion was recovered later on by  M. Auslander  and O. Goldman in \cite{Auslander-Goldman:60}, where several new properties of Azumaya algebras were displayed, see for instance \cite[\S 3]{Auslander-Goldman:60}.  In \cite{Auslander:66} Auslander extended this notion to ringed spaces,  in the case of a topological space endowed with its structural sheaf of rings of continuous  complex  valued functions;  Azumaya algebras\footnote{or Azumaya monoid in the abelian symmetric  monoidal category of quasi-coherent sheaves.} are interpreted as locally trivial algebra  bundles whose fibers are  central simple complex algebras (that is, square matrices over complex numbers).
As was shown by A. Grothendieck in \cite[1.1]{Grothendieck:BrauerI},  the set of isomorphic classes of Azumaya algebras with constant  rank $n^2$ over a topological space, is  identified with the set of isomorphic classes of $GP(n)$-principal bundles with base this space. Here $GP(n)$ is the projective group with $n$ variables over the complex numbers. By using the classifying space of this group, a  homotopic  interpretation of these Azumaya algebras over $CW$-complexes,   is also possible  \cite[1.1]{Grothendieck:BrauerI}.

Naturally, with a compact base space, the global sections of an Azumaya algebra bundle give rise to an Azumaya algebra over the ring of continuous complex valued functions (the base ring). In this way, those Azumaya algebra bundles of rank $n^2$ lead to  Azumaya algebras which are finitely generated and projective of locally constant rank $n^2$ as modules over the base ring. By  the classification theorem \cite[Th\'eor\`eme 6.6, Corollaire 6.7]{Knus-Ojanguren}, a given Azumaya algebra is of this form if and only if it is a twisted-form \cite[(c) p.29]{Knus-Ojanguren} of an $n$-square matrices algebra with coefficients in the base ring.  The latter condition can be interpreted in the case of smooth manifolds\footnote{or at least for almost complex manifolds.}, by saying that there is a surjective submersion to the base manifold such that the induced bundle of any Azumaya algebra bundle of rank $n^2$ is a trivial bundle.

\smallskip

There is no doubt then that  Azumaya algebras are extremely rich objects which, along the last decades, have attracted the attention of several mathematicians from different areas. Unfortunately, we have the feeling that Azumaya algebras have not been deeply investigated   in the general setting of  abelian monoidal categories.  However, the concept of an Azumaya monoid in symmetric monoidal categories has  been earlier introduced in the literature, \cite{Vitale,PareigisIV,Fisher-Palmquist}.

The main motivation of this paper is to try to fulfil this lack of investigation by introducing and studying Miyashita action on Azumaya monoids in abelian symmetric monoidal categories.
Although the results displayed here can be applied to other situations, we limit ourselves to a concrete  application concerning the category of comodules over commutative (flat) Hopf algebroids  which up to  our knowledge seems  not have been treated before.

\subsection{Description of the main results}\label{ssec:1}

In the first part of  this paper, that is Sections \ref{sec:biobjects} and \ref{sec:action}, we introduce and study  Miyashita actions in monoidal categories. Precisely,  consider  a Penrose abelian locally small monoidal category $(\cat{M},\tensor{},\I)$ whose   tensor products $\tensor{}$  are right exact functors (on both arguments) and  two morphisms of monoids $R \rightarrow A \leftarrow S$ which are monomorphisms in $\cat{M}$.  We first consider the set $\mathrm{Inv}_{S,R}\left( A\right)$ which consists of  two-sided invertible $(R,S)$-sub-bimodules of $A$. These are isomorphic classes $(X,i_X)$  of $(R,S)$-sub-bimodules with monomorphism $i_X:X  \hookrightarrow A$ such that there exists another sub-bimodule $(Y,i_Y)$ with compatible isomorphisms $X\tensor{R}Y \cong S$ and $Y\tensor{S}X \cong R$, each in  the appropriate category of bimodules,  which are defined via the multiplications of $A$ with respect to $R$ and $S$ (the pair $(X,Y)$ with the two isomorphisms is also referred to as a two-sided
dualizable
datum).
Section \ref{sec:biobjects} is entirely devoted to  the properties of these data  which will be used in the proofs of results stated in the forthcoming sections.
It is noteworthy to mention here that the set $\mathrm{Inv}_{S,R}\left( A\right)$ is quite different from the one already considered in the literature, see Remark \ref{rem:converse} and  Appendix \ref{ssec:appendix1} where  this difference is made clearer.

Our goal in Section \ref{sec:action} is to construct the map ${\Phi}$:
\begin{equation*}
{\Phi} ^{S,R}:\mathrm{Inv}_{S,R}\left( A\right) \longrightarrow \mathrm{Iso}_{\scriptscriptstyle{\Z{\I}}
\text{-alg}}\left(\ZS{A},\ZR{A} \right),
\end{equation*}
where $\cat{Z}$, $\cat{Z}_R$ and $\cat{Z}_S$ are the functors  $\hom{\cat{M}}{\I}{-}$,  $\hom{{}_R\cat{M}_R}{R}{-}$ and $\hom{{}_S\cat{M}_S}{S}{-}$, respectively (here ${}_R\cat{M}_R$ denotes the category of $R$-bimodules).
The codomain of ${\Phi} ^{S,R}$ is the set of $\Z{\I}$-algebra isomorphisms between the invariant algebras $\ZS{A}$ and $\ZR{A}$.
In particular, when $R=S$,  we show in Proposition \ref{pro: Phi} that the morphism $\Phi ^{R}:=\Phi^{R,R}$ factors as a composition of group homomorphisms:
\begin{equation*}
\mathrm{Inv}_{R}\left( A\right) \longrightarrow \mathrm{Aut}_{\scriptscriptstyle{\cat{Z}_{R}(R)}
\text{-alg}}\left( \ZR{A}\right) \hookrightarrow \mathrm{Aut}_{\scriptscriptstyle{\cat{Z}(\I)}\text{-alg}}\left( \ZR{A}\right).
\end{equation*}
The morphism $\Phi ^{R}$ is known in the literature as \emph{Miyashita action}. For $R=\I$, we clearly have a morphism of groups $\Omega: \mathrm{Aut}_{alg}(A)\rightarrow \mathrm{Aut}_{\scriptscriptstyle{\Z{\I}}\text{-}alg}(\Z{A}), \,\gamma \mapsto \Z{\gamma}$, where $\rm{Aut}_{alg}(A)$ is the group of monoid automorphisms of $A$.

The main aim of the second part of the paper, that is Sections  \ref{sec:azymaya} and \ref{sec:azymayacentral}, is to give conditions under which the map $\Phi^{\I}$, or some of its factors, becomes  bijective.  Explicitly in Theorem \ref{thm:mono}, we show that $\Phi^{\I}$ is injective when the base category $\cat{M}$ is bicomplete and $\I$ is isomorphic to a specific sub-monoid of $A$. If we further assume that $A\tensor{}A \sim A$ (i.e.~$A$ and $A\tensor{}A$ are direct summand of a product of finite copies of each other as $A$-bimodules), $\Omega$ is surjective and the functor $-\tensor{}A$ is exact, then $\Phi^{\I}$ comes out to be bijective, see Theorem \ref{thm:iso} where other bijections were also established,  when $\cat{Z}$ is faithful (i.e.~the unit object $\I$ is a generator).

In Section \ref{sec:azymayacentral} we study the behaviour of $\Phi^{\I}$ in the case when $\cat{M}$ is also symmetric. In Proposition \ref{prop:Gamma}, we construct a morphism of groups $\B{\Gamma}: \rm{Inv}_{\I}(A) \to \rm{Aut}_{alg}(A)$, and show that  $\Phi^{\I}$ decomposes as $\Phi^{\I}= \B{\Gamma} \circ \Omega$.
After that, in  Corollary \ref{thm:isoAzumaya},  we show  that $\B{\Gamma}$ is bijective  for any  Azumaya  monoid $A$ whose enveloping monoid $A^e=A\tensor{}A^o$ is, as an $A$-bimodule, a direct summand of a product of  finite copies of $A$,  and always under the hypothesis that $\cat{Z}$ is faithful.

Our main application, given in Section \ref{sec:application},  deals with the category of (right)  $H$-comodules over a commutative flat Hopf algebroid  $(R,H)$, where the  base ring $R$ is assumed to be a generator.  In fact, we show that the group of $H$-automorphisms of an Azumaya $H$-comodule $R$-algebra which satisfies the above condition  is isomorphic to the group of all invertible $H$-subcomodules, see Corollary \ref{coro:Halgd}.  The particular case  of split Hopf  algebroid is one of  the best places where this application could have some  geometric meaning. For instance,  let us consider  a compact Lie group $G$ acting freely and  smoothly on a  manifold $\mathfrak{M}$. Assume that this action converts  the  ring $\mathcal{C}^{\infty}(\mathfrak{M})$ of smooth (complex valued) functions into an  $\mathscr{R}_{\mathbb{C}}(G)$-comodule $\mathbb{C}$-algebra, where   $\mathscr{R}_{\mathbb{C}}(G)$ is the commutative  Hopf $\mathbb{C}$-algebra of representative smooth functions on $G$.  Now, consider the
Hopf
algebroid $(R,H)$ with $R=\mathcal{C}^{\infty}(\mathfrak{M})$ and
$H=\mathcal{C}^{\infty}(\mathfrak{M}) \tensor{\mathbb{C}}\mathscr{R}_{\mathbb{C}}(G)$.
In this way the $R$-module of smooth global sections of  any $G$-equivariant complex vector bundle turns out to be an $H$-comodule. Hence   an  interpretation of our result in this setting can be given as follows. Take  an Azumaya  $G$-equivariant algebra bundle $(\cat{E},\theta)$ of constant rank $n^2$  and consider its $G$-equivariant enveloping algebra bundle\footnote{The fibers of $\cat{E}^o$ are the opposite algebras of the fibers of $\cat{E}$ and the action $\theta\tensor{}\theta^o$ is the obvious one.}  $(\mathcal{E}\tensor{}\mathcal{E}^o, \theta\tensor{}\theta^o)$   such that the canonical splitting $\mathcal{E}\tensor{}\mathcal{E}^o | \mathcal{E}$ in vector bundles is also  $G$-equivariant\footnote{that is the canonical monomorphism $\mathcal{E}\tensor{}\mathcal{E}^o \hookrightarrow  \mathcal{E}^{(n^2)}$ is compatible with the action of $G$, i.e.~with the $\theta$'s.}. Then we can affirm that the group of $G$-equivariant algebra automorphisms of $\cat{E}$ is isomorphic to the group of (isomorphic
classes) of all invertible $G$-equivariant sub-bundles\footnote{
These are $G$-equivariant sub-bundles $(\cat{X}, \theta_{|\cat{X}})$ such that there exists another  $G$-equivariant sub-bundle $(\cat{Y}, \theta_{|\cat{Y}})$ with $\cat{X} \tensor{}\cat{Y} \cong \mathfrak{M}\times  \mathbb{C}$ to the trivial line  bundle endowed with the trivial $G$-action.}of $\cat{E}$.
Analogous  affirmations take place in the context of affine algebraic varieties. In fact, we have an analogue result when $G$ is an affine algebraic group acting freely (and algebraically) on an affine algebraic variety $\cat{X}$, by taking the split Hopf algebroid $H=\cat{P}(\cat{X}) \tensor{}\cat{P}(G)$, where $\cat{P}(\cat{X})$ is  the commutative algebra of polynomial functions on $\cat{X}$, and $\cat{P}(G)$ is the Hopf algebra of polynomial functions on $G$.

\subsection{Basic notions,  notations and general assumptions}\label{ssec:2}

Let $\cat{M}$ be an additive category. The notation $X \in \cat{M}$ means that $X$ is an object in $\cat{M}$. The identity  arrow $\id_X$ of $X\in \cat{M}$ will be denoted by the object itself if there is no danger of misunderstanding.
The sets of morphisms are denoted by $\hom{\cat{M}}{X}{Y}$, for $X,Y \in \cat{M}$. For two functors $\cat{F}$ and $\cat{G }$ the notation $\cat{F} \dashv \cat{G}$ means that $\cat{F}$ is a left adjoint to $\cat{G}$.

\emph{In this paper $(\cat{M},\tensor{}, \I,l,r)$ is  a Penrose  monoidal abelian category, where tensor products are right exact on both factors}.
Recall from \cite[page 5825]{Bruguieres:1994} that a monoidal additive category is said to be Penrose if the abelian groups of morphisms are central $\Z{\I}$-modules, where
$\Z{\I}=\hom{\cat{M}}{\I}{\I}$ is the commutative endomorphisms ring of the identity object $\I$. Notice that a braided  monoidal additive category is always Penrose, cf. \cite[remark on page 5825]{Bruguieres:1994}. \emph{We also assume that our category $\cat{M}$ is locally small}, that is the class of subobjects of any object is a set.

For a  monoid $(R,m_R, u_R)$ (or simply $(R,m,u)$ when no confusion can be made), we denote by ${}_R\cat{M}$, $\cat{M}_R$ and ${}_R\cat{M}_R$ its categories of left $R$-modules, right $R$-modules and $R$-bimodules respectively. The category ${}_R\cat{M}_R$ inherits then  a structure of monoidal abelian (also bicomplete  if $\cat{M}$ is) category with right exact tensor products denoted by $-\tensor{R}-$; the unit object is $R$ and the left, right constraints are denoted, respectively  by $l^R$ and $r^R$. Furthermore, the forgetful functor ${}_R\cat{M}_R \to \cat{M}$ is faithful and exact.  We denote by $\mathrm{End}_{alg}(R)$ the ring of monoid endomorphisms of $R$ and by $\mathrm{Aut}_{alg}(R)$ its group of units.

Given a second monoid $S$,  we use the classical notation for morphisms of left, right and bimodules. That is, we denote by $\lhom{S}{X}{Y}$ the set of left $S$-module morphisms, by $\rhom{R}{U}{V}$  the set of right $R$-module morphisms,  and by $\hom{S,R}{P}{Q}$ the set of $(S,R)$-bimodule morphisms ($S$ on the left and $R$ on the right).
We will use the  notation $\ZR{-}$  for the functor  $\hom{R,R}{R}{-}$ and similarly for $\ZS{-}$.

An object $X$ in $\cat{M}$ is called \emph{left} (resp. \emph{right}) \emph{flat}, if the functor $-\tensor{}X : \cat{M} \to \cat{M}$ (resp. $X\tensor{}-: \cat{M} \to \cat{M}$) is left exact. Obviously, if $\cat{M}$ is symmetric, then left flat is equivalent to right flat, and the adjective left or right is omitted.

\section{Invertible bimodules and dualizable  data, revisited}\label{sec:biobjects}

Let $A, R,S$ be monoids in $\left( \mathcal{M},\otimes ,\mathbb{I}\right)$ and let $\alpha :R\rightarrow A$, $\beta:S\rightarrow A$ be  morphisms of monoids.  One can  consider in a canonical way $A$ as a monoid simultaneously in the monoidal categories of bimodules $({}_R\cat{M}_R, \tensor{R}, R,l^{\scriptscriptstyle{R}},r^{\scriptscriptstyle{R}})$ and $({}_S\cat{M}_S, \tensor{R}, S,l^{\scriptscriptstyle{S}},r^{\scriptscriptstyle{S}})$. To distinguish the multiplications of $A$ in these different categories, we use the following notations: $\mar,\mas$.

Consider $A$ as an $(R,S)$-bimodule via $\alpha$ and $\beta$. By an \emph{$(R,S)$-sub-bimodule} of $A$, we mean a pair $(X,i_X)$ where $X$ is an $(R,S)$-bimodule and $i_X: X \to A$ a monomorphism of $(R,S)$-bimodules. Consider
\begin{equation*}
\mathscr{P}\left( _{R}A_{S}\right) :=\Big\{ (R,S)\text{-sub-bimodules }\left(
X,i_{X}\right) \text{ of }_{R}A_{S}\Big\}.
\end{equation*}
Since the base category $\cat{M}$ is locally small, $\mathscr{P}\left( _{R}A_{S}\right)$ is a skeletally small category, where a morphism $f:X \to X'$ is a morphism of $(R,S)$-bimodules satisfying $i_{X'} \circ f=i_X$. We will not make a difference between the category $\mathscr{P}\left( _{R}A_{S}\right)$ and its skeleton set, that is, between an object $X$ and its representing element $(X,i_X)$. In this way,
an element (or an object) $(X,i_X) \in \mathscr{P}\left( _{R}A_{S}\right)$ will be simply denoted by $X$, where the (fixed) monomorphism of $(R,S)$-bimodules $i_{X}$ is implicitly understood.  Similar conventions and considerations are applied to the set $\mathscr{P}({}_SA_R)$.

Given $X \in  \mathscr{P}\left( _{R}A_{S}\right)$ and $Y \in \mathscr{P}\left( _{S}A_{R}\right)$, one defines
\begin{eqnarray}
f_{X} &:=&\mas\circ \left( i_{X}\tensor{S}A\right) :X\tensor{S}A\longrightarrow A,\label{Eq:fx} \\
g_{Y} &:=&\mas\circ \left( A\tensor{S}i_{Y}\right) :A\tensor{S}Y\longrightarrow A, \label{Eq:gy}
\end{eqnarray}%
Using this time the multiplication $\mar$, one analogously defines $f_Y: Y\tensor{R}A \to A$ and $g_X: A\tensor{R}X \to A$.

Recall the following two definitions which will  play a central role in this section.

\begin{definition}\label{def:inverse}
A \emph{right inverse} for $X  \in \mathscr{P}\left( _{R}A_{S}\right)  $ consists of an element $Y \in \mathscr{P}\left( _{S}A_{R}\right) $ such that
\begin{itemize}
\item there are morphisms $m_{X}=m_{X,Y}:X\tensor{S}Y\rightarrow R$ in $
_{R}\mathcal{M}_{R}$ and $m_{Y}=m_{Y,X}:Y\tensor{R}X\rightarrow S$\ in $
_{S}\mathcal{M}_{S}$ fulfilling
\begin{eqnarray}
\alpha \circ m_{X} &=&\mas\circ \left( i_{X}\tensor{S}i_{Y}\right)
\label{def: mX} \\
\beta \circ m_{Y} &=&\mar\circ \left( i_{Y}\tensor{R}i_{X}\right)
\label{def: mY}
\end{eqnarray}
\item $m_{X}$ is an isomorphism.
\end{itemize}
We also say that $X$ is a \emph{right invertible} sub-bimodule.  \emph{Left} and \emph{two-sided inverses} are obviously defined.
\end{definition}

\begin{definition}\label{def:dualizable}
An $(R,S)$-bimodule $X$ is called \emph{right dualizable} if there
exist an $(S,R)$-bimodule $Y$ and morphisms
\begin{equation*}
\mathrm{ev}:Y\tensor{R}X\rightarrow S\qquad \text{and}\qquad \mathrm{coev}%
:R\rightarrow X\tensor{S}Y,
\end{equation*}%
of $S$-bimodules and $R$-bimodules respectively, such that the following
equalities hold true
\begin{eqnarray}
\rxs\circ \left( X\tensor{S}\mathrm{ev}\right) \circ \left( \mathrm{%
coev}\tensor{R}X\right) \circ \left( \lxr\right) ^{-1} &=& {X},  \label{form:ev1} \\
\lys\circ \left( \mathrm{ev}\tensor{S}Y\right) \circ \left( Y\tensor{R}\mathrm{coev}\right) \circ \left( \ryr\right) ^{-1} &=& {Y}.  \label{form:ev2}
\end{eqnarray}%
We will also say that $\left( X,Y,\mathrm{ev},\mathrm{coev}\right) $ is a
\emph{right dualizable datum} in this case. Notice, that the same definition was given in \cite[Definition 2.1]{PareigisV} with different terminology, where condition (ii) in that definition always hods true under our assumptions. If $R=S$, then of course we have that $Y$ is a (left) dual object of $X$ in the monoidal category of bimodules ${}_{R}\cat{M}_R$. In such case, if $Y$ exists, it is unique up to isomorphism.
\end{definition}

The following standard diagrammatic notation  will be used in the sequel.
\begin{center}
\begin{tikzpicture}[x=11pt,y=11pt,thick]\pgfsetlinewidth{0.5pt}
\node[inner sep=1pt] at (0,0) {$\,\text{ev}:=\,$};
\node at (0,-1) {};
\end{tikzpicture}%
\begin{tikzpicture}[x=11pt,y=11pt,thick]\pgfsetlinewidth{0.5pt}
\node(1) at (0,2){$\scriptstyle{Y}$};
\node(2) at (2,2) {$\scriptstyle{X}$};
\draw[-] (1) to [out=-90, in=180] (1,-0.5);
\draw[-] (1,-0.5) to [out=0, in=-90] (2);
\draw[-] (1,-0.5) to  [out=-90, in=90] (1,-1);
\end{tikzpicture}$\qquad $
\begin{tikzpicture}[x=11pt,y=11pt,thick]\pgfsetlinewidth{0.5pt}
\node[inner sep=1pt] at (0,0) {$\,\text{coev}:=\,$};
\node at (0,-1) {};
\end{tikzpicture}%
\begin{tikzpicture}[x=11pt,y=11pt,thick]\pgfsetlinewidth{0.5pt}
\node(1) at (0,-2) {$\scriptstyle{X}$};
\node(2) at (2,-2) {$\scriptstyle{Y}$};
\draw[-] (1) to  [out=90,in=180]  (1,0.5);
\draw[-] (1,0.5) to  [out=0,in=90]  (2);
\draw[-] (1,0.5) to  [out=90, in=-90] (1,1);
\end{tikzpicture}
\end{center}

In terms of these diagrams, equations \eqref{form:ev1} and \eqref{form:ev2} are represented as follows (notice that several kinds of tensor products are involved)

\begin{center}
\begin{tikzpicture}[x=9pt,y=9pt,thick]\pgfsetlinewidth{0.5pt}
\node(1) at (0,4){$\scriptstyle{Y}$};
\node(2) at (3,4) {$\scriptstyle{R}$};
\node(3) at (1,-4) {$\scriptstyle{S}$};
\node(4) at (4,-4) {$\scriptstyle{Y}$};

\draw[-] (1) to [out=-90, in=90] (0,-1);
\draw[-] (0,-1) to [out=-90, in=180] (1,-2);
\draw[-] (1,-2) to  [out=0, in=-90] (2,-1);
\draw[-] (2,-1) to  [out=90, in=-90] (2,0.5);
\draw[-] (2,0.5) to  [out=90, in=180] (3,1.5);
\draw[-] (3,1.5) to  [out=0, in=90] (4,0.5);
\draw[-] (4,0.5) to  [out=-90, in=90] (4);
\draw[-] (2) to  [out=-90, in=90] (3,1.5);
\draw[-] (3) to  [out=90, in=-90] (1,-2);
\end{tikzpicture}%
\begin{tikzpicture}[x=9pt,y=9pt,thick]\pgfsetlinewidth{0.5pt}
\node[inner sep=1pt] at (0,0) {$\,=\,$};
\node at (0,-4) {};
\end{tikzpicture}%
\begin{tikzpicture}[x=9pt,y=9pt,thick]\pgfsetlinewidth{0.5pt}
\node(1) at (0,4){$\scriptstyle{Y}$};
\node(2) at (1,4) {$\scriptstyle{R}$};
\node(3) at (-1,-4) {$\scriptstyle{S}$};
\node(4) at (0,-4) {$\scriptstyle{Y}$};

\draw[-] (2) to [out=-90, in=0] (0,1);
\draw[-] (3) to [out=90, in=180] (0,-1);
\draw[-] (1) to [out=-90, in=90] (4);
\end{tikzpicture}%
\begin{tikzpicture}[x=9pt,y=9pt,thick]\pgfsetlinewidth{0.5pt}
\node[inner sep=1pt] at (0,0) {$\,=\,$};
\node at (0,-4) {};
\end{tikzpicture}%
\begin{tikzpicture}[x=9pt,y=9pt,thick]\pgfsetlinewidth{0.5pt}
\node(1) at (0,4){$\scriptstyle{Y}$};
\node(4) at (0,-4) {$\scriptstyle{Y}$};

\draw[-] (1) to [out=-90, in=90] (4);
\end{tikzpicture}$\qquad $
\begin{tikzpicture}[x=9pt,y=9pt,thick]\pgfsetlinewidth{0.5pt}
\node(1) at (0,-4){$\scriptstyle{X}$};
\node(2) at (1,4) {$\scriptstyle{R}$};
\node(3) at (3,-4) {$\scriptstyle{S}$};
\node(4) at (4,4) {$\scriptstyle{X}$};

\draw[-] (2) to [out=-90, in=90] (1,1.5);
\draw[-] (1,1.5) to [out=180, in=90] (0,0);
\draw[-] (1) to [out=90, in=-90] (0,0);
\draw[-] (1,1.5) to [out=0, in=90] (2,0);
\draw[-] (2,0) to [out=-90, in=90] (2,-1);
\draw[-] (2,-1) to [out=-90, in=180] (3,-2);
\draw[-] (3,-2) to [out=0, in=-90] (4,-1);
\draw[-] (4,-1) to [out=90, in=-90] (4);
\draw[-] (3) to  [out=90, in=-90] (3,-2);
\end{tikzpicture}%
\begin{tikzpicture}[x=9pt,y=9pt,thick]\pgfsetlinewidth{0.5pt}
\node[inner sep=1pt] at (0,0) {$\,=\,$};
\node at (0,-4) {};
\end{tikzpicture}%
\begin{tikzpicture}[x=9pt,y=9pt,thick]\pgfsetlinewidth{0.5pt}
\node(1) at (0,4){$\scriptstyle{Y}$};
\node(2) at (-1,4) {$\scriptstyle{R}$};
\node(3) at (1,-4) {$\scriptstyle{S}$};
\node(4) at (0,-4) {$\scriptstyle{Y}$};

\draw[-] (2) to [out=-90, in=180] (0,1);
\draw[-] (3) to [out=90, in=0] (0,-1);
\draw[-] (1) to [out=-90, in=90] (4);
\end{tikzpicture}%
\begin{tikzpicture}[x=9pt,y=9pt,thick]\pgfsetlinewidth{0.5pt}
\node[inner sep=1pt] at (0,0) {$\,=\,$};
\node at (0,-4) {};
\end{tikzpicture}%
\begin{tikzpicture}[x=9pt,y=9pt,thick]\pgfsetlinewidth{0.5pt}
\node(1) at (0,4){$\scriptstyle{X}$};
\node(4) at (0,-4) {$\scriptstyle{X}$};

\draw[-] (1) to [out=-90, in=90] (4);
\end{tikzpicture}
\end{center}

which we use in the following form, where, as customary, we get rid of the unit constraints.

\begin{equation}
\begin{tikzpicture}[x=8pt,y=8pt,thick]\pgfsetlinewidth{0.5pt}
\node(1) at (0,4){$\scriptstyle{Y}$};
\node(4) at (4,-4) {$\scriptstyle{Y}$};

\draw[-] (1) to [out=-90, in=90] (0,-1);
\draw[-] (2,-1) to  [out=90, in=-90] (2,0.5);
\draw[-] (0,-1) to [out=-90, in=-90] (2,-1);
\draw[-] (2,0.5) to  [out=90, in=90] (4,0.5);
\draw[-] (4,0.5) to  [out=-90, in=90] (4);
\end{tikzpicture}%
\begin{tikzpicture}[x=8pt,y=8pt,thick]\pgfsetlinewidth{0.5pt}
\node[inner sep=1pt] at (0,0) {\,=\,};
\node at (0,-4) {};
\end{tikzpicture}%
\begin{tikzpicture}[x=8pt,y=8pt,thick]\pgfsetlinewidth{0.5pt}
\node(1) at (0,4){$\scriptstyle{Y}$};
\node(4) at (0,-4) {$\scriptstyle{Y}$};

\draw[-] (1) to [out=-90, in=90] (4);
\end{tikzpicture}\qquad \qquad
\begin{tikzpicture}[x=8pt,y=8pt,thick]\pgfsetlinewidth{0.5pt}
\node(1) at (0,-4){$\scriptstyle{X}$};
\node(4) at (4,4) {$\scriptstyle{X}$};

\draw[-] (1) to [out=90, in=-90] (0,0.5);
\draw[-] (2,0.5) to [out=-90, in=90] (2,-1);
\draw[-] (0,0.5) to [out=90,in=90] (2,0.5);
\draw[-] (2,-1) to [out=-90, in=-90] (4,-1);
\draw[-] (4,-1) to [out=90, in=-90] (4);
\end{tikzpicture}%
\begin{tikzpicture}[x=8pt,y=8pt,thick]\pgfsetlinewidth{0.5pt}
\node[inner sep=1pt] at (0,0) {\,=\,};
\node at (0,-4) {};
\end{tikzpicture}%
\begin{tikzpicture}[x=8pt,y=8pt,thick]\pgfsetlinewidth{0.5pt}
\node(1) at (0,4){$\scriptstyle{X}$};
\node(4) at (0,-4) {$\scriptstyle{X}$};

\draw[-] (1) to [out=-90, in=90] (4);
\end{tikzpicture}\label{Eq:R}
\end{equation}

\subsection{Right inverse versus right dualizable datum, and adjunctions}\label{ssec:invdual}
The main aim of this subsection is to check that the right inverse, if it exists, is unique up to isomorphism. To this end, we first show that the existence of a  right inverse leads  in fact to a  right dualizable datum. The converse holds true under some more assumptions, see  subsection \ref{ssec:converse}. Secondly we show, as it might be expected, that a dualizable datum entails adjunctions.

\begin{proposition}\label{pro:invdual}
Let $Y \in  \mathscr{P}\left( _{S}A_{R}\right) $
be a right inverse of $X \in \mathscr{P}\left( _{R}A_{S}\right) $, as in Definition \ref{def:inverse}.
Set%
\begin{equation*}
\mathrm{ev}:=m_{Y}:Y\tensor{R}X\rightarrow S\qquad \text{and}\qquad
\mathrm{coev}:=\left( m_{X}\right) ^{-1}:R\rightarrow X\tensor{S}Y.
\end{equation*}%
Then
\begin{enumerate}[$(i)$]
\item $\left( X,Y,\mathrm{ev},\mathrm{coev}\right) $ is a right dualizable
datum;
\item we have that
\begin{eqnarray}
\mas\circ \left( i_{X}\tensor{S}i_{Y}\right) \circ \mathrm{coev}
&=&\alpha ,  \label{form:ev3} \\
\mar\circ \left( i_{Y}\tensor{R}i_{X}\right) &=&\beta \circ \mathrm{ev%
}\text{.}  \label{form:ev4}
\end{eqnarray}
\end{enumerate}
\end{proposition}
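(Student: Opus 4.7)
The plan is to first dispose of part (ii) by direct substitution, and then reduce the two triangle identities of part (i) to computations ``inside $A$'' by exploiting that $i_X$ and $i_Y$ are monomorphisms.

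For part (ii), both equations are immediate consequences of the defining conditions of a right inverse. Indeed, since $\mathrm{coev}=m_{X}^{-1}$, composing both sides of \eqref{def: mX} on the right with $m_{X}^{-1}$ yields $\mas \circ (i_X \tensor{S} i_Y) \circ \mathrm{coev} = \alpha$, which is exactly \eqref{form:ev3}. Equation \eqref{form:ev4} is nothing but \eqref{def: mY} rewritten under the identification $\mathrm{ev}=m_{Y}$.

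For part (i), since $i_X$ is a monomorphism, the triangle identity \eqref{form:ev1} holds if and only if it holds after post-composition with $i_X$, and similarly \eqref{form:ev2} can be tested against $i_Y$. After this reduction the target becomes $A$, so I can invoke part (ii) to rewrite every occurrence of $(i_X \tensor{S} i_Y) \circ \mathrm{coev}$ as $\alpha$ and every occurrence of $\beta \circ \mathrm{ev}$ as $\mar \circ (i_Y \tensor{R} i_X)$. This turns both triangle identities into equalities of morphisms built solely from $\mar$, $\mas$, $\alpha$, $\beta$, $i_X$, $i_Y$, and the unit constraints. These equalities then follow from the combined input of: the associativity and unitality of $A$ as a monoid in each of $({}_R\cat{M}_R,\tensor{R})$ and $({}_S\cat{M}_S,\tensor{S})$; the naturality of the unit constraints $l^R, r^R, l^S, r^S$ with respect to the bimodule maps $i_X, i_Y$; and the fact that $\alpha$ and $\beta$ are monoid morphisms, which identifies the $R$- (resp.~$S$-)actions on $A$ with the corresponding restrictions of $\mar$ (resp.~$\mas$).

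The main obstacle I foresee is the careful bookkeeping needed to handle the simultaneous presence of the two balanced tensor products $\tensor{R}$ and $\tensor{S}$ inside a single composition. One must reassociate a mixed chain of multiplications switching between $\mar$ and $\mas$, which is legitimate because both descend from the same underlying map $m_A:A\tensor{}A \to A$ in $\cat{M}$ once one factors through the appropriate coequalizers. Diagrammatically, once $\mathrm{ev}$ and $\mathrm{coev}$ are expanded by part (ii) into cups and caps populated by $i_X$, $i_Y$, $\alpha$ and $\beta$, the verification amounts to straightening the zigzag displayed in \eqref{Eq:R} using only associativity and the unit laws of the multiplications of $A$, echoing the familiar rigidity calculus but carried out across two different bimodule categories.
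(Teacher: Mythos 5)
Your proposal is correct and takes essentially the same route as the paper: part (ii) is the definitional rewriting, and part (i) is verified by post-composing the two zigzag composites with the monomorphisms $i_Y$ and $i_X$, expanding $\mathrm{ev}$ and $\mathrm{coev}$ via \eqref{def: mX} and \eqref{def: mY} into multiplications inside $A$, straightening with associativity, unitality and the monoid-morphism property of $\alpha$ and $\beta$, and then cancelling the monomorphism. The only difference is cosmetic: the paper performs this computation for the more general maps $\B{\phi}(f)$ and $\B{\psi}(g)$ of \eqref{eq: phi-ij}--\eqref{eq: psi-ij} (reused later for uniqueness of the right inverse) and obtains \eqref{Eq:R} by specializing to $f=\id_X$, $g=\id_Y$.
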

\begin{proof}
$(i)$.  Assume that there is another $X' \in \mathscr{P}({}_RA_S)$ with right inverse $Y' \in  \mathscr{P}({}_SA_R)$. Then, for any pair of morphisms  $(f,g) \in\mathrm{Hom}_{\scriptstyle{\mathscr{P}\left( _{R}A_{S}\right)}}\left(X',X\right) \times  \mathrm{Hom}_{\scriptstyle{\mathscr{P}\left( _{S}A_{R}\right)}}\left(Y,Y'\right)$, we define
\begin{equation}\label{Eq:phipsi}
\begin{tikzpicture}[x=11pt,y=11pt,thick]\pgfsetlinewidth{0.5pt}
\node[inner sep=1pt] at (0,-1) {$\B{\phi }(f)\,:=\,$};
\node at (0,-4) {};
\end{tikzpicture}%
\begin{tikzpicture}[x=11pt,y=11pt,thick]\pgfsetlinewidth{0.5pt}
\node(1) at (0,2.5) {$\scriptstyle{Y}$};
\node(2) at (2,-2.5) {$\scriptstyle{Y'}$};
\draw[-] (1) to [out=-90,in=90] (0,-1.5);
\node[circle,draw, inner sep=0.5pt](3) at (1,0) {$\scriptstyle{f}$};
\draw[-] (1,-0.4) to [out=-90,in=-90] (1,-1.5);
\draw[-] (0,-1.5) to [out=-90,in=-90] (1,-1.5);
\draw[-] (1,1.5) to [out=90,in=-90] (1,0.5);
\draw[-] (1,1.5) to [out=90,in=90] (2,1.5);
\draw[-] (2,1.5) to [out=-90,in=90] (2);
\end{tikzpicture}
\qquad
\begin{tikzpicture}[x=11pt,y=11pt,thick]\pgfsetlinewidth{0.5pt}
\node[inner sep=1pt] at (0,-1) {$\B{\psi}(g)\,:=\,$};
\node at (0,-4) {};
\end{tikzpicture}
\begin{tikzpicture}[x=11pt,y=11pt,thick]\pgfsetlinewidth{0.5pt}
\node(1) at (0,-2.5) {$\scriptstyle{X}$};
\node(2) at (2,2.5) {$\scriptstyle{X'}$};
\draw[-] (0,1.5) to [out=-90,in=90] (1);
\node[circle,draw, inner sep=1pt](3) at (1,0) {$\scriptstyle{g}$};
\draw[-] (1,-0.5) to [out=90,in=90] (1,-1.5);
\draw[-] (0,1.5) to [out=90,in=90] (1,1.5);
\draw[-] (1,1.5) to [out=-90,in=90] (1,0.4);
\draw[-] (1,-1.5) to [out=-90,in=-90] (2,-1.5);
\draw[-] (2) to [out=-90,in=90] (2,-1.5);
\end{tikzpicture}
\end{equation}
Explicitly, $\B{\phi}$ and $\B{\psi}$ are  given by
\begin{eqnarray}
\B{\phi}_{\scriptscriptstyle{Y,Y'}}(f)&:=&  l_{\scriptscriptstyle{Y'}}^{\scriptscriptstyle{S}}\circ \left( m_{Y,X}\tensor{S}Y'\right) \circ (Y\tensor{R} f\tensor{S} Y')\circ\left(
Y\tensor{R}\left( m_{X',Y'}\right) ^{-1}\right) \circ \left(
r_{\scriptscriptstyle{Y}}^{\scriptscriptstyle{R}}\right) ^{-1}, \label{eq: phi-ij}\\
 \B{\psi}_{\scriptscriptstyle{X',X}}(g)&:=& r_{\scriptscriptstyle{X}}^{\scriptscriptstyle{S}}\circ(X\tensor{S}m_{Y',X'})
\circ(X\tensor{S}g\tensor{R}X')\circ(m_{X,Y}^{-1}\tensor{R}X')\circ (l_{\scriptscriptstyle{X'}}^{\scriptscriptstyle{R}})^{-1}.\label{eq: psi-ij}
\end{eqnarray}
Therefore, we have
\begin{center}
\begin{tikzpicture}[x=11pt,y=11pt,thick]\pgfsetlinewidth{0.5pt}
\node[inner sep=1pt] at (0,0) {$\,i_{Y'} \circ \B{\phi}(f)\,=\,$};
\node at (0,-5) {};
\end{tikzpicture}%
\begin{tikzpicture}[x=11pt,y=11pt,thick]\pgfsetlinewidth{0.5pt}
\node(1) at (0,4.5) {$\scriptstyle{Y}$};
\node(2) at (2.5,-4.5) {$\scriptstyle{A}$};
\node[circle,draw, inner sep=0.5pt](3) at (2.5,-2) {${\scriptstyle{i_{Y'}}}$};
\node[circle,draw, inner sep=0.5pt](4) at (1,0.5) {$\scriptstyle{f}$};

\draw[-] (1) to [out=-90,in=90] (0,-1);
\draw[-] (0,-1) to [out=-90,in=-90] (1,-1);
\draw[-] (1,-1) to [out=90,in=-90] (4);
\draw[-] (4) to [out=90,in=-90] (1,2.5);
\draw[-] (1,2.5) to [out=90,in=90] (2.5,2.5);
\draw[-] (2.5,2.5) to [out=-90,in=90] (3);
\draw[-] (3) to [out=-90,in=90] (2);
\end{tikzpicture}%
\begin{tikzpicture}[x=11pt,y=11pt,thick]\pgfsetlinewidth{0.5pt}
\node[inner sep=1pt] at (0,0) {$\,=\,$};
\node at (0,-5) {};
\end{tikzpicture}%
\begin{tikzpicture}[x=11pt,y=11pt,thick]\pgfsetlinewidth{0.5pt}
\node(1) at (0,4.5) {$\scriptstyle{Y}$};
\node(2) at (1,-4.5) {$\scriptstyle{A}$};
\node[circle,draw, inner sep=0.5pt](3) at (2,-1) {$\scriptstyle{i_{Y'}}$};
\node[circle,draw, inner sep=0.5pt](4) at (0,-1) {$\scriptstyle{\beta}$};
\node[circle,draw, inner sep=0.5pt](5) at (1,2) {$\scriptstyle{f}$};

\draw[-] (1) to [out=-90,in=90] (0,0.5);
\draw[-] (0,0.5) to [out=-90,in=-90] (1,0.5);
\draw[-] (1,0.5) to [out=90,in=-90] (5);
\draw[-] (5) to [out=90,in=-90] (1,3.5);
\draw[-] (1,3.5) to [out=90,in=90] (2,3.5);
\draw[-] (2,3.5) to [out=-90,in=90] (3);
\draw[-] (2,1) to [out=-90,in=90] (3);
\draw[-] (3) to [out=-90,in=90] (2,-2.5);
\draw[-] (4) to [out=-90,in=90] (0,-2.5);
\draw[-] (2,-2.5) to [out=-90,in=-90] (0,-2.5);
\draw[-] (1,-3.1) to [out=-90,in=90] (2);
\end{tikzpicture}%
\begin{tikzpicture}[x=11pt,y=11pt,thick]\pgfsetlinewidth{0.5pt}
\node[inner sep=1pt] at (0,0) {$\,\overset{\eqref{def: mY}}{=}\,$};
\node at (0,-5) {};
\end{tikzpicture}%
\begin{tikzpicture}[x=11pt,y=11pt,thick]\pgfsetlinewidth{0.5pt}
\node[circle,draw, inner sep=0.5pt](1) at (0,1) {$\scriptstyle{i_{Y}}$};
\node[circle,draw, inner sep=0.5pt](2) at (2,1) {$\scriptstyle{i_{X}}$};
\node[circle,draw, inner sep=0.5pt](3) at (4,1) {$\scriptstyle{i_{Y'}}$};
\node[circle,draw, inner sep=0.5pt](6) at (2,3) {$\scriptstyle{f}$};
\node(4) at (0,4.5) {$\scriptstyle{Y}$};
\node(5) at (2.5,-4.5) {$\scriptstyle{A}$};

\draw[-] (4) to [out=-90,in=90] (1);
\draw[-] (1) to [out=-90,in=90] (0,-0.5);
\draw[-] (2) to [out=-90,in=90] (2,-0.5);
\draw[-] (0,-0.5) to [out=-90,in=-90] (2,-0.5);
\draw[-] (1,-1.1) to [out=-90,in=90] (1,-1.7);
\draw[-] (1,-1.7) to [out=-90,in=-90] (4,-1.7);
\draw[-] (4,-1.7) to [out=90,in=-90] (3);
\draw[-] (5) to [out=90,in=-90] (2.5,-2.6);
\draw[-] (2) to [out=90,in=-90] (6);
\draw[-] (6) to [out=90,in=-90] (2,4);
\draw[-] (2,4) to [out=90,in=90] (4,4);
\draw[-] (4,4) to [out=-90,in=90] (3);
\end{tikzpicture}%
\begin{tikzpicture}[x=11pt,y=11pt,thick]\pgfsetlinewidth{0.5pt}
\node[inner sep=1pt] at (0,0) {$\,=\,$};
\node at (0,-5) {};
\end{tikzpicture}%
\begin{tikzpicture}[x=11pt,y=11pt,thick]\pgfsetlinewidth{0.5pt}
\node[circle,draw, inner sep=0.5pt](1) at (0,1) {$\scriptstyle{i_{Y}}$};
\node[circle,draw, inner sep=0.5pt](2) at (2,1) {$\scriptstyle{i_{X'}}$};
\node[circle,draw, inner sep=0.5pt](3) at (4,1) {$\scriptstyle{i_{Y'}}$};
\node(4) at (0,4.5) {$\scriptstyle{Y}$};
\node(5) at (1.5,-4.5) {$\scriptstyle{A}$};

\draw[-] (4) to [out=-90,in=90] (1);
\draw[-] (2) to [out=90,in=-90] (2,2);
\draw[-] (3) to [out=90,in=-90] (4,2);
\draw[-] (4,2) to [out=90,in=90] (2,2);

\draw[-] (2) to [out=-90,in=90] (2,0);
\draw[-] (3) to [out=-90,in=90] (4,0);
\draw[-] (4,0) to [out=-90,in=-90] (2,0);

\draw[-] (3,-0.5) to [out=-90,in=90] (3,-2);

\draw[-] (1) to [out=-90,in=90] (0,-2);
\draw[-] (3,-2) to [out=-90,in=-90] (0,-2);
\draw[-] (5) to [out=90,in=-90] (1.5,-2.9);
\end{tikzpicture}
\begin{tikzpicture}[x=11pt,y=11pt,thick]\pgfsetlinewidth{0.5pt}
\node[inner sep=1pt] at (0,0) {$\,\overset{\eqref{def: mX}}{=}\,$};
\node at (0,-5) {};
\end{tikzpicture}%
\begin{tikzpicture}[x=11pt,y=11pt,thick]\pgfsetlinewidth{0.5pt}
\node[circle,draw, inner sep=0.5pt](1) at (0,1) {$\scriptstyle{i_{Y}}$};
\node[circle,draw, inner sep=1pt](2) at (2,1) {$\scriptstyle{\alpha}$};
\node(3) at (1,-4.5) {$\scriptstyle{A}$};
\node(4) at (0,4.5) {$\scriptstyle{Y}$};

\draw[-] (4) to [out=-90,in=90] (1);
\draw[-] (1) to [out=-90,in=90] (0,-1);
\draw[-] (2) to [out=-90,in=90] (2,-1);
\draw[-] (0,-1) to [out=-90,in=-90] (2,-1);
\draw[-] (3) to [out=90,in=-90] (1,-1.6);
\end{tikzpicture}%
\begin{tikzpicture}[x=11pt,y=11pt,thick]\pgfsetlinewidth{0.5pt}
\node[inner sep=1pt] at (0,0) {$\,=\, i_{Y}$};
\node at (0,-5) {};
\end{tikzpicture}
\end{center}
Similarly, one obtains the equality $i_{X} \circ \B{\psi}(g) = i_{X'}$. Now, for $X=X'$, $Y=Y'$ and $f=\id_X$, we get $i_Y \circ \B{\phi}(\id_X)= i_Y$ which implies that  $ \B{\phi}(\id_X)= \id_Y$, since $i_Y$ is a monomorphism. Taking now $g=\id_Y$, we obtain $ \B{\psi}(\id_Y)= \id_X$.  Both equalities $ \B{\psi}(\id_Y)= \id_X$ and $ \B{\phi}(\id_X)= \id_Y$ form precisely equation \eqref{Eq:R}, and this finishes the proof of this item.\\
$(ii)$. Equalities (\ref{form:ev3}) and (\ref{form:ev4}) are just (\ref{def: mX})
and (\ref{def: mY}) rewritten with respect to $\mathrm{ev}$ and $\mathrm{coev%
}$.
\end{proof}

The following result is inspired by the equivalence between (ii) and (iii) in \cite[Theorem 2.6]{May-Picard}, see also \cite[Proposition 2.2]{PareigisV}. For the reader's sake we give here a diagrammatic proof.

\begin{proposition}\label{prop:adjunction}
Let $\left( X,Y,\mathrm{ev},\mathrm{coev}\right) $ be
a right dualizable datum, as in Definition \ref{def:dualizable}. Then the assignments%
\begin{eqnarray*}
\phi &:&\left( f:V\tensor{R}X\longrightarrow W\right) \longmapsto \Big( \xymatrix@C=35pt{V\ar@{->}^-{( r_{V}^{R}) ^{-1}}[r] & V\tensor{R}R\ar@{->}^-{V\tensor{R}\mathrm{coev}}[r] & V\tensor{R}X\tensor{S}Y\ar@{->}^-{f\tensor{S}Y}[r] & W\tensor{S}Y }\Big) \\
 \psi &:&\left( g:V\longrightarrow W\tensor{S}Y\right) \longmapsto \Big( \xymatrix@C=35pt{V\tensor{R}X \ar@{->}^-{g\tensor{R}X}[r] & W\tensor{S}Y\tensor{R}X
\ar@{->}^-{W\tensor{S}\mathrm{ev}}[r] & W\tensor{S}S \ar@{->}^-{r_{W}^{S}}[r] &  W}\Big)
\end{eqnarray*}%
yield a natural isomorphism%
\begin{equation*}
\rhom{S}{V\tensor{R}X}{W} \cong \rhom{R}{V}{W\tensor{S}Y}.
\end{equation*}%
In other words the functor $\left( -\right) \tensor{R}X:\mathcal{M}%
_{R}\rightarrow \mathcal{M}_{S}$ is left adjoint to  the functor $%
\left( -\right) \tensor{S}Y:\mathcal{M}_{S}\rightarrow \mathcal{M}_{R}$. We also have  that $Y\tensor{R} (-):{_R\cat{M}}\rightarrow {_S\cat{M}}$ is a left adjoint of $X\tensor{S}(-):{}_{S}\mathcal{M}\rightarrow {}_{R}\mathcal{M}$.
\end{proposition}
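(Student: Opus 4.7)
The plan is to verify directly that the assignments $\phi$ and $\psi$ given in the statement are mutually inverse bijections, natural in $V$ and $W$; this yields the stated adjunction $(-)\tensor{R}X\dashv(-)\tensor{S}Y$. Conceptually the claim is just the standard categorical fact that a dualizable datum in a monoidal setting produces an adjunction, with $\mathrm{coev}$ as unit and $\mathrm{ev}$ as counit, the two triangle identities being exactly \eqref{form:ev1} and \eqref{form:ev2}.

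The main step is the identity $\psi(\phi(f))=f$ for $f:V\tensor{R}X\to W$. Unfolding the definitions one gets
\[
\psi(\phi(f))=r^{S}_{W}\circ(W\tensor{S}\mathrm{ev})\circ\bigl(f\tensor{S}Y\tensor{R}X\bigr)\circ\bigl(V\tensor{R}\mathrm{coev}\tensor{R}X\bigr)\circ\bigl((r^{R}_{V})^{-1}\tensor{R}X\bigr).
\]
Using bifunctoriality of $\tensor{S}$ (together with the suppressed associativity constraints between $\tensor{R}$ and $\tensor{S}$), I would slide $f$ outward past $W\tensor{S}\mathrm{ev}$, and then use naturality of $r^{S}$ to move it past the remaining unitor. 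What remains acting on $V\tensor{R}X$ is precisely the composition
\[
r^{S}_{X}\circ(X\tensor{S}\mathrm{ev})\circ(\mathrm{coev}\tensor{R}X)\circ(l^{R}_{X})^{-1},
\]
tensored on the left with $V$, which by \eqref{form:ev1} equals $\id_{X}$; hence $\psi(\phi(f))=f$. The dual identity $\phi(\psi(g))=g$ is obtained by the symmetric manipulation, this time isolating the composition appearing in \eqref{form:ev2} acting on $Y$ and applying it to conclude the latter equals $\id_{Y}$.

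Naturality of $\phi$ and $\psi$ in $V$ and $W$ is routine: it follows from bifunctoriality of $\tensor{R}$ and $\tensor{S}$ together with naturality of the unit constraints, by pre- and post-composing with an arbitrary morphism $V\to V'$ in $\cat{M}_{R}$ or $W\to W'$ in $\cat{M}_{S}$ and using the defining formulas. For the second adjunction $Y\tensor{R}(-)\dashv X\tensor{S}(-)$ between ${}_{R}\cat{M}$ and ${}_{S}\cat{M}$, I would repeat the argument mirrored, using
\[
\tilde{\phi}(f):=(Y\tensor{S}f)\circ(\mathrm{coev}\tensor{R}V)\circ(l^{R}_{V})^{-1},\qquad \tilde{\psi}(g):=l^{S}_{W}\circ(\mathrm{ev}\tensor{S}W)\circ(Y\tensor{R}g),
\]
and appealing once more to the same two triangle identities.

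The only real obstacle is bookkeeping rather than content: since $X$ is an $(R,S)$-bimodule while $Y$ is an $(S,R)$-bimodule, both $\tensor{R}$ and $\tensor{S}$ occur in the same composition and one must invoke the associativity isomorphisms between these two bimodule tensor products at the correct places. However these associators are already implicit in the phrasing of Definition~\ref{def:dualizable}, so no additional coherence verification is needed beyond what is built into \eqref{form:ev1} and \eqref{form:ev2}.
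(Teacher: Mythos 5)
Your proof is correct and follows essentially the same route as the paper: a direct check that $\psi\phi(f)=f$ and $\phi\psi(g)=g$ by sliding $f$ (resp.\ $g$) out via bifunctoriality and naturality of the unit constraints and then invoking \eqref{form:ev1} and \eqref{form:ev2} (the paper performs exactly this computation in string-diagram form, using their reformulation \eqref{Eq:R}, and likewise dismisses naturality and the second adjunction as routine). Only note the small slip in your formula for the second adjunction: the transpose of $f:Y\tensor{R}V\to W$ should be $(X\tensor{S}f)\circ(\mathrm{coev}\tensor{R}V)\circ (l^{R}_{V})^{-1}$, i.e.\ $X\tensor{S}f$ rather than $Y\tensor{S}f$.
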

\begin{proof} The naturality of both $\phi$ and $\psi$ is clear. Now, for $f$ as above we have%
\begin{center}
\begin{tikzpicture}[x=9pt,y=9pt,thick]\pgfsetlinewidth{0.5pt}
\node[inner sep=1pt] at (0,0) {$\,\psi\phi(f)\,=\,\,$};
\node at (0,-5) {};
\end{tikzpicture}%
\begin{tikzpicture}[x=9pt,y=9pt,thick]\pgfsetlinewidth{0.5pt}
\node(1) at (-0.7,4.5) {$\scriptstyle{V}$};
\node(2) at (-1.5,-4.5) {$\scriptstyle{W}$};
\node(3) at (2,4.5) {$\scriptstyle{X}$};
\node[rectangle,draw, inner sep=6pt, text width=18pt, text centered](4) at (-0.5,0) {$\scriptstyle{\phi(f)}$};

\draw[-] (1) to [out=-90,in=90] (-0.7,1);
\draw[-] (2) to [out=90,in=-90] (-1.5,-1);
\draw[-] (3) to [out=-90,in=90] (2,-2.5);
\draw[-] (2,-2.5) to [out=-90,in=-90] (0.3,-2.5);
\draw[-] (0.3,-2.5) to [out=90,in=-90] (0.3,-1);
\end{tikzpicture}%
\begin{tikzpicture}[x=9pt,y=9pt,thick]\pgfsetlinewidth{0.5pt}
\node[inner sep=1pt] at (0,0) {$\,=\,\;$};
\node at (0,-5) {};
\end{tikzpicture}%
\begin{tikzpicture}[x=9pt,y=9pt,thick]\pgfsetlinewidth{0.5pt}
\node(1) at (-1,4.5) {$\scriptstyle{V}$};
\node(2) at (-0.3,-4.5) {$\scriptstyle{W}$};
\node(3) at (3,4.5) {$\scriptstyle{X}$};
\node[rectangle,draw, inner sep=4pt, text width=14pt, text centered](4) at (-0.5,0) {$\scriptstyle{f}$};

\draw[-] (1) to [out=-90,in=90] (-1,0.8);
\draw[-] (2) to [out=90,in=-90] (-0.3,-0.8);
\draw[-] (3) to [out=-90,in=90] (3,-1.5);
\draw[-] (3,-1.5) to [out=-90,in=-90] (1.5,-1.5);
\draw[-] (1.5,-1.5) to [out=90,in=-90] (1.5,2.5);
\draw[-] (0,2.5) to [out=90,in=90] (1.5,2.5);
\draw[-] (0,2.5) to [out=-90,in=90] (0,0.8);
\end{tikzpicture}
\begin{tikzpicture}[x=9pt,y=9pt,thick]\pgfsetlinewidth{0.5pt}
\node[inner sep=1pt] at (0,0) {$\,\overset{\eqref{Eq:R}}{=}\;\,$};
\node at (0,-5) {};
\end{tikzpicture}%
\begin{tikzpicture}[x=9pt,y=9pt,thick]\pgfsetlinewidth{0.5pt}
\node(1) at (-1,4.5) {$\scriptstyle{V}$};
\node(2) at (0,-4.5) {$\scriptstyle{W}$};
\node(3) at (1,4.5) {$\scriptstyle{X}$};
\node[rectangle,draw, inner sep=4pt, text width=14pt, text centered](4) at (0,0) {$\scriptstyle{f}$};

\draw[-] (1) to [out=-90,in=90] (-1,0.8);
\draw[-] (2) to [out=90,in=-90] (0,-0.8);
\draw[-] (3) to [out=-90,in=90] (1,0.8);
\end{tikzpicture}
\begin{tikzpicture}[x=9pt,y=9pt,thick]\pgfsetlinewidth{0.5pt}
\node[inner sep=1pt] at (0,0) {$\,=\, f$};
\node at (0,-5) {};
\end{tikzpicture}%
\end{center}
If we reflect horizontally the diagrams above and we apply the substitutions $\psi\leftrightarrow\phi,f\mapsto g,V\leftrightarrow W,X\mapsto Y$, we get the diagrammatic proof for the equality $\phi(\psi(g))=g$. The last adjunction is similarly proved.
\end{proof}

As a consequence of Propositions \ref{pro:invdual} and \ref{prop:adjunction}, we obtain the desired uniqueness of the right inverse, since we know that a right adjoint functor is  unique up to a natural isomorphism.
\begin{corollary}\label{lem: rightinverse} Let $ X \in \mathscr{P}\left(
_{R}A_{S}\right) $. Then a right inverse of $X$, if it exists, is unique up to an isomorphism in $\mathscr{P}({}_SA_R)$.  More precisely, assume that $ Y , Y' \in
\mathscr{P}\left( _{S}A_{R}\right)$ are two right inverses of $ X$.  Then
$$
i_{\scriptscriptstyle{Y,Y'}}:= \B{\phi}_{\scriptscriptstyle{Y,Y'}} (\id_X): Y \to Y', \text{ and }\;  i_{\scriptscriptstyle{Y',Y}}:=\B{\phi}_{\scriptscriptstyle{Y',Y}} (\id_X): Y' \to Y
$$
are mutual inverses in $\mathscr{P}\left( _{S}A_{R}\right)$, where $\B{\phi}$ is as in \eqref{eq: phi-ij}.
Moreover, if $\alpha $ and $\beta$ are monomorphisms, we also have
\begin{eqnarray}
m_{Y',X}\circ \left( i_{Y,Y'}\tensor{R}X\right) &=& m_{Y,X},  \label{form:iYZ1} \\
m_{X,Y'}\circ \left( X\tensor{S}i_{Y,Y'}\right) &=&m_{X,Y}.  \label{form:iYZ2}
\end{eqnarray}
\end{corollary}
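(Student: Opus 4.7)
The plan is to obtain the isomorphism abstractly via uniqueness of right adjoints, then pin it down explicitly using a computation already performed in the proof of Proposition \ref{pro:invdual}, and finally derive the two compatibility identities from the defining equations of $m_X,m_Y$.

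First, by Proposition \ref{pro:invdual} each of $Y$ and $Y'$ produces a right dualizable datum for the same $X$. Proposition \ref{prop:adjunction} then yields that both $-\tensor{S}Y$ and $-\tensor{S}Y':\mathcal{M}_S\to \mathcal{M}_R$ are right adjoints of $-\tensor{R}X:\mathcal{M}_R\to \mathcal{M}_S$. Uniqueness of right adjoints up to natural isomorphism gives already $Y\cong Y'$ in $\mathcal{M}_S$. The compatibility with the full $(S,R)$-bimodule structure, and with the chosen inclusions $i_Y,i_{Y'}$, will be confirmed in the next step by exhibiting the isomorphism concretely.

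Second, to identify this abstract isomorphism with $i_{Y,Y'}=\B{\phi}_{Y,Y'}(\id_X)$, I would reread the diagrammatic computation carried out in the proof of Proposition \ref{pro:invdual}(i) in the specialization $X'=X$ and $f=\id_X$. A glance at the diagrams shows that nothing in that chain of identities requires $X'\neq X$, so the same sequence of equalities delivers $i_{Y'}\circ i_{Y,Y'}=i_Y$. Exchanging the roles of $Y$ and $Y'$ gives symmetrically $i_Y\circ i_{Y',Y}=i_{Y'}$. In particular both $i_{Y,Y'}$ and $i_{Y',Y}$ are morphisms in $\mathscr{P}({}_SA_R)$. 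Composing,
\[
i_{Y'}\circ (i_{Y,Y'}\circ i_{Y',Y}) \;=\; i_Y\circ i_{Y',Y} \;=\; i_{Y'} \;=\; i_{Y'}\circ \id_{Y'},
\]
and since $i_{Y'}$ is a monomorphism we conclude $i_{Y,Y'}\circ i_{Y',Y}=\id_{Y'}$; the other composition is handled identically. This settles the existence and explicit form of the mutually inverse isomorphisms.

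Third, for the identities \eqref{form:iYZ1} and \eqref{form:iYZ2}, I would postcompose with $\beta$ and $\alpha$ respectively. Using \eqref{def: mY}, the functoriality of $-\tensor{R} i_X$, and the already established $i_{Y'}\circ i_{Y,Y'}=i_Y$, one finds $\beta\circ m_{Y',X}\circ (i_{Y,Y'}\tensor{R}X)=\mar\circ(i_Y\tensor{R}i_X)=\beta\circ m_{Y,X}$, and then the hypothesis that $\beta$ is a monomorphism cancels $\beta$ on the left to yield \eqref{form:iYZ1}; \eqref{form:iYZ2} is obtained in the same way from \eqref{def: mX} and the hypothesis that $\alpha$ is a monomorphism. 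The main (though minor) obstacle in the whole argument is verifying that the diagrammatic computation from the proof of Proposition \ref{pro:invdual}(i) still makes sense with $X'=X$ and $f=\id_X$; everything else is formal.
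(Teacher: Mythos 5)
Your proposal is correct and follows essentially the paper's route: the uniqueness part rests on Propositions \ref{pro:invdual} and \ref{prop:adjunction} (a right adjoint being unique up to natural isomorphism), and \eqref{form:iYZ1}--\eqref{form:iYZ2} are obtained exactly as in the paper by postcomposing with $\beta$ (resp.\ $\alpha$), invoking \eqref{def: mY} (resp.\ \eqref{def: mX}) together with $i_{Y'}\circ i_{\scriptscriptstyle{Y,Y'}}=i_Y$, and cancelling the monomorphism. Your explicit re-run of the computation from the proof of Proposition \ref{pro:invdual}(i) with $X'=X$, $f=\id_X$, followed by cancellation of the monomorphism $i_{Y'}$ to get that $i_{\scriptscriptstyle{Y,Y'}}$ and $i_{\scriptscriptstyle{Y',Y}}$ are mutual inverses, is a valid and slightly more detailed justification of a point the paper leaves to the adjoint-uniqueness remark.
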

\begin{proof}
We only show the last statement. The equality  \eqref{form:iYZ1} is shown as follows:
\begin{multline*}
\beta \circ m_{Y',X}  \circ (i_{Y,Y'}\tensor{R}X)\,\overset{\eqref{def: mY}}{=}\, \mar \circ (i_{Y'}\tensor{R}i_{X}) \circ  (\B{\phi}_{\scriptscriptstyle{Y,Y'}} (\id_X)\tensor{R}X) \\ \,=\, \mar \circ \big((i_{Y'}\circ \B{\phi}_{\scriptscriptstyle{Y,Y'}} (\id_X) )\tensor{R}i_{X} \big) \,=\, \mar \circ (i_Y\tensor{R}i_X) \,\overset{\eqref{def: mY}}{=}\,  \beta \circ m_{Y,X}.
\end{multline*}
Equality \eqref{form:iYZ2} follows similarly.
\end{proof}

We finish this subsection by giving more properties of  dualizable datum which  in fact  lead to a characterization of right invertible subbimodules.

\begin{proposition}\label{pro:fX}
Let $\left( X,Y,\mathrm{ev},\mathrm{coev}\right) $ be a right
dualizable datum such that $X \in \mathscr{P}({}_RA_S)$ and $Y \in \mathscr{P}({}_SA_R)$ with associated monomorphisms $i_X, i_Y$ satisfying equations \eqref{form:ev3} and \eqref{form:ev4}.
Then the morphisms $f_X$ and $g_Y$ of equations \eqref{Eq:fx} and \eqref{Eq:gy}
are isomorphisms with inverses%
\begin{eqnarray*}
f_{X}^{-1} &=&\left( X\tensor{S}\mar\right) \circ \left( X\tensor{S}i_{Y}\tensor{R}A\right) \circ \left( \mathrm{coev}\tensor{R}A\right)
\circ \left(\lar\right) ^{-1}, \\
g_{Y}^{-1} &=&\left( \mar\tensor{S}Y\right) \circ \left( A\tensor{R}i_{X}\tensor{S}Y\right) \circ \left( A\tensor{R}\mathrm{coev}\right)
\circ \left( \rar\right) ^{-1}.
\end{eqnarray*}
\end{proposition}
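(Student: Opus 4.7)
The plan is to verify directly that the two composites $f_X \circ f_X^{-1}$ and $f_X^{-1} \circ f_X$ coincide with the respective identities; the parallel claim for $g_Y$ is established by the dual argument, exchanging the roles of $X$ and $Y$ and using \eqref{form:ev2} in place of \eqref{form:ev1}. The basic tools are associativity of the monoid $A$ (which lets one freely interchange $\mar$ and $\mas$ up to re-bracketing of tensor products), the interchange law for $\tensor{R}$ and $\tensor{S}$, naturality of the unit constraints, and the relations \eqref{form:ev1}, \eqref{form:ev3} and \eqref{form:ev4}.

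For $f_X \circ f_X^{-1}$, I would use associativity of $A$ to recognize the triple product identity
\[
\mas \circ (i_X \tensor{S} A) \circ (X \tensor{S} \mar) \circ (X \tensor{S} i_Y \tensor{R} A) \;=\; \mar \circ \bigl((\mas \circ (i_X \tensor{S} i_Y)) \tensor{R} A\bigr).
\]
Plugging this into the full composite reduces it to $\mar \circ \bigl((\mas \circ (i_X \tensor{S} i_Y) \circ \mathrm{coev}) \tensor{R} A\bigr) \circ (\lar)^{-1}$. By \eqref{form:ev3} the bracketed morphism equals $\alpha$, and the unit axiom $\mar \circ (\alpha \tensor{R} A) = \lar$ for $A$ as a monoid in ${}_R\cat{M}_R$ yields $\lar \circ (\lar)^{-1} = \id_A$.

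For $f_X^{-1} \circ f_X$, my plan is first to establish the auxiliary identity
\[
f_Y \circ (Y \tensor{R} f_X) \;=\; \las \circ (\mathrm{ev} \tensor{S} A),
\]
by the same associativity-plus-interchange technique, now invoking \eqref{form:ev4} and the unit axiom $\mas \circ (\beta \tensor{S} A) = \las$. Next, naturality of $l^R$ rewrites $(\lar)^{-1} \circ f_X$ as $(R \tensor{R} f_X) \circ (l^R_{X \tensor{S} A})^{-1}$; the interchange law then turns the relevant segment into $(X \tensor{S} Y \tensor{R} f_X) \circ (\mathrm{coev} \tensor{R} (X \tensor{S} A))$, and bifunctoriality of $\tensor{S}$ collapses $(X \tensor{S} f_Y) \circ (X \tensor{S} Y \tensor{R} f_X)$ to $X \tensor{S} (f_Y \circ (Y \tensor{R} f_X))$. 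Substituting the auxiliary identity reduces the problem to checking
\[
(X \tensor{S} \las) \circ (X \tensor{S} \mathrm{ev} \tensor{S} A) \circ (\mathrm{coev} \tensor{R} X \tensor{S} A) \circ (l^R_{X \tensor{S} A})^{-1} \;=\; \id_{X \tensor{S} A}.
\]
One further interchange isolates the zig-zag $(X \tensor{S} \mathrm{ev}) \circ (\mathrm{coev} \tensor{R} X)$, which by \eqref{form:ev1} equals $(\rxs)^{-1} \circ \lxr$; the triangle axiom $(X \tensor{S} \las) \circ ((\rxs)^{-1} \tensor{S} A) = \id_{X \tensor{S} A}$ together with the coherence identification $\lxr \tensor{S} A = l^R_{X \tensor{S} A}$ concludes the argument.

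The main obstacle is organizational rather than conceptual: every single step is an application of the interchange law, naturality of a unit constraint, the triangle axiom, the unit axiom, or one of the four dualizability relations \eqref{form:ev1}--\eqref{form:ev4}, but one must track the two tensor products $\tensor{R}, \tensor{S}$ and the two multiplications $\mar, \mas$ sitting on the same object $A$ across multiple tensor factors. The diagrammatic calculus already used in the paper makes the successive re-bracketings essentially mechanical, and I would render the full verification in that form.
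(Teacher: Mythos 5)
Your proposal is correct and follows essentially the same route as the paper's proof: both reduce $f_X\circ f_X^{-1}$ via associativity and \eqref{form:ev3} to the unit axiom for $\alpha$, and reduce $f_X^{-1}\circ f_X$ via associativity and \eqref{form:ev4} to the zig-zag identity \eqref{form:ev1}, with $g_Y$ handled by the symmetric argument. The paper simply carries out these same interchange and re-bracketing steps in tangle-diagram form rather than equationally.
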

\begin{proof}
Let us prove that $f_{X}$ and $\left( X\tensor{S}\mar\right) \circ
\left( X\tensor{S}i_{Y}\tensor{R}A\right) \circ \left( \mathrm{coev}%
\tensor{R}A\right) \circ \left( \lar\right) ^{-1}$ are mutual
inverses (the proof for $g_{Y}$ is analogous). One composition can be
computed by means of  the following tangle diagrams.
\begin{center}
\begin{tikzpicture}[x=8pt,y=8pt,thick]\pgfsetlinewidth{0.5pt}
\node[circle,draw, inner sep=0.2pt](1) at ( 1,1)  {$\scriptstyle{i_Y}$};
\node[circle,draw, inner sep=0.2pt](2) at (-1,-1) {$\scriptstyle{i_X}$};
\node(3) at (0.3,-5.5) {$\scriptstyle{A}$};
\node(4) at (2.5,5.5) { $\scriptstyle{A}$};

\draw[-]  (2) to [out=90,in=-90]  (-1,2.5);
\draw[-]  (2) to [out=-90,in=90]  (-1,-3);
\draw[-]  (3) to [out=90,in=-90]  (0.3,-3.8);
\draw[-]  (1) to [out=-90,in=90]  (1,-0.5);
\draw[-]  (4) to [out=-90,in=90]  (2.5,-0.5);
\draw[-]  (1,-0.5) to [out=-90,in=-90]  (2.5,-0.5);
\draw[-]  (1) to [out=90,in=-90]  (1,2.5);
\draw[-]  (1,2.5) to [out=90,in=90]  (-1,2.5);
\draw[-]  (1.7,-0.9) to [out=-90,in=90]  (1.7,-3);
\draw[-]  (-1,-3) to [out=-90,in=-90]  (1.7,-3);
\end{tikzpicture}%
\begin{tikzpicture}[x=8pt,y=8pt,thick]\pgfsetlinewidth{0.5pt}
\node[inner sep=1pt] at (0,0) {$\,\overset{}{=}\,$};
\node at (0,-5.5) {};
\end{tikzpicture}%
\begin{tikzpicture}[x=8pt,y=8pt,thick]\pgfsetlinewidth{0.5pt}
\node[circle,draw, inner sep=0.2pt](41) at ( 4,1)  {$\scriptstyle{i_X}$};
\node[circle,draw, inner sep=0.2pt](61) at ( 6,1)  {$\scriptstyle{i_Y}$};
\node(81) at (7.5,5.5) {$\scriptstyle{A}$};
\node(71) at (5.3,-5.5) {$\scriptstyle{A}$};

\draw[-]  (41) to [out=90,in=-90]  (4,3.3);
\draw[-]  (61) to [out=90,in=-90]  (6,3.3);
\draw[-]  (6,3.3) to [out=90,in=90]  (4,3.3);
\draw[-]  (41) to [out=-90,in=90]  (4,-3);
\draw[-]  (81) to [out=-90,in=90]  (7.5,-1.5);
\draw[-]  (6,-1.5) to [out=-90,in=-90]  (7.5,-1.5);
\draw[-]  (61) to [out=-90,in=90]  (6,-1.5);
\draw[-]  (6.7,-1.9) to [out=-90,in=90]  (6.7,-3);
\draw[-]  (6.7,-3) to [out=-90,in=-90]  (4,-3);
\draw[-]  (71) to [out=90,in=-90]  (5.3,-3.8);
\end{tikzpicture}%
\begin{tikzpicture}[x=8pt,y=8pt,thick]\pgfsetlinewidth{0.5pt}
\node[inner sep=1pt] at (0,0) {$\,\overset{}{=}\,$};
\node at (0,-5.5) {};
\end{tikzpicture}%
\begin{tikzpicture}[x=8pt,y=8pt,thick]\pgfsetlinewidth{0.5pt}
\node[circle,draw, inner sep=0.2pt](42) at ( 9,1)  {$\scriptstyle{i_X}$};
\node[circle,draw, inner sep=0.2pt](62) at ( 11,1)  {$\scriptstyle{i_Y}$};
\node(72) at (11,-5.5) {$\scriptstyle{A}$};
\node(82) at (12,5.5) {$\scriptstyle{A}$};

\draw[-]  (42) to [out=90,in=-90]  (9,3);
\draw[-]  (62) to [out=90,in=-90]  (11,3);
\draw[-]  (9,3) to [out=90,in=90]  (11,3);
\draw[-]  (42) to [out=-90,in=90]  (9,-1);
\draw[-]  (62) to [out=-90,in=90]  (11,-1);
\draw[-]  (9,-1) to [out=-90,in=-90]  (11,-1);

\draw[-]  (10,-1.5) to [out=-90,in=90]  (10,-3);

\draw[-]  (72) to [out=90,in=90]  (11,-3.6);
\draw[-]  (82) to [out=-90,in=90]  (12,-3);
\draw[-]  (12,-3) to [out=-90,in=-90]  (10,-3);
\end{tikzpicture}%
\begin{tikzpicture}[x=8pt,y=8pt,thick]\pgfsetlinewidth{0.5pt}
\node[inner sep=1pt] at (0,0) {$\,\overset{\eqref{form:ev3}}{=}\,$};
\node at (0,-5.5) {};
\end{tikzpicture}%
\begin{tikzpicture}[x=8pt,y=8pt,thick]\pgfsetlinewidth{0.5pt}
\node[circle, draw, inner sep=1pt](43) at ( 14,1)  {$\scriptstyle{\alpha}$};
\node(73) at (15,-5.5) {$\scriptstyle{A}$};
\node(83) at (16,5.5) {$\scriptstyle{A}$};

\draw[-]  (43) to [out=-90,in=90]  (14,-1);
\draw[-]  (73) to [out=90,in=-90]  (15,-1.5);
\draw[-]  (14,-1) to [out=-90,in=-90]  (16,-1);
\draw[-]  (16,-1) to [out=90,in=-90]  (83);
\end{tikzpicture}%
\begin{tikzpicture}[x=8pt,y=8pt,thick]\pgfsetlinewidth{0.5pt}
\node[inner sep=1pt] at (0,0) {$\,\overset{}{=}\,$};
\node at (0,-5.5) {};
\end{tikzpicture}%
\begin{tikzpicture}[x=8pt,y=8pt,thick]\pgfsetlinewidth{0.5pt}
\node(74) at (18,-5.5) {$\scriptstyle{A}$};
\node(84) at (18,5.5) {$\scriptstyle{A}$};
\draw[-]  (74) to [out=90,in=-90]  (84);
\end{tikzpicture}
\end{center}

The other composition is computed as follows.
\begin{center}
\begin{tikzpicture}[x=9pt,y=9pt,thick]\pgfsetlinewidth{0.5pt}
\node(1) at (0,-5) {$\scriptstyle{X}$};
\node(3) at (1.5,-5) {$\scriptstyle{A}$};
\node[circle,draw, inner sep=0.2pt](2) at (1,-2) {$\scriptstyle{i_Y}$} ;
\node[circle,draw, inner sep=0.2pt](4) at (1.5,2) {$\scriptstyle{i_X}$} ;
\node(5) at (2.5,5) {$\scriptstyle{A}$};
\node(6) at (1.5,5) {$\scriptstyle{X}$};

\draw[-] (6) to [out=-90, in=90] (4);
\draw[-] (4) to [out=-90, in=90] (1.5, 0);
\draw[-] (1.5,0) to [out=-90, in=-90] (2.5,0);
\draw[-] (5) to [out=-90, in=90] (2.5,0);

\draw[-] (2,-0.3) to [out=-90, in=90] (2,-3.5);
\draw[-] (2,-3.5) to [out=-90, in=-90] (1,-3.5);
\draw[-] (1,-3.5) to [out=90, in=-90] (2);
\draw[-] (2) to [out=90, in=-90] (1,-0.5);
\draw[-] (1,-0.5) to [out=90, in=90] (0,-0.5);
\draw[-] (0,-0.5) to [out=-90, in=90] (1);
\draw[-] (3) to [out=90, in=-90] (1.5,-3.8);
\end{tikzpicture}%
\begin{tikzpicture}[x=9pt,y=9pt,thick]\pgfsetlinewidth{0.5pt}
\node[inner sep=1pt] at (0,0) {$\,\overset{}{=}\,$};
\node at (0,-5) {};
\end{tikzpicture}%
\begin{tikzpicture}[x=9pt,y=9pt,thick]\pgfsetlinewidth{0.5pt}
\node(7) at (0,-5) {$\scriptstyle{X}$};
\node[circle,draw, inner sep=0.2pt](8) at (1,2) {$\scriptstyle{i_Y}$} ;
\node(9) at (2.3,-5) {$\scriptstyle{A}$};
\node[circle,draw, inner sep=0.2pt](10) at (3,2) {$\scriptstyle{i_X}$} ;
\node(51) at (4,5) {$\scriptstyle{A}$};
\node(61) at (3,5) {$\scriptstyle{X}$};

\draw[-] (7) to [out=90, in=-90] (0,3.5);
\draw[-] (0,3.5) to [out=90, in=90] (1,3.5);
\draw[-] (8) to [out=90, in=-90] (1,3.5);
\draw[-] (8) to [out=-90, in=90] (1,-2);
\draw[-] (9) to [out=90, in=90] (2.3,-2.9);
\draw[-] (3.5,-2) to [out=-90, in=-90] (1,-2);
\draw[-] (3.5,-2) to [out=90, in=-90] (3.5,-0.3);
\draw[-] (10) to [out=-90, in=90] (3,0);
\draw[-] (3,0) to [out=-90, in=-90] (4,0);
\draw[-] (51) to [out=-90, in=90] (4,0);
\draw[-] (10) to [out=90, in=-90] (61);
\end{tikzpicture}%
\begin{tikzpicture}[x=9pt,y=9pt,thick]\pgfsetlinewidth{0.5pt}
\node[inner sep=1pt] at (0,0) {$\,\overset{}{=}\,$};
\node at (0,-5) {};
\end{tikzpicture}%
\begin{tikzpicture}[x=9pt,y=9pt,thick]\pgfsetlinewidth{0.5pt}
\node(71) at (0,-5) {$\scriptstyle{X}$};
\node[circle,draw, inner sep=0.2pt](82) at (1,2) {$\scriptstyle{i_Y}$} ;
\node[circle,draw, inner sep=0.2pt](101) at (3,2) {$\scriptstyle{i_X}$} ;
\node(52) at (3,-5) {$\scriptstyle{A}$};
\node(62) at (3,5) {$\scriptstyle{X}$};
\node(92) at (4,5) {$\scriptstyle{A}$};

\draw[-] (71) to [out=90, in=-90] (0,3.5);
\draw[-] (0,3.5) to [out=90, in=90] (1,3.5);
\draw[-] (1,3.5) to [out=-90, in=90] (82);
\draw[-] (82) to [out=-90, in=90] (1,0);
\draw[-] (101) to [out=-90, in=90] (3,0);
\draw[-] (1,0) to [out=-90, in=-90] (3,0);
\draw[-] (2,-0.6) to [out=-90, in=90] (2,-2);
\draw[-] (52) to [out=90, in=-90] (3,-2.6);
\draw[-] (92) to [out=-90, in=90] (4,-2);
\draw[-] (4,-2) to [out=-90, in=-90] (2,-2);
\draw[-] (101) to [out=90, in=-90] (3,4.2);
\end{tikzpicture}%
\begin{tikzpicture}[x=9pt,y=9pt,thick]\pgfsetlinewidth{0.5pt}
\node[inner sep=1pt] at (0,0) {$\,\overset{\eqref{form:ev4}}{=}\,$};
\node at (0,-5) {};
\end{tikzpicture}%
\begin{tikzpicture}[x=9pt,y=9pt,thick]\pgfsetlinewidth{0.5pt}
\node(1) at (0,-5) {$\scriptstyle{X}$};
\node(2) at (2.7,5) {$\scriptstyle{X}$};
\node(3) at (3.7,5) {$\scriptstyle{A}$};
\node(4) at (2.8,-5) {$\scriptstyle{A}$};
\node[circle,draw, inner sep=0.5pt](5) at (2,-0.5) {$\scriptstyle{\beta}$} ;

\draw[-] (1) to [out=90, in=-90] (0,3.5);
\draw[-] (0,3.5) to [out=90, in=90] (1.5,3.5);
\draw[-] (1.5,2) to [out=-90, in=-90] (2.7,2);
\draw[-] (2.7,2) to [out=90, in=-90] (2.7,4.2);
\draw[-] (1.5,2) to [out=90, in=-90] (1.5,3.5);
\draw[-] (5) to [out=-90, in=90] (2,-2.5);
\draw[-] (4) to [out=90, in=-90] (2.8,-3);
\draw[-] (3.7,-2.5) to [out=-90, in=-90] (2,-2.5);
\draw[-] (3) to [out=-90, in=90] (3.7,-2.5);
\end{tikzpicture}%
\begin{tikzpicture}[x=9pt,y=9pt,thick]\pgfsetlinewidth{0.5pt}
\node[inner sep=1pt] at (0,0) {$\,\overset{\eqref{form:ev1}}{=}\,$};
\node at (0,-5) {};
\end{tikzpicture}%
\begin{tikzpicture}[x=9pt,y=9pt,thick]\pgfsetlinewidth{0.5pt}
\node(1) at (0,-5) {$\scriptstyle{X}$};
\node(2) at (0,5) {$\scriptstyle{X}$};
\node(3) at (1,5) {$\scriptstyle{A}$};
\node(4) at (1,-5) {$\scriptstyle{A}$};
\draw[-] (1) to [out=90, in=-90] (2);
\draw[-] (3) to [out=-90, in=90] (4);
\end{tikzpicture}
\end{center}
\end{proof}

Combining Proposition \ref{pro:invdual} with Proposition  \ref{pro:fX}, we obtain
\begin{corollary}\label{coro:conaq}
Let $Y \in  \mathscr{P}\left( _{S}A_{R}\right) $
be a right inverse of $X \in \mathscr{P}\left( _{R}A_{S}\right) $. Then the morphisms
$f_{X}: X\tensor{S}A \to A$ and $g_Y:A\tensor{S}Y \to A$ defined in \eqref{Eq:fx} and \eqref{Eq:gy} are isomorphisms.
\end{corollary}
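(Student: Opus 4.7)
The plan is to observe that the statement is an immediate consequence of the two preceding propositions chained together, so no new computation is needed; the only task is to verify that the hypotheses align.

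First, I would apply Proposition \ref{pro:invdual} to the given right inverse $Y \in \mathscr{P}({}_SA_R)$ of $X \in \mathscr{P}({}_RA_S)$. Setting $\mathrm{ev} := m_Y$ and $\mathrm{coev} := m_X^{-1}$, part $(i)$ of that proposition produces a right dualizable datum $(X, Y, \mathrm{ev}, \mathrm{coev})$ in the sense of Definition \ref{def:dualizable}, while part $(ii)$ supplies the compatibility equations \eqref{form:ev3} and \eqref{form:ev4} between this datum and the monomorphisms $i_X, i_Y$ (they are just \eqref{def: mX} and \eqref{def: mY} rewritten).

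Next, I would feed this right dualizable datum into Proposition \ref{pro:fX}. The hypotheses of that proposition are exactly that $(X, Y, \mathrm{ev}, \mathrm{coev})$ is a right dualizable datum with $X, Y$ sitting inside $A$ via monomorphisms $i_X, i_Y$ satisfying \eqref{form:ev3} and \eqref{form:ev4}, which we have just verified. The conclusion then gives explicit two-sided inverses for both $f_X$ and $g_Y$, so in particular these morphisms are isomorphisms, which is exactly the claim.

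There is really no obstacle here: the corollary is a pure assembly of the two preceding results, and the proof reduces to checking that the output of Proposition \ref{pro:invdual} matches the input format of Proposition \ref{pro:fX}, which it does by construction.
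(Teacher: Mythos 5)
Your proposal is correct and is exactly how the paper proves this corollary: it is stated as an immediate combination of Proposition \ref{pro:invdual} (which turns the right inverse into a right dualizable datum satisfying \eqref{form:ev3} and \eqref{form:ev4}) with Proposition \ref{pro:fX} (which then yields the invertibility of $f_X$ and $g_Y$). Nothing further is needed.
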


\begin{remark}\label{rem:converse} Corollary \ref{coro:conaq} says that, if $X \in\mathscr{P}\left( _{R}A_{S}\right) $ has a right inverse, then $f_X$ is an isomorphism. Implicitly, an analogue statement holds true for left invertible elements in $\mathscr{P}\left( _{S}A_{R}\right)$.  At this level of generality the converse  of this implication is not at all a  trivial question.
However,  once assumed that $X$ belongs to  a right dualizable datum $(X, Y, {\rm ev}, {\rm coev})$ with $Y \in \mathscr{P}({}_RA_S)$ such that $f_X$ is an isomorphism and that  $\text{ev}, \text{coev}$ satisfy equations \eqref{form:ev3} and \eqref{form:ev4}, then one can construct a new sub-monoid $R'$ of $A$ in such a way that $X \in \mathscr{P}\left( _{R'}A_{S}\right)$ is right invertible. The complete   proof  of this fact  is included in  the Appendix, see Theorem \ref{thm:Rp}.
On the other hand,  one can define a right  invertible $(R,S)$-sub-bimodule as a sub-bimodule $X$ of $A$ whose associated morphism $f_X$ is an isomorphism. This was in fact the approach adopted in  \cite{Gomez-Mesablishvili, Masuoka-Corings, Kaoutit-Gomez}. It is noteworthy to mention that  these arguments show in fact that our approach runs in a different direction.
\end{remark}

\subsection{The group of invertible subbimodules}\label{ssec:invsubbimod} Before introducing the set of two-sided invertible subbimodules, which will be our main object of study in the forthcoming  sections, this lemma is needed:
\begin{lemma}\label{lem: rightinverse-gen}
Let $X,X' \in \mathscr{P}\left(
_{R}A_{S}\right) $ have right  inverses  $ Y, Y'  \in
\mathscr{P}\left( _{S}A_{R}\right) $ respectively, as in Definition \ref{def:inverse}. Then the  maps $\B{\phi}_{\scriptscriptstyle{Y,Y'}}$ and
$\B{\psi}_{\scriptscriptstyle{X',X}}$ given by the formulae  \eqref{eq: phi-ij}, \eqref{eq: psi-ij}, yield an isomorphism%
\begin{equation*}
\mathrm{Hom}_{\scriptscriptstyle{\mathscr{P}\left(
_{R}A_{S}\right)}}\left(X',X\right) \cong \mathrm{Hom}{\scriptscriptstyle{\mathscr{P}\left(
_{S}A_{R}\right)}}\left(Y,Y'\right) .
\end{equation*}%
Moreover, we have
\begin{gather*}
\B{\phi}_{\scriptscriptstyle{X,X}}(\id_{X})=\id_{Y},\quad \B{\psi}_{\scriptscriptstyle{Y,Y}}(\id_{Y})=\id_{X},\\
\B{\phi}(f)\circ\B{\phi}(f')=\B{\phi}(f'\circ f),\quad\B{\psi}(g)\circ\B{\psi}(g')=\B{\psi}(g'\circ g),
\end{gather*}
where in the last two equations different $\B{\phi}$'s and $\B{\psi}$'s are employed.
\end{lemma}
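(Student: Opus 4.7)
My plan is to verify all three assertions using the tangle-diagrammatic calculus introduced above, following the style of Proposition \ref{pro:invdual} and the strategy of Proposition \ref{prop:adjunction}. The essential input is that, by Proposition \ref{pro:invdual} applied to each pair separately, the hypothesis that $Y$ and $Y'$ are right inverses of $X$ and $X'$ yields two right dualizable data $(X, Y, m_{Y,X}, m_{X,Y}^{-1})$ and $(X', Y', m_{Y',X'}, m_{X',Y'}^{-1})$; in particular, the zigzag identities displayed in \eqref{Eq:R} hold for both of these data, and they are the only nontrivial ingredient needed.

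To prove that $\B{\phi}_{Y,Y'}$ and $\B{\psi}_{X',X}$ are mutually inverse, I would draw the composite $\B{\psi}_{X',X}\circ\B{\phi}_{Y,Y'}(f)$ directly from the definitions in \eqref{Eq:phipsi}. The resulting tangle contains a single occurrence of $f$, flanked on one side by an $X'$-$Y'$ cap-cup pair and on the other by a $Y$-$X$ cap-cup pair. Applying the snake identity \eqref{Eq:R} to each of these two zigzags straightens the tangle and returns $f$ unchanged. The mirror-reflected computation, under the substitutions $\B{\phi}\leftrightarrow\B{\psi}$, $X\leftrightarrow Y$, $X'\leftrightarrow Y'$, gives $\B{\phi}_{Y,Y'}\circ\B{\psi}_{X',X}(g)=g$.

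The identities $\B{\phi}_{X,X}(\id_X)=\id_Y$ and $\B{\psi}_{Y,Y}(\id_Y)=\id_X$ are already contained in the proof of Proposition \ref{pro:invdual}(i) (this is essentially the final step of that argument: the chain of equalities ending in $i_Y$, together with the fact that $i_Y$ is monic), so no further computation is needed for them.

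For the contravariant composition laws, given $f''\colon X''\to X'$ and $f\colon X'\to X$, I would stack the tangle for $\B{\phi}_{Y',Y''}(f'')$ on top of that for $\B{\phi}_{Y,Y'}(f)$ along the shared $Y'$-strand. The middle portion of the stacked diagram contains a $Y'$ cap-cup pair which collapses by \eqref{Eq:R}, after which the two boxes labelled $f$ and $f''$ compose directly along their common $X'$-strand, producing exactly the tangle representing $\B{\phi}_{Y,Y''}(f''\circ f)$. The entirely analogous manipulation on the $X$-side yields $\B{\psi}(g)\circ\B{\psi}(g')=\B{\psi}(g'\circ g)$. The principal technical obstacle is the bookkeeping in this last step — identifying correctly which cap-cup pair cancels and verifying that the residual strands compose in the expected order — but this is a purely combinatorial tangle manipulation governed by \eqref{Eq:R}, entirely parallel to the diagram chase carried out in the proof of Proposition \ref{pro:invdual}(i).
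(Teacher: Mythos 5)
Your proposal is correct and follows essentially the same route as the paper: the identity laws and the composition laws are verified by exactly the diagram manipulations you describe, each reducing to one application of \eqref{Eq:R} together with the interchange law, and the fact that $\B{\phi}$, $\B{\psi}$ actually land in the $\mathscr{P}$-hom-sets is the content of Proposition \ref{pro:invdual}(i), which you correctly cite. The only divergence is the bijectivity step, where the paper chains the four adjunction isomorphisms of Proposition \ref{prop:adjunction} instead of checking $\B{\psi}_{X',X}\circ\B{\phi}_{Y,Y'}=\id$ and $\B{\phi}_{Y,Y'}\circ\B{\psi}_{X',X}=\id$ directly; your direct verification is equivalent (it uses \eqref{form:ev1} for both data in one direction and \eqref{form:ev2} in the other) and even slightly more self-contained, since it avoids having to identify the composite adjunction bijection with the explicit formula \eqref{eq: phi-ij}.
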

\begin{proof}
 First note that the required isomorphism is obtained, using the adjunctions of Proposition \ref{prop:adjunction}, as follows:
 \begin{equation*}
\mathrm{Hom}_{\scriptscriptstyle{R,S}}\left(X',X\right) \cong\mathrm{Hom}_{\scriptscriptstyle{R,S}}\left(R\tensor{R}X',X\right)
\cong\mathrm{Hom}_{\scriptscriptstyle{R,R}}\left(R,X\tensor{S}Y'\right) \cong\mathrm{Hom}_{\scriptscriptstyle{S,R}}\left(Y\tensor{R}R,Y'\right)\cong \mathrm{Hom}_{\scriptscriptstyle{S,R}}\left(Y,Y'\right) .
\end{equation*}
By \eqref{Eq:R}, it is clear that $\B{\phi}_{\scriptscriptstyle{X,X}}(\id_{X})=\id_{Y}$ and $\B{\psi}_{\scriptscriptstyle{Y,Y}}(\id_{Y})=\id_{X}.$
The following diagrams show that
\begin{center}
\begin{tikzpicture}[x=8pt,y=8pt,thick]\pgfsetlinewidth{0.5pt}
\node[inner sep=1pt] at (0,0) {$\,\B{\phi}(f) \circ \B{\phi}(f')\,=\,$};
\node at (0,-5) {};
\end{tikzpicture}%
\begin{tikzpicture}[x=8pt,y=8pt,thick]\pgfsetlinewidth{0.5pt}
\node(1) at (0,4.5) {$\scriptstyle{Y}$};
\node(2) at (6,-4.5) {$\scriptstyle{Y}$};
\node[circle,draw, inner sep=0.5pt](4) at (1.5,0.5) {$\scriptstyle{f'}$};
\node[circle,draw, inner sep=0.5pt](5) at (4.5,-1.5) {$\scriptstyle{f}$};

\draw[-] (1) to [out=-90,in=90] (0,-1);
\draw[-] (0,-1) to [out=-90,in=-90] (1.5,-1);
\draw[-] (1.5,-1) to [out=90,in=-90] (4);
\draw[-] (4) to [out=90,in=-90] (1.5,2.5);
\draw[-] (1.5,2.5) to [out=90,in=90] (3,2.5);
\draw[-] (3,2.5) to [out=-90,in=90] (3,-3.5);
\draw[-] (3,-3.5) to [out=-90,in=-90] (4.5,-3.5);
\draw[-] (5) to [out=-90,in=90] (4.5,-3.5);
\draw[-] (5) to [out=90,in=-90] (4.5,0.5);
\draw[-] (4.5,0.5) to [out=90,in=90] (6,0.5);
\draw[-] (6,0.5) to [out=-90,in=90] (2);
\end{tikzpicture}%
\begin{tikzpicture}[x=8pt,y=8pt,thick]\pgfsetlinewidth{0.5pt}
\node[inner sep=1pt] at (0,0) {$\,\overset{\eqref{Eq:R}}{=}\,$};
\node at (0,-5) {};
\end{tikzpicture}%
\begin{tikzpicture}[x=8pt,y=8pt,thick]\pgfsetlinewidth{0.5pt}
\node(1) at (0,4.5) {$\scriptstyle{Y'}$};
\node(2) at (3,-4.5) {$\scriptstyle{Y'}$};
\node[circle,draw, inner sep=0.5pt](4) at (1.5,-2) {$\scriptstyle{f'}$};
\node[circle,draw, inner sep=0.5pt](5) at (1.5,1) {$\scriptstyle{f}$};

\draw[-] (1) to [out=-90,in=90] (0,-4);
\draw[-] (0,-4) to [out=-90,in=-90] (1.5,-4);
\draw[-] (4) to [out=90,in=-90] (5);
\draw[-] (1.5,-4) to [out=90,in=-90] (4);
\draw[-] (5) to [out=90,in=-90] (1.5,3);
\draw[-] (1.5,3) to [out=90,in=90] (3,3);
\draw[-] (3,3) to [out=-90,in=90] (2);
\end{tikzpicture}
\begin{tikzpicture}[x=8pt,y=8pt,thick]\pgfsetlinewidth{0.5pt}
\node[inner sep=1pt] at (0,0) {$\,=\, \B{\phi}(f'\circ f)$.};
\node at (0,-5) {};
\end{tikzpicture}%
\end{center}
If we reflect horizontally the diagrammatic proof above we get $\B{\psi}(g)\circ\B{\psi}(g')=\B{\psi}(g'\circ g)$.
\end{proof}

\begin{definition}\label{def:invrs}
Let $(A,m,u)$ be a monoid in $\cat{M}$ and $\alpha: R \rightarrow A \leftarrow S: \beta$ two morphisms of monoids. We define
\begin{equation*}
\mathrm{Inv}_{R,S}^{r}\left( A\right) :=\Big\{ X \in
\mathscr{P}\left( _{R}A_{S}\right) \mid X \text{ has a right
inverse, as in Definition \ref{def:inverse}}\Big\} .
\end{equation*}%
Similarly, one can define $\mathrm{Inv}_{R,S}^{l}(A)$;  interchanging $R$ by $S$, one defines $\mathrm{Inv}_{S,R}^{r}(A)$ and $\mathrm{Inv}_{S,R}^{l}(A)$.
\end{definition}
The following lemma establishes a functorial relation between the left and right hand sides versions of the previous definition.
\begin{lemma}\label{lema:leftright}
Consider $\mathrm{Inv}_{R,S}^{r}\left( A\right)$ and $\mathrm{Inv}_{S,R}^{l}\left( A\right)$ as full sub-categories of $\mathscr{P}({}_RA_S)$ and $\mathscr{P}({}_SA_R)$, respectively. Then there is an isomorphism of categories
$$ \B{\phiup}: \mathrm{Inv}_{R,S}^{r}\left( A\right) \overset{\cong}{\longrightarrow} \mathrm{Inv}_{S,R}^{l}\left( A\right)^{\scriptscriptstyle{op}}.$$
\end{lemma}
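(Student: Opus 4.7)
The plan is to assemble the functor $\B{\phiup}$ directly from the data already produced, relying on the uniqueness of right inverses (Corollary \ref{lem: rightinverse}) at the level of objects and on the calculus of the maps $\B{\phi}$, $\B{\psi}$ (Lemma \ref{lem: rightinverse-gen}) at the level of morphisms.

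First I would define $\B{\phiup}$ on objects. Given $X\in \mathrm{Inv}_{R,S}^{r}(A)$, fix its right inverse $Y\in \mathscr{P}({}_{S}A_{R})$ (well defined up to unique isomorphism in the skeleton by Corollary \ref{lem: rightinverse}) together with the structural morphisms $m_{X,Y}$ and $m_{Y,X}$ of Definition \ref{def:inverse}. Observe that the very same data $(m_{X,Y},m_{Y,X})$ also exhibit $X$ as a \emph{left} inverse of $Y$: indeed $m_{X,Y}$ is the required isomorphism in ${}_R\cat{M}_R$ and equations \eqref{def: mX}--\eqref{def: mY} are symmetric in the roles of $X$ and $Y$. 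Hence $Y\in \mathrm{Inv}_{S,R}^{l}(A)$, and we set $\B{\phiup}(X):=Y$.

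On morphisms, for $f\in\mathrm{Hom}_{\scriptscriptstyle{\mathscr{P}\left(_{R}A_{S}\right)}}(X',X)$, I would put
\[
\B{\phiup}(f):=\B{\phi}_{\scriptscriptstyle{Y,Y'}}(f)\in \mathrm{Hom}_{\scriptscriptstyle{\mathscr{P}\left(_{S}A_{R}\right)}}(Y,Y'),
\]
with $\B{\phi}$ as in \eqref{eq: phi-ij}. Contravariant functoriality and bijectivity on Hom-sets are then immediate from Lemma \ref{lem: rightinverse-gen}: the identities $\B{\phi}_{\scriptscriptstyle{X,X}}(\id_{X})=\id_{Y}$ and $\B{\phi}(f)\circ \B{\phi}(f')=\B{\phi}(f'\circ f)$ say exactly that $\B{\phiup}$ is a functor into $\mathrm{Inv}_{S,R}^{l}(A)^{op}$, and the displayed isomorphism $\mathrm{Hom}(X',X)\cong \mathrm{Hom}(Y,Y')$ says it is fully faithful.

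Next, I would construct the inverse functor $\B{\psiup}:\mathrm{Inv}_{S,R}^{l}(A)^{op}\to \mathrm{Inv}_{R,S}^{r}(A)$ by the entirely symmetric procedure: on objects send $Y$ to its (unique) left inverse $X\in \mathrm{Inv}_{R,S}^{r}(A)$, and on morphisms $g:Y\to Y'$ set $\B{\psiup}(g):=\B{\psi}_{\scriptscriptstyle{X',X}}(g)$ with $\B{\psi}$ as in \eqref{eq: psi-ij}; the analogue of Lemma \ref{lem: rightinverse-gen} for $\B{\psi}$ guarantees that this is again a contravariant functor. On objects, $\B{\psiup}\circ \B{\phiup}=\mathrm{id}$ and $\B{\phiup}\circ \B{\psiup}=\mathrm{id}$ follow from the uniqueness stated in Corollary \ref{lem: rightinverse}, once one observes that the structural morphisms pair up symmetrically. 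On morphisms, one has to check that $\B{\phi}$ and $\B{\psi}$ are mutually inverse bijections, which is precisely the content of Proposition \ref{prop:adjunction} (applied with $V=R$ or $V=S$), so the two compositions act as the identity on Hom-sets as well.

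The only mildly delicate point I anticipate is keeping track of the four variants ($\B{\phi}$ versus $\B{\psi}$, and left versus right inverse), and verifying that the pair $(m_{X,Y},m_{Y,X})$ really is preserved by the constructions so that $\B{\phiup}$ and $\B{\psiup}$ land in the declared categories rather than merely in $\mathscr{P}({}_{S}A_{R})$ and $\mathscr{P}({}_{R}A_{S})$. This is however just a bookkeeping matter, already handled in the proof of Corollary \ref{lem: rightinverse} via \eqref{form:iYZ1}--\eqref{form:iYZ2}. Once this is in place, the proof is complete.
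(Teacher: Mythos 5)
Your proposal is correct and follows essentially the same route as the paper: the object bijection comes from the uniqueness of right inverses (Corollary \ref{lem: rightinverse}), and the action on morphisms is $\B{\phi}$ of \eqref{eq: phi-ij}, with contravariant functoriality and the Hom-set bijections supplied by Lemma \ref{lem: rightinverse-gen}. Your additional explicit construction of the inverse functor via $\B{\psi}$ only spells out what the paper leaves implicit, since Lemma \ref{lem: rightinverse-gen} already records that $\B{\phi}$ and $\B{\psi}$ are mutually inverse on Hom-sets.
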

\begin{proof}
First notice that both $\mathrm{Inv}_{R,S}^{r}\left( A\right)$ and $\mathrm{Inv}_{S,R}^{l}\left( A\right)$ are regarded as skeletal categories, in the sense that any two isomorphic objects are identical \cite[page 91]{MacLane-Categories}.
Therefore in view of Lemma \ref{lem: rightinverse}, if $X \in
\mathscr{P}\left( _{R}A_{S}\right) $ has a right inverse, then this inverse is unique and denoted by $X^{r}$. Thus to each object $X \in \mathrm{Inv}_{R,S}^{r}\left( A\right)$ it corresponds a unique object $X^{r} \in \mathrm{Inv}_{S,R}^{l}\left( A\right)$.  This establishes a bijection at the level of objects. By Lemma \ref{lem: rightinverse-gen} we know that the maps $\B{\phi}$ of equation \eqref{eq: phi-ij} induce
isomorphisms $\mathrm{Hom}_{\scriptscriptstyle{\mathrm{Inv}_{R,S}^{r}\left( A\right)}}\left(X,U\right) \cong \mathrm{Hom}{\scriptscriptstyle{\mathrm{Inv}_{S,R}^{l}\left( A\right)}}\left(U^r,X^r\right)$, which, by the last equations of that Lemma, give the desired contravariant category isomorphism $\B{\phiup}$.
\end{proof}

The image of an element $X \in \mathrm{Inv}^r_{R,S}\left( A\right)$ by the functor $\B{\phiup}$ of Lemma \ref{lema:leftright} will be denoted by $X^r$.  The set of two-sided invertible sub-bimodules is then defined in the following way:
\begin{equation}\label{Eq:inv}
\mathrm{Inv}_{R,S}\left( A\right) :=
\Big\{  X \in
\mathrm{Inv}_{R,S}^{r}\left( A\right) \mid m_{X^{r}}\text{ is an isomorphism, see Definition \ref{def:inverse}}\Big\} .
\end{equation}
One shows the  following equivalent description of this set
$$
\mathrm{Inv}_{R,S}\left( A\right) \,=\,  \mathrm{Inv}_{R,S}^{r}\left( A\right) \bigcap  \mathrm{Inv}_{R,S}^{l}\left( A\right),
$$
where the intersection is that of two subsets of $\mathscr{P}({}_RA_S)$.

Let $X\in \mathscr{P}\left( _{R}A_{S}\right)$ and $X' \in\mathscr{P}\left( _{S}A_{T}\right)$, where $\gamma: T \to A$ is another morphism of monoids. The image of the morphism $$\mas \circ(i_X\tensor{S}i_{X'}):X\tensor{S}X'\rightarrow A,$$ will be denoted by $(XX',i_{XX'}:XX'\rightarrow A)$, which will be considered as an element in $\mathscr{P}({}_RA_T)$.

\begin{proposition}\label{pro: InvTens} Let $\alpha:R\rightarrow A,\beta:S\rightarrow A$ and $\gamma:T\rightarrow A$ be morphisms of monoids in $\cat{M}$.  Let $X \in \mathrm{Inv}^r_{R,S}\left( A\right)$ and $X'\in\mathrm{Inv}^r_{S,T}\left( A\right)$ with right inverses $Y$ and $Y'$ respectively. Then
 \begin{enumerate}
  \item $\mas \circ(i_X\tensor{S}i_{X'})$ and $\mas\circ(i_{Y'}\tensor{S}i_Y)$ are monomorphisms i.e. $X\tensor{S}X'\cong XX'$ and $Y'\tensor{S}Y\cong Y'Y$, as $(R,T)$-bimodules.
  \item $X\tensor{S}X'\in \mathrm{Inv}^r_{R,T}\left( A\right)$ and its right inverse is $Y'\tensor{S}Y$.
  \end{enumerate}
Moreover, we have a functor $\mathrm{Inv}^r_{R,S}\left( A\right) \times  \mathrm{Inv}^r_{S,T}\left( A\right) \to  \mathrm{Inv}^r_{R,T}\left( A\right)$.
\end{proposition}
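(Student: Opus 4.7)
My plan is to combine Corollary \ref{coro:conaq} with the adjunctions of Proposition \ref{prop:adjunction} for item (1), and to build explicit candidates for the structure morphisms of the new inverse datum for item (2). The functoriality assertion should then be automatic from the monicity of (1) together with bifunctoriality of $\tensor{S}$.

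For item (1), I would factor the two maps in question as
\[
\mas \circ (i_X \tensor{S} i_{X'}) \;=\; f_X \circ (X \tensor{S} i_{X'}), \qquad \mas \circ (i_{Y'} \tensor{S} i_Y) \;=\; g_Y \circ (i_{Y'} \tensor{S} Y).
\]
By Corollary \ref{coro:conaq} both $f_X$ and $g_Y$ are isomorphisms, and by Proposition \ref{prop:adjunction} the functors $X \tensor{S} (-) : {}_S\cat{M} \to {}_R\cat{M}$ and $(-) \tensor{S} Y : \cat{M}_S \to \cat{M}_R$ are right adjoints (to $Y \tensor{R}(-)$ and $(-)\tensor{R}X$, respectively), so they preserve monomorphisms. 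Hence $X \tensor{S} i_{X'}$ and $i_{Y'} \tensor{S} Y$ are monos, and the displayed composites are as well.

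For item (2), I would introduce as candidate structure maps for the pair $(X \tensor{S} X',\, Y' \tensor{S} Y)$
\begin{align*}
m_{X\tensor{S}X',\, Y'\tensor{S}Y} &:= m_{X,Y} \circ (X \tensor{S} \lys) \circ (X \tensor{S} m_{X',Y'} \tensor{S} Y), \\
m_{Y'\tensor{S}Y,\, X\tensor{S}X'} &:= m_{Y',X'} \circ (Y' \tensor{S} l^{\scriptscriptstyle{S}}_{\scriptscriptstyle{X'}}) \circ (Y' \tensor{S} m_{Y,X} \tensor{S} X').
\end{align*}
Both are bilinear over the correct monoids, and the first one is a composition of three isomorphisms (since $m_{X,Y}$, $m_{X',Y'}$ and the unit constraint $\lys$ are isos), hence itself an isomorphism, fulfilling the last condition of Definition \ref{def:inverse}. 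Compatibility \eqref{def: mX} will then be verified via associativity of $A$: writing $m^{\scriptscriptstyle{T}}_{\scriptscriptstyle{A}}$ for the multiplication of $A$ in ${}_T\cat{M}_T$, one has
\[
m^{\scriptscriptstyle{T}}_{\scriptscriptstyle{A}} \circ (i_{X\tensor{S}X'} \tensor{T} i_{Y'\tensor{S}Y}) \;=\; \mas \circ \bigl( i_X \tensor{S} (m^{\scriptscriptstyle{T}}_{\scriptscriptstyle{A}} \circ (i_{X'} \tensor{T} i_{Y'})) \tensor{S} i_Y \bigr),
\]
whose inner factor equals $\beta \circ m_{X',Y'}$ by \eqref{def: mX} applied to $(X',Y')$. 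Absorbing $\beta$ via the identity $\mas \circ (\beta \tensor{S} i_Y) = i_Y \circ \lys$ (itself a consequence of $\mas \circ (\beta \tensor{S} A) = \las$ together with naturality of $\las$), and then applying \eqref{def: mX} for $(X,Y)$, the right-hand side reduces to $\alpha \circ m_{X\tensor{S}X',\, Y'\tensor{S}Y}$, as required. Compatibility \eqref{def: mY} follows from a mirror-image computation, making use of \eqref{def: mY} for $(X,Y)$ and for $(X',Y')$.

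For functoriality, given morphisms $f : X \to \td X$ in $\mathrm{Inv}^r_{R,S}(A)$ and $f' : X' \to \td{X'}$ in $\mathrm{Inv}^r_{S,T}(A)$ (that is, compatible with the respective monos into $A$), the arrow $f \tensor{S} f'$ will satisfy $i_{\td X \tensor{S} \td{X'}} \circ (f \tensor{S} f') = i_{X \tensor{S} X'}$, and the functor axioms are inherited from those of $\tensor{S}$. The main obstacle I anticipate is the bookkeeping of associators and unit constraints in item (2); a tangle-diagram rendering in the spirit of Propositions \ref{pro:invdual} and \ref{pro:fX} should make the verification most transparent.
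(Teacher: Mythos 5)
Your proposal is correct and follows essentially the same route as the paper: item (1) via the factorizations $f_X\circ(X\tensor{S}i_{X'})$ and $g_Y\circ(i_{Y'}\tensor{S}Y)$ combined with Corollary \ref{coro:conaq} and the right-adjointness from Proposition \ref{prop:adjunction}, and item (2) via the same nested contraction maps, with \eqref{def: mX} applied first to $(X',Y')$ and then to $(X,Y)$ after absorbing $\beta$ through unitality. The only (harmless) difference is that you write the verification algebraically where the paper uses tangle diagrams, and you make explicit that the new $m$ is an isomorphism, which the paper leaves implicit in its diagrammatic definition.
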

\begin{proof}
 $(1)$. By Proposition \ref{prop:adjunction}, the functor $X\tensor{S}(-)$ is a right adjoint and hence left exact so that $X\tensor{S}i_{X'}$ is a monomorphism. By Corollary \ref{coro:conaq}, the morphism $f_{X}: X\tensor{S}A\rightarrow A$ is an isomorphism. Thus $\mas \circ(i_X\tensor{S}i_{X'})=f_X\circ (X\tensor{S}i_{X'}):X\tensor{S}X'\rightarrow A$ is a monomorphism. Similarly, using the fact that the functor $(-)\tensor{S}Y$ is a right adjoint (Proposition \ref{prop:adjunction}) and that $g_Y:=m_A^S\circ(A\tensor{S}i_Y)$ is an isomorphism (Corollary \ref{coro:conaq}) one gets that $\mas\circ(i_{Y'}\tensor{S}i_Y)$ is a monomorphism too.

 $(2)$. Define $m_{X\tensor{S}X'}$ and $m_{Y'\tensor{S}Y}$ diagrammatically by setting
\begin{center}
\begin{tikzpicture}[x=9pt,y=9pt,thick]\pgfsetlinewidth{0.5pt}
\node[inner sep=1pt] at (0,0) {$\,m_{X\tensor{S}X'}\,=\,$};
\node at (0,-1) {};
\end{tikzpicture}%
\begin{tikzpicture}[x=9pt,y=9pt,thick]\pgfsetlinewidth{0.5pt}
\node(1) at (-3,2) {$\scriptstyle{X}$};
\node(2) at (-1.5,2) {$\scriptstyle{X'}$};
\node(3) at (1.5,2) {$\scriptstyle{Y'}$};
\node(4) at (3,2) {$\scriptstyle{Y}$};

\draw[-] (1) to [out=-90,in=90] (-3, 1);
\draw[-] (4) to [out=-90,in=90] (3, 1);
\draw[-] (3,1) to [out=-90,in=-90] (-3, 1);

\draw[-] (2) to [out=-90,in=90] (-1.5, 1);
\draw[-] (3) to [out=-90,in=90] (1.5, 1);
\draw[-] (1.5,1) to [out=-90,in=-90] (-1.5, 1);
\end{tikzpicture} $\qquad$
\begin{tikzpicture}[x=9pt,y=9pt,thick]\pgfsetlinewidth{0.5pt}
\node[inner sep=1pt] at (0,0) {$\,m_{Y'\tensor{S}Y}\,=\,$};
\node at (0,-1) {};
\end{tikzpicture}%
\begin{tikzpicture}[x=9pt,y=9pt,thick]\pgfsetlinewidth{0.5pt}
\node(1) at (-3,2) {$\scriptstyle{Y'}$};
\node(2) at (-1.5,2) {$\scriptstyle{Y}$};
\node(3) at (1.5,2) {$\scriptstyle{X}$};
\node(4) at (3,2) {$\scriptstyle{X'}$};

\draw[-] (1) to [out=-90,in=90] (-3, 1);
\draw[-] (4) to [out=-90,in=90] (3, 1);
\draw[-] (3,1) to [out=-90,in=-90] (-3, 1);

\draw[-] (2) to [out=-90,in=90] (-1.5, 1);
\draw[-] (3) to [out=-90,in=90] (1.5, 1);
\draw[-] (1.5,1) to [out=-90,in=-90] (-1.5, 1);

\end{tikzpicture}
\end{center}

Now, using equation \eqref{def: mX} for $X'$ and $X$, one gets the same equality for $X\tensor{S}X'$ as follows.
\begin{center}
\begin{tikzpicture}[x=9pt,y=9pt,thick]\pgfsetlinewidth{0.5pt}
\node(1) at (-3.5,5) {$\scriptstyle{X}$};
\node(2) at (-1.5,5) {$\scriptstyle{X'}$};
\node(3) at (1.5,5) {$\scriptstyle{Y'}$};
\node(4) at (3.5,5) {$\scriptstyle{Y}$};
\node(5) at (0,-5) {$\scriptstyle{A}$};
\node[circle, draw,inner sep=0.5pt](11) at (-3.5,3) {$\scriptstyle{\,i_{X}}$};
\node[circle, draw,inner sep=0.5pt](22) at (-1.5,3) {$\scriptstyle{\,i_{X'}}$};
\node[circle, draw,inner sep=0.5pt](33) at (1.5,3) {$\scriptstyle{\,i_{Y'}}$};
\node[circle, draw,inner sep=0.5pt](44) at (3.5,3) {$\scriptstyle{\,i_{Y}}$};

\draw[-] (1) to [out=-90,in=90] (11);
\draw[-] (2) to [out=-90,in=90] (22);
\draw[-] (3) to [out=-90,in=90] (33);
\draw[-] (4) to [out=-90,in=90] (44);
\draw[-] (11) to [out=-90,in=90] (-3.5,1);
\draw[-] (22) to [out=-90,in=90] (-1.5,1);
\draw[-] (33) to [out=-90,in=90] (1.5,1);
\draw[-] (44) to [out=-90,in=90] (3.5,1);
\draw[-] (-3.5,1) to [out=-90,in=-90] (-1.5,1);
\draw[-] (3.5,1) to [out=-90,in=-90] (1.5,1);
\draw[-] (-2.5,-1) to [out=90,in=-90] (-2.5,0.4);
\draw[-] (2.5,-1) to [out=90,in=-90] (2.5,0.4);
\draw[-] (-2.5,-1) to [out=-90,in=-90] (2.5,-1);
\draw[-] (5) to [out=90,in=-90] (0,-2.4);
\end{tikzpicture}
\begin{tikzpicture}[x=9pt,y=9pt,thick]\pgfsetlinewidth{0.5pt}
\node[inner sep=1pt] at (0,0) {$\,=\,$};
\node at (0,-5) {};
\end{tikzpicture}%
\begin{tikzpicture}[x=9pt,y=9pt,thick]\pgfsetlinewidth{0.5pt}
\node(1) at (-3.5,5) {$\scriptstyle{X}$};
\node(2) at (-1.5,5) {$\scriptstyle{X'}$};
\node(3) at (1.5,5) {$\scriptstyle{Y'}$};
\node(4) at (3.5,5) {$\scriptstyle{Y}$};
\node(5) at (-0.5,-5) {$\scriptstyle{A}$};
\node[circle, draw,inner sep=0.5pt](11) at (-3.5,3) {$\scriptstyle{\,i_{X}}$};
\node[circle, draw,inner sep=0.5pt](22) at (-1.5,3) {$\scriptstyle{\,i_{X'}}$};
\node[circle, draw,inner sep=0.5pt](33) at (1.5,3) {$\scriptstyle{i_{Y'}}$};
\node[circle, draw,inner sep=0.5pt](44) at (3.5,3) {$\scriptstyle{\,i_{Y}}$};

\draw[-] (1) to [out=-90,in=90] (11);
\draw[-] (2) to [out=-90,in=90] (22);
\draw[-] (3) to [out=-90,in=90] (33);
\draw[-] (4) to [out=-90,in=90] (44);
\draw[-] (11) to [out=-90,in=90] (-3.5,-2);
\draw[-] (22) to [out=-90,in=90] (-1.5,2);
\draw[-] (33) to [out=-90,in=90] (1.5,2);
\draw[-] (44) to [out=-90,in=90] (3.5,0);
\draw[-] (-1.5,2) to [out=-90,in=-90] (1.5,2);
\draw[-] (0,0) to [out=90,in=-90] (0,1);
\draw[-] (0,0) to [out=-90,in=-90] (3.5,0);
\draw[-] (1.7,-2) to [out=90,in=-90] (1.7,-1);
\draw[-] (1.7,-2) to [out=-90,in=-90] (-3.5,-2);
\draw[-] (5) to [out=90,in=-90] (-0.5,-3.5);
\end{tikzpicture}
\begin{tikzpicture}[x=9pt,y=9pt,thick]\pgfsetlinewidth{0.5pt}
\node[inner sep=1pt] at (0,0) {$\,=\,$};
\node at (0,-5) {};
\end{tikzpicture}%
\begin{tikzpicture}[x=9pt,y=9pt,thick]\pgfsetlinewidth{0.5pt}
\node(1) at (-3.5,5) {$\scriptstyle{X}$};
\node(2) at (-1.5,5) {$\scriptstyle{X'}$};
\node(3) at (1.5,5) {$\scriptstyle{Y'}$};
\node(4) at (3.5,5) {$\scriptstyle{Y}$};
\node(5) at (-0.7,-5) {$\scriptstyle{A}$};
\node[circle, draw,inner sep=0.5pt](11) at (-3.5,3) {$\scriptstyle{\,i_{X}}$};
\node[circle, draw,inner sep=0.5pt](6) at (0,2) {$\scriptstyle{\beta}$};
\node[circle, draw,inner sep=0.5pt](44) at (3.5,3) {$\scriptstyle{\,i_{Y}}$};

\draw[-] (1) to [out=-90,in=90] (11);

\draw[-] (2) to [out=-90,in=90] (-1.5,4);
\draw[-] (3) to [out=-90,in=90] (1.5,4);
\draw[-] (-1.5, 4) to [out=-90,in=-90] (1.5,4);

\draw[-] (4) to [out=-90,in=90] (44);
\draw[-] (11) to [out=-90,in=90] (-3.5,-1.5);
\draw[-] (44) to [out=-90,in=90] (3.5,0);
\draw[-] (0,0) to [out=90,in=-90] (6);
\draw[-] (0,0) to [out=-90,in=-90] (3.5,0);
\draw[-] (1.7,-1.5) to [out=90,in=-90] (1.7,-1);
\draw[-] (1.7,-1.5) to [out=-90,in=-90] (-3.5,-1.5);
\draw[-] (5) to [out=90,in=-90] (-0.7,-3);
\end{tikzpicture}
\begin{tikzpicture}[x=9pt,y=9pt,thick]\pgfsetlinewidth{0.5pt}
\node[inner sep=1pt] at (0,0) {$\,=\,$};
\node at (0,-5) {};
\end{tikzpicture}%
\begin{tikzpicture}[x=9pt,y=9pt,thick]\pgfsetlinewidth{0.5pt}
\node(1) at (-3.5,5) {$\scriptstyle{X}$};
\node(2) at (-1.5,5) {$\scriptstyle{X'}$};
\node(3) at (1.5,5) {$\scriptstyle{Y'}$};
\node(4) at (3.5,5) {$\scriptstyle{Y}$};
\node(5) at (0,-5) {$\scriptstyle{A}$};
\node[circle, draw,inner sep=1pt](6) at (0,0) {$\scriptstyle{\alpha}$};

\draw[-] (1) to [out=-90,in=90] (-3.5,4);
\draw[-] (4) to [out=-90,in=90] (3.5,4);
\draw[-] (-3.5,4) to [out=-90,in=-90] (3.5,4);
\draw[-] (2) to [out=-90,in=-90] (3);
\draw[-] (5) to [out=90,in=-90] (6);
\end{tikzpicture}
\end{center}

The same diagrammatic proof, once applied the substitutions $X\leftrightarrow Y',Y\leftrightarrow X'$ and $\alpha\leftrightarrow\gamma$, yields \eqref{def: mY} for $Y'\tensor{S}Y$ using the corresponding equality for $Y$ and $Y'$. The last statement is clear.
\end{proof}

\begin{corollary}\label{coro:InvGrp} Assume that $\beta:S \to A$ is a monomorphism. Then
 $\mathrm{Inv}^r_{S,S}(A)$ is a monoid where the multiplication of $X,X' \in \mathrm{Inv}_{S,S}^r(A)$ is given by $X\tensor{S}X'$ with monomorphism $i_{X\tensor{S}X'}:= \mas \circ(i_X\tensor{S}i_{X'})$.
The neutral element is $S \in \mathrm{Inv}^r_{S,S}(A)$ via $\beta$. Moreover $\mathrm{Inv}_{S,S}(A)$ is the group of units of this monoid.
\end{corollary}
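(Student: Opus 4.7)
The plan is to verify the three monoid axioms for $\mathrm{Inv}_{S,S}^{r}(A)$ under the prescribed operation, and then to identify its group of units with $\mathrm{Inv}_{S,S}(A)$.

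Closure and an explicit description of the right inverse of a product are given, almost verbatim, by Proposition \ref{pro: InvTens} applied with $R=S=T$: it supplies at once that $XX'$ belongs to $\mathrm{Inv}_{S,S}^{r}(A)$ with right inverse $(X')^{r}\tensor{S}X^{r}$, and that $\mas\circ(i_X\tensor{S}i_{X'})$ is a monomorphism, so that the embedding $i_{XX'}$ is well defined. Associativity would then follow by combining the associator of the monoidal category $({}_S\cat{M}_S,\tensor{S},S)$ with the associativity of $\mas$: both $(XX')X''$ and $X(X'X'')$ are realised as the image of the iterated multiplication $A\tensor{S}A\tensor{S}A\to A$ precomposed with $i_X\tensor{S}i_{X'}\tensor{S}i_{X''}$, and hence coincide as sub-bimodules of $A$.

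Next I would show that $(S,\beta)$ itself lies in $\mathrm{Inv}_{S,S}^{r}(A)$, taking $S$ as its right inverse and $m_S:S\tensor{S}S\to S$ to be the canonical unit isomorphism $l^S_S=r^S_S$. Conditions \eqref{def: mX} and \eqref{def: mY} reduce in this case to $\beta\circ l^S_S=\mas\circ(\beta\tensor{S}\beta)$, a direct rephrasing of the fact that $\beta$ is a morphism of monoids; the assumption that $\beta$ is a monomorphism is what guarantees that $(S,\beta)$ is a genuine object of $\mathscr{P}({}_SA_S)$. Neutrality $XS=X=SX$ in the skeleton would then follow from the unit constraints $r^S_X$, $l^S_X$, upon checking via the unitality of $\mas$ along $\beta$ that these are compatible with the respective embeddings into $A$.

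Finally, I would identify the group of units. Given $X\in\mathrm{Inv}_{S,S}^{r}(A)$, invertibility in the monoid amounts to the existence of some $X'$ with $XX'=S=X'X$ in the skeleton of $\mathscr{P}({}_SA_S)$. By uniqueness of right inverses (Corollary \ref{lem: rightinverse}), the first equality forces $X'\cong X^{r}$, while the second equality translates precisely into the statement that the structure map $m_{X^{r}}$ is an isomorphism; by the definition \eqref{Eq:inv} this is exactly the condition $X\in\mathrm{Inv}_{S,S}(A)$, and conversely any such $X$ admits $X^{r}$ as a two-sided inverse. The main technical subtlety throughout is that the unit and associativity constraints of $({}_S\cat{M}_S,\tensor{S},S)$ have to descend to \emph{equalities} in the skeleton $\mathrm{Inv}_{S,S}^{r}(A)\subseteq\mathscr{P}({}_SA_S)$, and not merely to isomorphisms in the ambient bimodule category; once the embeddings $i_{XX'}$ are carefully tracked, this reduces to a routine coherence computation involving only $\mas$ and $\beta$.
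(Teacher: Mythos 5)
Your proposal is correct and follows essentially the same route as the paper: closure and the monomorphy of $i_{X\tensor{S}X'}$ come from Proposition \ref{pro: InvTens}, $S$ is neutral via the unit constraints, and the group of units is identified by observing that every $X$ already has a right $\tensor{S}$-inverse (namely its right inverse $X^{r}$, since $i_{X\tensor{S}X^r}=\beta\circ m_X$ with $m_X$ an isomorphism), so invertibility in the monoid reduces to the single condition $X^{r}\tensor{S}X=S$. The only point worth spelling out is the final translation of $X^{r}\tensor{S}X=S$ into ``$m_{X^{r}}$ is an isomorphism'': by \eqref{def: mY} the embedding of $X^{r}\tensor{S}X$ into $A$ is $\beta\circ m_{X^{r}}$, so one needs to cancel $\beta$, and this is precisely where the hypothesis that $\beta$ is a monomorphism enters the units argument (not only in making $(S,\beta)$ an object of $\mathscr{P}({}_SA_S)$).
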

\begin{proof}
Let $X, X'\in \mathrm{Inv}^r_{S,S}(A)$. By Proposition \ref{pro: InvTens}, we have $X\tensor{S}X' \in \mathrm{Inv}^r_{S,S}(A)$, with monomorphism $\mas\circ(i_X\tensor{S}i_{X'})$,  so that $\tensor{S}$ is a well-defined multiplication for $\mathrm{Inv}_{S,S}^r(A)$. It is clearly associative. Note that $X\tensor{S}S = X = S\tensor{S}X$ as sub-bimodules of $A$
(these equalities make sense as the definition of subobject is given up to isomorphism, cf.~\cite[page 122]{MacLane-Categories}). Thus $S$ is the neutral element for this operation.

Let us check the last statement. For this consider $X\in \mathrm{Inv}_{S,S}^r(A)$ and $Y$ its right inverse (as in Definition \ref{def:inverse}).  Then, by equation \eqref{def: mX}, we have
$$
i_{X\tensor{S}Y}\,:=\, \mas \circ(i_X\tensor{S} i_Y)\,=\, \beta \circ m_{X}.
$$
This means that $X\tensor{S}Y =S \in \mathrm{Inv}_{S,\,S}^r(A)$, as $m_X:X\tensor{S}Y\rightarrow S$ is an isomorphism. Therefore, any element in $\mathrm{Inv}_{S,\,S}^r(A)$ is already right invertible w.r.t.~to the multiplication $\tensor{S}$.  Thus an element $X  \in \mathrm{Inv}_{S,\,S}^r(A)$ belongs to the group of units if and only if it has a left $\tensor{S}$-inverse. Note that, in this case, the left and the right inverses should coincide as we are in a monoid. Therefore,  the element $X$ is $\tensor{S}$-invertible if and only if $Y\tensor{S}X= S$. Now, $Y\tensor{S}X$ is sub-bimodule of $A$ via $\mas \circ (i_Y\tensor{S}i_X) = \beta\circ m_{Y}$  by equation \eqref{def: mY}. Thus, the equality $Y\tensor{S}X=S$ holds if and only if there is an isomorphism $\xi:Y\tensor{S}X\to S$ of bimodules over $S$ such that $\beta\circ\xi=\beta\circ m_Y$. Since we are assuming $\beta$ to be a monomorphism, this equality is equivalent to say that $m_Y$ is an isomorphism. This entails that $X$ is $\tensor{S}$-
invertible if and only if $X \in \mathrm{Inv}_{S,S}(A)$.
\end{proof}

The group $\mathrm{Inv}_{S,S}(A)$, considered in Corollary \ref{coro:InvGrp}, will also be denoted by $\mathrm{Inv}_{S}(A)$.

\begin{proposition}\label{prop:monoiso}
Let $X, X' \in \mathrm{Inv}_{R,\,S}(A)$ be two-sided invertible sub-bimodules of $A$ with inverses $Y,Y' \in \mathrm{Inv}_{S,\,R}(A)$ respectively. Consider the category isomorphism $\B{\phiup}$ of Lemma \ref{lema:leftright}, and assume that  there is a monomorphism  $i: X \hookrightarrow X'$ of bimodules in $\mathscr{P}({}_RA_S)$ (i.e.~satisfying $i_{X'} \circ i \,=\, i_X$).
\begin{enumerate}[(1)]
\item The morphism $i$ is an isomorphism  if and only if $\B{\phiup}(i)$ is.
\item Assume further that there is a monomorphism  of bimodules $j: Y \hookrightarrow Y'$ in $\mathscr{P}({}_SA_R)$ (i.e.~satisfying $i_{Y'} \circ j \,=\, i_Y$). Then both $i$ and $j$ are isomorphisms i.e. $X= X'$ and  $Y= Y'$ as elements in $\mathrm{Inv}_{R,S}\left( A\right)$ and $\mathrm{Inv}_{S,R}\left( A\right)$, respectively.
\end{enumerate}
\end{proposition}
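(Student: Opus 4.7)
The plan for part (1) is to invoke Lemma \ref{lema:leftright} directly. Since $\B{\phiup}\colon \mathrm{Inv}_{R,S}^{r}(A)\to \mathrm{Inv}_{S,R}^{l}(A)^{op}$ is an isomorphism of categories, it both preserves and reflects isomorphisms. Hence $i$ is invertible in $\mathrm{Inv}_{R,S}^{r}(A)$ (equivalently, in $\mathscr{P}({}_{R}A_{S})$, since this is a full subcategory) if and only if $\B{\phiup}(i)$ is invertible in $\mathrm{Inv}_{S,R}^{l}(A)^{op}$, which in turn amounts to being invertible in $\mathrm{Inv}_{S,R}^{l}(A)$.

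For part (2), the approach is to construct explicit two-sided inverses for $i$ and $j$ using the transfer maps $\B{\phi}$ and $\B{\psi}$ of \eqref{eq: phi-ij} and \eqref{eq: psi-ij}. Specifically, I would set
\begin{equation*}
p:=\B{\psi}_{X',X}(j)\colon X'\longrightarrow X\qquad\text{and}\qquad q:=\B{\phi}_{Y',Y}(i)\colon Y'\longrightarrow Y,
\end{equation*}
and then re-run the diagrammatic calculation in the proof of Proposition \ref{pro:invdual}(i) (with the appropriate reversal of roles) to obtain the key identities
\begin{equation*}
i_{X}\circ p = i_{X'}\qquad\text{and}\qquad i_{Y}\circ q = i_{Y'}.
\end{equation*}

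Once these two formulas are in hand, the conclusion follows by a purely formal cancellation. Composing with $i$ and $j$ and using the monomorphism conditions $i_{X'}\circ i=i_{X}$ and $i_{Y'}\circ j=i_{Y}$ gives
\begin{equation*}
i_{X}\circ (p\circ i) = i_{X'}\circ i = i_{X},\qquad i_{X'}\circ (i\circ p) = i_{X}\circ p = i_{X'},
\end{equation*}
together with the symmetric pair for $q,j$. Because $i_{X},i_{X'},i_{Y},i_{Y'}$ are monomorphisms in $\cat{M}$, we cancel to deduce $p\circ i=\id_{X}$, $i\circ p=\id_{X'}$, $q\circ j=\id_{Y}$ and $j\circ q=\id_{Y'}$. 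Therefore $i$ and $j$ are mutually inverse to $p$ and $q$ respectively, so $X=X'$ in $\mathrm{Inv}_{R,S}(A)$ and $Y=Y'$ in $\mathrm{Inv}_{S,R}(A)$, as these categories are treated as skeletons.

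The main obstacle is not conceptual but notational: one has to match carefully the subscripts in $\B{\phi}_{Y,Y'}$ and $\B{\psi}_{X',X}$ to the sources and targets of our morphisms $i$ and $j$, since in the statement of Proposition \ref{pro:invdual} the morphism $f$ goes from $X'$ to $X$ whereas here $i$ goes from $X$ to $X'$. Once this bookkeeping is straightened out, the tangle-diagram proof of Proposition \ref{pro:invdual}(i) applies verbatim to deliver the two identities $i_{X}\circ p=i_{X'}$ and $i_{Y}\circ q=i_{Y'}$, and the rest is a routine cancellation argument.
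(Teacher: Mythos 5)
Your proof is correct and follows essentially the same route as the paper: both rest on the identities $i_{Y}\circ \B{\phi}_{Y',Y}(i)=i_{Y'}$ and $i_{X}\circ\B{\psi}_{X',X}(j)=i_{X'}$ established in the proof of Proposition \ref{pro:invdual}(i), followed by cancellation against the monomorphisms $i_X,i_{X'},i_Y,i_{Y'}$. The only cosmetic difference is that the paper constructs an explicit inverse only for $j$ (namely $\B{\phiup}(i)$) and then deduces that $i$ is invertible from part (1), whereas you invert both $i$ and $j$ directly via $\B{\psi}(j)$ and $\B{\phi}(i)$.
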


\begin{proof}
$(1)$ is trivial.
$(2)$. From Lemma \ref{lema:leftright} we know that $i_Y \circ \B{\phiup}(i)= i_{Y'}$. Thus $i_{Y'}\circ j \circ \B{\phiup}(i)=i_Y \circ \B{\phiup}(i)=i_{Y'}$. Since $i_{Y'}$ is a monomorphism, we get $j \circ \B{\phiup}(i)=\id_{Y'}.$ Similarly one proves that $\B{\phiup}(i)\circ j=\id_Y$. Thus  $j$ and  $\B{\phiup}(i)$ are isomorphisms. By  $(1)$, the morphism $i$ is an isomorphism too, and this finishes the proof.
\end{proof}

\section{Miyashita action  and invariant subobjects}\label{sec:action}
In this section  we assume that our base monoidal category $(\cat{M},\tensor{}, \I, l,r)$ is also bicomplete.
In what follows let $A, R$ and $S$ be three monoids in $\cat{M}$ and $\alpha: R \rightarrow A \leftarrow S: \beta$ be morphisms of monoids which are monomorphisms in $\cat{M}$.
We denote by  $\cat{Z}: \cat{M} \to \rmod{\Z{\I}}$  the functor $\hom{\cat{M}}{\I}{-}$. Analogously,   we denote by $\ZR{-}: {}_R\cat{M}_R \to \rmod{\ZR{R}}$ the functor $\hom{R,R}{R}{-}$, and similarly we consider $\ZS{-}$.

\subsection{Miyashita action: Definition}\label{ssec:miyaaction}
In this subsection we introduce the \emph{Miyashita action} in the context of monoidal categories. This is a map (group homomorphism) which connects   $\mathrm{Inv}_{S,R}^{r}\left( A\right) $ (resp.~$\mathrm{Inv}_{S,R}\left( A\right) $)  with   the set of (iso)morphisms of $\Z{\I}$-algebras from $\ZS{A}$ to $\ZR{A}$. The latter can be seen as the  invariants (subalgebra) of $A$ with respect to $S$ and $R$, respectively  (usually they are denoted by $A^S$ and $A^R$ respectively in the classical case).

Let $M$ be a left $S$-module and let $N$ be an $A$-bimodule. Consider the action $\ZS{A} \times \lhom{S}{M}{N} \rightarrow\lhom{S}{M}{N}:(z,h)\mapsto z\trir{S} h$ defined by setting
\begin{equation}\label{Eq:triangles}
z\trir{S} h:=\Big(\xymatrix@C=40pt{  M\ar@{->}^{(l_{\scriptscriptstyle{M}}^{\scriptscriptstyle{S}})^{-1}}[r] &S\tensor{S}M \ar[r]^-{z\tensor{S}h} & A\tensor{S}N \ar@{->}^{\rho^S_N}[r] & N   } \Big)
\end{equation}

Similarly one defines the action $\rhom{R}{M}{N} \times \ZR{A}\rightarrow\rhom{R}{M}{N}:(f,z)\mapsto f\tril{R} z$. When $S=R=\I$ we omit the subscripts and write $f\triangleleft z$ and $z\triangleright f$.

\begin{lemma}\label{lema:convolution}
Let $\left( \mathcal{M},\otimes ,\mathbb{I}\right) $ be a monoidal category.
Let $\left( C,\Delta _{C},\varepsilon _{C}\right) $ be a comonoid  in $%
\left( \mathcal{M},\otimes ,\mathbb{I}\right) $ and let $\left(
A,m_{A},u_{A}\right) $ be a monoid in $\left( \mathcal{M},\otimes ,\mathbb{%
I}\right) .$ Then
\begin{equation*}
\left( B=\mathrm{Hom}_{\mathcal{M}}\left( C,A\right) ,m_{B},1_{B}\right)
\end{equation*}%
is a $\mathbb{Z}$-algebra where, for all $f,g\in B$
\begin{equation*}
f\ast g:=m_{B}\circ \left( f\otimes g\right) :=m_{A} \circ \left( f\otimes g\right) \circ
\Delta _{C}\qquad \text{and}\qquad 1_{B}:=u_{A}\circ \varepsilon _{C}.
\end{equation*}
\end{lemma}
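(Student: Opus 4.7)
The plan is to verify, in order, that the proposed multiplication $\ast$ is well defined, associative, and unital, and that it is $\mathbb{Z}$-bilinear with respect to the abelian group structure on $B=\hom{\cat{M}}{C}{A}$ inherited from the additive structure on $\cat{M}$. Well-definedness of $\ast$ and of $1_{B}$ is immediate from the very definitions, since $\Delta_{C}$, $\varepsilon_{C}$, $m_{A}$, $u_{A}$ and the tensor product are all morphisms in $\cat{M}$. Bilinearity follows at once from the bilinearity of $\tensor{}$ (the tensor product is additive in each argument because $\cat{M}$ is additive) combined with the bilinearity of composition, so $(B,+,\ast)$ inherits the abelian group structure of $\hom{\cat{M}}{C}{A}$ and $\ast$ is $\mathbb{Z}$-bilinear.

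For associativity, given $f,g,h\in B$, I would expand
\begin{equation*}
(f\ast g)\ast h \;=\; m_{A}\circ\bigl((m_{A}\circ(f\tensor{}g)\circ\Delta_{C})\tensor{}h\bigr)\circ\Delta_{C}
\end{equation*}
and use functoriality of $\tensor{}$ to pull the inner composites apart, obtaining
\begin{equation*}
(f\ast g)\ast h \;=\; m_{A}\circ(m_{A}\tensor{}A)\circ(f\tensor{}g\tensor{}h)\circ(\Delta_{C}\tensor{}C)\circ\Delta_{C}.
\end{equation*}
Now the associativity of $m_{A}$ and the coassociativity of $\Delta_{C}$ (both up to the canonical associator, which I would suppress as per the paper's convention) turn the right-hand side into
\begin{equation*}
m_{A}\circ(A\tensor{}m_{A})\circ(f\tensor{}g\tensor{}h)\circ(C\tensor{}\Delta_{C})\circ\Delta_{C},
\end{equation*}
which by a second application of functoriality of $\tensor{}$ is exactly $f\ast(g\ast h)$.

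For the unit axiom, I would compute $1_{B}\ast f = m_{A}\circ\bigl((u_{A}\circ\varepsilon_{C})\tensor{}f\bigr)\circ\Delta_{C}$. Functoriality of $\tensor{}$ rewrites this as $m_{A}\circ(u_{A}\tensor{}A)\circ(\varepsilon_{C}\tensor{}f)\circ\Delta_{C}$, then as $m_{A}\circ(u_{A}\tensor{}A)\circ(\I\tensor{}f)\circ(\varepsilon_{C}\tensor{}C)\circ\Delta_{C}$. Applying the left unit axiom for $A$ (i.e.~$m_{A}\circ(u_{A}\tensor{}A)=l_{A}$), the counit axiom for $C$ (i.e.~$l_{C}\circ(\varepsilon_{C}\tensor{}C)\circ\Delta_{C}=C$), and the naturality of $l$ gives $1_{B}\ast f=l_{A}\circ(\I\tensor{}f)\circ l_{C}^{-1}=f$. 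The identity $f\ast 1_{B}=f$ is verified symmetrically using the right unit axiom of $A$, the right counit axiom of $C$, and the naturality of $r$.

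The bookkeeping of associators and unit constraints is the only nuisance: strictly speaking one should insert the associator $a_{A,A,A}$ and the appropriate unitors at every step, but since the paper's convention is to suppress these on display (as already done by the authors in Section~\ref{sec:biobjects}), the diagrammatic argument above is fully rigorous. There is no genuine obstacle here; the lemma is the standard convolution-algebra construction transplanted to an arbitrary monoidal additive category, and every step reduces to one of the monoid, comonoid, or bifunctoriality axioms.
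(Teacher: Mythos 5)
Your proof is correct and is precisely the routine verification that the paper dismisses as ``Straightforward'': the standard convolution-algebra argument using bifunctoriality of $\otimes$, associativity/unitality of $m_{A},u_{A}$, and coassociativity/counitality of $\Delta_{C},\varepsilon_{C}$. No discrepancy with the paper's (omitted) argument.
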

\begin{proof}
Straightforward.
\end{proof}

Since $( R,\left(l^{\scriptscriptstyle{R}}_{\scriptscriptstyle{R}}\right) ^{-1},\mathrm{Id}_{R}) $ is a comonoid and $\left(A,\mar,\alpha \right) $ is a monoid, both in $\left( _{R}\mathcal{M}_{R},\tensor{R},R\right) $, we have that $\ZR{A}$ is a  $\mathbb{Z}$-algebra by Lemma \ref{lema:convolution}. Furthermore, since  the base category is assumed to be Penrose, we have that $R\tensor{}t=t\tensor{}R$ for every element $t$ in $\Z{\I}$ which in fact defines an algebra map $\Z{\I} \to \ZR{R}$ so that $\ZR{R}$ becomes  a commutative $\Z{\I}$-algebra. In this way, the map $\ZR{\alpha}$  clearly induces a structure of   $\Z{\I}$-algebra on
$\ZR{A}$. Explicitly, the unit $\td{\alpha}:\Z{\I}\to \ZR{A}$ of this algebra maps an element $t \in \Z{\I}$ to $\alpha \circ (R\tensor{}t) \,=\, \alpha \circ (t\tensor{}R)$ which is an element in $\ZR{A}$.  Similarly, one constructs $\td{\beta}: \Z{\I} \to \ZS{A}$. On the other hand, one shows that the map
\begin{equation}\label{Eq:varphi}
\varphi_R:\ZR{A} \to \cat{Z}(A), \quad f \mapsto f \circ u_R
\end{equation}
is multiplicative and satisfies $\varphi_R \circ \td{\alpha} =\cat{Z}(u)$, so that it is a $\Z{\I}$-algebras map. Moreover, by using that morphisms in $\ZR{A}$ are of $R$-bimodules, one also gets that $\varphi_R$ is always injective.  In this way, both $\ZR{A}$ and $\ZS{A}$ become $\Z{\I}$-subalgebras of $\cat{Z}(A)$.

\begin{proposition}\label{pro: right Miyashita}
For any $g\in \ZS{A}$ and $X \in \mathrm{Inv}_{R,S}^{r}\left( A\right) $ let $\B{\sigma}
_{X}^{S,R}\left( g\right) :=\B{\sigma} _{X}\left( g\right) :R\rightarrow A$ be
defined by
\begin{equation}\label{form:defsigmaX}
\B{\sigma} _{X}\left( g\right) =\mas\circ \left( i_X\tensor{S}(g\trir{S}i_{Y})\right) \circ \left( m_{X}\right) ^{-1},
\end{equation}
where $Y$ is a right inverse of $X$.
Then, $\B{\sigma} _{X}$ does not depend on the choice of $Y$ and  the map
$$
\B{\sigma} ^{S,R} \,:=\, \B{\sigma} :\mathrm{Inv}_{R,S}^{r}\left( A\right) \longrightarrow
\mathrm{Hom}_{\scriptscriptstyle{\Z{\I}}\text{-alg}}\Big(\ZS{A},\ZR{A}\Big), \;\; \Big\{
X\longmapsto \B{\sigma} _{X}\Big\}
$$
is well-defined.
\end{proposition}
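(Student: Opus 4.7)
The plan is to check, in turn: (a) that $\B{\sigma}_X(g)$ is an $R$-bimodule morphism $R\to A$, so actually lies in $\ZR{A}$; (b) that the formula \eqref{form:defsigmaX} is independent of the choice of right inverse $Y$; (c) that $\B{\sigma}_X:\ZS{A}\to\ZR{A}$ is unital and multiplicative; and (d) that it is $\Z{\I}$-linear. Throughout I would rely on the tangle calculus introduced in Section \ref{sec:biobjects} to keep the bookkeeping manageable.

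For (a), I would inspect each composition factor in \eqref{form:defsigmaX}: $m_X^{-1}$ is an $R$-bimodule map by Definition \ref{def:inverse}; the morphism $g\trir{S}i_Y:Y\to A$ is $(S,R)$-bilinear because $g\in\ZS{A}$ is $S$-bilinear and $i_Y$ is an $(S,R)$-bimodule map; consequently $i_X\tensor{S}(g\trir{S}i_Y)$ is $(R,R)$-bilinear; and $\mas$ is $R$-bilinear since $\alpha$ is a morphism of monoids. For (b), if $Y'$ is another right inverse, Corollary \ref{lem: rightinverse} supplies an isomorphism $i_{Y,Y'}:Y\to Y'$ with $i_{Y'}\circ i_{Y,Y'}=i_Y$ and, via \eqref{form:iYZ2}, $m_{X,Y'}\circ(X\tensor{S}i_{Y,Y'})=m_{X,Y}$. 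Substituting $i_Y=i_{Y'}\circ i_{Y,Y'}$ and $m_{X,Y}^{-1}=(X\tensor{S}i_{Y,Y'})\circ m_{X,Y'}^{-1}$ into \eqref{form:defsigmaX} turns the formula computed with $Y$ into the one computed with $Y'$.

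For unitality in (c), the unit of $\ZS{A}$ is $\beta:S\to A$ by Lemma \ref{lema:convolution}, and the monoid unit axiom for $\mas$ gives $\beta\trir{S}i_Y=i_Y$; hence $\B{\sigma}_X(\beta)=\mas\circ(i_X\tensor{S}i_Y)\circ m_X^{-1}\overset{\eqref{def: mX}}{=}\alpha\circ m_X\circ m_X^{-1}=\alpha$, which is the unit of $\ZR{A}$. Multiplicativity is the delicate part. For $g,g'\in\ZS{A}$ I would expand the convolution
\[
\B{\sigma}_X(g)\ast\B{\sigma}_X(g')=\mar\circ\bigl(\B{\sigma}_X(g)\tensor{R}\B{\sigma}_X(g')\bigr)\circ(l_R^R)^{-1}
\]
by substituting \eqref{form:defsigmaX} in each factor, so that in the middle of the resulting composite one finds the pair $\mar\circ(i_Y\tensor{R}i_X)$; replacing it by $\beta\circ m_{Y,X}$ via \eqref{def: mY} and then absorbing $\beta$ into $\mas$ (again via the monoid unit axiom) allows one to merge the two tensor factors $g$ and $g'$ across a single multiplication of $A$, yielding the formula for $\B{\sigma}_X(g\ast g')$. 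This rewriting is cleanest if performed diagrammatically in the tangle calculus of Section \ref{sec:biobjects}, where the cancellation appears as a cup-cap collapse between the two halves.

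Finally for (d), $\Z{\I}$-linearity reduces, via unitality and multiplicativity, to the identity $\B{\sigma}_X(\td{\beta}(t))=\td{\alpha}(t)$ for $t\in\Z{\I}$; this in turn follows from the Penrose hypothesis (which makes $t$ central and allows it to slide freely through the tensor products) together with the unit computation above. The principal obstacle is clearly the multiplicativity step in (c); the key observation enabling the collapse is that \eqref{def: mY} converts the central piece $\mar\circ(i_Y\tensor{R}i_X)$ into $\beta\circ m_{Y,X}$, after which everything reduces to the algebra structure of $\mas$ together with the naturality of the constraints.
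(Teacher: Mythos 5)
Your proposal is correct and follows essentially the same route as the paper: independence of $Y$ via Corollary \ref{lem: rightinverse} and \eqref{form:iYZ2}, membership in $\ZR{A}$ by inspecting the factors, unitality via \eqref{def: mX}, multiplicativity via \eqref{def: mY} followed by the snake identity \eqref{Eq:R} (the paper merely packages this computation as the intermediate identity $\B{\sigma}_X(g)\trir{R}i_X = i_X\tril{S}g$ before merging the two factors), and the $\Z{\I}$-algebra condition via the Penrose hypothesis together with the unit computation. The only point you leave implicit is that the formula is also independent of the chosen representative of the isomorphism class of $(X,i_X)$, which the paper itself dispatches with the one-line remark that it follows by the same argument as the independence of $Y$.
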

\begin{proof}
First note that the right inverse $Y$ of $X$ is unique up to isomorphism as shown in Lemma \ref{lem: rightinverse}. Let us check that $\B{\sigma} _{X}\left( g\right) $ does not depend on this isomorphism. Thus, given another right inverse $Z$ of $X$, we want to check  that the formula \eqref{form:defsigmaX} is the same for both $Y$ and $Z$. That is,  we need to check the following equality
$$\B{\sigma} _{X,Y}\left( g\right) := \mas\circ \left( i_X\tensor{S}(g\trir{S}i_{Y})\right) \circ \left( m_{X}\right) ^{-1}\,\,=\,\, \mas\circ \left( i_X\tensor{S}(g\trir{S}i_{Z})\right) \circ \left( m_{X}\right) ^{-1}:= \B{\sigma} _{X,Z}\left( g\right).$$
The map $\B{\sigma} _{X,Y}(g)$ can be represented by the following diagram:
\begin{center}
\begin{tikzpicture}[x=9pt,y=9pt,thick]\pgfsetlinewidth{0.5pt}
\node[inner sep=1pt] at (0,0) {$\B{\sigma}_{X,\, Y}(g)\,:=\,$};
\node at (0,-5) {};
\end{tikzpicture}
\begin{tikzpicture}[x=9pt,y=9pt,thick]\pgfsetlinewidth{0.5pt}
\node[circle,draw, inner sep=0.6pt](1) at (0,0) {$\scriptstyle{i_X}$};
\node[circle,draw, inner sep=0.6pt](2) at (3,0) {$\scriptstyle{i_Y}$};
\node[circle,draw, inner sep=1pt](3) at (1.5,2) {$\scriptstyle{g}$};
\node(4) at (1.3,-5) {$\scriptstyle{A}$};

\draw[-] (1) to [out=90,in=-90] (0,2);
\draw[-] (2) to [out=90,in=-90] (3,2);
\draw[-] (0,2) to [out=90,in=180] (1.5,3.5);
\draw[-] (1.5,3.5) to [out=0,in=90] (3,2);
\draw[-] (3) to [out=-90,in=90] (1.5,-1.5);
\draw[-] (1) to [out=-90,in=90] (0,-2.5);
\draw[-] (2) to [out=-90,in=90] (3,-1.5);
\draw[-] (3,-1.5) to [out=-90,in=-90] (1.5,-1.5);
\draw[-] (2.3,-2) to [out=-90,in=90] (2.3,-2.55);
\draw[-] (2.3,-2.5) to [out=-90,in=-90] (0,-2.5);
\draw[-] (4) to [out=90,in=-90] (1.3,-3.2);
\end{tikzpicture}
\end{center}
Using the isomorphisms stated in Corollary \ref{lem: rightinverse},  we then have
\begin{center}
\begin{tikzpicture}[x=9pt,y=9pt,thick]\pgfsetlinewidth{0.5pt}
\node[inner sep=1pt] at (0,0.5) {$\B{\sigma}_{X,\, Z}(g)\,=\,$};
\node at (0,-5) {};
\end{tikzpicture}
\begin{tikzpicture}[x=9pt,y=9pt,thick]\pgfsetlinewidth{0.5pt}
\node[circle,draw, inner sep=0.6pt](1) at (0,0) {$\scriptstyle{i_X}$};
\node[circle,draw, inner sep=0.6pt](2) at (3,0) {$\scriptstyle{i_Z}$};
\node[circle,draw, inner sep=1pt](3) at (1.5,2) {$\scriptstyle{g}$};
\node(4) at (1.3,-5.2) {$\scriptstyle{A}$};

\draw[-] (1) to [out=90,in=-90] (0,4.5);
\draw[-] (2) to [out=90,in=-90] (3,4.5);
\draw[-] (0,4.5) to [out=90,in=180] (1.5,5.5);
\draw[-] (1.5,5.5) to [out=0,in=90] (3,4.5);
\draw[-] (3) to [out=-90,in=90] (1.5,-1.5);
\draw[-] (1) to [out=-90,in=90] (0,-2.5);
\draw[-] (2) to [out=-90,in=90] (3,-1.5);
\draw[-] (3,-1.5) to [out=-90,in=-90] (1.5,-1.5);
\draw[-] (2.3,-2) to [out=-90,in=90] (2.3,-2.5);
\draw[-] (2.3,-2.5) to [out=-90,in=-90] (0,-2.5);
\draw[-] (4) to [out=90,in=-90] (1.3,-3.2);
\end{tikzpicture}
\begin{tikzpicture}[x=9pt,y=9pt,thick]\pgfsetlinewidth{0.5pt}
\node[inner sep=1pt] at (0,0.5) {$\,\overset{\eqref{form:iYZ2}}{=}\,$};
\node at (0,-5) {};
\end{tikzpicture}
\begin{tikzpicture}[x=9pt,y=9pt,thick]\pgfsetlinewidth{0.5pt}
\node[circle,draw, inner sep=0.6pt](1) at (0,0) {$\scriptstyle{i_X}$};
\node[circle,draw, inner sep=0.6pt](2) at (3,0) {$\scriptstyle{i_Z}$};
\node[circle,draw, inner sep=1pt](3) at (1.5,2) {$\scriptstyle{g}$};
\node(4) at (1.3,-5.2) {$\scriptstyle{A}$};
\node[circle,draw, inner sep=0pt](5) at (3,3.5) {$\scriptstyle{i_{Y, Z}}$};

\draw[-] (1) to [out=90,in=-90] (0,4.3);
\draw[-] (2) to [out=90,in=-90] (5);
\draw[-] (0,4.3) to [out=90,in=180] (1.5,5.5);
\draw[-] (1.5,5.5) to [out=0,in=90] (5);
\draw[-] (3) to [out=-90,in=90] (1.5,-1.5);
\draw[-] (1) to [out=-90,in=90] (0,-2.5);
\draw[-] (2) to [out=-90,in=90] (3,-1.5);
\draw[-] (1.5,-1.5) to [out=-90,in=-90] (3,-1.5);
\draw[-] (2.3,-2) to [out=-90,in=90] (2.3,-2.5);
\draw[-] (2.3,-2.5) to [out=-90,in=-90] (0,-2.5);
\draw[-] (4) to [out=90,in=-90] (1.3,-3.2);
\end{tikzpicture}
\begin{tikzpicture}[x=9pt,y=9pt,thick]\pgfsetlinewidth{0.5pt}
\node[inner sep=1pt] at (0,0.5) {$\,=\,$};
\node at (0,-5) {};
\end{tikzpicture}
\begin{tikzpicture}[x=9pt,y=9pt,thick]\pgfsetlinewidth{0.5pt}
\node[circle,draw, inner sep=0.6pt](1) at (0,0) {$\scriptstyle{i_X}$};
\node[circle,draw, inner sep=0.6pt](2) at (3,0) {$\scriptstyle{i_Z}$};
\node[circle,draw, inner sep=1pt](3) at (1.5,4) {$\scriptstyle{g}$};
\node(4) at (1.3,-5.2) {$\scriptstyle{A}$};
\node[circle,draw, inner sep=0pt](5) at (3,2.5) {$\scriptstyle{i_{Y,Z}}$};

\draw[-] (1) to [out=90,in=-90] (0,4.3);
\draw[-] (2) to [out=90,in=-90] (5);
\draw[-] (0,4.3) to [out=90,in=180] (1.5,5.5);
\draw[-] (1.5,5.5) to [out=0,in=90] (3,4.5);
\draw[-] (3,4.5) to [out=-90,in=90] (5);
\draw[-] (3) to [out=-90,in=90] (1.5,-1.5);
\draw[-] (1) to [out=-90,in=90] (0,-2.5);
\draw[-] (2) to [out=-90,in=90] (3,-1.5);
\draw[-] (3,-1.5) to [out=-90,in=-90] (1.5,-1.5);
\draw[-] (2.3,-2) to [out=-90,in=90] (2.3,-2.5);
\draw[-] (2.3,-2.5) to [out=-90,in=-90] (0,-2.5);
\draw[-] (4) to [out=90,in=-90] (1.3,-3.2);
\end{tikzpicture}
\begin{tikzpicture}[x=9pt,y=9pt,thick]\pgfsetlinewidth{0.5pt}
\node[inner sep=1pt] at (0,0.5) {$\,=\,$};
\node at (0,-5) {};
\end{tikzpicture}
\begin{tikzpicture}[x=9pt,y=9pt,thick]\pgfsetlinewidth{0.5pt}
\node[circle,draw, inner sep=0.6pt](1) at (0,0) {$\scriptstyle{i_X}$};
\node[circle,draw, inner sep=0.6pt](2) at (3,0) {$\scriptstyle{i_Y}$};
\node[circle,draw, inner sep=1pt](3) at (1.5,3) {$\scriptstyle{g}$};
\node(4) at (1.3,-5.2) {$\scriptstyle{A}$};

\draw[-] (1) to [out=90,in=-90] (0,4.5);
\draw[-] (2) to [out=90,in=-90] (3,4.5);
\draw[-] (0,4.5) to [out=90,in=180] (1.5,5.5);
\draw[-] (1.5,5.5) to [out=0,in=90] (3,4.5);
\draw[-] (3) to [out=-90,in=90] (1.5,-1.5);
\draw[-] (1) to [out=-90,in=90] (0,-2.5);
\draw[-] (2) to [out=-90,in=90] (3,-1.5);
\draw[-] (3,-1.5) to [out=-90,in=-90] (1.5,-1.5);
\draw[-] (2.3,-2) to [out=-90,in=90] (2.3,-2.5);
\draw[-] (2.3,-2.5) to [out=-90,in=-90] (0,-2.5);
\draw[-] (4) to [out=90,in=-90] (1.3,-3.2);
\end{tikzpicture}
\begin{tikzpicture}[x=9pt,y=9pt,thick]\pgfsetlinewidth{0.5pt}
\node[inner sep=1pt] at (0,0.5) {$\,=\,\B{\sigma}_{X,\, Y}(g).$};
\node at (0,-5) {};
\end{tikzpicture}
\end{center}

By a similar argument, one proves that the definition of this map does not depend on the representative of the equivalence class $(X,i_X)$. In other words if $X=X'$ (i.e.~there is an isomorphism $f:X'\to X$ such that $i_X\circ f=i_{X'}$), then $\B{\sigma} _{X}\left(
g\right) =\B{\sigma} _{X'}\left(
g\right)$. On the other hand,  the morphism $\B{\sigma} _{X}\left(
g\right) $ is  a morphism in $_{R}\mathcal{M}_{R}$ being the composition of morphisms in $_{R}\mathcal{M}_{R}$. Therefore, $\B{\sigma} _{X}\left( g\right) $ is an element in $\ZR{A}$.

We  have to check that $\B{\sigma} _{X}$ is a $\Z{\I}$-algebra map. To this aim, given $g,h\in \ZS{A}$, we first show that $\B{\sigma} _{X}\left( g\right) \ast
\B{\sigma} _{X}\left( h\right) =\B{\sigma} _{X}\left( g\ast h\right)$. Using the diagrammatic notation,  we have
\begin{center}
\begin{tikzpicture}[x=8pt,y=8pt,thick]\pgfsetlinewidth{0.5pt}
\node[ellipse,draw, inner sep=1pt](1) at (0,0) {$\scriptstyle{\B{\sigma}_{X}(g)}$};
\node[circle,draw, inner sep=0.6pt](2) at (4,0) {$\scriptstyle{i_X}$};
\node(3) at (4,5) {$\scriptstyle{X}$};
\node(4) at (2,-5) {$\scriptstyle{A}$};
\draw[-] (1) to [out=-90,in=90] (0,-2);
\draw[-] (0,-2) to [out=-90,in=180] (2,-3);
\draw[-] (4,-2) to [out=90,in=-90] (2);
\draw[-] (2,-3) to [out=0,in=-90] (4,-2);
\draw[-] (4) to [out=90,in=-90] (2,-3);
\draw[-] (3) to [out=-90,in=90] (2);
\end{tikzpicture}
\begin{tikzpicture}[x=8pt,y=8pt,thick]\pgfsetlinewidth{0.5pt}
\node[inner sep=1pt] at (0,0) {$\,=\,$};
\node at (0,-5) {};
\end{tikzpicture}
\begin{tikzpicture}[x=8pt,y=8pt,thick]\pgfsetlinewidth{0.5pt}
\node[circle,draw, inner sep=0.5pt](1) at (0,1) {$\scriptstyle{i_X}$};
\node[circle,draw, inner sep=0.5pt](2) at (3,1) {$\scriptstyle{i_Y}$};
\node[circle,draw, inner sep=1pt](3) at (1.5,3) {$\scriptstyle{g}$};
\node(4) at (3,-5) {$\scriptstyle{A}$};
\node[circle,draw, inner sep=0.6pt](5) at (5,0) {$\scriptstyle{i_X}$};
\node(6) at (5,5) {$\scriptstyle{X}$};

\draw[-] (1) to [out=90,in=-90] (0,4);
\draw[-] (2) to [out=90,in=-90] (3,4);
\draw[-] (0,4) to [out=90,in=90] (3,4);
\draw[-] (3) to [out=-90,in=90] (1.5,-0.5);
\draw[-] (1) to [out=-90,in=90] (0,-1.5);
\draw[-] (2) to [out=-90,in=90] (3,-0.5);
\draw[-] (3,-0.5) to [out=-90,in=-90] (1.5,-0.5);
\draw[-] (2.3,-1) to [out=-90,in=90] (2.3,-1.5);
\draw[-] (2.3,-1.5) to [out=-90,in=-90] (0,-1.5);
\draw[-] (1.2,-2.2) to [out=-90,in=90] (1.2,-3);
\draw[-] (5,-3) to [out=-90,in=-90] (1.2,-3);
\draw[-] (6) to [out=-90,in=90] (5);
\draw[-] (5) to [out=-90,in=90] (5,-3);
\draw[-] (4) to [out=90,in=-90] (3,-4.5);
\end{tikzpicture}
\begin{tikzpicture}[x=8pt,y=8pt,thick]\pgfsetlinewidth{0.5pt}
\node[inner sep=1pt] at (0,0) {$\,=\,$};
\node at (0,-5) {};
\end{tikzpicture}
\begin{tikzpicture}[x=8pt,y=8pt,thick]\pgfsetlinewidth{0.5pt}
\node[circle,draw, inner sep=0.5pt](1) at (0,1) {$\scriptstyle{i_X}$};
\node[circle,draw, inner sep=0.5pt](2) at (3,1) {$\scriptstyle{i_Y}$};
\node[circle,draw, inner sep=1pt](3) at (1.5,3) {$\scriptstyle{g}$};
\node(4) at (2.3,-5) {$\scriptstyle{A}$};
\node[circle,draw, inner sep=0.5pt](5) at (5,0) {$\scriptstyle{i_X}$};
\node(6) at (5,5) {$\scriptstyle{X}$};

\draw[-] (1) to [out=90,in=-90] (0,4);
\draw[-] (2) to [out=90,in=-90] (3,4);
\draw[-] (0,4) to [out=90,in=90] (3,4);

\draw[-] (3) to [out=-90,in=90] (1.5,-1);
\draw[-] (1) to [out=-90,in=90] (0,-1);
\draw[-] (0,-1) to [out=-90,in=-90] (1.5,-1);
\draw[-] (2) to [out=-90,in=90] (3,-1.5);
\draw[-] (6) to [out=-90,in=90] (5);
\draw[-] (5) to [out=-90,in=90] (5,-1.5);
\draw[-] (3,-1.5) to [out=-90,in=-90]  (5,-1.5);
\draw[-] (0.7,-1.5) to [out=-90,in=90] (0.7,-2.5);
\draw[-] (4,-2) to [out=-90,in=90] (4,-2.5);
\draw[-] (0.7,-2.5) to [out=-90,in=-90] (4,-2.5);
\draw[-] (4) to [out=90,in=-90] (2.3,-3.5);
\end{tikzpicture}
\begin{tikzpicture}[x=8pt,y=8pt,thick]\pgfsetlinewidth{0.5pt}
\node[inner sep=1pt] at (0,0) {$\,=\,$};
\node at (0,-5) {};
\end{tikzpicture}
\begin{tikzpicture}[x=8pt,y=8pt,thick]\pgfsetlinewidth{0.5pt}
\node[circle,draw, inner sep=0.5pt](1) at (0,1) {$\scriptstyle{i_X}$};
\node[circle,draw, inner sep=1pt](3) at (1.5,3) {$\scriptstyle{g}$};
\node(4) at (2.3,-5) {$\scriptstyle{A}$};
\node(6) at (5,5) {$\scriptstyle{X}$};
\node[circle,draw, inner sep=1pt](7) at (4,-0.5) {$\scriptstyle{\beta}$};

\draw[-] (1) to [out=90,in=-90] (0,4);
\draw[-] (3,4) to [out=-90,in=90] (3,1.5);
\draw[-] (3,4) to [out=90,in=90] (0,4);

\draw[-] (3,1.5) to [out=-90,in=-90] (5,1.5);
\draw[-] (3) to [out=-90,in=90] (1.5,-1);
\draw[-] (1) to [out=-90,in=90] (0,-1);
\draw[-] (0,-1) to [out=-90,in=-90] (1.5,-1);
\draw[-] (6) to [out=-90,in=90] (5,1.5);
\draw[-] (0.7,-1.5) to [out=-90,in=90] (0.7,-2.5);
\draw[-] (7) to [out=-90,in=90] (4,-2.5);
\draw[-] (0.7,-2.5) to [out=-90,in=-90]  (4,-2.5);
\draw[-] (4) to [out=90,in=-90] (2.3,-3.5);
\end{tikzpicture}
\begin{tikzpicture}[x=8pt,y=8pt,thick]\pgfsetlinewidth{0.5pt}
\node[inner sep=1pt] at (0,0) {$\,=\,$};
\node at (0,-5) {};
\end{tikzpicture}
\begin{tikzpicture}[x=8pt,y=8pt,thick]\pgfsetlinewidth{0.5pt}
\node[circle,draw, inner sep=0.6pt](1) at (0,-1) {$\scriptstyle{i_X}$};
\node(4) at (1.5,-5) {$\scriptstyle{A}$};
\node(6) at (5,5) {$\scriptstyle{X}$};
\node[circle,draw, inner sep=1pt](7) at (3,-1) {$\scriptstyle{g}$};

\draw[-] (1) to [out=90,in=-90] (0,3);
\draw[-] (0,3) to [out=90,in=90] (3,3);
\draw[-] (1) to [out=-90,in=90] (0,-2.5);
\draw[-] (7) to [out=-90,in=90] (3,-2.5);
\draw[-] (0,-2.5) to [out=-90,in=-90] (3,-2.5);
\draw[-] (4) to [out=90,in=-90] (1.5,-3.4);
\draw[-] (3,3) to [out=-90,in=90] (3,1.5);
\draw[-] (3,1.5) to [out=-90,in=-90] (5,1.5);
\draw[-] (6) to [out=-90,in=90] (5,1.5);
\end{tikzpicture}
\begin{tikzpicture}[x=8pt,y=8pt,thick]\pgfsetlinewidth{0.5pt}
\node[inner sep=1pt] at (0,0) {$\,\overset{\eqref{form:ev1}}{=}\,$};
\node at (0,-5) {};
\end{tikzpicture}
\begin{tikzpicture}[x=8pt,y=8pt,thick]\pgfsetlinewidth{0.5pt}
\node[circle,draw, inner sep=0.6pt](1) at (0,-1) {$\scriptstyle{i_X}$};
\node(4) at (1.5,-5) {$\scriptstyle{A}$};
\node(6) at (0,5) {$\scriptstyle{X}$};
\node[circle,draw, inner sep=1pt](7) at (3,-1) {$\scriptstyle{g}$};

\draw[-] (1) to [out=90,in=-90] (0,4.5);
\draw[-] (1) to [out=-90,in=90] (0,-2.5);
\draw[-] (7) to [out=-90,in=90] (3,-2.5);
\draw[-] (0,-2.5) to [out=-90,in=180] (1.5,-3.5);
\draw[-] (1.5,-3.5) to [out=0,in=-90] (3,-2.5);
\draw[-] (4) to [out=90,in=-90] (1.5,-3.5);
\end{tikzpicture}
\end{center}
so that we get%
\begin{equation}\label{form:sigmaWiX}
\B{\sigma} _{X}(g)\trir{R} i_{X}= i_{X}\tril{S}g.
\end{equation}
Using this equality, we obtain
\begin{center}
\begin{tikzpicture}[x=8.5pt,y=8.5pt,thick]\pgfsetlinewidth{0.5pt}
\node[circle,draw, inner sep=0.2pt](1) at (0,1) {$\scriptstyle{i_X}$};
\node[circle,draw, inner sep=0.2pt](2) at (3,1) {$\scriptstyle{i_Y}$};
\node[circle,draw, inner sep=1pt](3) at (1.5,3) {$\scriptstyle{h}$};
\node(4) at (-1,-5.5) {$A$};
\node[ellipse,draw, inner sep=1pt](5) at (-3.5,1) {$\scriptstyle{\B{\sigma}_X(g)}$};

\draw[-] (1) to [out=90,in=-90] (0,4);
\draw[-] (2) to [out=90,in=-90] (3,4);
\draw[-] (0,4) to [out=90,in=90] (3,4);
\draw[-] (3) to [out=-90,in=90] (1.5,-0.5);
\draw[-] (1) to [out=-90,in=90] (0,-1.5);
\draw[-] (2) to [out=-90,in=90] (3,-0.5);
\draw[-] (1.5,-0.5) to [out=-90,in=-90] (3,-0.5);
\draw[-] (2.3,-1) to [out=-90,in=90] (2.3,-1.55);
\draw[-] (2.3,-1.5) to [out=-90,in=-90] (0,-1.5);
\draw[-] (5) to [out=-90,in=90] (-3.5,-3);
\draw[-] (1.3,-3) to [out=90,in=-90] (1.3,-2.1);
\draw[-] (1.3,-3) to [out=-90,in=-90] (-3.5,-3);
\draw[-] (4) to [out=90,in=-90] (-1,-5);
\end{tikzpicture}
\begin{tikzpicture}[x=8.5pt,y=8.5pt,thick]\pgfsetlinewidth{0.5pt}
\node[inner sep=1pt] at (0,0) {$=$};
\node at (0,-5.5) {};
\end{tikzpicture}
\begin{tikzpicture}[x=8.5pt,y=8.5pt,thick]\pgfsetlinewidth{0.5pt}
\node[circle,draw, inner sep=0.2pt](1) at (0,1) {$\scriptstyle{i_X}$};
\node[circle,draw, inner sep=0.2pt](2) at (3,1) {$\scriptstyle{i_Y}$};
\node[circle,draw, inner sep=1pt](3) at (1.5,3) {$\scriptstyle{h}$};
\node(4) at (0.5,-5.5) {$\scriptstyle{A}$};
\node[ellipse,draw, inner sep=1pt](5) at (-3.5,1) {$\scriptstyle{\B{\sigma}_X(g)}$};

\draw[-] (1) to [out=90,in=-90] (0,4);
\draw[-] (2) to [out=90,in=-90] (3,4);
\draw[-] (0,4) to [out=90,in=90] (3,4);
\draw[-] (3) to [out=-90,in=90] (1.5,-0.5);
\draw[-] (1) to [out=-90,in=90] (0,-1);
\draw[-] (2) to [out=-90,in=90] (3,-0.5);
\draw[-] (1.5,-0.5) to [out=-90,in=-90] (3,-0.5);
\draw[-] (5) to [out=-90,in=90] (-3.5,-1);
\draw[-] (-3.5,-1) to [out=-90,in=-90] (0,-1);
\draw[-] (-1.5,-2.5) to [out=90,in=-90] (-1.5,-2);
\draw[-] (2.3,-1) to [out=-90,in=90] (2.3,-2.5);
\draw[-] (2.3,-2.5) to [out=-90,in=-90] (-1.5,-2.5);
\draw[-] (4) to [out=90,in=-90] (0.5,-3.6);
\end{tikzpicture}
\begin{tikzpicture}[x=8.5pt,y=8.5pt,thick]\pgfsetlinewidth{0.5pt}
\node[inner sep=1pt] at (0,0) {$\,\overset{\eqref{form:sigmaWiX}}{=}\,$};
\node at (0,-5) {};
\end{tikzpicture}
\begin{tikzpicture}[x=8.5pt,y=8.5pt,thick]\pgfsetlinewidth{0.5pt}
\node[circle,draw, inner sep=0.6pt](1) at (-2,1) {$\scriptstyle{i_X}$};
\node[circle,draw, inner sep=0.6pt](2) at (3,1) {$\scriptstyle{i_Y}$};
\node[circle,draw, inner sep=1pt](3) at (1.5,3) {$\scriptstyle{h}$};
\node[circle,draw, inner sep=1pt](5) at (0,3) {$\scriptstyle{g}$};
\node(4) at (0.6,-5.2) {$\scriptstyle{A}$};

\draw[-] (1) to [out=90,in=-90] (-2,4);
\draw[-] (2) to [out=90,in=-90] (3,4);
\draw[-] (3,4) to [out=90,in=90] (-2,4);

\draw[-] (1) to [out=-90,in=90] (-2,0);
\draw[-] (3) to [out=-90,in=90] (1.5,-1);
\draw[-] (5) to [out=-90,in=90] (0,-1);
\draw[-] (2) to [out=-90,in=90] (3,-1);
\draw[-] (1) to [out=-90,in=90] (-2,-1);
\draw[-] (0,-1) to [out=-90,in=-90] (-2,-1);

\draw[-] (1.5,-1) to [out=-90,in=-90] (3,-1);
\draw[-] (-1,-1.5) to [out=-90,in=90] (-1,-2.5);
\draw[-] (2.3,-1.5) to [out=-90,in=90] (2.3,-2.5);
\draw[-] (2.3,-2.5) to [out=-90,in=-90]  (-1,-2.5);
\draw[-] (4) to [out=90,in=-90] (0.6,-3.5);
\end{tikzpicture}
\begin{tikzpicture}[x=8.5pt,y=8.5pt,thick]\pgfsetlinewidth{0.5pt}
\node[inner sep=1pt] at (0,0) {$\,=\,$};
\node at (0,-5) {};
\end{tikzpicture}
\begin{tikzpicture}[x=8.5pt,y=8.5pt,thick]\pgfsetlinewidth{0.5pt}
\node[circle,draw, inner sep=0.6pt](1) at (-2,1) {$\scriptstyle{i_X}$};
\node[circle,draw, inner sep=0.6pt](2) at (3,1) {$\scriptstyle{i_Y}$};
\node[circle,draw, inner sep=1pt](3) at (1.5,3) {$\scriptstyle{h}$};
\node[circle,draw, inner sep=1pt](5) at (0,3) {$\scriptstyle{g}$};
\node(4) at (0,-5.5) {$\scriptstyle{A}$};

\draw[-] (1) to [out=90,in=-90] (-2,4);
\draw[-] (2) to [out=90,in=-90] (3,4);
\draw[-] (3,4) to [out=90,in=90] (-2,4);
\draw[-] (1) to [out=-90,in=90] (-2,-3);
\draw[-] (3) to [out=-90,in=90] (1.5,0);
\draw[-] (5) to [out=-90,in=90] (0,0);
\draw[-] (2) to [out=-90,in=90] (3,-1.5);
\draw[-] (1.5,0) to [out=-90,in=-90] (0,0);
\draw[-] (0.7,-0.5) to [out=-90,in=90] (0.7,-1.5);
\draw[-] (0.7,-1.5) to [out=-90,in=-90] (3,-1.5);
\draw[-] (1.8,-2.2) to [out=-90,in=90] (1.8,-3);
\draw[-] (1.8,-3) to [out=-90,in=-90] (-2,-3);
\draw[-] (4) to [out=90,in=-90] (0,-4.1);
\end{tikzpicture}
\begin{tikzpicture}[x=8.5pt,y=8.5pt,thick]\pgfsetlinewidth{0.5pt}
\node[inner sep=1pt] at (0,0) {$\,=\,$};
\node at (0,-5) {};
\end{tikzpicture}
\begin{tikzpicture}[x=8.5pt,y=8.5pt,thick]\pgfsetlinewidth{0.5pt}
\node[circle,draw, inner sep=0.6pt](1) at (-2,1) {$\scriptstyle{i_X}$};
\node[circle,draw, inner sep=0.6pt](2) at (3,1) {$\scriptstyle{i_Y}$};
\node[circle,draw, inner sep=1pt](3) at (0.5,3) {$\scriptstyle{g * h}$};
\node(4) at (0,-5.5) {$\scriptstyle{A}$};

\draw[-] (1) to [out=90,in=-90] (-2,4);
\draw[-] (2) to [out=90,in=-90] (3,4);
\draw[-] (3,4) to [out=90,in=90] (-2,4);
\draw[-] (1) to [out=-90,in=90] (-2,-3);
\draw[-] (3) to [out=-90,in=90] (0.5,-1);
\draw[-] (1.7,-1.7) to [out=-90,in=90] (1.7,-3);
\draw[-] (-2,-3) to [out=-90,in=-90] (1.7,-3);
\draw[-] (2) to [out=-90,in=90] (3,-1);
\draw[-] (3,-1) to [out=-90,in=-90] (0.5,-1);

\draw[-] (4) to [out=90,in=-90] (0,-4);
\end{tikzpicture}
\end{center}
which means that $\B{\sigma} _{X}(g)\ast \B{\sigma} _{X}(h)=\B{\sigma} _{X}(g\ast h)$.
Moreover we have
\begin{equation*}
   \B{\sigma} _X( 1_{\scriptscriptstyle{\ZS{A}}}) =  \B{\sigma} _X(\beta)=\mas\circ(i_X\tensor{S}(\beta\trir{S} i_Y))\circ \mathrm{coev}=\mas\circ(i_X\tensor{S} i_Y)\circ \mathrm{coev}\overset{\eqref{form:ev3}}{=}\alpha =1_{\scriptscriptstyle{\ZR{A}}}.
\end{equation*}
We still need to check that $\B{\sigma}_{X} \circ \td{\beta} \,=\, \td{\alpha}$. To this aim, given  $t \in \Z{\I}$, we have
\begin{equation}\label{diag:mah}
\begin{tikzpicture}[x=8pt,y=8pt,thick]\pgfsetlinewidth{0.5pt}
\node[inner sep=1pt] at (0,0) {$\,\B{\sigma}_{X} \circ \td{\beta}(t)\,=\,$};
\node at (0,-5.5) {};
\end{tikzpicture}
\begin{tikzpicture}[x=8pt,y=8pt,thick]\pgfsetlinewidth{0.5pt}
\node[circle,draw, inner sep=0.5pt](1) at (-2,2.5) {$\scriptstyle{i_X}$};
\node[circle,draw, inner sep=0.5pt](2) at (2,2.5) {$\scriptstyle{i_Y}$};
\node[circle,draw, inner sep=0.5pt](3) at (0,0) {$\scriptstyle{\td{\beta}(t)}$};
\node(4) at (-0.5,-5.5) {$\scriptstyle{A}$};

\draw[-] (1) to [out=90,in=-90] (-2,4);
\draw[-] (2) to [out=90,in=-90] (2,4);
\draw[-] (2,4) to [out=90,in=90] (-2,4);

\draw[-] (2) to [out=-90,in=90] (2,-2);
\draw[-] (3) to [out=-90,in=90] (0,-2);
\draw[-] (0,-2) to [out=-90,in=-90] (2,-2);
\draw[-] (1) to [out=-90,in=90] (-2,-3.5);
\draw[-] (1,-3.5) to [out=90,in=-90] (1,-2.6);
\draw[-] (1,-3.5) to [out=-90,in=-90] (-2,-3.5);
\draw[-] (4) to [out=90,in=-90] (-0.5,-4.3);
\end{tikzpicture}
\begin{tikzpicture}[x=8pt,y=8pt,thick]\pgfsetlinewidth{0.5pt}
\node[inner sep=1pt] at (0,0) {$\,=\,$};
\node at (0,-5.5) {};
\end{tikzpicture}
\begin{tikzpicture}[x=8pt,y=8pt,thick]\pgfsetlinewidth{0.5pt}
\node[circle,draw, inner sep=0.5pt](1) at (-2,2.5) {$\scriptstyle{i_X}$};
\node[circle,draw, inner sep=0.5pt](2) at (2,2.5) {$\scriptstyle{i_Y}$};
\node[circle,draw, inner sep=1pt](3) at (0,1) {$\scriptstyle{\beta}$};
\node[circle,draw, inner sep=1pt](4) at (-1,-0.5) {$\scriptstyle{t}$};
\node(4) at (-0.5,-5.5) {$\scriptstyle{A}$};

\draw[-] (1) to [out=90,in=-90] (-2,4);
\draw[-] (2) to [out=90,in=-90] (2,4);
\draw[-] (2,4) to [out=90,in=90] (-2,4);

\draw[-] (2) to [out=-90,in=90] (2,-1);
\draw[-] (3) to [out=-90,in=90] (0,-1);
\draw[-] (0,-1) to [out=-90,in=-90] (2,-1);
\draw[-] (1) to [out=-90,in=90] (-2,-2.5);
\draw[-] (1,-2.5) to [out=90,in=-90] (1,-1.6);
\draw[-] (1,-2.5) to [out=-90,in=-90] (-2,-2.5);
\draw[-] (4) to [out=90,in=-90] (-0.5,-3.5);
\end{tikzpicture}
\begin{tikzpicture}[x=8pt,y=8pt,thick]\pgfsetlinewidth{0.5pt}
\node[inner sep=1pt] at (0,0) {$\,=\,$};
\node at (0,-5.5) {};
\end{tikzpicture}
\begin{tikzpicture}[x=8pt,y=8pt,thick]\pgfsetlinewidth{0.5pt}
\node[circle,draw, inner sep=0.5pt](1) at (-2,2.5) {$\scriptstyle{i_X}$};
\node[circle,draw, inner sep=0.5pt](2) at (2,2.5) {$\scriptstyle{i_Y}$};
\node[circle,draw, inner sep=1pt](4) at (0,-0.5) {$\scriptstyle{t}$};
\node(4) at (0,-5.5) {$\scriptstyle{A}$};

\draw[-] (1) to [out=90,in=-90] (-2,4);
\draw[-] (2) to [out=90,in=-90] (2,4);
\draw[-] (2,4) to [out=90,in=90] (-2,4);

\draw[-] (2) to [out=-90,in=90] (2,-2);
\draw[-] (1) to [out=-90,in=90] (-2,-2);
\draw[-] (-2,-2) to [out=-90,in=-90] (2,-2);
\draw[-] (4) to [out=90,in=-90] (0,-3.3);
\end{tikzpicture}
\begin{tikzpicture}[x=8pt,y=8pt,thick]\pgfsetlinewidth{0.5pt}
\node[inner sep=1pt] at (0,0) {$\,=\,$};
\node at (0,-5.5) {};
\end{tikzpicture}
\begin{tikzpicture}[x=8pt,y=8pt,thick]\pgfsetlinewidth{0.5pt}
\node[circle,draw, inner sep=0.5pt](1) at (-2,1) {$\scriptstyle{i_X}$};
\node[circle,draw, inner sep=0.5pt](2) at (2,1) {$\scriptstyle{i_Y}$};
\node[circle,draw, inner sep=1pt](4) at (3.7,-0.5) {$\scriptstyle{t}$};
\node(4) at (0,-5.5) {$\scriptstyle{A}$};

\draw[-] (1) to [out=90,in=-90] (-2,2.5);
\draw[-] (2) to [out=90,in=-90] (2,2.5);
\draw[-] (2,2.5) to [out=90,in=90] (-2,2.5);

\draw[-] (2) to [out=-90,in=90] (2,-2);
\draw[-] (1) to [out=-90,in=90] (-2,-2);
\draw[-] (-2,-2) to [out=-90,in=-90] (2,-2);
\draw[-] (4) to [out=90,in=-90] (0,-3.3);
\end{tikzpicture}
\begin{tikzpicture}[x=8pt,y=8pt,thick]\pgfsetlinewidth{0.5pt}
\node[inner sep=1pt] at (0,0) {$\,\overset{\eqref{eq: phi-ij}}{=}\,$};
\node at (0,-5.5) {};
\end{tikzpicture}
\begin{tikzpicture}[x=8pt,y=8pt,thick]\pgfsetlinewidth{0.5pt}
\node[circle,draw, inner sep=1pt](2) at (0,1) {$\scriptstyle{\alpha}$};
\node[circle,draw, inner sep=1pt](4) at (1.5,-0.5) {$\scriptstyle{t}$};
\node(4) at (0,-5.5) {$\scriptstyle{A}$};

\draw[-] (4) to [out=90,in=-90] (2);
\end{tikzpicture}
\begin{tikzpicture}[x=8pt,y=8pt,thick]\pgfsetlinewidth{0.5pt}
\node[inner sep=1pt] at (0,0) {${\,=\,}\,\td{\alpha}(t),\,$};
\node at (0,-5.5) {};
\end{tikzpicture}
\end{equation}
and this completes the proof.
\end{proof}

The map defined in the following proposition is an extension,  to the general framework of monoidal categories, of the so-called  \emph{Miyashita action} which was originally introduce by Miyashita in \cite[page 100]{Mi}.
Further developments  on this action appeared in various studies: Hopf Galois extensions, $H$-separable extensions, comodules over corings with grouplike elements etc, see  \cite{Masuoka-Corings,Kadison-Depth2, Kadison-H-sep,Kadison-NewExamples,Sch-Miya,Kaoutit-Gomez}.  Our general definition aims to provide a common and unifying context for all these studies.

\begin{proposition}\label{pro: Phi}
The map $$\B{\sigma} :\mathrm{Inv}_{R,S}^{r}\left( A\right)
\rightarrow \mathrm{Hom}_{\scriptscriptstyle{\Z{\I}}\text{-alg}}\left( \ZS{A},\ZR{A}\right) $$ of Proposition \ref{pro: right
Miyashita} induces a map%
\begin{equation*}
\Phi ^{S,R}:\mathrm{Inv}_{R,S}\left( A\right) \rightarrow \mathrm{Iso}_{\scriptscriptstyle{\Z{\I}}
\text{-alg}}\left(\ZS{A},\ZR{A} \right) .
\end{equation*}
In particular, when $R=S$ we have a morphism of groups $\Phi^R:=\Phi^{R,R}$. Moreover, $\Phi^R$ factors as the composition
\begin{equation}\label{Eq:PHI}
\mathrm{Inv}_{R}\left( A\right) \longrightarrow \mathrm{Aut}_{\scriptscriptstyle{\cat{Z}_{R}(R)}
\text{-alg}}\left( \ZR{A}\right) \hookrightarrow \mathrm{Aut}_{\scriptscriptstyle{\cat{Z}(\I)}
\text{-alg}}\left( \ZR{A}\right) .
\end{equation}
\end{proposition}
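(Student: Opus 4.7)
My plan is to reduce the statement to three claims: (i) $\B{\sigma}_X^{S,R}$ is bijective for every $X \in \mathrm{Inv}_{R,S}(A)$; (ii) when $R=S$, $\Phi^R$ is a group homomorphism; (iii) when $R=S$, the image of $\Phi^R$ lies in $\mathrm{Aut}_{\scriptscriptstyle{\ZR{R}}\text{-alg}}(\ZR{A})$, so that the factorization \eqref{Eq:PHI} makes sense.

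For (i), given a two-sided inverse $Y \in \mathrm{Inv}_{S,R}(A)$ of $X$, Proposition \ref{pro: right Miyashita} yields a $\Z{\I}$-algebra map $\B{\sigma}_Y^{R,S}: \ZR{A} \to \ZS{A}$, and I aim to prove that it is the two-sided inverse of $\B{\sigma}_X^{S,R}$. The key tool is the characterizing identity \eqref{form:sigmaWiX}, namely $\B{\sigma}_X(g) \trir{R} i_X = i_X \tril{S} g$, together with the following uniqueness principle: because $X$ is also left invertible, the analogue of Corollary \ref{coro:conaq} produces an isomorphism $g_X: A \tensor{R} X \to A$, which makes the map $\ZR{A} \to \hom{R,S}{X}{A}$, $z \mapsto z \trir{R} i_X$, injective. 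Hence, to verify $\B{\sigma}_Y \circ \B{\sigma}_X = \id_{\ZS{A}}$, it is enough to check the identity $g \trir{S} i_Y = i_Y \tril{R} \B{\sigma}_X(g)$, which I would establish by a tangle computation that inserts the definition \eqref{form:defsigmaX} of $\B{\sigma}_X$ into the right-hand side and contracts the diagram via the dualizability relations \eqref{form:ev1}--\eqref{form:ev4}. The opposite equality $\B{\sigma}_X \circ \B{\sigma}_Y = \id_{\ZR{A}}$ is proved symmetrically.

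For (ii)--(iii), assume $R = S$. Proposition \ref{pro: InvTens} guarantees that $X \tensor{R} X' \in \mathrm{Inv}_R(A)$ whenever $X, X' \in \mathrm{Inv}_R(A)$, with explicit formulas for $i_{X \tensor{R} X'}$, $m_{X \tensor{R} X'}$ and the right inverse $(X')^r \tensor{R} X^r$. Substituting these into \eqref{form:defsigmaX} and rearranging the resulting tangle along the lines of the multiplicativity argument of Proposition \ref{pro: right Miyashita} should yield the composition formula $\B{\sigma}_{X \tensor{R} X'} = \B{\sigma}_X \circ \B{\sigma}_{X'}$; together with the obvious identification $\B{\sigma}_R = \id_{\ZR{A}}$ (take $X = Y = R$ and $m_X = m_Y = \id_R$ in \eqref{form:defsigmaX}), this proves that $\Phi^R$ is a group morphism. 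For (iii) I have to check that $\B{\sigma}_X$ fixes $\ZR{\alpha}(r) = \alpha \circ r$ pointwise for every $r \in \ZR{R}$. A direct computation shows $(\alpha \circ r) \trir{R} i_Y = i_Y \circ \tha{r}_Y$, where $\tha{r}_Y := l_Y^R \circ (r \tensor{R} Y) \circ (l_Y^R)^{-1}$ is the canonical action of $r$ on the $R$-bimodule $Y$; inserting this into \eqref{form:defsigmaX} and exploiting the $R$-bilinearity of $m_X$ together with \eqref{def: mX} collapses $\B{\sigma}_X(\alpha \circ r)$ to $\alpha \circ \tha{r}_R \circ m_X \circ m_X^{-1} = \alpha \circ r$, since $\tha{r}_R = r$. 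The final embedding $\mathrm{Aut}_{\scriptscriptstyle{\ZR{R}}\text{-alg}}(\ZR{A}) \hookrightarrow \mathrm{Aut}_{\scriptscriptstyle{\Z{\I}}\text{-alg}}(\ZR{A})$ is then immediate from the canonical algebra map $\Z{\I} \to \ZR{R}$ provided by the Penrose assumption.

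The central obstacle is the diagrammatic justification of the composition formula $\B{\sigma}_{X \tensor{R} X'} = \B{\sigma}_X \circ \B{\sigma}_{X'}$ in step (ii) and, to a lesser extent, of $\B{\sigma}_Y \circ \B{\sigma}_X = \id$ in step (i); both require careful bookkeeping of the interaction between the tangles defining the individual Miyashita actions and those expressing $m_{X \tensor{R} X'}^{-1}$ and \eqref{form:ev1}--\eqref{form:ev4}, and are noticeably more intricate than the multiplicativity diagram already appearing in the proof of Proposition \ref{pro: right Miyashita}.
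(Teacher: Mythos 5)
Your outline is correct and, in its overall architecture, coincides with the paper's proof: both establish that $\B{\sigma}_X^{S,R}$ and $\B{\sigma}_{X^r}^{R,S}$ are mutually inverse, prove the composition formula $\B{\sigma}_{X\tensor{R}X'}=\B{\sigma}_X\circ\B{\sigma}_{X'}$ together with $\B{\sigma}_R=\id_{\ZR{A}}$, and obtain $\ZR{R}$-linearity by checking that $\B{\sigma}_X$ fixes $\ZR{\alpha}(p)$ pointwise (multiplicativity then does the rest), the last embedding coming from the Penrose map $\Z{\I}\to\ZR{R}$. The one place where you genuinely diverge is step (i): the paper evaluates the composite $(\B{\sigma}_{X^r}\circ\B{\sigma}_X)(g)$ by a single diagram chase (diagram \eqref{diag:Boh}), and then gets multiplicativity of $\Phi^R$ essentially for free by observing that the second equality of that same diagram identifies $\B{\sigma}_X(\B{\sigma}_{X'}(g))$ with $\B{\sigma}_{X\tensor{R}X'}(g)$; you instead reduce to the intertwining identity $g\trir{S}i_Y=i_Y\tril{R}\B{\sigma}_X(g)$ via \eqref{form:sigmaWiX} plus a cancellation argument, which is a legitimate alternative but costs you an extra injectivity lemma and a second, separate tangle computation for the group-morphism property. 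Two small corrections to that reduction: to pass from $\B{\sigma}_Y(\B{\sigma}_X(g))\trir{S}i_Y=g\trir{S}i_Y$ to $\B{\sigma}_Y(\B{\sigma}_X(g))=g$ you need injectivity of $z\mapsto z\trir{S}i_Y$ on $\ZS{A}$, which comes from $g_Y$ being an isomorphism (this is already Corollary \ref{coro:conaq}) — the map $z\mapsto z\trir{R}i_X$ built from $g_X$ is what you need for the \emph{other} composite $\B{\sigma}_X\circ\B{\sigma}_Y=\id_{\ZR{A}}$; and in either case the isomorphism $g_Y$ (resp. $g_X$) only shows that $z\trir{S}i_Y$ determines $z\tensor{S}Y$ (resp. $z\tensor{R}X$), so you still must cancel the tensor factor, e.g.\ by tensoring with $X$ (resp. $Y$) and using that $m_Y$ (resp. $m_X$) is an isomorphism, which is exactly where two-sided invertibility enters. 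With these repairs your plan goes through and proves the same statement, formula \eqref{Eq:PHI} included.
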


\begin{proof}
Consider both
\begin{eqnarray*}
\B{\sigma} ^{S,R} &:&\mathrm{Inv}_{R,S}^{r}\left( A\right) \rightarrow \mathrm{%
Hom}_{\scriptscriptstyle{\Z{\I}}\text{-alg}}\left( \ZS{A},\ZR{A} \right) \\
\B{\sigma} ^{R,S} &:&\mathrm{Inv}_{S,R}^{r}\left( A\right) \rightarrow \mathrm{%
Hom}_{\scriptscriptstyle{\Z{\I}}\text{-alg}}\left( \ZR{A},\ZS{A} \right).
\end{eqnarray*}%
Take $X \in \mathrm{Inv}_{R,S}\left( A\right)$ as in \eqref{Eq:inv}.  Since $m_{X^{r}}$ is an isomorphism, we have that $X^{r} \in \mathrm{Inv}_{S,R}^{r}\left( A\right) $. Hence we can consider both $\B{\sigma} _{X}^{S,R}$ and $\B{\sigma} _{
X^{r}}^{R,S}$. Let us check that these maps are mutual
inverses. For $g\in \ZS{A}$, we have $\left( \B{\sigma} _{X^{r}}^{R,S}\circ \B{\sigma} _{X}^{S,R}\right) \left( g\right)$ is equal to
\begin{equation}\label{diag:Boh}
\begin{tikzpicture}[x=8.5pt,y=8.5pt,thick]\pgfsetlinewidth{0.5pt}
\node[circle,draw, inner sep=0.5pt](1) at (0,1) {$\scriptstyle{i_X}$};
\node[circle,draw, inner sep=0.5pt](2) at (-7,1) {$\scriptstyle{i_{X^r}}$};
\node(0) at (0,-9) { };
\node(4) at (-4,-6.5) {$\scriptstyle{A}$};
\node[ellipse,draw, inner sep=1pt](5) at (-3.5,2) {$\scriptstyle{\B{\sigma}_X(g)}$};

\draw[-] (1) to [out=90,in=-90] (0,3);
\draw[-] (2) to [out=90,in=-90] (-7,3);
\draw[-] (-7,3) to [out=90,in=90] (0,3);
\draw[-] (1) to [out=-90,in=90] (0,-1.5);
\draw[-] (2) to [out=-90,in=90] (-7,-3.3);
\draw[-] (-7,-3.3) to [out=-90,in=-90] (-1.5,-3.3);
\draw[-] (0,-1.5) to [out=-90,in=0] (-1.5,-2.5);

\draw[-] (5) to [out=-90,in=90] (-3.5,-1.5);
\draw[-] (-3.5,-1.5) to [out=-90,in=180] (-1.5,-2.5);
\draw[-] (-1.5,-2.5) to [out=-90,in=90] (-1.5,-3.3);
\draw[-] (4) to [out=90,in=-90] (-4,-4.9);
\end{tikzpicture}
\begin{tikzpicture}[x=8.5pt,y=8.5pt,thick]\pgfsetlinewidth{0.5pt}
\node[inner sep=1pt] at (0,0) {$=$};
\node at (0,-9) {};
\end{tikzpicture}
\begin{tikzpicture}[x=8.5pt,y=8.5pt,thick]\pgfsetlinewidth{0.5pt}
\node[circle,draw, inner sep=0.5pt](1) at (4,0) {$\scriptstyle{i_X}$};
\node[circle,draw, inner sep=0.5pt](2) at (-4,0) {$\scriptstyle{i_{X^r}}$};
\node[circle,draw, inner sep=0.5pt](11) at (2,1.5) {$\scriptstyle{i_{X^r}}$};
\node[circle,draw, inner sep=0.5pt](22) at (-2,1.5) {$\scriptstyle{i_X}$};
\node[circle,draw, inner sep=1pt](3) at (0,2) {$\scriptstyle{g}$};
\node(4) at (-1.2,-9) {$\scriptstyle{A}$};

\draw[-] (1) to [out=90,in=-90] (4,3.5);
\draw[-] (2) to [out=90,in=-90] (-4,3.5);
\draw[-] (4,3.5) to [out=90,in=90] (-4,3.5);
\draw[-] (11) to [out=90,in=-90] (2,2.5);
\draw[-] (22) to [out=90,in=-90] (-2,2.5);
\draw[-] (2,2.5) to [out=90,in=90] (-2,2.5);

\draw[-] (1) to [out=-90,in=90] (4,-4);
\draw[-] (2) to [out=-90,in=90] (-4,-6);
\draw[-] (11) to [out=-90,in=90] (2,-0.5);
\draw[-] (22) to [out=-90,in=90] (-2,-2);
\draw[-] (3) to [out=-90,in=90] (0,-0.5);
\draw[-] (0,-0.5) to [out=-90,in=-90] (2,-0.5);
\draw[-] (1,-1) to [out=-90,in=90] (1,-2);
\draw[-] (-2,-2) to [out=-90,in=-90] (1,-2);
\draw[-] (-0.5,-3) to [out=90,in=-90] (-0.5,-4);
\draw[-] (4,-4) to [out=-90,in=-90] (-0.5,-4);
\draw[-] (2,-6) to [out=90,in=-90] (2,-5.2);
\draw[-] (2,-6) to [out=-90,in=-90] (-4,-6);
\draw[-] (4) to [out=90,in=-90] (-1.2,-7.8);
\end{tikzpicture}
\begin{tikzpicture}[x=8.5pt,y=8.5pt,thick]\pgfsetlinewidth{0.5pt}
\node[inner sep=1pt] at (0,0) {$\,=\,$};
\node at (0,-9) {};
\end{tikzpicture}
\begin{tikzpicture}[x=8.5pt,y=8.5pt,thick]\pgfsetlinewidth{0.5pt}
\node[circle,draw, inner sep=0.5pt](1) at (4,1) {$\scriptstyle{i_X}$};
\node[circle,draw, inner sep=0.5pt](2) at (-4,1) {$\scriptstyle{i_{X^r}}$};
\node[circle,draw, inner sep=0.5pt](11) at (2,2) {$\scriptstyle{i_{X^r}}$};
\node[circle,draw, inner sep=0.5pt](22) at (-2,2) {$\scriptstyle{i_X}$};
\node[circle,draw, inner sep=1pt](3) at (0,2.5) {$\scriptstyle{g}$};
\node(4) at (-0.7,-9) {$\scriptstyle{A}$};

\draw[-] (1) to [out=90,in=-90] (4,3.5);
\draw[-] (2) to [out=90,in=-90] (-4,3.5);
\draw[-] (4,3.5) to [out=90,in=90] (-4,3.5);
\draw[-] (11) to [out=90,in=-90] (2,3);
\draw[-] (22) to [out=90,in=-90] (-2,3);
\draw[-] (2,3) to [out=90,in=90] (-2,3);

\draw[-] (1) to [out=-90,in=90] (4,-0.5);
\draw[-] (2) to [out=-90,in=90] (-4,-0.5);
\draw[-] (11) to [out=-90,in=90] (2,-0.5);
\draw[-] (22) to [out=-90,in=90] (-2,-0.5);
\draw[-] (3) to [out=-90,in=90] (0,-2.5);

\draw[-] (4,-0.5) to [out=-90,in=-90] (2,-0.5);
\draw[-] (3,-1) to [out=-90,in=90] (3,-2.5);
\draw[-] (3,-2.5) to [out=-90,in=-90] (0,-2.5);
\draw[-] (-4,-0.5) to [out=-90,in=-90] (-2,-0.5);
\draw[-] (-3,-1) to [out=-90,in=90] (-3, -5);
\draw[-] (1.5,-3.3) to [out=-90,in=90] (1.5,-5);
\draw[-] (1.5,-5) to [out=-90,in=-90] (-3,-5);
\draw[-] (4) to [out=90,in=-90] (-0.7,-6.3);
\end{tikzpicture}
\begin{tikzpicture}[x=8.5pt,y=8.5pt,thick]\pgfsetlinewidth{0.5pt}
\node[inner sep=1pt] at (0,0) {$\,\overset{\eqref{def: mY}}{=}\,$};
\node at (0,-9) {};
\end{tikzpicture}
\begin{tikzpicture}[x=8.5pt,y=8.5pt,thick]\pgfsetlinewidth{0.5pt}
\node[circle,draw, inner sep=0.6pt](1) at (2,-0.5) {$\scriptstyle{\beta}$};
\node[circle,draw, inner sep=0.6pt](2) at (-2,-0.5) {$\scriptstyle{\beta}$};
\node[circle,draw, inner sep=1pt](3) at (0,2.5) {$\scriptstyle{g}$};
\node(4) at (-0.5,-9) {$\scriptstyle{A}$};

\draw[-] (3,2) to [out=90,in=-90] (3,4.5);
\draw[-] (-3,2) to [out=90,in=-90] (-3,4.5);
\draw[-] (3,4.5) to [out=90,in=90] (-3,4.5);
\draw[-] (1.5,2) to [out=90,in=-90] (1.5,3.5);
\draw[-] (1.5,3.5) to [out=90,in=90] (-1.5,3.5);

\draw[-] (3,2) to [out=-90,in=-90] (1.5,2);
\draw[-] (-3,2) to [out=-90,in=-90]  (-1.5,2);
\draw[-] (-1.5,2) to [out=-90,in=90] (-1.5,3.5);
\draw[-] (3) to [out=-90,in=90] (0,-3);

\draw[-] (1) to [out=-90,in=90] (2,-3);
\draw[-] (2) to [out=-90,in=90] (-2,-5);
\draw[-] (2,-3) to [out=-90,in=-90] (0,-3);
\draw[-] (1,-3.5) to [out=-90,in=90] (1,-5);
\draw[-] (-2,-5) to [out=-90,in=-90] (1, -5);
\draw[-] (4) to [out=90,in=-90] (-0.5,-5.9);
\end{tikzpicture}
\begin{tikzpicture}[x=8.5pt,y=8.5pt,thick]\pgfsetlinewidth{0.5pt}
\node[inner sep=1pt] at (0,0) {$\,\overset{\eqref{form:ev1}}{=}\,$};
\node at (0,-9) {};
\end{tikzpicture}
\begin{tikzpicture}[x=8.5pt,y=8.5pt,thick]\pgfsetlinewidth{0.5pt}
\node[circle,draw, inner sep=1pt](3) at (0,0.5) {$\scriptstyle{g}$};
\node(4) at (0,-4) {$\scriptstyle{A}$};
\node(1) at (0,-9) { };

\draw[-] (1,2) to [out=-90,in=-90] (-1,2);
\draw[-] (1,2) to [out=90,in=-90] (1,4);
\draw[-] (-1,2) to [out=90,in=-90] (-1,4);
\draw[-] (1,4) to [out=90,in=90] (-1,4);
\draw[-] (3) to [out=-90,in=90] (4);
\end{tikzpicture}
\begin{tikzpicture}[x=8.5pt,y=8.5pt,thick]\pgfsetlinewidth{0.5pt}
\node[inner sep=1pt] at (0,0) {$\,=\,$};
\node at (0,-9) {};
\end{tikzpicture}
\begin{tikzpicture}[x=8.5pt,y=8.5pt,thick]\pgfsetlinewidth{0.5pt}
\node[circle,draw, inner sep=1pt](3) at (0,2.5) {$\scriptstyle{g}$};
\node(4) at (0,-3) {$\scriptstyle{A}$};
\node(5) at (0,-9) {};

\draw[-] (3) to [out=-90,in=90] (4);
\end{tikzpicture}
\end{equation}
Thus $\B{\sigma} _{X^{r}}^{R,S}\circ \B{\sigma} _{
X}^{S,R}=\mathrm{Id}_{\ZS{A}}$. Similarly one gets $\B{\sigma}
_{X}^{S,R}\circ \B{\sigma} _{ X^{r}}^{R,S}=\mathrm{Id}_{\ZR{A}}$. Therefore $\B{\sigma} _{X}^{S,R}$ is an isomorphism. This proves that $\Phi^{S,R}$ is well-defined.

Now assume that $R=S$. By Corollary \ref{coro:InvGrp}, we know that $(\mathrm{Inv}_{R}\left( A\right),\tensor{R},R) $ is a group.
Let us check that $\Phi ^{R}$ is a group morphism. Take $X,X' \in \mathrm{Inv}_{R}\left( A\right)$ with two-sided inverses $Y$ and $Y'$, respectively.  We have
\begin{center}
\begin{tikzpicture}[x=8pt,y=8pt,thick]\pgfsetlinewidth{0.5pt}
\node[inner sep=1pt] at (0,0) {$\B{\sigma}_X(\B{\sigma}_{X'}(g))\,=\,$};
\node at (0,-9) {};
\end{tikzpicture}
\begin{tikzpicture}[x=8.5pt,y=8.5pt,thick]\pgfsetlinewidth{0.5pt}
\node[circle,draw, inner sep=0.7pt](1) at (0,1) {$\scriptstyle{i_Y}$};
\node[circle,draw, inner sep=0.7pt](2) at (-7,1) {$\scriptstyle{i_{X}}$};
\node(0) at (0,-9) { };
\node(4) at (-4,-9) {$\scriptstyle{A}$};
\node[ellipse,draw, inner sep=1pt](5) at (-3.5,2) {$\scriptstyle{\B{\sigma}_{X'}(g)}$};

\draw[-] (1) to [out=90,in=-90] (0,3);
\draw[-] (2) to [out=90,in=-90] (-7,3);
\draw[-] (-7,3) to [out=90,in=90] (0,3);
\draw[-] (1) to [out=-90,in=90] (0,-1.5);
\draw[-] (2) to [out=-90,in=90] (-7,-3.3);
\draw[-] (-7,-3.3) to [out=-90,in=-90] (-1.5,-3.3);
\draw[-] (0,-1.5) to [out=-90,in=0] (-1.5,-2.5);

\draw[-] (5) to [out=-90,in=90] (-3.5,-1.5);
\draw[-] (-3.5,-1.5) to [out=-90,in=180] (-1.5,-2.5);
\draw[-] (-1.5,-2.5) to [out=-90,in=90] (-1.5,-3.3);
\draw[-] (4) to [out=90,in=-90] (-4,-4.9);

\end{tikzpicture} $\qquad$
\begin{tikzpicture}[x=8.5pt,y=8.5pt,thick]\pgfsetlinewidth{0.5pt}
\node[inner sep=1pt] at (0,0) {$\B{\sigma}_{X\tensor{R}X'}(g)\,=\,$};
\node at (0,-9) {};
\end{tikzpicture}
\begin{tikzpicture}[x=8pt,y=8pt,thick]\pgfsetlinewidth{0.5pt}
\node[circle,draw, inner sep=0.5pt](1) at (4,1) {$\scriptstyle{i_Y}$};
\node[circle,draw, inner sep=0.5pt](2) at (-4,1) {$\scriptstyle{i_{X}}$};
\node[circle,draw, inner sep=0.5pt](11) at (2,2) {$\scriptstyle{i_{Y'}}$};
\node[circle,draw, inner sep=0.5pt](22) at (-2,2) {$\scriptstyle{i_{X'}}$};
\node[circle,draw, inner sep=1pt](3) at (0,2.5) {$\scriptstyle{g}$};
\node(4) at (-0.7,-9) {$\scriptstyle{A}$};

\draw[-] (1) to [out=90,in=-90] (4,3.5);
\draw[-] (2) to [out=90,in=-90] (-4,3.5);
\draw[-] (4,3.5) to [out=90,in=90] (-4,3.5);
\draw[-] (11) to [out=90,in=-90] (2,3);
\draw[-] (22) to [out=90,in=-90] (-2,3);
\draw[-] (2,3) to [out=90,in=90] (-2,3);

\draw[-] (1) to [out=-90,in=90] (4,-0.5);
\draw[-] (2) to [out=-90,in=90] (-4,-0.5);
\draw[-] (11) to [out=-90,in=90] (2,-0.5);
\draw[-] (22) to [out=-90,in=90] (-2,-0.5);
\draw[-] (3) to [out=-90,in=90] (0,-2.5);

\draw[-] (4,-0.5) to [out=-90,in=-90] (2,-0.5);
\draw[-] (3,-1) to [out=-90,in=90] (3,-2.5);
\draw[-] (3,-2.5) to [out=-90,in=-90] (0,-2.5);
\draw[-] (-4,-0.5) to [out=-90,in=-90] (-2,-0.5);
\draw[-] (-3,-1) to [out=-90,in=90] (-3, -5);
\draw[-] (1.5,-3.3) to [out=-90,in=90] (1.5,-5);
\draw[-] (1.5,-5) to [out=-90,in=-90] (-3,-5);
\draw[-] (4) to [out=90,in=-90] (-0.7,-6.3);
\end{tikzpicture}
\end{center}
where $g$ is as above. The second equality in diagram \eqref{diag:Boh} implies that $\B{\sigma}_X(\B{\sigma}_{X'}(g))=\B{\sigma}_{X\tensor{R}X'}(g)$.
In other words  $\Phi ^{R}(X\tensor{R}X')=\Phi ^{R}(X)\circ\Phi ^{R}(X')$.
Moreover $$\Phi ^{R}(R)(g)=\B{\sigma}_{R}(g)\overset{\eqref{form:defsigmaX}}{=}m_A^R\circ(\alpha\tensor{R}(g\trir{R}\alpha))\circ (r_R^R)^{-1}\overset{(*)}{=}m_A^R\circ(\alpha\tensor{R}g)\circ (r_R^R)^{-1}=\alpha\trir{R}g=g$$ so that $\Phi ^{R}(R)=\id_{\ZR{A}},$ where in $(*)$ we applied the definition of $\trir{R}$. \smallskip

Let us check that $\Phi^{R}$ factors as stated. Take $p \in \ZR{R}$ and set $g:= \ZR{\alpha}(p)=\alpha \circ p$. Since we already know that $\sigma_X$ is multiplicative, in order to conclude that $\sigma_X$ is $\ZR{R}$-bilinear, it suffices to check that $\sigma_X(g)=g$.  As $p$ is an endomorphism of the unit $R$ of the monoidal category ${}_R\cat{M}_R$, we have $R\tensor{R}p = p\tensor{R}R$. Hence, by using the same arguments of  diagram \eqref{diag:mah}, we get the desired equality.
\end{proof}

\begin{remark}\label{rem:bicat}
Sub-monoids of $A$ are $0$-cells in a bicategory $\mathrm{Inv}^r(A)$, where for any  two $0$-cells $R$ and $S$ the associated hom-category from $R$ to $S$ is given by  $\mathrm{Inv}^r_{R,\, S}(A)$ with horizontal and vertical multiplications given by the functors of Proposition \ref{pro: InvTens}.  Similarly, one defines the bicategories $\mathrm{Inv}^l(A)$ and $\mathrm{Inv}(A)$.
Now, if we restrict to sub-monoids $R$ of $A$ for which the functor $-\tensor{R}A$ reflects isomorphisms and denote this new bicategory by $\mathrm{Invf}^r(A)$, then any morphism in the hom-category $\mathrm{Invf}_{R,S}^r(A)$ is an isomorphism. Indeed, given a morphism $h: X \to X'$ in $\mathrm{Inv}^r_{R,\, S}(A)$, then $f_{X'} \circ (h\tensor{R}A) =f_{X}$, where $f_{-}$ is as in equation \eqref{Eq:fx}. Therefore,  by Corollary \ref{coro:conaq}, we have $h\tensor{R}A$ is an isomorphism and hence  $h$ is an isomorphism as well.

In this way,    the bicategory $\mathrm{Invf}(A)$, corresponding to $\mathrm{Inv}(A)$, can be regarded as a \emph{$2$-groupoid}, that is, a bicategory where $1$-cells and $2$-cells are invertible (i.e.~each $1$-cell is a member of an internal equivalence and any $2$-cell is an isomorphism). However, $\mathrm{Inv}(A)$ viewed as a small category with hom-sets $\mathrm{Inv}_{R,S}(A)$ and composition given by the tensor products $\tensor{R}$, is clearly a groupoid.

On the other hand, there is another $2$-category with $0$-cells given by $\Z{\I}$-subalgebras of $\Z{A}$, and hom-category from $E$ and $E'$ given by the set  $\mathrm{Hom}_{\scriptscriptstyle{\Z{\I}}\text{-alg}}\big(E,E'\big)$ (objects are edges and arrows are squares). Clearly, one assigns to any $1$-cell $R$ in $\mathrm{Inv}^r(A)$  a $1$-cell $\ZR{A}$ in this $2$-category, by using the map $\varphi_R$ of \eqref{Eq:varphi}. However,  it is not clear to us whether the family of maps $\{\B{\sigma}^{S,R}\}_{\scriptscriptstyle{S,\, R}}$ defined in  Proposition \ref{pro: right Miyashita} together with this assignment, give rise to a family of functors and thus to a morphism of bicategories. Nevertheless,  using the maps $\{\Phi^{S,R}\}_{\scriptscriptstyle{S,\, R}}$ given in Proposition \ref{pro: Phi}, one shows, as in the classical case \cite[Theorem 1.3]{Mi},   that $\Phi$ establishes a homomorphism of groupoids.
\end{remark}

\subsection{Invariant subobjects}\label{ssec:invariants}

Let $(A,m,u)$ be a monoid in $\cat{M}$ and consider  $\sigma, \delta : \Z{A}  \to \Z{A}$ two  $\Z{\I}$-linear maps.  For each element $t \in \Z{A}$, define the following equalizer:
$$\xymatrix@C=60pt{ 0 \ar[r] & Eq(\sigma(t)\triangleright A, A\triangleleft\delta(t)) \ar@{->}^-{\fk{eq}_{\sigma(t),\, \delta(t)}}[r] & A \ar@<0.5ex>@{->}^-{\sigma(t)\,\triangleright A}[r]  \ar@<-0.5ex>@{->}_-{A\triangleleft\,\delta(t)}[r] & A } $$
Now, since $\cat{M}$ is an abelian and bicomplete category, we can take the intersection of all those equalizes:
\begin{equation}\label{Eq:J}
{}_{\sigma}J_{\delta}\,:=\, \left(\bigcap_{t \in \Z{A}} Eq(\sigma(t)\triangleright A, A\triangleleft\delta(t))\right) \hookrightarrow Eq(\sigma(t)\triangleright A, A\triangleleft\delta(t)), \text{ for every } t \in \Z{A}
\end{equation}
with structure monomorphism denoted by
\begin{equation}\label{Eq:strmaps}
\xymatrix@C=50pt{ 0 \ar@{->}^-{}[r] & {}_{\sigma}J_{\delta}  \ar@{->}^-{\fk{eq}_{\sigma,\delta}}[r] & A.}
\end{equation} We view ${}_{\sigma}J_{\delta}$ as an element in $\mathscr{P}({}_{\mathbb{I}}A_{\I})$ with monomorphism $i_{{}_{\sigma}J_{\delta}}=\fk{eq}_{\sigma,\delta}$ as in Section \ref{sec:biobjects}.
The pair $({}_{\sigma}J_{\delta},\fk{eq}_{\sigma,\delta})$ is in fact an  universal object with respect to the following equations:
\begin{equation}\label{eq:equalizer}
(A\triangleleft\delta(t)) \circ \fk{eq}_{\sigma,\delta} \,\,=\,\,  (\sigma(t) \triangleright A)  \circ \fk{eq}_{\sigma,\delta}, \text{ for every } t \in \Z{A}.
\end{equation}
In other words  any other morphism which satisfies these equations (parameterized by elements  in $\Z{A}$) factors uniquely  throughout the monomorphism $\fk{eq}_{\sigma,\delta}$.
For every $t \in \Z{A}$, equation \eqref{eq:equalizer} will be used in the following  form
\begin{equation}\label{Eq:seraloqsera}
\begin{tikzpicture}[x=8pt,y=8pt,thick]\pgfsetlinewidth{0.5pt}
\node(1) at (0.5,6){$\scriptstyle{{}_{\sigma}J_{\delta}}$};
\node[circle,draw, inner sep=1pt](2) at (0.5,3) {$\scriptstyle{\fk{eq}_{\sigma,\delta}}$};
\node[circle,draw, inner sep=1pt] (3) at (2.5,0.5) {$\scriptstyle{\delta(t)}$};
\node(4) at (1.5,-5.5) {$\scriptstyle{A}$};

\draw[-] (1) to [out=-90, in=90] (2);
\draw[-] (2) to [out=-90, in=90] (0.5,-2.5);
\draw[-] (3) to [out=-90, in=90] (2.5,-2.5);
\draw[-] (0.5,-2.5) to [out=-90, in=-90] (2.5,-2.5);
\draw[-] (1.5,-3) to [out=-90, in=90] (4);
\end{tikzpicture}
\begin{tikzpicture}[x=8pt,y=8pt,thick]\pgfsetlinewidth{0.5pt}
\node[inner sep=1pt] at (0,0) {$\,\,=\,\,$};
\node at (0,-6) {};
\end{tikzpicture}
\begin{tikzpicture}[x=8pt,y=8pt,thick]\pgfsetlinewidth{0.5pt}
\node(1) at (0.5,6){$\scriptstyle{{}_{\sigma}J_{\delta}}$};
\node[circle,draw, inner sep=1pt](2) at (0.5,3) {$\scriptstyle{\fk{eq}_{\sigma,\delta}}$};
\node[circle,draw, inner sep=1pt] (3) at (-1.5,0.5) {$\scriptstyle{\sigma(t)}$};
\node(4) at (-0.5,-5.5) {$\scriptstyle{A}$};

\draw[-] (1) to [out=-90, in=90] (2);
\draw[-] (2) to [out=-90, in=90] (0.5,-2.5);
\draw[-] (3) to [out=-90, in=90] (-1.5,-2.5);
\draw[-] (0.5,-2.5) to [out=-90, in=-90] (-1.5,-2.5);
\draw[-] (-0.5,-3) to [out=-90, in=90] (4);
\end{tikzpicture}
\end{equation}

For simplicity we write ${}_1J_{1}$,  ${}_{\sigma}J_{1}$ and ${}_1J_{\delta}$ when one or both the involved $\Z{\I}$-linear maps are identity.
\begin{proposition}\label{lema:-1}
Let $(A,m,u)$ be a monoid in $\cat{M}$ and  let $\sigma$, $\delta$,  $\gamma$ and $\lambda$ be in $\rm{End}_{\scriptscriptstyle{\Z{\I}}}(\Z{A})$. Consider the family of subobjects  $({}_{x}J_{y}, \fk{eq}_{x,y})$ of $A$, for $x, y \in \{\sigma, \delta, \gamma, \lambda\}$.   Then
\begin{enumerate}[(i)]
\item There is a morphism
$$ m_{\sigma, \gamma}^{\delta}: {}_{\sigma}J_{\delta} \tensor{} {}_{\delta}J_{\gamma} \longrightarrow {}_{\sigma}J_{\gamma}$$ such that
\begin{equation}\label{Eq:mu3}
\fk{eq}_{\sigma,\gamma} \circ   m_{\sigma, \gamma}^{\delta} \,=\, m \circ  (\fk{eq}_{\sigma,\delta}\tensor{}\fk{eq}_{\delta,\gamma}).
\end{equation}
\item We have
\begin{equation}\label{Eq:mu4}
 m_{\sigma, \lambda}^{\gamma}\circ (m_{\sigma, \gamma}^{\delta}\tensor {} {_\gamma J_\lambda})=m_{\sigma, \lambda}^{\delta}\circ (_\sigma J_\delta\tensor {} m_{\delta, \lambda}^{\gamma}).
\end{equation}
\item There exists a unique morphism  $u_\sigma :\I\rightarrow {}_{\sigma}J_{\sigma}$ such that $\fk{eq}_{\sigma,\sigma}\circ u_\sigma=u.$ Furthermore $({}_{\sigma}J_{\sigma},m_\sigma,u_\sigma)$ is a monoid, for every $\Z{\I}$-linear map $\sigma$, where we set
$m_\sigma:=m_{\sigma, \sigma}^{\sigma}$.

\item The morphisms $m_{\sigma, \delta}^{\sigma}$ and $m_{\sigma, \delta}^{\delta}$ turn ${}_{\sigma}J_{\delta}$ into a $({}_{\sigma}J_{\sigma},{}_{\delta}J_{\delta})$-bimodule.
\item There is a unique morphism
\begin{equation}\label{Eq:mu5}
\overline{m}_{\sigma, \gamma}^{\delta}: ({}_{\sigma}J_{\delta} )\tensor{_\delta J_\delta} {}({_{\delta}J_{\gamma}}) \longrightarrow {}{_{\sigma}J_{\gamma}} \, \text{ such that } \,
 \overline{m}_{\sigma, \gamma}^{\delta}\circ \chi_{\sigma, \gamma}^{\delta} =m_{\sigma, \gamma}^{\delta}
\end{equation}
where $\chi_{\sigma, \gamma}^{\delta}:{}_{\sigma}J_{\delta} \tensor{} {}_{\delta}J_{\gamma}\rightarrow ({}_{\sigma}J_{\delta} )\tensor{_{\delta}J_{\delta}} {}(_{\delta}J_{\gamma})$ denotes the canonical morphism defining the tensor product over the monoid ${}_{\delta}J_{\delta}$.
\end{enumerate}
\end{proposition}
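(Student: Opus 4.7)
The proof will rest on two principles applied repeatedly: the universal property of the intersection of equalizers defining ${}_{\sigma}J_{\delta}$ in \eqref{Eq:J}, used to produce morphisms into ${}_{\sigma}J_{\delta}$, and the fact that each $\fk{eq}_{\sigma,\delta}$ is a monomorphism, used to prove equalities between such morphisms. The substantive step is (i); everything else then follows formally.

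For (i), the plan is to verify that $m \circ (\fk{eq}_{\sigma,\delta} \tensor{} \fk{eq}_{\delta,\gamma})$ equalizes $\sigma(t) \triangleright A$ and $A \triangleleft \gamma(t)$ for every $t \in \Z{A}$. Starting from $(A \triangleleft \gamma(t)) \circ m \circ (\fk{eq}_{\sigma,\delta} \tensor{} \fk{eq}_{\delta,\gamma})$, I would use associativity of $m$ to move $\gamma(t)$ onto the right tensor factor, apply \eqref{Eq:seraloqsera} for $\fk{eq}_{\delta,\gamma}$ to rewrite it as a left action of $\delta(t)$, use associativity of $m$ again to slide $\delta(t)$ across the tensor product onto the right of $\fk{eq}_{\sigma,\delta}$, and finally apply \eqref{Eq:seraloqsera} for $\fk{eq}_{\sigma,\delta}$ to convert it into a left action of $\sigma(t)$, arriving at $(\sigma(t) \triangleright A) \circ m \circ (\fk{eq}_{\sigma,\delta} \tensor{} \fk{eq}_{\delta,\gamma})$. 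The universal property of ${}_{\sigma}J_{\gamma}$ then yields the unique $m_{\sigma,\gamma}^{\delta}$ obeying \eqref{Eq:mu3}. This equalizer chase is the principal obstacle.

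The remaining items are formal consequences. For (ii), post-composing each side with the monomorphism $\fk{eq}_{\sigma,\lambda}$ reduces them, via two applications of \eqref{Eq:mu3}, to $m \circ (m \tensor{} A) \circ (\fk{eq}_{\sigma,\delta} \tensor{} \fk{eq}_{\delta,\gamma} \tensor{} \fk{eq}_{\gamma,\lambda})$ and $m \circ (A \tensor{} m) \circ (\fk{eq}_{\sigma,\delta} \tensor{} \fk{eq}_{\delta,\gamma} \tensor{} \fk{eq}_{\gamma,\lambda})$ respectively, which agree by associativity of $m$. For (iii), the unit $u : \I \to A$ satisfies $(\sigma(t) \triangleright A) \circ u = \sigma(t) = (A \triangleleft \sigma(t)) \circ u$ because multiplication in $A$ absorbs the unit through the constraints $l^A, r^A$; the unique $u_\sigma$ then arises from the universal property, and the monoid axioms for $({}_{\sigma}J_{\sigma}, m_\sigma, u_\sigma)$ descend from those of $(A,m,u)$ by post-composition with the monomorphism $\fk{eq}_{\sigma,\sigma}$. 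Part (iv) is analogous: the required left ${}_{\sigma}J_{\sigma}$- and right ${}_{\delta}J_{\delta}$-actions on ${}_{\sigma}J_{\delta}$ and their mutual compatibility reduce, after post-composing with $\fk{eq}_{\sigma,\delta}$, to associativity and unitality of $m$ together with (i) and (iii). Finally, (v) follows by applying (ii) with the substitution $\gamma \mapsto \delta$, $\lambda \mapsto \gamma$, which gives the balancing identity $m_{\sigma,\gamma}^{\delta}\circ(m_{\sigma,\delta}^{\delta}\tensor{} {}_{\delta}J_{\gamma}) = m_{\sigma,\gamma}^{\delta}\circ({}_{\sigma}J_{\delta}\tensor{} m_{\delta,\gamma}^{\delta})$; since the coequalizer $({}_{\sigma}J_{\delta}) \tensor{{}_{\delta}J_{\delta}} ({}_{\delta}J_{\gamma})$ exists in $\cat{M}$ (bicomplete with right exact tensor), its universal property produces the unique $\overline{m}_{\sigma,\gamma}^{\delta}$ satisfying \eqref{Eq:mu5}.
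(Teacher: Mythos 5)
Your proposal is correct and follows essentially the same route as the paper: the equalizer chase for (i) (associativity of $m$ plus two applications of \eqref{Eq:seraloqsera}, then the universal property of ${}_{\sigma}J_{\gamma}$), post-composition with the monomorphisms $\fk{eq}_{x,y}$ to verify (ii)--(iv), and the specialization of \eqref{Eq:mu4} to obtain the ${}_{\delta}J_{\delta}$-balancing needed for (v). No gaps.
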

\begin{proof}
$(i)$.  Fixing an element $t \in \Z{A}$, we have
\begin{center}
\begin{tikzpicture}[x=8pt,y=8pt,thick]\pgfsetlinewidth{0.5pt}
\node(1) at (-3,6) {$\scriptstyle{{}_{\sigma}J_{\delta}}$};
\node(2) at (0,6) {$\scriptstyle{{}_{\delta}J_{\gamma}}$};
\node(5) at (-0.2,-6) {$\scriptstyle{A}$};
\node[circle, draw,inner sep=1pt](11) at (-3,3) {$\scriptstyle{\fk{eq}_{\sigma,\delta}}$};
\node[circle, draw,inner sep=1pt](22) at (0,3) {$\scriptstyle{\fk{eq}_{\delta,\gamma}}$};
\node[circle, draw,inner sep=0.5pt](33) at (1,-0.5) {$\scriptstyle{\gamma(t)}$};

\draw[-] (1) to [out=-90,in=90] (11);
\draw[-] (2) to [out=-90,in=90] (22);
\draw[-] (33) to [out=-90,in=90] (1,-3);

\draw[-] (11) to [out=-90,in=90] (-3,1);
\draw[-] (22) to [out=-90,in=90] (0,1);
\draw[-] (-3,1) to [out=-90,in=-90] (0,1);
\draw[-] (-1.5,0) to [out=-90,in=90] (-1.5,-3);
\draw[-] (-1.5,-3) to [out=-90,in=-90] (1,-3);
\draw[-] (5) to [out=90,in=-90] (-0.2,-3.8);
\end{tikzpicture}
\begin{tikzpicture}[x=8pt,y=8pt,thick]\pgfsetlinewidth{0.5pt}
\node[inner sep=1pt] at (0,0) {$=$};
\node at (0,-6) {};
\end{tikzpicture}%
\begin{tikzpicture}[x=8pt,y=8pt,thick]\pgfsetlinewidth{0.5pt}
\node(1) at (-3,6) {$\scriptstyle{{}_{\sigma}J_{\delta}}$};
\node(2) at (0,6) {$\scriptstyle{{}_{\delta}J_{\gamma}}$};
\node(5) at (-1,-6) {$\scriptstyle{A}$};
\node[circle, draw,inner sep=1pt](11) at (-3,3) {$\scriptstyle{\fk{eq}_{\sigma,\delta}}$};
\node[circle, draw,inner sep=1pt](22) at (0,3) {$\scriptstyle{\fk{eq}_{\delta,\gamma}}$};
\node[circle, draw,inner sep=0.5pt](33) at (2,0.5) {$\scriptstyle{\gamma(t)}$};

\draw[-] (1) to [out=-90,in=90] (11);

\draw[-] (2) to [out=-90,in=90] (22);
\draw[-] (33) to [out=-90,in=90] (2,-2);

\draw[-] (11) to [out=-90,in=90] (-3,-3.5);
\draw[-] (22) to [out=-90,in=90] (0,-2);
\draw[-] (0,-2) to [out=-90,in=-90] (2,-2);
\draw[-] (1,-2.5) to [out=-90,in=90] (1,-3.5);
\draw[-] (1,-3.5) to [out=-90,in=-90] (-3,-3.5);
\draw[-] (5) to [out=90,in=-90] (-1,-4.7);
\end{tikzpicture}
\begin{tikzpicture}[x=8pt,y=8pt,thick]\pgfsetlinewidth{0.5pt}
\node[inner sep=1pt] at (0,0) {$\overset{\eqref{Eq:seraloqsera}}{=}$};
\node at (0,-6) {};
\end{tikzpicture}%
\begin{tikzpicture}[x=8pt,y=8pt,thick]\pgfsetlinewidth{0.5pt}
\node(1) at (-2,6) {$\scriptstyle{{}_{\sigma}J_{\delta}}$};
\node(2) at (2,6) {$\scriptstyle{{}_{\delta}J_{\gamma}}$};
\node(5) at (-0.5,-6) {$\scriptstyle{A}$};
\node[circle, draw,inner sep=1pt](11) at (-2,3) {$\scriptstyle{\fk{eq}_{\sigma,\delta}}$};
\node[circle, draw,inner sep=1pt](22) at (2,3) {$\scriptstyle{\fk{eq}_{\delta,\gamma}}$};
\node[circle, draw,inner sep=0.5pt](33) at (0,0.5) {$\scriptstyle{\delta(t)}$};

\draw[-] (1) to [out=-90,in=90] (11);
\draw[-] (2) to [out=-90,in=90] (22);
\draw[-] (33) to [out=-90,in=90] (0,-2);

\draw[-] (11) to [out=-90,in=90] (-2,-3.5);
\draw[-] (22) to [out=-90,in=90] (2,-2);
\draw[-] (0,-2) to [out=-90,in=-90] (2,-2);
\draw[-] (0.7,-2.7) to [out=-90,in=90] (0.7,-3.5);
\draw[-] (0.7,-3.5) to [out=-90,in=-90] (-2,-3.5);
\draw[-] (5) to [out=90,in=-90] (-0.5,-4.3);
\end{tikzpicture}
\begin{tikzpicture}[x=8pt,y=8pt,thick]\pgfsetlinewidth{0.5pt}
\node[inner sep=1pt] at (0,0) {$=$};
\node at (0,-6) {};
\end{tikzpicture}%
\begin{tikzpicture}[x=8pt,y=8pt,thick]\pgfsetlinewidth{0.5pt}
\node(1) at (-2,6) {$\scriptstyle{{}_{\sigma}J_{\delta}}$};
\node(2) at (2,6) {$\scriptstyle{{}_{\delta}J_{\gamma}}$};
\node(5) at (0.5,-6) {$\scriptstyle{A}$};
\node[circle, draw,inner sep=1pt](11) at (-2,3) {$\scriptstyle{\fk{eq}_{\sigma,\delta}}$};
\node[circle, draw,inner sep=1pt](22) at (2,3) {$\scriptstyle{\fk{eq}_{\delta,\gamma}}$};
\node[circle, draw,inner sep=0.5pt](33) at (0,0.5) {$\scriptstyle{\delta(t)}$};

\draw[-] (1) to [out=-90,in=90] (11);
\draw[-] (2) to [out=-90,in=90] (22);
\draw[-] (33) to [out=-90,in=90] (0,-2);

\draw[-] (11) to [out=-90,in=90] (-2,-2);
\draw[-] (22) to [out=-90,in=90] (2,-3.5);
\draw[-] (-2,-2) to [out=-90,in=-90] (0,-2);
\draw[-] (-1,-2.7) to [out=-90,in=90] (-1,-3.5);
\draw[-] (-1,-3.5) to [out=-90,in=-90] (2,-3.5);
\draw[-] (5) to [out=90,in=-90] (0.5,-4.4);
\end{tikzpicture}
\begin{tikzpicture}[x=8pt,y=8pt,thick]\pgfsetlinewidth{0.5pt}
\node[inner sep=1pt] at (0,0) {$\overset{\eqref{Eq:seraloqsera}}{=}$};
\node at (0,-6) {};
\end{tikzpicture}%
\begin{tikzpicture}[x=8pt,y=8pt,thick]\pgfsetlinewidth{0.5pt}
\node(1) at (-3,6) {$\scriptstyle{{}_{\sigma}J_{\delta}}$};
\node(2) at (0,6) {$\scriptstyle{{}_{\delta}J_{\gamma}}$};
\node(5) at (-2,-6) {$\scriptstyle{A}$};
\node[circle, draw,inner sep=1pt](11) at (-3,3) {$\scriptstyle{\fk{eq}_{\sigma,\delta}}$};
\node[circle, draw,inner sep=1pt](22) at (0,3) {$\scriptstyle{\fk{eq}_{\delta,\gamma}}$};
\node[circle, draw,inner sep=0.5pt](33) at (-5,0.5) {$\scriptstyle{\sigma(t)}$};

\draw[-] (1) to [out=-90,in=90] (11);
\draw[-] (2) to [out=-90,in=90] (22);
\draw[-] (33) to [out=-90,in=90] (-5,-2);
\draw[-] (11) to [out=-90,in=90] (-3,-2);
\draw[-] (22) to [out=-90,in=90] (0,-3.5);
\draw[-] (-5,-2) to [out=-90,in=-90] (-3,-2);
\draw[-] (-4,-2.5) to [out=-90,in=90] (-4,-3.5);
\draw[-] (-4,-3.5) to [out=-90,in=-90] (0,-3.5);
\draw[-] (5) to [out=90,in=-90] (-2,-4.7);
\end{tikzpicture}
\begin{tikzpicture}[x=8pt,y=8pt,thick]\pgfsetlinewidth{0.5pt}
\node[inner sep=1pt] at (0,0) {$\,=\,$};
\node at (0,-6) {};
\end{tikzpicture}%
\begin{tikzpicture}[x=8pt,y=8pt,thick]\pgfsetlinewidth{0.5pt}
\node(1) at (-3,6) {$\scriptstyle{{}_{\sigma}J_{\delta}}$};
\node(2) at (0,6) {$\scriptstyle{{}_{\delta}J_{\gamma}}$};
\node(5) at (-2.8,-6) {$\scriptstyle{A}$};
\node[circle, draw,inner sep=1pt](11) at (-3,3) {$\scriptstyle{\fk{eq}_{\sigma,\delta}}$};
\node[circle, draw,inner sep=1pt](22) at (0,3) {$\scriptstyle{\fk{eq}_{\delta,\gamma}}$};
\node[circle, draw,inner sep=0.5pt](33) at (-4,-0.5) {$\scriptstyle{\sigma(t)}$};

\draw[-] (1) to [out=-90,in=90] (11);
\draw[-] (2) to [out=-90,in=90] (22);
\draw[-] (33) to [out=-90,in=90] (-4,-3);
\draw[-] (11) to [out=-90,in=90] (-3,1);
\draw[-] (22) to [out=-90,in=90] (0,1);

\draw[-] (-3,1) to [out=-90,in=-90] (0,1);
\draw[-] (-1.5,0) to [out=-90,in=90] (-1.5,-3);
\draw[-] (-1.5,-3) to [out=-90,in=-90] (-4,-3);
\draw[-] (5) to [out=90,in=-90] (-2.8,-3.8);
\end{tikzpicture}
\end{center}
which shows that
\begin{eqnarray*}
(\sigma(t) \triangleright A) \circ m \circ  (\fk{eq}_{\sigma,\delta}\tensor{}\fk{eq}_{\delta,\gamma})  &=&    (A\triangleleft \gamma(t) ) \circ m \circ (\fk{eq}_{\sigma,\delta}\tensor{}\fk{eq}_{\delta,\gamma}).
\end{eqnarray*}
Hence, by the universal property, the desired morphism is the one which turns commutative the diagrams
\begin{equation}\label{Eq:diag}
  \xymatrixcolsep{2.5cm}
\xymatrix{
  {}_{\sigma}J_{\delta} \tensor{} {}_{\delta}J_{\gamma}  \ar[r]^{\fk{eq}_{\sigma,\delta}\tensor{}\fk{eq}_{\delta,\gamma}}  \ar@{-->}_-{m^{\delta}_{\sigma,\gamma}}[d]& A\otimes A\ar[d]^{m} && \\
 {}_{\sigma}J_{\gamma}  \ar[r]^{\fk{eq}_{\sigma,\gamma}} & A \ar@<.5ex>[rr]^{\sigma(t)\, \triangleright\, A} \ar@<-.5ex>[rr]_{A\,\triangleleft\,\gamma(t) } && A}
\end{equation}
for every $t \in \Z{A}$.

$(ii)$. The equation follows from the fact that $\fk{eq}_{\sigma,\lambda} $ is a monomorphism and the computation:
\begin{eqnarray*}
\fk{eq}_{\sigma,\lambda} \circ   m_{\sigma, \lambda}^{\delta}\circ (_\sigma J_\delta\tensor {} m_{\delta, \lambda}^{\gamma})
&\overset{\eqref{Eq:mu3}}{=} &m \circ  (\fk{eq}_{\sigma,\delta}\tensor{}\fk{eq}_{\delta,\lambda}) \circ (_\sigma J_\delta\tensor {} m_{\delta, \lambda}^{\gamma})
\\ &\overset{\eqref{Eq:mu3}}{=}& m  \circ (A\tensor{}m) \circ  (\fk{eq}_{\sigma,\delta}\tensor{}\fk{eq}_{\delta,\gamma}\tensor{}\fk{eq}_{\gamma,\lambda})
\\ &=& m  \circ (m\tensor{} A) \circ  (\fk{eq}_{\sigma,\delta}\tensor{}\fk{eq}_{\delta,\gamma}\tensor{}\fk{eq}_{\gamma,\lambda})
\\ &\overset{\eqref{Eq:mu3}}{=} &m \circ  (\fk{eq}_{\sigma,\gamma}\tensor{}\fk{eq}_{\gamma,\lambda}) \circ (m_{\sigma,\gamma}^\delta\tensor{}{_\gamma J_\lambda})
\\ &\overset{\eqref{Eq:mu3}}{=} &\fk{eq}_{\sigma,\lambda} \circ   m_{\sigma, \lambda}^{\gamma}\circ (m_{\sigma, \gamma}^{\delta}\tensor{} {_\gamma J_\lambda}).
\end{eqnarray*}

$(iii)$. The associativity follows by \eqref{Eq:mu4}. Denote $m^{\sigma}_{\sigma,\sigma}:=m_{\sigma}$ and ${}_{\sigma}J_{\sigma}:=J$.
 Now, we show that there is a morphism $u_\sigma$ such that  the following diagrams commute
\begin{equation}\label{Eq:triangle}
  \xymatrixcolsep{3cm}
\xymatrix{
  & \I\ar[d]^{u}\ar@{-->}[ld]_-{u_\sigma} && \\
 {}_{\sigma}J_{\sigma}  \ar[r]^{\fk{eq}_{\sigma,\sigma}} & A \ar@<.5ex>[rr]^{\sigma(t)\, \triangleright\, A} \ar@<-.5ex>[rr]_{A\, \triangleleft\, \sigma(t) } && A}
\end{equation}
We compute
\begin{eqnarray*}
m \circ (A\tensor{}\sigma(t)) \circ u&=&m \circ (u\tensor{}A)\circ (\I\tensor{}\sigma(t)) =l_A \circ  (\I\tensor{}\sigma(t))
\\&=&\sigma(t)\circ l_\I  =\sigma(t)\circ r_\I
\\&=&r_A \circ  (\sigma(t)\tensor{}\I)=m \circ (A\tensor{}u)\circ (\sigma(t)\tensor{}\I)
\\&=&m \circ (\sigma(t)\tensor{}A) \circ u.
\end{eqnarray*}
We now prove that the multiplication of $J$ is unitary:
\begin{eqnarray*}
 \fk{eq}_{\sigma,\sigma} \circ   m_{\sigma}  \circ (J\tensor{}u_{\sigma})&\overset{\eqref{Eq:mu3}}{=} &m \circ  (\fk{eq}_{\sigma,\sigma}\tensor{}\fk{eq}_{\sigma,\sigma}) \circ (J\tensor{}u_{\sigma})
 \\&=&m \circ  (A\tensor{}u) \circ (\fk{eq}_{\sigma,\sigma}\tensor{}\I)
  \\&=&r_A \circ (\fk{eq}_{\sigma,\sigma}\tensor{}\I)=\fk{eq}_{\sigma,\sigma}\circ r_J.
\end{eqnarray*}
Since $\fk{eq}_{\sigma,\sigma}$ is a monomorphism we obtain $m_{\sigma}  \circ (J\tensor{}u_{\sigma})=r_J$. In a similar manner one gets $m_{\sigma}  \circ (u_{\sigma}\tensor{}J)=l_J$. We have so proved that $(J,m_{\sigma} ,u_{\sigma})$ is a monoid.

$(iv)$. It follows by \eqref{Eq:mu4}.

$(v)$. From \eqref{Eq:mu4} one gets that $m_{\sigma, \gamma}^{\delta}$ is balanced over $_\delta J_\delta$.
\end{proof}

\section{The bijectivity of Miyashita action}\label{sec:azymaya}

The aim of this section is to seek for conditions under which the Miyashita action, that is the map of Proposition \ref{pro: Phi}, or some of its factors are bijective. We work, as before, over   a Penrose  monoidal abelian, locally small and bicomplete  category $(\cat{M},\tensor{}, \I, l,r)$, where the tensor product is right exact on both factors.

\subsection{The case when $\Phi$ is a monomorphism of groups}\label{ssec:mono}

Take $R=S=\I$ in Proposition \ref{pro: Phi} and  consider an invertible subobject $X \in {\rm Inv}_{\I}(A)$ with two-sided inverse $Y$. The image of any $t \in \Z{A}$  through the map $\Phi_X$ of equation \eqref{Eq:PHI} is given by
\begin{equation}\label{Eq:phiXt}
\Phi_X(t)\,=\,  m \circ \left(i_X\tensor{}(t\triangleright i_Y)\right) \circ \text{coev},
\end{equation}
where, $\text{coev}=m_X^{-1}$ is as in  Proposition \ref{pro:invdual}.

\begin{lemma}\label{lema:inclusion}
Let $(A, m,u)$ be a monoid in $\cat{M}$ and  let $X \in {\rm Inv}_{\I}(A)$ with inverse $Y$. Consider the associated automorphism $\Phi_X \in \Aut{\scriptscriptstyle{\Z{\I}}\text{-alg}}{\Z{A}}$ defined as in Proposition \ref{pro: Phi} with $R=S=\I$.  Then we have monomorphisms
$$
\xymatrix@C=45pt{ X\; \ar@{^(->}^-{\iota_X}[r] & {}_{\Phi_X}J_1, & Y \;\ar@{^(->}^-{\iota_Y}[r] & {}_1J_{\Phi_X} }
$$
satisfying $ \fk{eq}_{\Phi_X,1} \circ \iota_X\,=\, i_X$,  and $  \fk{eq}_{1,\Phi_X} \circ \iota_Y\,=\, i_Y$.
\end{lemma}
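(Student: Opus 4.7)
The plan is to produce both inclusions as unique factorizations through the intersected equalizers in \eqref{Eq:J}, taking equation \eqref{form:sigmaWiX} as the key input.

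First I would unpack what it means for a morphism $h\colon Z\to A$ to factor through $\fk{eq}_{\Phi_X,1}$: by \eqref{Eq:J} and \eqref{eq:equalizer}, it is equivalent to the system of equalities $(\Phi_X(t)\triangleright A)\circ h = (A\triangleleft t)\circ h$ parametrized by $t\in\Z{A}$. Specializing to $Z=X$ and $h=i_X$, by naturality of the unit constraints $l$ and $r$ together with the definition \eqref{Eq:triangles} of $\triangleright$ and $\triangleleft$ (with $R=S=\I$), the two sides are respectively $m\circ(\Phi_X(t)\tensor{}i_X)\circ (l_X)^{-1}$ and $m\circ(i_X\tensor{}t)\circ (r_X)^{-1}$. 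Their equality is precisely equation \eqref{form:sigmaWiX} with $g=t$, since $\Phi_X$ coincides with $\B{\sigma}_X$ when $R=S=\I$. The universal property of the intersection of equalizers then produces a unique $\iota_X\colon X\to {}_{\Phi_X}J_1$ with $\fk{eq}_{\Phi_X,1}\circ\iota_X=i_X$, and $\iota_X$ is a monomorphism because $i_X$ is.

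For the second inclusion I would exploit the symmetry of the two-sided inverse: since $X\in\mathrm{Inv}_{\I}(A)$ has inverse $Y$, also $Y\in\mathrm{Inv}_{\I}(A)$ with inverse $X$, so Proposition \ref{pro: right Miyashita} produces $\B{\sigma}_Y\in\mathrm{End}_{\scriptscriptstyle{\Z{\I}}\text{-alg}}(\Z{A})$. By the argument proving mutual inverses in Proposition \ref{pro: Phi} (i.e.\ $\B{\sigma}_{X^r}\circ\B{\sigma}_X=\mathrm{Id}$), we get $\B{\sigma}_Y\circ\Phi_X=\mathrm{Id}_{\Z{A}}$, so $\B{\sigma}_Y(\Phi_X(t))=t$ for every $t\in\Z{A}$. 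Applying \eqref{form:sigmaWiX} with $X$ replaced by $Y$ and $g=\Phi_X(t)$ yields $t\triangleright i_Y = i_Y\triangleleft \Phi_X(t)$, which is exactly the equalizer condition $(t\triangleright A)\circ i_Y=(A\triangleleft \Phi_X(t))\circ i_Y$ defining ${}_1J_{\Phi_X}$. The universal property of \eqref{Eq:J} then gives the required monomorphism $\iota_Y\colon Y\to {}_1J_{\Phi_X}$ with $\fk{eq}_{1,\Phi_X}\circ\iota_Y=i_Y$.

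The only non-cosmetic step is the reduction of the equalizer condition to \eqref{form:sigmaWiX}; the rest is a direct application of the universal property. In particular, I expect no obstacle beyond carefully translating between the two notational conventions: $\triangleright,\triangleleft$ (as in \eqref{Eq:triangles}) versus the raw multiplication of $A$ composed with the appropriate unit constraints.
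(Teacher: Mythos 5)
Your proposal is correct and follows essentially the same route as the paper: both inclusions are obtained by reducing the equalizer conditions defining ${}_{\Phi_X}J_1$ and ${}_1J_{\Phi_X}$ to equation \eqref{form:sigmaWiX} and then invoking the universal property of \eqref{Eq:J}. Your handling of $\iota_Y$ (using $\B{\sigma}_Y\circ\Phi_X=\mathrm{Id}$ and substituting $g=\Phi_X(t)$) merely spells out what the paper compresses into ``replacing $X$ by $Y$''.
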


\begin{proof}
By applying equation \eqref{form:sigmaWiX} to our situation we obtain
\begin{equation}\label{Eq:muphi}
\Phi_X(t)\triangleright i_X\,=\, i_X\triangleleft t,\quad \text{ for every } t \in \Z{A}.
\end{equation}
Equivalently  $(\Phi_X(t)\triangleright A)  \circ i_X \,=\,  (A\triangleleft t) \circ i_X$ for every $t \in \Z{A}$, which by the universal property of ${}_{\Phi_X}J_1$ gives the desired monomorphism. Replacing $X$ by $Y$  one gets the other monomorphism.
\end{proof}

Notice that for any $\Z{\I}$-algebra automorphism $ \theta$ of $\Z{A}$, we have that for each  $s \in \Z{A}$, there exists a unique $t\in \Z{A}$ such that $\theta(t)\,=\, s$. Hence
\begin{equation}\label{Eq:egos}
{}_{\theta}J_{\theta}\,=\, \bigcap_{t \in \Z{A}} Eq(\theta(t)\triangleright A, A\triangleleft \theta(t)) \,=\, \bigcap_{s \in \Z{A}} Eq(s\triangleright A, A\triangleleft s)\,=\, {}_1J_1.
\end{equation}
This observation is implicitly used in the forgoing.

\begin{proposition}\label{prop:J}
Let $(A, m,u)$ be a monoid in $\cat{M}$ and assume that $u$ is a monomorphism. Let $X \in {\rm Inv}_{\I}(A)$ be an  invertible subobject with associated automorphism $\Phi_X$ as in \eqref{Eq:phiXt}. Then ${}_{\Phi_X}J_1$ is  a two-sided invertible ${}_{1}J_{1}$-subbimodule of $A$, that is an element of $\,{\rm Inv}_{{}_{1}J_{1}}(A)$ with inverse ${}_1J_{\Phi_X}$.
\end{proposition}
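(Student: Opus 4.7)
The plan proceeds in three stages.

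Stage one identifies the bimodule structures. Since $\Phi_X$ is an automorphism of $\Z{A}$, the substitution $t\mapsto\Phi_X(t)$ in the intersection defining ${}_{\Phi_X}J_{\Phi_X}$ gives ${}_{\Phi_X}J_{\Phi_X}={}_1J_1$ as subobjects of $A$, just as in \eqref{Eq:egos}. By Proposition \ref{lema:-1}(iv), ${}_{\Phi_X}J_1$ and ${}_1J_{\Phi_X}$ thereby acquire canonical $({}_1J_1,{}_1J_1)$-bimodule structures, and the inclusions $\fk{eq}_{\Phi_X,1}$, $\fk{eq}_{1,\Phi_X}$ into $A$ are bimodule morphisms when $A$ is viewed as a ${}_1J_1$-bimodule via the submonoid $\fk{eq}_{1,1}\colon{}_1J_1\hookrightarrow A$ (monic because $u$ is monic).

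Stage two supplies the candidate data required by Definition \ref{def:inverse}, with $\alpha=\beta=\fk{eq}_{1,1}$. Proposition \ref{lema:-1}(v) furnishes
\[
m_{{}_{\Phi_X}J_1}:=\overline{m}^{1}_{\Phi_X,\Phi_X}\colon {}_{\Phi_X}J_1\tensor{{}_1J_1}{}_1J_{\Phi_X}\to{}_1J_1,\qquad
m_{{}_1J_{\Phi_X}}:=\overline{m}^{\Phi_X}_{1,1}\colon {}_1J_{\Phi_X}\tensor{{}_1J_1}{}_{\Phi_X}J_1\to{}_1J_1,
\]
where the codomain ${}_{\Phi_X}J_{\Phi_X}={}_1J_1$ is identified using stage one. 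The compatibilities \eqref{def: mX} and \eqref{def: mY} are then immediate from \eqref{Eq:mu3} combined with \eqref{Eq:mu5}.

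The crux is stage three: showing that the two multiplications above are isomorphisms. Lemma \ref{lema:inclusion} provides monomorphisms $\iota_X\colon X\hookrightarrow{}_{\Phi_X}J_1$ and $\iota_Y\colon Y\hookrightarrow{}_1J_{\Phi_X}$ compatible with the inclusions into $A$, while Proposition \ref{pro:invdual} together with Definition \ref{def:inverse} yields a right dualizable datum $(X,Y,\mathrm{ev},\mathrm{coev})$ with $\mathrm{coev}=m_X^{-1}\colon\I\to X\tensor{}Y$. Set
\[
c:=\chi^{1}_{\Phi_X,\Phi_X}\circ(\iota_X\tensor{}\iota_Y)\circ\mathrm{coev}\colon\I\to{}_{\Phi_X}J_1\tensor{{}_1J_1}{}_1J_{\Phi_X}.
\]
Postcomposing with $\fk{eq}_{1,1}\circ m_{{}_{\Phi_X}J_1}$ and using \eqref{Eq:mu3}, \eqref{form:ev3}, together with the monicity of $\fk{eq}_{1,1}$, gives $m_{{}_{\Phi_X}J_1}\circ c=u_1$, where $u_1\colon\I\to{}_1J_1$ is the unit. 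Next, I extend $c$ via the left ${}_1J_1$-action on the target to produce $\hat c\colon{}_1J_1\to{}_{\Phi_X}J_1\tensor{{}_1J_1}{}_1J_{\Phi_X}$. Left ${}_1J_1$-linearity of $m_{{}_{\Phi_X}J_1}$ then instantly yields $m_{{}_{\Phi_X}J_1}\circ\hat c=\id_{{}_1J_1}$.

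The main obstacle is the opposite identity $\hat c\circ m_{{}_{\Phi_X}J_1}=\id$; the plan is to derive it by transporting the zigzag identity \eqref{form:ev1} for $(X,Y,\mathrm{ev},\mathrm{coev})$ through the pair $(\iota_X,\iota_Y)$ and the projection $\chi^{1}_{\Phi_X,\Phi_X}$, exploiting the fact that $f_X\colon X\tensor{}A\to A$ is an isomorphism by Corollary \ref{coro:conaq} to promote the identity from $X\tensor{}Y$ up to the tensor product over ${}_1J_1$. The assertion for $m_{{}_1J_{\Phi_X}}$ is then obtained by symmetry: applying the whole argument to $Y\in{\rm Inv}_\I(A)$ and noting that $\Phi_Y=\Phi_X^{-1}$ forces ${}_{\Phi_Y}J_1={}_1J_{\Phi_X}$ and ${}_1J_{\Phi_Y}={}_{\Phi_X}J_1$, exchanging the roles of the two multiplications and completing the proof that ${}_{\Phi_X}J_1\in\mathrm{Inv}_{{}_1J_1}(A)$ with two-sided inverse ${}_1J_{\Phi_X}$.
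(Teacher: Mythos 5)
Your stages one and two, and the first half of stage three, coincide with the paper's own proof: the paper also identifies ${}_{\Phi_X}J_{\Phi_X}={}_1J_1$, takes the multiplications from Proposition \ref{lema:-1}(v), and produces exactly your $\hat c$ as the candidate inverse $\nu$ of $\overline{m}^1_{\Phi_X,\Phi_X}$ (it packages the insertion of $\mathrm{coev}$, $\iota_X\tensor{}\iota_Y$ and $\chi^1_{\Phi_X,\Phi_X}$ together with the left ${}_{\Phi_X}J_{\Phi_X}$-action into one composite). Your computation $\overline{m}^1_{\Phi_X,\Phi_X}\circ\hat c=\id$ via $\fk{eq}_{\Phi_X,\Phi_X}\circ m^1_{\Phi_X,\Phi_X}\circ(\iota_X\tensor{}\iota_Y)=m\circ(i_X\tensor{}i_Y)=u\circ m_X$ and the monicity of $\fk{eq}_{\Phi_X,\Phi_X}$ is precisely the paper's identity $(\star)$.

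The gap is in your plan for the remaining composite $\hat c\circ\overline{m}^1_{\Phi_X,\Phi_X}=\id$. The ingredients you name --- the zigzag identity \eqref{form:ev1} and the invertibility of $f_X$ from Corollary \ref{coro:conaq} --- are not what makes this step work, and it is not clear how they would ``promote'' anything to the tensor product over ${}_1J_1$: invertibility of $f_X$ concerns $X\tensor{}A$, not ${}_{\Phi_X}J_1\tensor{{}_1J_1}{}_1J_{\Phi_X}$, and there is no reason for $\chi^1_{\Phi_X,\Phi_X}\circ(\iota_X\tensor{}\iota_Y)$ to be epi, so identities cannot simply be transported from $X\tensor{}Y$. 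What the step actually requires is to precompose with the \emph{epimorphism} $\chi^1_{\Phi_X,\Phi_X}\colon{}_{\Phi_X}J_1\tensor{}{}_1J_{\Phi_X}\to{}_{\Phi_X}J_1\tensor{{}_1J_1}{}_1J_{\Phi_X}$ and then compute: by the associativity relations \eqref{Eq:mu4} the middle product $m^{\Phi_X}_{1,1}$ lands in ${}_1J_1$, the coequalizer relation defining $\tensor{{}_1J_1}$ lets you slide that factor across the balanced tensor, and then the same identity $(\star)$ (i.e.\ $m\circ(i_X\tensor{}i_Y)\circ\mathrm{coev}=u$, which is \eqref{form:ev3} with $\alpha=u$) together with the unitality of Proposition \ref{lema:-1}(iii) collapses the result back to $\chi^1_{\Phi_X,\Phi_X}$. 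Since $\chi^1_{\Phi_X,\Phi_X}$ is epi, this yields the identity on the balanced tensor product. Your final reduction of $m_{{}_1J_{\Phi_X}}$ to the case of $Y$ via $\Phi_Y=\Phi_X^{-1}$ and ${}_{\Phi_X^{-1}}J_1={}_1J_{\Phi_X}$ is exactly the paper's closing step and is fine.
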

\begin{proof}
Along  this proof  we abbreviate $\Phi_X$ to $\phi$. The morphisms $m_{{}_{\phi}J_1}=\bara{m}^1_{\phi,\phi}$ and $m_{{}_1J_{\phi}}=\bara{m}^{\phi}_{1,1}$ defined in  Proposition \ref{pro:invdual}  for the pair $({}_{\phi}J_1, {}_1J_{\phi})$, are given by Proposition \ref{lema:-1}(v), where the compatibility constrains are shown using equation \eqref{Eq:mu4} and  \eqref{Eq:mu5}.  Consider the morphism
$$ \xymatrix@C=40pt{\psi :={}_{\phi }J_{\phi }\tensor{}X \ar@{->}^-{{_{\phi }J_{\phi }}
\otimes \iota _{X}}[r] & {}_{\phi }J_{\phi }\otimes  {}_{\phi}J_{1}  \ar@{->}^-{m _{\phi ,1}^{\phi }}[r] &   {}_{\phi }J_{1} }
 $$
where $\iota_X$  was defined in Lemma \ref{lema:inclusion}.
Now let us check that $m_{_\phi J_1}$ is invertible with inverse
$\nu$ defined as the following composition  of morphisms:
$$
\xymatrix@C=45pt{ {}_{\phi}J_{\phi} \ar@{->}_-{\cong}^-{{}_{\phi}J_{\phi}\tensor{}m_X^{-1}}[r] & {}_{\phi}J_{\phi}\tensor{}X\tensor{}Y \ar@{->}^-{\psi\tensor{}Y}[r] & {}_{\phi}J_{1} \tensor{}Y  \ar@{->}^-{{}_{\phi}J_1\tensor{}\iota_Y}[r] & {}_{\phi}J_{1}\tensor{} {}_{1}J_{\phi}  \ar@{->}^-{\chi_{\phi, \phi}^{1}}[r]  & {}_{\phi}J_{1}\tensor{{}_1J_1}{}_{1}J_{\phi}.   }
$$
We have
\begin{center}
\begin{tikzpicture}[x=9pt,y=9pt,thick]\pgfsetlinewidth{0.5pt}
\node(4) at (0,-10) {$\scriptstyle{{}_{\phi}J_{\phi}}$};
\node[rectangle,draw, inner sep=4pt, text width=1cm, text centered](3) at (-3,-1) {$\scriptstyle{\psi}$};
\node[circle,draw, inner sep=0.7pt](2) at (0.5,-1) {$\scriptstyle{\iota_Y}$};
\node(5) at (-4,3) {$\scriptstyle{{}_{\phi}J_{\phi}}$};
\node[rectangle,draw, inner sep=3pt, text width=1.2cm, text centered](1) at (0,-3.7) {$\scriptstyle{\chi^1_{\phi,\phi}}$};
\node[circle,draw, inner sep=0.7pt](6) at (0,-6.5) {$\scriptstyle{\bara{m}^1}$};

\draw[-] (5) to [out=-90, in=90] (-4,-0.2);
\draw[-] (-2,1.2) to [out=90, in=90] (0.5,1.2);
\draw[-] (0.5,1.2) to [out=-90, in=90] (2);
\draw[-] (-2,1.2) to [out=-90, in=90] (-2,-0.2);
\draw[-] (2) to [out=-90, in=90] (0.5,-2.8);
\draw[-] (-2,-1.8) to [out=-90, in=90] (-1,-2.8);
\draw[-] (1) to [out=-90, in=90] (6);
\draw[-] (6) to [out=-90, in=90] (4);
\end{tikzpicture}
\begin{tikzpicture}[x=9pt,y=9pt,thick]\pgfsetlinewidth{0.5pt}
\node[inner sep=1pt] at (0,-3) {$\,\overset{\eqref{Eq:mu5}}{=}\,$};
\node at (0,-10) {};
\end{tikzpicture}
\begin{tikzpicture}[x=9pt,y=9pt,thick]\pgfsetlinewidth{0.5pt}
\node(1) at (-4,5){$\scriptstyle{{}_{\phi}J_{\phi}}$};
\node(4) at (-2,-6.5) {$\scriptstyle{{}_{\phi}J_{\phi}}$};
\node[circle,draw, inner sep=0.7pt](3) at (-2.5,1) {$\scriptstyle{\iota_X}$};
\node[circle,draw, inner sep=0.7pt](2) at (-0.5,1) {$\scriptstyle{\iota_Y}$};

\draw[-] (-2.5,3) to [out=90, in=90] (-0.5,3);
\draw[-] (-0.5,3) to [out=-90, in=90] (2);
\draw[-] (-2.5,3) to [out=-90, in=90] (3);
\draw[-] (2) to [out=-90, in=90] (-0.5,-2);
\draw[-] (-3.2,-2) to [out=-90, in=-90] (-0.5,-2);
\draw[-] (-3.2,-2) to [out=90, in=-90] (-3.2,-1.5);
\draw[-] (3) to [out=-90, in=90] (-2.5,-1);
\draw[-] (-2,-2.7) to [out=-90, in=90] (4);
\draw[-] (-2.5,-1) to [out=-90, in=-90] (-4,-1);
\draw[-] (1) to [out=-90, in=90] (-4,-1);
\end{tikzpicture}
\begin{tikzpicture}[x=9pt,y=9pt,thick]\pgfsetlinewidth{0.5pt}
\node[inner sep=1pt] at (0,-3.5) {$\,\overset{\eqref{Eq:mu4}}{=}\,$};
\node at (0,-10) {};
\end{tikzpicture}
\begin{tikzpicture}[x=9pt,y=9pt,thick]\pgfsetlinewidth{0.5pt}
\node(1) at (-4,5){$\scriptstyle{{}_{\phi}J_{\phi}}$};
\node(4) at (-2.7,-6.5) {$\scriptstyle{{}_{\phi}J_{\phi}}$};
\node[circle,draw, inner sep=0.7pt](3) at (-2.5,1) {$\scriptstyle{\iota_X}$};
\node[circle,draw, inner sep=0.7pt](2) at (-0.5,1) {$\scriptstyle{\iota_Y}$};

\draw[-] (-2.5,3) to [out=90, in=90] (-0.5,3);
\draw[-] (-0.5,3) to [out=-90, in=90] (2);
\draw[-] (-2.5,3) to [out=-90, in=90] (3);
\draw[-] (2) to [out=-90, in=90] (-0.5,-2);
\draw[-] (-2.5,-2) to [out=-90, in=-90] (-0.5,-2);
\draw[-] (-1.5,-4) to [out=90, in=-90] (-1.5,-2.6);
\draw[-] (3) to [out=-90, in=90] (-2.5,-2);
\draw[-] (-4,-4) to [out=-90, in=-90] (-1.5,-4);
\draw[-] (-2.7,-4.7) to [out=-90, in=90] (4);
\draw[-] (1) to [out=-90, in=90] (-4,-4);
\end{tikzpicture}
\begin{tikzpicture}[x=9pt,y=9pt,thick]\pgfsetlinewidth{0.5pt}
\node[inner sep=1pt] at (0,-3.5) {$\,\overset{(\star)}{=}\,$};
\node at (0,-10) {};
\end{tikzpicture}
\begin{tikzpicture}[x=9pt,y=9pt,thick]\pgfsetlinewidth{0.5pt}
\node(1) at (-4,5){$\scriptstyle{{}_{\phi}J_{\phi}}$};
\node(4) at (-2.7,-6.5) {$\scriptstyle{{}_{\phi}J_{\phi}}$};
\node[circle,draw, inner sep=0.7pt](3) at (-1.5,1) {$\scriptstyle{u_{\phi}}$};

\draw[-] (3) to [out=-90, in=90] (-1.5,-4);
\draw[-] (-4,-4) to [out=-90, in=-90] (-1.5,-4);
\draw[-] (-2.7,-4.7) to [out=-90, in=90] (4);
\draw[-] (1) to [out=-90, in=90] (-4,-4);
\end{tikzpicture}
\begin{tikzpicture}[x=9pt,y=9pt,thick]\pgfsetlinewidth{0.5pt}
\node[inner sep=1pt] at (0,-3.5) {$\,=\,$};
\node at (0,-10) {};
\end{tikzpicture}
\begin{tikzpicture}[x=9pt,y=9pt,thick]\pgfsetlinewidth{0.5pt}
\node(1) at (-4,5){$\scriptstyle{{}_{\phi}J_{\phi}}$};
\node(4) at (-4,-6.5) {$\scriptstyle{{}_{\phi}J_{\phi}}$};

\draw[-] (1) to [out=-90, in=90] (4);
\end{tikzpicture}
\end{center}
where the last equality is given by Proposition \ref{lema:-1}(iii), while equality $(\star)$ is proved as follows:
\begin{gather*}
\fk{eq}_{\phi,\phi} \circ m_{\phi,\phi}^1\circ(\iota_X\otimes\iota_Y)
\overset{\eqref{Eq:mu3}}{=}m\circ(\fk{eq}_{\phi,1}\otimes\fk{eq}_{1,\phi})\circ(\iota_X\otimes\iota_Y) =m\circ(i_X\otimes i_Y) \overset{(1)}{=}  u\circ m_X \\ =\fk{eq}_{\phi,\phi} \circ u_\phi\circ\mathrm{coev}^{-1}.
\end{gather*}
This proves that $\bara{m}^1_{\phi,\phi}\circ\nu$ is the identity. A similar computation,  which  uses composition with  the epimorphism $\chi^1_{\phi,\phi}$, shows that $\nu \circ \bara{m}^1_{\phi,\phi}$ is the identity too so that  $\bara{m}^1_{\phi,\phi}$ is invertible.
Replacing $\phi$ by $\phi^{-1}$ will shows that $m_{_{\phi^{-1}}J_1}$ is an isomorphism which means that  $m_{ {}_{1}J_{\phi}}$ is an isomorphism, since $_{\phi^{-1}}J_1\,=\, {}_{1}J_{\phi}$.
\end{proof}

\begin{lemma}\label{lema:1}
Let $(A, m, u)$ be a monoid in $\cat{M}$. Assume that the unit $u_1:\I\rightarrow {_1J_1}$ is an isomorphism and consider a two-sided invertible object $X \in \Inv{\I}{A}$ with inverse $Y$ and associated automorphism $\Phi_X$ as in \eqref{Eq:phiXt}. Then, we have
$X\,=\, {}_{\Phi_X}J_1$ and $Y={}_1J_{\Phi_X}$.
\end{lemma}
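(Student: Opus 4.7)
The plan is to identify the pair $(X,Y)$ with the pair $({}_{\Phi_X}J_1, {}_1J_{\Phi_X})$ by exhibiting compatible inclusions in both directions and then invoking the uniqueness-type result Proposition \ref{prop:monoiso}(2).

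First I would recall that Lemma \ref{lema:inclusion} already supplies monomorphisms
\[
\iota_X : X \hookrightarrow {}_{\Phi_X}J_1,\qquad \iota_Y : Y \hookrightarrow {}_1J_{\Phi_X},
\]
which are compatible with the embeddings into $A$ in the sense that $\fk{eq}_{\Phi_X,1}\circ\iota_X=i_X$ and $\fk{eq}_{1,\Phi_X}\circ\iota_Y=i_Y$. In particular these are morphisms in $\mathscr{P}({}_{\I}A_{\I})$.

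Next I would observe that Proposition \ref{prop:J} gives ${}_{\Phi_X}J_1\in\Inv{{}_1J_1}(A)$ with two-sided inverse ${}_1J_{\Phi_X}$. The hypothesis that $u_1:\I\to{}_1J_1$ is an isomorphism means that ${}_1J_1$ and $\I$ coincide as submonoids of $A$, so we can (and do) regard both ${}_{\Phi_X}J_1$ and ${}_1J_{\Phi_X}$ as elements of $\Inv{\I}(A)$, mutually inverse in that group.

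At this point all four objects $X, {}_{\Phi_X}J_1, Y, {}_1J_{\Phi_X}$ live in $\Inv{\I}(A)$ with inverses paired as $(X,Y)$ and $({}_{\Phi_X}J_1,{}_1J_{\Phi_X})$, and the monomorphisms $\iota_X$, $\iota_Y$ satisfy precisely the compatibility required by the hypothesis of Proposition \ref{prop:monoiso}(2). Applying that result immediately yields that $\iota_X$ and $\iota_Y$ are isomorphisms, i.e.\ $X={}_{\Phi_X}J_1$ and $Y={}_1J_{\Phi_X}$ as elements of $\Inv{\I}(A)$. No real obstacle arises, since all the work has been done in Proposition \ref{prop:J} (producing the $J$-object as an invertible subbimodule) and Lemma \ref{lema:inclusion} (producing the comparison maps); the only delicate point is the bookkeeping identification of $\Inv{{}_1J_1}(A)$ with $\Inv{\I}(A)$ afforded by $u_1$ being an isomorphism.
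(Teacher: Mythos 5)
Your proposal is correct and follows essentially the same route as the paper: Lemma \ref{lema:inclusion} for the comparison monomorphisms, Proposition \ref{prop:J} for the invertibility of ${}_{\Phi_X}J_1$ with inverse ${}_1J_{\Phi_X}$, the identification of $\Inv{{}_1J_1}{A}$ with $\Inv{\I}{A}$ via $u_1$, and finally Proposition \ref{prop:monoiso}(2). The only detail worth making explicit is that the hypothesis on $u_1$ forces $u$ to be a monomorphism (by Proposition \ref{lema:-1}(iii)), which is the standing assumption needed to invoke Proposition \ref{prop:J}.
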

\begin{proof}
Under the assumption made on $u_1$, the unit $u$ is a monomorphism by Proposition \ref{lema:-1}(iii). Hence, by Proposition \ref{prop:J}, we know that  $_{\Phi_X}J_1$ is invertible with inverse ${}_1J_{\Phi_X}$. Therefore, we have that ${}_{\Phi_X}J_1\tensor{}{}_1J_{\Phi_X} \cong \I$ and ${}_1J_{\Phi_X}\tensor{}{}_{\Phi_X}J_{1} \cong \I$. Now,  by Lemma \ref{lema:inclusion}, we know that $X$ is injected in ${}_{\Phi_X}J_1$ while $Y$ is injected in ${}_1J_{\Phi_X}$. Summing up,  we have shown that the elements $X$ and ${}_{\Phi_X}J_1$ satisfy the assumptions of Proposition \ref{prop:monoiso}(2) so that we obtain the equalities  $X={}_{\Phi_X}J_1$ and $Y={}_1J_{\Phi_X}$.
\end{proof}

Given a monoid $(A,m,u)$ in $\cat{M}$, we define the set $\cat{G}_{\scriptscriptstyle{\Z{\I}}}(\Z{A})$ as follows:
\begin{equation}\label{Eq:G}
\cat{G}_{\scriptscriptstyle{\Z{\I}}}(\Z{A})\,:=\, \Big\{\underset{}{} \theta \in \Aut{\scriptscriptstyle{\Z{\I}}\text{-alg}}{\Z{A}}| \,\, {}_{\theta}J_1 \text{ is two-sided invertible subobject }  \Big\}.
\end{equation}
Here by ${}_{\theta}J_1$ invertible we mean that the morphisms $\bara{m}_{\theta,\theta}^1$  and  $\bara{m}^{\theta}_{1,1}$ given in equation \eqref{Eq:mu5} are  isomorphisms,  which, in view of equation \eqref{Eq:egos}, shows that ${}_{\theta}J_1 \in {\rm Inv}_{\scriptscriptstyle{{}_1J_1}}(A)$.  In this case,  the two-sided inverse of ${}_{\theta}J_1$ is ${}_1J_{\theta}={}_{\theta^{-1}}J_1$. As we will see below the set  $\cat{G}_{\scriptscriptstyle{\Z{\I}}}(\Z{A})$ is in fact a subgroup of the automorphisms group $\Aut{\scriptscriptstyle{\Z{\I}}\text{-alg}}{\Z{A}}$.

The following is one of our main results.
\begin{theorem}\label{thm:mono}
Let $(\cat{M},\tensor{}, \I)$ be a Penrose  monoidal abelian, locally small and bicomplete category with  right exact tensor products functors.  Let $(A,m, u)$ be a monoid such that the morphism $u_{1}: \I \to {}_1J_1$
is an isomorphism. Then the map $\Phi$ of Proposition \ref{pro: Phi} induces an isomorphism of groups $\widehat{\Phi}:\Inv{\I}{A} \rightarrow \cat{G}_{\scriptscriptstyle{\Z{\I}}}(\Z{A})$ which makes the following triangle commutative
\begin{small}
$$
\xymatrix@C=40pt{\Inv{\I}{A}  \ar@{->}^-{\Phi}[r] \ar@{->}_-{\cong}^{\widehat{\Phi}}[rd] &  \Aut{\scriptscriptstyle{\Z{\I}}\text{-alg}}{\Z{A}}  \\  & \cat{G}_{\scriptscriptstyle{\Z{\I}}}(\Z{A}),  \ar@{^(->}_-{\B{\varsigma}}[u]  }
$$
\end{small}
where $\B{\varsigma}$ denotes the canonical injection. Moreover, $\widehat{\Phi}^{-1}(\theta)= {}_{\theta}J_1$, for every $\theta \in  \cat{G}_{\scriptscriptstyle{\Z{\I}}}(\Z{A})$.
\end{theorem}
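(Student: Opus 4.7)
The plan is to corestrict $\Phi$ to a bijection $\widehat{\Phi}$ whose inverse will send each automorphism $\theta$ to the invariant subobject ${}_{\theta}J_{1}$, exploiting Proposition \ref{prop:J} and Lemma \ref{lema:1}.

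\emph{Well-definedness, triangle and injectivity.} First, by Proposition \ref{lema:-1}(iii) we have $u=\fk{eq}_{1,1}\circ u_{1}$, so the hypothesis that $u_{1}$ is an isomorphism forces $u$ to be a monomorphism. Proposition \ref{prop:J} then applies and shows that for every $X\in\Inv{\I}{A}$ the subobject ${}_{\Phi_{X}}J_{1}$ is two-sided invertible, i.e.~$\Phi_{X}\in \cat{G}_{\scriptscriptstyle{\Z{\I}}}(\Z{A})$. Thus $\Phi$ corestricts to a map $\widehat{\Phi}\colon \Inv{\I}{A}\to \cat{G}_{\scriptscriptstyle{\Z{\I}}}(\Z{A})$, which will be a group homomorphism by Proposition \ref{pro: Phi} and satisfy $\B{\varsigma}\circ \widehat{\Phi}=\Phi$ by construction. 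Injectivity then follows at once from Lemma \ref{lema:1}: if $\Phi_{X}=\Phi_{X'}$, then $X={}_{\Phi_{X}}J_{1}={}_{\Phi_{X'}}J_{1}=X'$.

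\emph{Surjectivity: constructing the preimage.} For $\theta\in \cat{G}_{\scriptscriptstyle{\Z{\I}}}(\Z{A})$, I would set $X_{\theta}:={}_{\theta}J_{1}$ (with canonical monomorphism $\fk{eq}_{\theta,1}\colon X_{\theta}\hookrightarrow A$) and $Y_{\theta}:={}_{1}J_{\theta}$; by the very definition of $\cat{G}_{\scriptscriptstyle{\Z{\I}}}(\Z{A})$ the pair is invertible in $\Inv{{}_{1}J_{1}}{A}$, and the task is to promote this to invertibility in $\Inv{\I}{A}$. Using ${}_{\theta}J_{\theta}={}_{1}J_{1}$ from \eqref{Eq:egos} together with Proposition \ref{lema:-1}(iv), both $X_{\theta}$ and $Y_{\theta}$ carry $({}_{1}J_{1},{}_{1}J_{1})$-bimodule structures; since $u_{1}$ is an isomorphism these actions coincide (via $u_{1}$) with the canonical $\I$-actions, so the triangle identity in $\cat{M}$ forces the two parallel arrows of the coequalizer defining $X_{\theta}\tensor{{}_{1}J_{1}}Y_{\theta}$ to agree, whence $\chi^{1}_{\theta,\theta}$ becomes an isomorphism. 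I would then put $m_{X_{\theta}}:=u_{1}^{-1}\circ \bara{m}^{1}_{\theta,\theta}\circ \chi^{1}_{\theta,\theta}$ and define $m_{Y_{\theta}}$ symmetrically; condition \eqref{def: mX}, which reads $u\circ m_{X_{\theta}}=m\circ(\fk{eq}_{\theta,1}\tensor{}\fk{eq}_{1,\theta})$, should then follow from \eqref{Eq:mu3} combined with $\fk{eq}_{\theta,\theta}=\fk{eq}_{1,1}$ and $u=\fk{eq}_{1,1}\circ u_{1}$, with \eqref{def: mY} being symmetric. This places $X_{\theta}$ in $\Inv{\I}{A}$.

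\emph{Identification $\Phi_{X_{\theta}}=\theta$ and the main obstacle.} Fixing $t\in\Z{A}$, I would unfold \eqref{Eq:phiXt} and apply associativity of $m$ to rewrite $\Phi_{X_{\theta}}(t)\circ m_{X_{\theta}}\colon X_{\theta}\tensor{}Y_{\theta}\to A$ as $m\circ\bigl(m\circ(\fk{eq}_{\theta,1}\tensor{} t)\tensor{}\fk{eq}_{1,\theta}\bigr)$ up to unit constraints. The equalizer defining relation \eqref{eq:equalizer} lets me replace $m\circ(\fk{eq}_{\theta,1}\tensor{} t)$ by $m\circ(\theta(t)\tensor{}\fk{eq}_{\theta,1})$; re-associating and substituting the identity $u\circ m_{X_{\theta}}=m\circ(\fk{eq}_{\theta,1}\tensor{}\fk{eq}_{1,\theta})$ established above then collapses the whole expression to $\theta(t)\circ m_{X_{\theta}}$ via the left unit constraint of $A$. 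Since $m_{X_{\theta}}$ is invertible I obtain $\Phi_{X_{\theta}}=\theta$, proving surjectivity together with the formula $\widehat{\Phi}^{-1}(\theta)={}_{\theta}J_{1}$. The main obstacle is precisely the identification of $\tensor{{}_{1}J_{1}}$ with $\tensor{}$ in the preceding step, which is what secures $X_{\theta}\in\Inv{\I}{A}$ and not merely $X_{\theta}\in\Inv{{}_{1}J_{1}}{A}$; once this is granted, the computation of $\Phi_{X_{\theta}}=\theta$ is a routine diagrammatic manipulation.
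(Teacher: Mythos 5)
Your proposal is correct and follows essentially the same route as the paper: corestriction via Proposition \ref{prop:J}, injectivity via Lemma \ref{lema:1}, and surjectivity by verifying $\Phi_{{}_{\theta}J_{1}}=\theta$ through the equalizer relation \eqref{Eq:seraloqsera} and the identity $\fk{eq}_{\theta,\theta}\circ u_{\theta}=u$. Your explicit justification that the $\I$-invertibility of ${}_{\theta}J_{1}$ follows from its ${}_{1}J_{1}$-invertibility (via $u_{1}$ being an isomorphism, which makes $\chi^{1}_{\theta,\theta}$ invertible) is a point the paper leaves implicit, but it is the same argument.
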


\begin{proof}
In view of Proposition \ref{prop:J},  it is clear that $\Phi$ corestricts to a map $\widehat{\Phi}$ such that $\varsigma\circ\widehat{\Phi}=\Phi$ and, by Lemma \ref{lema:1}, we have that $\widehat{\Phi}$ is injective. Let us check that it is also surjective i.e.~that  $\Phi_{{}_{\theta}J_1} \,=\, \theta$, for every $\theta \in \cat{G}_{\scriptscriptstyle{\Z{\I}}}(\Z{A})$.  Recall that, for a given $t \in \Z{A}$, we have
$$ \Phi_{{}_{\theta}J_1}(t) \,=\, m \circ (A\tensor{}m) \circ (\fk{eq}_{\theta,1}\tensor{}A\tensor{}\fk{eq}_{1,\theta}) \circ ({}_{\theta}J_1\tensor{}t\tensor{} {}_1J_{\theta})  \circ (m_{\theta,\theta}^{1})^{-1}\circ u_\theta.
$$

The desired equality  can be checked by diagrams as follows
\begin{equation}\label{Eq:phitheta}
\begin{tikzpicture}[x=8pt,y=8pt,thick]\pgfsetlinewidth{0.5pt}
\node[inner sep=1pt] at (0,0) {$\,\Phi_{{}_{\theta}J_1}(t)=$};
\node at (0,-6) {};
\end{tikzpicture}
\begin{tikzpicture}[x=8pt,y=8pt,thick]\pgfsetlinewidth{0.5pt}
\node[circle,draw, inner sep=0.5pt](1) at (-2,3) {$\scriptstyle{\fk{eq}_{\theta,1}}$};
\node[circle,draw, inner sep=0.5pt](2) at (2,3) {$\scriptstyle{\fk{eq}_{1,\theta}}$};
\node[circle,draw, inner sep=1.5pt](3) at (0,1) {$\scriptstyle{t}$};
\node(4) at (-0.5,-5) {$\scriptstyle{A}$};

\draw[-] (-2,4.5) to [out=-90,in=90] (1);
\draw[-] (2,4.5) to [out=-90,in=90] (2);
\draw[-] (2,4.5) to [out=90,in=90] (-2,4.5);

\draw[-] (2) to [out=-90,in=90] (2,-1);
\draw[-] (3) to [out=-90,in=90] (0,-1);

\draw[-] (0,-1) to [out=-90,in=-90] (2,-1);
\draw[-] (1) to [out=-90,in=90] (-2,-2.5);
\draw[-] (1,-2.5) to [out=90,in=-90] (1,-1.5);
\draw[-] (1,-2.5) to [out=-90,in=-90] (-2,-2.5);
\draw[-] (4) to [out=90,in=-90] (-0.5,-3.4);
\end{tikzpicture}
\begin{tikzpicture}[x=8pt,y=8pt,thick]\pgfsetlinewidth{0.5pt}
\node[inner sep=1pt] at (0,0) {$\overset{\eqref{Eq:seraloqsera}}{=}$};
\node at (0,-6) {};
\end{tikzpicture}
\begin{tikzpicture}[x=8pt,y=8pt,thick]\pgfsetlinewidth{0.5pt}
\node[circle,draw, inner sep=0.5pt](1) at (-1.5,3) {$\scriptstyle{\fk{eq}_{\theta,1}}$};
\node[circle,draw, inner sep=0.5pt](2) at (1.5,3) {$\scriptstyle{\fk{eq}_{1,\theta}}$};
\node[circle,draw, inner sep=0.5pt](3) at (4,1) {$\scriptstyle{\theta(t)}$};
\node(4) at (0.7,-5) {$\scriptstyle{A}$};

\draw[-] (-1.5,4.5) to [out=-90,in=90] (1);
\draw[-] (1.5,4.5) to [out=-90,in=90] (2);
\draw[-] (1.5,4.5) to [out=90,in=90] (-1.5,4.5);

\draw[-] (2) to [out=-90,in=90] (1.5,-1);
\draw[-] (3) to [out=-90,in=90] (4,-1);

\draw[-] (4,-1) to [out=-90,in=-90] (1.5,-1);

\draw[-] (1) to [out=-90,in=90] (-1.5,-2.5);
\draw[-] (3,-2.5) to [out=90,in=-90] (3,-1.7);
\draw[-] (3,-2.5) to [out=-90,in=-90] (-1.5,-2.5);
\draw[-] (4) to [out=90,in=-90] (0.7,-3.8);
\end{tikzpicture}
\begin{tikzpicture}[x=8pt,y=8pt,thick]\pgfsetlinewidth{0.5pt}
\node[inner sep=1pt] at (0,0) {$\,=\,$};
\node at (0,-6) {};
\end{tikzpicture}
\begin{tikzpicture}[x=8pt,y=8pt,thick]\pgfsetlinewidth{0.5pt}
\node[circle,draw, inner sep=0.5pt](1) at (-1.5,3) {$\scriptstyle{\fk{eq}_{\theta,1}}$};
\node[circle,draw, inner sep=0.5pt](2) at (1.5,3) {$\scriptstyle{\fk{eq}_{1,\theta}}$};
\node[circle,draw, inner sep=0.5pt](3) at (4,1) {$\scriptstyle{\theta(t)}$};
\node(4) at (2,-5) {$\scriptstyle{A}$};

\draw[-] (-1.5,4.5) to [out=-90,in=90] (1);
\draw[-] (1.5,4.5) to [out=-90,in=90] (2);
\draw[-] (1.5,4.5) to [out=90,in=90] (-1.5,4.5);

\draw[-] (2) to [out=-90,in=90] (1.5,0);
\draw[-] (3) to [out=-90,in=90] (4,-2);

\draw[-] (1.5,0) to [out=-90,in=-90] (-1.5,0);

\draw[-] (1) to [out=-90,in=90] (-1.5,0);
\draw[-] (0,-1) to [out=-90,in=90] (0,-2);
\draw[-] (4,-2) to [out=-90,in=-90] (0,-2);
\draw[-] (4) to [out=90,in=-90] (2,-3.2);
\end{tikzpicture}
\begin{tikzpicture}[x=8pt,y=8pt,thick]\pgfsetlinewidth{0.5pt}
\node[inner sep=1pt] at (0,0) {$\,=\,$};
\node at (0,-6) {};
\end{tikzpicture}
\begin{tikzpicture}[x=8pt,y=8pt,thick]\pgfsetlinewidth{0.5pt}
\node[rectangle,draw, inner sep=2pt, text width=1.2cm, text centered](1) at (0,4) {$\scriptstyle{m_{\theta,\theta}^1}$};
\node[circle,draw, inner sep=0.5pt](2) at (0,1) {$\scriptstyle{\fk{eq}_{\theta,\theta}}$};
\node[circle,draw, inner sep=1pt](3) at (2,-1) {$\scriptstyle{\theta(t)}$};
\node(4) at (1,-5) {$\scriptstyle{A}$};

\draw[-] (-1.3,6) to [out=90,in=90] (1.3,6);
\draw[-] (-1.3,6) to [out=-90,in=90] (-1.3,4.8);
\draw[-] (1.3,6) to [out=-90,in=90] (1.3,4.8);
\draw[-] (2) to [out=-90,in=90] (0,-3);
\draw[-] (3) to [out=-90,in=90] (2,-3);

\draw[-] (2,-3) to [out=-90,in=-90] (0,-3);

\draw[-] (1) to [out=-90,in=90] (2);
\draw[-] (4) to [out=90,in=-90] (1,-3.5);
\end{tikzpicture}
\begin{tikzpicture}[x=8pt,y=8pt,thick]\pgfsetlinewidth{0.5pt}
\node[inner sep=1pt] at (0,0) {$\,=\;$};
\node at (0,-6) {};
\end{tikzpicture}
\begin{tikzpicture}[x=8pt,y=8pt,thick]\pgfsetlinewidth{0.5pt}
\node[circle,draw, inner sep=1pt](1) at (0,4) {$\scriptstyle{u_{\theta}}$};
\node[circle,draw, inner sep=0.5pt](2) at (0,1) {$\scriptstyle{\fk{eq}_{\theta,\theta}}$};
\node[circle,draw, inner sep=0.5pt](3) at (2,-1) {$\scriptstyle{\theta(t)}$};
\node(4) at (1,-5) {$\scriptstyle{A}$};

\draw[-] (2) to [out=-90,in=90] (0,-3);
\draw[-] (3) to [out=-90,in=90] (2,-3);

\draw[-] (2,-3) to [out=-90,in=-90] (0,-3);

\draw[-] (1) to [out=-90,in=90] (2);
\draw[-] (4) to [out=90,in=-90] (1,-3.5);
\end{tikzpicture}
\begin{tikzpicture}[x=8pt,y=8pt,thick]\pgfsetlinewidth{0.5pt}
\node[inner sep=1pt] at (0,0) {$=$};
\node at (0,-6) {};
\end{tikzpicture}
\begin{tikzpicture}[x=8pt,y=8pt,thick]\pgfsetlinewidth{0.5pt}
\node[circle,draw, inner sep=1.5pt](1) at (0,2) {$\scriptstyle{u}$};
\node[circle,draw, inner sep=0.5pt](3) at (3,2) {$\scriptstyle{\theta(t)}$};
\node(4) at (1.5,-5) {$\scriptstyle{A}$};

\draw[-] (3) to [out=-90,in=90] (3,-1);
\draw[-] (3,-1) to [out=-90,in=-90] (0,-1);
\draw[-] (1) to [out=-90,in=90] (0,-1);
\draw[-] (4) to [out=90,in=-90] (1.5,-2);
\end{tikzpicture}
\begin{tikzpicture}[x=8pt,y=8pt,thick]\pgfsetlinewidth{0.5pt}
\node[inner sep=1pt] at (0,0) {$=\theta(t).$};
\node at (0,-6) {};
\end{tikzpicture}
\end{equation}

We have so proved that the map $\widehat{\Phi}:\Inv{\I}{A} \rightarrow \cat{G}_{\scriptscriptstyle{\Z{\I}}}(\Z{A})$ is bijective. As a consequence, since $\Phi$ is a group morphism, we have for free that  $\cat{G}_{\scriptscriptstyle{\Z{\I}}}(\Z{A})$ is a subgroup of the automorphisms group $\Aut{\scriptscriptstyle{\Z{\I}}\text{-alg}}{\Z{A}}$ and that both $\widehat{\Phi}$ and $\varsigma$ are group morphisms.
\end{proof}

\subsection{The case when $\Phi$ is an isomorphism of groups}\label{ssec:iso}

Let $M$ and $N$ be two $A$-bimodules. Consider the action $\Z{A}\times \hom{\cat{M}}{M}{N}\rightarrow \hom{\cat{M}}{M}{N}:(z,f)\mapsto z\triangleright f$, defined by \eqref{Eq:triangles} for $S=\I$, and its right-hand version $f\triangleleft z$. These actions  turns in fact the $\Z{\I}$-module $\hom{\cat{M}}{M}{N}$ into a $\Z{A}$-bimodule.
Note that for $f\in \hom{\cat{M}}{A}{A}$ one easily checks that
\begin{equation}\label{Eq:linTriang}
f\in \lhom{A}{A}{A}\quad\text{ if and only if  }\quad f=A \triangleleft (fu).
\end{equation}
Here the notation $fu$ stands for the composition. Given $\sigma,\tau:\Z{A}\rightarrow \Z{A}$  two $\Z{I}$-linear maps, we set $$\cat{M}_{A,\Z{A}}(M_\sigma,N_\tau):=\LR{f\in \hom{A\text{-}}{M}{N} \mid f\circ (M\triangleleft \sigma(z))=f\triangleleft \tau(z),\forall z\in \Z{A}}.$$

For every $f\in\cat{M}_{A,\Z{A}}(A_\sigma,A_\tau),z\in\Z{A}$, we have
 \begin{gather}\label{form:lf}
   z\triangleright f\overset{\eqref{Eq:linTriang}}{=}z\triangleright(A \triangleleft (fu))=(z\triangleright A) \triangleleft (fu)=(A\triangleleft (fu))\circ(z\triangleright A)\overset{\eqref{Eq:linTriang}}{=}f \circ(z\triangleright A).
 \end{gather}
Let $\rho$ be another $\Z{\I}$-linear endomorphism of $\Z{A}$. We then compute
\begin{multline*}
 (\rho(t)\triangleright A)\circ f\circ \fk{eq}_{\rho,\sigma}=(\rho(t)\triangleright f ) \circ\fk{eq}_{\rho,\sigma}\overset{\eqref{form:lf}}{=}f\circ (\rho(t)\triangleright A)\circ\fk{eq}_{\rho,\sigma}
  \overset{\eqref{Eq:J}}{=}f\circ (A\triangleleft\sigma(t))\circ\fk{eq}_{\rho,\sigma}=(f\triangleleft\tau(t))\circ\fk{eq}_{\rho,\sigma} \\ =(A\triangleleft\tau(t))\circ f\circ\fk{eq}_{\rho,\sigma},
\end{multline*}
for any  $f\in\cat{M}_{A,\Z{A}}(A_\sigma,A_\tau)$ and $ t\in\Z{A}$. Using the universal property of $\fk{eq}_{\rho,\tau}$, we so get a unique morphism ${_\rho f}:{_\rho J_\sigma}\rightarrow {_\rho J_\tau}$ such that $\fk{eq}_{\rho,\tau}\circ {_\rho f}\,=\,f\circ \fk{eq}_{\rho,\sigma}$ which in diagrammatic form is expressed by
\begin{equation}\label{Eq:rhof}
\begin{tikzpicture}[x=8pt,y=8pt,thick]\pgfsetlinewidth{0.5pt}
\node(1) at (0,5) {$\scriptstyle{{}_{\rho}J_{\sigma}}$};
\node[circle,draw, inner sep=0.5pt](2) at (0,2) {$\scriptstyle{\fk{eq}_{\rho, \sigma}}$};
\node[circle,draw, inner sep=0.7pt](3) at (0,-2) {$\scriptstyle{f}$};
\node(4) at (0,-5) {$\scriptstyle{A}$};

\draw[-] (1) to [out=-90, in=90] (2);
\draw[-] (2) to [out=-90, in=90] (3);
\draw[-] (3) to [out=-90, in=90] (4);
\end{tikzpicture}
\begin{tikzpicture}[x=8pt,y=8pt,thick]\pgfsetlinewidth{0.5pt}
\node[inner sep=1pt] at (0,0) {$\,=\,$};
\node at (0,-5) {};
\end{tikzpicture}
\begin{tikzpicture}[x=8pt,y=8pt,thick]\pgfsetlinewidth{0.5pt}
\node(1) at (0,5) {$\scriptstyle{{}_{\rho}J_{\sigma}}$};
\node[circle,draw, inner sep=0.5pt](2) at (0,2) {$\scriptstyle{{}_{\rho}f}$};
\node[circle,draw, inner sep=0.5pt](3) at (0,-2) {$\scriptstyle{\fk{eq}_{\rho, \tau}}$};
\node(4) at (0,-5) {$\scriptstyle{A}$};

\draw[-] (1) to [out=-90, in=90] (2);
\draw[-] (2) to [out=-90, in=90] (3);
\draw[-] (3) to [out=-90, in=90] (4);
\end{tikzpicture}
\end{equation}
In this way we have defined a map
\begin{equation}\label{Eq:Rf}
\cat{M}_{A,\Z{A}}(A_\sigma,A_\tau)\longrightarrow \hom{\cat{M}}{_\rho J_\sigma}{_\rho J_\tau}:f\longmapsto {_\rho f}.
\end{equation}

\begin{definition}\label{def:divide}
Let $M$ and $N$ be two $A$-bimodules.
\begin{enumerate}[(1)]
\item For $\sigma,\tau:\Z{A}\rightarrow \Z{A}$, we will write $M_\sigma|N_\tau$ whenever there is $k\geq 1 $ and morphisms $f_{i}\in\cat{M}_{A,\Z{A}}(M_\sigma,N_\tau)$, $g_{i}\in\cat{M}_{A,\Z{A}}(N_\tau,M_\sigma)$, for $i=1,\cdots,k$,
such that $\sum_{i}g_{i}\circ f_{i}={M}$.
We will write $M_\sigma\sim N_\tau$ whenever both $M_\sigma|N_\tau$ and $N_\tau|M_\sigma$.

\item If there is $k\geq 1 $ and morphisms $f_{i}\in\hom{A,A}{M}{N}$, $g_{i}\in \hom{A,A}{N}{M}$, for $i=1,\cdots,k$,
such that $\sum_{i}g_{i}\circ f_{i}={M}$, then we will write $M|N$.  Since  our base category $\cat{M}$ is an abelian category, this is equivalent to say that $M$,  as an $A$-bimodule, is direct summand   of a finite direct sum of copies of $N$.  We will write $M\sim N$ whenever both $M|N$ and $N|M$.  The same notation was used for bimodules over rings by  Miyashita in \cite{Mi} which in fact  goes back to a Hirata \cite{Hirata:68}, although with different symbols.
\end{enumerate}
\end{definition}

The possible relation between the items $(1)$ and $(2)$ in the previous Definition will be explored in Proposition \ref{pro:div} below.
 Part of the following Theorem can be compared with \cite[Theorem 5.3]{PareigisIII}.
\begin{theorem}\label{teo:Miy1.2}
Let $(A,m,u)$ be a monoid in $\cat{M}$. Let $\sigma,\delta,\gamma:\Z{A}\rightarrow \Z{A}$ be $\Z{\I}$-linear maps and assume that $A_\gamma | A_\delta$.
Then the morphism $m_{\sigma,\gamma}^\delta$ is a split epimorphism. If we further assume that  $A$ is  right flat and
that $u_\delta:\I\rightarrow {_\delta J_\delta}$ is an isomorphism, then $m_{\sigma,\gamma}^\delta$ is an isomorphism.
\end{theorem}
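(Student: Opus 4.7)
The plan is to build an explicit splitting $s$ of $m_{\sigma,\gamma}^{\delta}$ from the data witnessing $A_\gamma\,|\,A_\delta$, and then, under the additional hypotheses, to upgrade $s$ into a two-sided inverse. Fix morphisms $f_i\in\cat{M}_{A,\Z{A}}(A_\gamma,A_\delta)$ and $g_i\in\cat{M}_{A,\Z{A}}(A_\delta,A_\gamma)$ with $\sum_i g_i\circ f_i=\id_A$, and introduce the induced morphisms $\alpha_i:={}_\sigma f_i:{}_\sigma J_\gamma\to{}_\sigma J_\delta$ and $\beta_i:={}_\delta g_i\circ u_\delta:\I\to{}_\delta J_\gamma$ produced via \eqref{Eq:Rf}. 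Define
$$
s:=\sum_i(\alpha_i\tensor{}\beta_i)\circ r^{-1}_{{}_\sigma J_\gamma}:{}_\sigma J_\gamma\longrightarrow{}_\sigma J_\delta\tensor{}{}_\delta J_\gamma.
$$
The identity $m_{\sigma,\gamma}^{\delta}\circ s=\id$ is checked by postcomposing with the monomorphism $\fk{eq}_{\sigma,\gamma}$: using \eqref{Eq:mu3}, \eqref{Eq:rhof} and the formulas $f_i=A\triangleleft(f_iu)$, $g_i=A\triangleleft(g_iu)$ from \eqref{Eq:linTriang}, associativity of $m$ reduces the left hand side to $\fk{eq}_{\sigma,\gamma}$ composed with right multiplication by $\sum_i(f_iu)\cdot(g_iu)$, and this latter sum equals $u$ by evaluating $\sum_i g_i\circ f_i=\id_A$ at $u$. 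This proves $m_{\sigma,\gamma}^{\delta}$ is a split epimorphism.

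For the second claim, assume $A$ right flat and $u_\delta$ an isomorphism. The universal property of $\fk{eq}_{\delta,\gamma}$ applied to $\sum_i g_i\circ f_i=\id_A$ yields $\sum_i{}_\delta g_i\circ{}_\delta f_i=\id_{{}_\delta J_\gamma}$. Setting $\gamma_i:=u_\delta^{-1}\circ{}_\delta f_i:{}_\delta J_\gamma\to\I$, this becomes $\sum_i\beta_i\circ\gamma_i=\id_{{}_\delta J_\gamma}$, exhibiting ${}_\delta J_\gamma$ as a direct summand of $\I^{\oplus k}$; in particular ${}_\delta J_\gamma$ is left flat. Combined with $A$ right flat this makes the composite $\fk{eq}_{\sigma,\delta}\tensor{}\fk{eq}_{\delta,\gamma}:{}_\sigma J_\delta\tensor{}{}_\delta J_\gamma\to A\tensor{}A$ (factored through $A\tensor{}{}_\delta J_\gamma$) a monomorphism. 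Writing $\id_{{}_\sigma J_\delta\tensor{}{}_\delta J_\gamma}=\sum_i({}_\sigma J_\delta\tensor{}\beta_i)\circ({}_\sigma J_\delta\tensor{}\gamma_i)$, to establish $s\circ m_{\sigma,\gamma}^{\delta}=\id$ it suffices to verify, for each $i$, that $s\circ m_{\sigma,\gamma}^{\delta}\circ({}_\sigma J_\delta\tensor{}\beta_i)={}_\sigma J_\delta\tensor{}\beta_i$. A short computation using \eqref{Eq:mu3} and the universal property identifies $m_{\sigma,\gamma}^{\delta}\circ({}_\sigma J_\delta\tensor{}\beta_i)$ with ${}_\sigma g_i\circ r_{{}_\sigma J_\delta}$, so the task reduces to the identity $s\circ{}_\sigma g_i=({}_\sigma J_\delta\tensor{}\beta_i)\circ r^{-1}_{{}_\sigma J_\delta}$.

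The crux—and main obstacle—is the scalar identification underlying this last equality. Write $b_i:=\fk{eq}_{\delta,\gamma}\circ\beta_i$ and $a_j:=f_j\circ u$ (both morphisms $\I\to A$). The product $m\circ(b_i\tensor{}a_j)\circ r_\I^{-1}$ satisfies $\delta(z)\triangleright(-)=(-)\triangleleft\delta(z)$ for every $z\in\Z{A}$, so it factors through $\fk{eq}_{\delta,\delta}$; since $u_\delta$ is invertible it factors further as $u\circ\lambda_{ij}$ for a unique scalar $\lambda_{ij}\in\Z{\I}$. Penrose centrality of $\lambda_{ij}$ then gives $(A\triangleleft a_j)\circ(A\triangleleft b_i)=A\triangleleft(b_ia_j)=\lambda_{ij}\cdot\id_A$, from which $\alpha_j\circ{}_\sigma g_i=\lambda_{ij}\cdot\id_{{}_\sigma J_\delta}$ follows by composing with the mono $\fk{eq}_{\sigma,\delta}$. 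The chain
$$
\sum_j\lambda_{ij}\cdot b_j=\sum_j(b_ia_j)\,b_j=b_i\cdot\sum_j a_jb_j=b_i\cdot u=b_i,
$$
pulled back through the mono $\fk{eq}_{\delta,\gamma}$, gives $\sum_j\lambda_{ij}\cdot\beta_j=\beta_i$. Assembling these identities,
$$
s\circ{}_\sigma g_i=\sum_j(\alpha_j\circ{}_\sigma g_i\tensor{}\beta_j)\circ r^{-1}_{{}_\sigma J_\delta}=\bigl(\id_{{}_\sigma J_\delta}\tensor{}\textstyle\sum_j\lambda_{ij}\cdot\beta_j\bigr)\circ r^{-1}_{{}_\sigma J_\delta}=({}_\sigma J_\delta\tensor{}\beta_i)\circ r^{-1}_{{}_\sigma J_\delta},
$$
which completes the proof. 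The delicate step is precisely this Penrose-centrality manipulation that allows the scalars $\lambda_{ij}$ to be slid freely across $\tensor{}$, together with the observation that $u_\delta$ being an isomorphism is exactly what converts products $b_ia_j\in{}_\delta J_\delta$ into genuine scalars in $\Z{\I}$.
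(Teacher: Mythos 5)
Your proof is correct. The first half coincides with the paper's: your $s$ is exactly the splitting $\xi^{\delta}_{\sigma,\gamma}$ of \eqref{Eq:xidsg}, and the verification $m^{\delta}_{\sigma,\gamma}\circ s=\id$ after composing with the monomorphism $\fk{eq}_{\sigma,\gamma}$ is the same computation (your reduction to right multiplication by $\sum_i(f_iu)(g_iu)$ tacitly uses left $A$-linearity of the $f_i$, and identifying that sum with $u$ uses left $A$-linearity of the $g_i$, i.e.\ \eqref{form:figi}; both are available). The second half genuinely differs. The paper proves $\xi^{\delta}_{\sigma,\gamma}\circ m^{\delta}_{\sigma,\gamma}=\id$ by composing with $\fk{eq}_{\sigma,\delta}\tensor{}\fk{eq}_{\delta,\gamma}$ and then showing this is a monomorphism; that is precisely where right flatness of $A$ enters, together with left flatness of ${}_{\delta}J_{\gamma}$, which the paper extracts from an auxiliary right dualizable datum and Proposition \ref{prop:adjunction}. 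You instead promote $\sum_i g_i\circ f_i=\id_A$ to $\sum_i{}_{\delta}g_i\circ{}_{\delta}f_i=\id_{{}_{\delta}J_{\gamma}}$ and use $u_{\delta}^{-1}$ to exhibit ${}_{\delta}J_{\gamma}$ as a retract of $\I^{\oplus k}$; this lets you decompose $\id_{{}_{\sigma}J_{\delta}\tensor{}{}_{\delta}J_{\gamma}}$ through ${}_{\sigma}J_{\delta}\tensor{}\I$ and reduce $s\circ m^{\delta}_{\sigma,\gamma}=\id$ to componentwise identities checked against single equalizer monomorphisms, with the $\Z{\I}$-valued matrix $(\lambda_{ij})$ doing the bookkeeping; the scalar manipulations are legitimate exactly because the standing Penrose hypothesis identifies the left and right $\Z{\I}$-actions and lets scalars slide across $\tensor{}$. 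Two remarks on what each route buys: your sentence about $\fk{eq}_{\sigma,\delta}\tensor{}\fk{eq}_{\delta,\gamma}$ being a monomorphism is never used afterwards, so your argument in fact never invokes the right flatness of $A$ and proves the isomorphism statement from the invertibility of $u_{\delta}$ alone (and, in passing, it gives a more elementary proof that ${}_{\delta}J_{\gamma}$ is flat, as a retract of $\I^{\oplus k}$, than the paper's dualizable-datum detour); the paper's route, on the other hand, stays purely diagrammatic and produces along the way the dualizable datum $({}_{\gamma}J_{\delta},{}_{\delta}J_{\gamma},\bara{m}^{\gamma}_{\delta,\delta},\bara{\xi}^{\delta}_{\gamma,\gamma})$, which is of independent use in the surrounding development.
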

\begin{proof}
Let  $g_i$ denote the resulting maps from the assumption  $A_\gamma|A_\delta$. Since each $g_i$ is left $A$-linear, we have
\begin{equation}\label{form:figi}
\sum_{i}f_i\triangleleft (g_i\circ u)=\sum_{i}g_i\circ (f_i\triangleleft u)=\sum_{i}g_i\circ f_i=A.
\end{equation}
We set $\xi_{\sigma,\gamma}^\delta:=(\sum_{i}{_\sigma f_{i}}\otimes {_\delta g_{i}})\circ(_\sigma J_\gamma\otimes u_\delta)\circ r^{-1}_{_\sigma J_\gamma}$,
where ${}_{\sigma}f_i$ and ${}_{\delta}g_j$ are defined as in equation \eqref{Eq:rhof}.
By its own definition the morphism $\xi^{\delta}_{\sigma,\gamma}$ satisfies
\begin{equation}\label{Eq:xidsg}
\begin{tikzpicture}[x=8pt,y=8pt,thick]\pgfsetlinewidth{0.5pt}
\node(1) at (0,4) {$\scriptstyle{{}_{\sigma}J_{\gamma}}$};
\node[rectangle,draw, text width=1.5cm, text centered](2) at (0,1) {$\scriptstyle{\xi_{\sigma,\gamma}^{\delta}}$};
\node(3) at (-2,-4) {$\scriptstyle{{}_{\sigma}J_{\delta}}$};
\node(4) at (2,-4) {$\scriptstyle{{}_{\delta}J_{\gamma}}$};

\draw[-] (1) to [out=-90, in=90] (2);
\draw[-] (2,0) to [out=-90, in=90] (4);
\draw[-] (-2,0) to [out=-90, in=90] (3);
\end{tikzpicture}
\begin{tikzpicture}[x=8pt,y=8pt,thick]\pgfsetlinewidth{0.5pt}
\node[inner sep=1pt] at (0,0) {$\,=\,$};
\node at (0,-4) {};
\end{tikzpicture}
\begin{tikzpicture}[x=8pt,y=8pt,thick]\pgfsetlinewidth{0.5pt}
\node(1) at (0,4) {$\scriptstyle{{}_{\sigma}J_{\gamma}}$};
\node(3) at (0,-4) {$\scriptstyle{{}_{\sigma}J_{\delta}}$};
\node(4) at (4,-4) {$\scriptstyle{{}_{\delta}J_{\gamma}}$};
\node[circle,draw, inner sep=0.5pt](6) at (4,-0.5) {$\scriptstyle{{}_{\delta}g_i}$};
\node[circle,draw, inner sep=1pt](5) at (4,2) {$\scriptstyle{u_{\delta}}$};
\node[circle,draw, inner sep=0.5pt](2) at (0,-0.5) {$\scriptstyle{{}_{\sigma}f_i}$};

\draw[-] (1) to [out=-90, in=90] (2);
\draw[-] (2) to [out=-90, in=90] (3);
\draw[-] (5) to [out=-90, in=90] (6);
\draw[-] (6) to [out=-90, in=90] (4);
\end{tikzpicture}
\end{equation}
where the summation is understood. So we compute
\begin{center}
\begin{tikzpicture}[x=8pt,y=8pt,thick]\pgfsetlinewidth{0.5pt}
\node(1) at (0,8) {$\scriptstyle{{}_{\sigma}J_{\gamma}}$};
\node[rectangle,draw, text width=1.5cm, text centered](2) at (0,5) {$\scriptstyle{\xi_{\sigma,\gamma}^{\delta}}$};
\node[rectangle,draw, text width=1.5cm, text centered](3) at (0,1) {$\scriptstyle{m_{\sigma,\gamma}^{\delta}}$};
\node[circle,draw, inner sep=0.5pt](4) at (0,-3) {$\scriptstyle{\fk{eq}_{\sigma,\gamma}}$};
\node(5) at (0,-8) {$\scriptstyle{A}$};

\draw[-] (1) to [out=-90, in=90] (2);
\draw[-] (-1.5,4) to [out=-90, in=90] (-1.5,2);
\draw[-] (1.5,4) to [out=-90, in=90] (1.5,2);
\draw[-] (3) to [out=-90, in=90] (4);
\draw[-] (4) to [out=-90, in=90] (5);
\end{tikzpicture}
\begin{tikzpicture}[x=8pt,y=8pt,thick]\pgfsetlinewidth{0.5pt}
\node[inner sep=1pt] at (0,1) {$\,\overset{\eqref{Eq:mu3}, \,\eqref{Eq:xidsg}}{=}\,$};
\node at (0,-8) {};
\end{tikzpicture}
\begin{tikzpicture}[x=8pt,y=8pt,thick]\pgfsetlinewidth{0.5pt}
\node(1) at (0,8) {$\scriptstyle{{}_{\sigma}J_{\gamma}}$};
\node[circle,draw, inner sep=0.5pt](2) at (0,2) {$\scriptstyle{{}_{\sigma}f_i}$};
\node[circle,draw, inner sep=0.5pt](3) at (0,-3) {$\scriptstyle{\fk{eq}_{\sigma,\delta}}$};
\node[circle,draw, inner sep=0.5pt](4) at (4,-3) {$\scriptstyle{\fk{eq}_{\delta,\gamma}}$};
\node(5) at (2,-8) {$\scriptstyle{A}$};
\node[circle,draw, inner sep=0.5pt](6) at (4,1) {$\scriptstyle{{}_{\delta}g_i}$};
\node[circle,draw, inner sep=0.5pt](7) at (4,5) {$\scriptstyle{u_{\delta}}$};

\draw[-] (1) to [out=-90, in=90] (2);
\draw[-] (3) to [out=-90, in=90] (0,-4.5);
\draw[-] (4) to [out=-90, in=90] (4,-4.5);
\draw[-] (0,-4.5) to [out=-90, in=-90] (4,-4.5);
\draw[-] (2) to [out=-90, in=90] (3);
\draw[-] (7) to [out=-90, in=90] (6);
\draw[-] (6) to [out=-90, in=90] (4);
\draw[-] (5) to [out=90, in=-90] (2,-5.6);
\end{tikzpicture}
\begin{tikzpicture}[x=8pt,y=8pt,thick]\pgfsetlinewidth{0.5pt}
\node[inner sep=1pt] at (0,1) {$\,\overset{\eqref{Eq:rhof}}{=}\,$};
\node at (0,-8) {};
\end{tikzpicture}
\begin{tikzpicture}[x=8pt,y=8pt,thick]\pgfsetlinewidth{0.5pt}
\node(1) at (0,8) {$\scriptstyle{{}_{\sigma}J_{\gamma}}$};
\node[circle,draw, inner sep=0.5pt](2) at (0,2) {$\scriptstyle{\fk{eq}_{\sigma,\gamma}}$};
\node[circle,draw, inner sep=0.5pt](3) at (0,-3) {$\scriptstyle{f_i}$};
\node[circle,draw, inner sep=1pt](4) at (4,-3) {$\scriptstyle{g_i}$};
\node(5) at (2,-8) {$\scriptstyle{A}$};
\node[circle,draw, inner sep=0.5pt](6) at (4,1) {$\scriptstyle{\fk{eq}_{\delta,\delta}}$};
\node[circle,draw, inner sep=0.5pt](7) at (4,5) {$\scriptstyle{u_{\delta}}$};

\draw[-] (1) to [out=-90, in=90] (2);
\draw[-] (3) to [out=-90, in=90] (0,-4.5);
\draw[-] (4) to [out=-90, in=90] (4,-4.5);
\draw[-] (0,-4.5) to [out=-90, in=-90] (4,-4.5);
\draw[-] (2) to [out=-90, in=90] (3);
\draw[-] (7) to [out=-90, in=90] (6);
\draw[-] (6) to [out=-90, in=90] (4);
\draw[-] (5) to [out=90, in=-90] (2,-5.6);
\end{tikzpicture}
\begin{tikzpicture}[x=8pt,y=8pt,thick]\pgfsetlinewidth{0.5pt}
\node[inner sep=1pt] at (0,1) {$\,=\,$};
\node at (0,-8) {};
\end{tikzpicture}
\begin{tikzpicture}[x=8pt,y=8pt,thick]\pgfsetlinewidth{0.5pt}
\node(1) at (0,8) {$\scriptstyle{{}_{\sigma}J_{\gamma}}$};
\node[circle,draw, inner sep=0.5pt](2) at (0,2) {$\scriptstyle{\fk{eq}_{\sigma,\gamma}}$};
\node[circle,draw, inner sep=0.5pt](3) at (0,-3) {$\scriptstyle{f_i}$};
\node[circle,draw, inner sep=1pt](4) at (4,-3) {$\scriptstyle{g_i}$};
\node(5) at (2,-8) {$\scriptstyle{A}$};
\node[circle,draw, inner sep=1.5pt](7) at (4,1) {$\scriptstyle{u}$};

\draw[-] (1) to [out=-90, in=90] (2);
\draw[-] (3) to [out=-90, in=90] (0,-4.5);
\draw[-] (4) to [out=-90, in=90] (4,-4.5);
\draw[-] (0,-4.5) to [out=-90, in=-90] (4,-4.5);
\draw[-] (2) to [out=-90, in=90] (3);
\draw[-] (7) to [out=-90, in=90] (4);
\draw[-] (5) to [out=90, in=-90] (2,-5.6);
\end{tikzpicture}
\begin{tikzpicture}[x=8pt,y=8pt,thick]\pgfsetlinewidth{0.5pt}
\node[inner sep=1pt] at (0,1) {$\,\overset{\eqref{form:figi}}{=}\,$};
\node at (0,-8) {};
\end{tikzpicture}
\begin{tikzpicture}[x=8pt,y=8pt,thick]\pgfsetlinewidth{0.5pt}
\node(1) at (0,8) {$\scriptstyle{{}_{\sigma}J_{\gamma}}$};
\node[circle,draw, inner sep=0.5pt](2) at (0,2) {$\scriptstyle{\fk{eq}_{\sigma,\gamma}}$};
\node(5) at (0,-8) {$\scriptstyle{A}$};

\draw[-] (1) to [out=-90, in=90] (2);
\draw[-] (2) to [out=-90, in=90] (5);
\end{tikzpicture}
\end{center}

Since $\fk{eq}_{\sigma,\gamma}$ is a monomorphism, we conclude that
$m_{\sigma,\gamma}^\delta\circ\xi_{\sigma,\gamma}^\delta={_\sigma J_\gamma}$ and hence $m_{\sigma,\gamma}^\delta$ is a split epimorphism.
Now let us check, under the assumptions $u_\delta:\I\rightarrow {_\delta J_\delta}$ is an isomorphism and $A$ is right flat, that $m_{\sigma,\gamma}^{\delta}$ is indeed an isomorphism. Using diagrams, with summation understood again,  we have

\begin{center}
\begin{tikzpicture}[x=8pt,y=8pt,thick]\pgfsetlinewidth{0.5pt}
\node(1) at (-2,3.5) {$\scriptstyle{{}_{\sigma}J_{\delta}}$};
\node(2) at (2,3.5) {$\scriptstyle{{}_{\delta}J_{\gamma}}$};
\node[rectangle,draw, text width=1.5cm, text centered](3) at (0,0) {$\scriptstyle{m_{\sigma,\gamma}^{\delta}}$};
\node[rectangle,draw, text width=1.5cm, text centered](4) at (0,-3.5) {$\scriptstyle{\xi_{\sigma,\gamma}^{\delta}}$};
\node[circle,draw, inner sep=0.5pt](5) at (-2,-7.5) {$\scriptstyle{\fk{eq}_{\sigma, \delta}}$};
\node[circle,draw, inner sep=0.5pt](6) at (2,-7.5) {$\scriptstyle{\fk{eq}_{\delta, \gamma}}$};
\node(7) at (-2,-11) {$\scriptstyle{A}$};
\node(8) at (2,-11) {$\scriptstyle{A}$};

\draw[-] (1) to [out=-90, in=90] (-2,1);
\draw[-] (2) to [out=-90, in=90] (2,1);
\draw[-] (3) to [out=-90, in=90] (4);
\draw[-] (-2,-4.5) to [out=-90, in=90] (5);
\draw[-] (2,-4.5) to [out=-90, in=90] (6);
\draw[-] (5) to [out=-90, in=90] (7);
\draw[-] (6) to [out=-90, in=90] (8);
\end{tikzpicture}
\begin{tikzpicture}[x=8pt,y=8pt,thick]\pgfsetlinewidth{0.5pt}
\node[inner sep=1pt] at (0,-3) {$\,=\,$};
\node at (0,-10) {};
\end{tikzpicture}
\begin{tikzpicture}[x=8pt,y=8pt,thick]\pgfsetlinewidth{0.5pt}
\node(1) at (-2,3.5) {$\scriptstyle{{}_{\sigma}J_{\delta}}$};
\node(2) at (2,3.5) {$\scriptstyle{{}_{\delta}J_{\gamma}}$};
\node[rectangle,draw, text width=1.5cm, text centered](3) at (0,0) {$\scriptstyle{m_{\sigma,\gamma}^{\delta}}$};
\node[circle,draw, inner sep=0.5pt](5) at (0,-3.5) {$\scriptstyle{\fk{eq}_{\sigma, \gamma}}$};
\node[circle,draw, inner sep=0.5pt](6) at (4,-7.5) {$\scriptstyle{g_i}$};
\node[circle,draw, inner sep=1.5pt](4) at (4,-5) {$\scriptstyle{u}$};
\node[circle,draw, inner sep=0.5pt](9) at (0,-7.5) {$\scriptstyle{f_i}$};
\node(7) at (0,-11) {$\scriptstyle{A}$};
\node(8) at (4,-11) {$\scriptstyle{A}$};

\draw[-] (1) to [out=-90, in=90] (-2,1);
\draw[-] (2) to [out=-90, in=90] (2,1);
\draw[-] (3) to [out=-90, in=90] (5);
\draw[-] (4) to [out=-90, in=90] (6);
\draw[-] (5) to [out=-90, in=90] (9);
\draw[-] (6) to [out=-90, in=90] (8);
\draw[-] (9) to [out=-90, in=90] (7);
\end{tikzpicture}
\begin{tikzpicture}[x=8pt,y=8pt,thick]\pgfsetlinewidth{0.5pt}
\node[inner sep=1pt] at (0,-3) {$\,=\,$};
\node at (0,-10) {};
\end{tikzpicture}
\begin{tikzpicture}[x=8pt,y=8pt,thick]\pgfsetlinewidth{0.5pt}
\node(1) at (-2,3.5) {$\scriptstyle{{}_{\sigma}J_{\delta}}$};
\node(2) at (2,3.5) {$\scriptstyle{{}_{\delta}J_{\gamma}}$};
\node[circle,draw, inner sep=0.5pt](3) at (4,-7.5) {$\scriptstyle{g_i}$};
\node[circle,draw, inner sep=1.5pt](4) at (4,-5) {$\scriptstyle{u}$};
\node[circle,draw, inner sep=0.5pt](9) at (0,-7.5) {$\scriptstyle{f_i}$};
\node[circle,draw, inner sep=0.5pt](5) at (-2,0) {$\scriptstyle{\fk{eq}_{\sigma, \delta}}$};
\node[circle,draw, inner sep=0.5pt](6) at (2,0) {$\scriptstyle{\fk{eq}_{\delta, \gamma}}$};
\node(7) at (0,-11) {$\scriptstyle{A}$};
\node(8) at (4,-11) {$\scriptstyle{A}$};

\draw[-] (1) to [out=-90, in=90] (5);
\draw[-] (2) to [out=-90, in=90] (6);
\draw[-] (-2,-2) to [out=90, in=-90] (5);
\draw[-] (2,-2) to [out=90, in=-90] (6);
\draw[-] (2,-2) to [out=-90, in=-90] (-2,-2);
\draw[-] (9) to [out=90, in=-90] (0,-3.2);
\draw[-] (3) to [out=-90, in=90] (8);
\draw[-] (4) to [out=-90, in=90] (3);
\draw[-] (9) to [out=-90, in=90] (7);
\end{tikzpicture}
\begin{tikzpicture}[x=8pt,y=8pt,thick]\pgfsetlinewidth{0.5pt}
\node[inner sep=1pt] at (0,-3) {$\,=\,$};
\node at (0,-10) {};
\end{tikzpicture}
\begin{tikzpicture}[x=8pt,y=8pt,thick]\pgfsetlinewidth{0.5pt}
\node(1) at (-2,3.5) {$\scriptstyle{{}_{\sigma}J_{\delta}}$};
\node(2) at (2,3.5) {$\scriptstyle{{}_{\delta}J_{\gamma}}$};
\node[circle,draw, inner sep=0.5pt](3) at (5,-7.5) {$\scriptstyle{g_i}$};
\node[circle,draw, inner sep=1.5pt](4) at (5,-5) {$\scriptstyle{u}$};
\node[circle,draw, inner sep=0.5pt](9) at (2,-4) {$\scriptstyle{f_i}$};
\node[circle,draw, inner sep=0.5pt](5) at (-2,0) {$\scriptstyle{\fk{eq}_{\sigma, \delta}}$};
\node[circle,draw, inner sep=0.5pt](6) at (2,0) {$\scriptstyle{\fk{eq}_{\delta, \gamma}}$};
\node(7) at (0,-11) {$\scriptstyle{A}$};
\node(8) at (5,-11) {$\scriptstyle{A}$};

\draw[-] (1) to [out=-90, in=90] (5);
\draw[-] (2) to [out=-90, in=90] (6);
\draw[-] (5) to [out=-90, in=90] (-2,-6);
\draw[-] (7) to [out=90, in=-90] (0,-7.2);
\draw[-] (9) to [out=-90, in=90] (2,-6);
\draw[-] (-2,-6) to [out=-90, in=-90] (2,-6);
\draw[-] (3) to [out=-90, in=90] (8);
\draw[-] (4) to [out=-90, in=90] (3);
\draw[-] (9) to [out=90, in=-90] (6);
\end{tikzpicture}
\begin{tikzpicture}[x=8pt,y=8pt,thick]\pgfsetlinewidth{0.5pt}
\node[inner sep=1pt] at (0,-3) {$\,=\,$};
\node at (0,-10) {};
\end{tikzpicture}
\begin{tikzpicture}[x=8pt,y=8pt,thick]\pgfsetlinewidth{0.5pt}
\node(1) at (-2,3.5) {$\scriptstyle{{}_{\sigma}J_{\delta}}$};
\node(2) at (2,3.5) {$\scriptstyle{{}_{\delta}J_{\gamma}}$};
\node[circle,draw, inner sep=0.5pt](3) at (5,-7.5) {$\scriptstyle{g_i}$};
\node[circle,draw, inner sep=1.5pt](4) at (5,-5) {$\scriptstyle{u}$};
\node[circle,draw, inner sep=0.5pt](9) at (2,-4) {$\scriptstyle{\fk{eq}_{\delta, \delta}}$};
\node[circle,draw, inner sep=0.5pt](5) at (-2,0) {$\scriptstyle{\fk{eq}_{\sigma, \delta}}$};
\node[circle,draw, inner sep=0.5pt](6) at (2,0) {$\scriptstyle{{}_{\delta }f_i}$};
\node(7) at (0,-11) {$\scriptstyle{A}$};
\node(8) at (5,-11) {$\scriptstyle{A}$};

\draw[-] (1) to [out=-90, in=90] (5);
\draw[-] (2) to [out=-90, in=90] (6);
\draw[-] (5) to [out=-90, in=90] (-2,-6);
\draw[-] (7) to [out=90, in=-90] (0,-7.2);
\draw[-] (9) to [out=-90, in=90] (2,-6);
\draw[-] (-2,-6) to [out=-90, in=-90] (2,-6);
\draw[-] (3) to [out=-90, in=90] (8);
\draw[-] (4) to [out=-90, in=90] (3);
\draw[-] (9) to [out=90, in=-90] (6);
\end{tikzpicture}
\end{center}
where in the first equality we have used simultaneously equation \eqref{Eq:xidsg}, \eqref{Eq:rhof} and the fact that the unit $u_{\delta}$ of ${}_{\delta}J_{\delta}$ satisfies $\fk{eq}_{\delta,\delta} \circ u_{\delta}\,=\, u$ the unit of $A$. We continue then our computation
\begin{center}
\begin{tikzpicture}[x=8pt,y=8pt,thick]\pgfsetlinewidth{0.5pt}
\node[inner sep=1pt] at (0,-3) {$\,=\,$};
\node at (0,-10) {};
\end{tikzpicture}
\begin{tikzpicture}[x=8pt,y=8pt,thick]\pgfsetlinewidth{0.5pt}
\node(1) at (-2,3.5) {$\scriptstyle{{}_{\sigma}J_{\delta}}$};
\node(2) at (2,3.5) {$\scriptstyle{{}_{\delta}J_{\gamma}}$};
\node[circle,draw, inner sep=0.5pt](3) at (5,-7.5) {$\scriptstyle{g_i}$};
\node[circle,draw, inner sep=1.5pt](4) at (5,-5) {$\scriptstyle{u}$};
\node[circle,draw, inner sep=0.5pt](9) at (2,-3.5) {$\scriptstyle{u_{\delta}^{-1}}$};
\node[circle,draw, inner sep=0.5pt](5) at (-2,0) {$\scriptstyle{\fk{eq}_{\sigma, \delta}}$};
\node[circle,draw, inner sep=0.5pt](6) at (2,0) {$\scriptstyle{{}_{\delta }f_i}$};
\node[circle,draw, inner sep=1.5pt](10) at (0,-5) {$\scriptstyle{u}$};
\node(7) at (-1,-11) {$\scriptstyle{A}$};
\node(8) at (5,-11) {$\scriptstyle{A}$};

\draw[-] (1) to [out=-90, in=90] (5);
\draw[-] (2) to [out=-90, in=90] (6);
\draw[-] (5) to [out=-90, in=90] (-2,-6);
\draw[-] (7) to [out=90, in=-90] (-1,-6.5);
\draw[-] (10) to [out=-90, in=90] (0,-6);
\draw[-] (-2,-6) to [out=-90, in=-90] (0,-6);
\draw[-] (3) to [out=-90, in=90] (8);
\draw[-] (4) to [out=-90, in=90] (3);
\draw[-] (9) to [out=90, in=-90] (6);
\end{tikzpicture}
\begin{tikzpicture}[x=8pt,y=8pt,thick]\pgfsetlinewidth{0.5pt}
\node[inner sep=1pt] at (0,-3) {$\,=\,$};
\node at (0,-10) {};
\end{tikzpicture}
\begin{tikzpicture}[x=8pt,y=8pt,thick]\pgfsetlinewidth{0.5pt}
\node(1) at (-2,3.5) {$\scriptstyle{{}_{\sigma}J_{\delta}}$};
\node(2) at (2,3.5) {$\scriptstyle{{}_{\delta}J_{\gamma}}$};
\node[circle,draw, inner sep=0.5pt](3) at (2,-7.5) {$\scriptstyle{g_i}$};
\node[circle,draw, inner sep=1.5pt](4) at (2,-5.5) {$\scriptstyle{u}$};
\node[circle,draw, inner sep=0.5pt](9) at (2,-3) {$\scriptstyle{u_{\delta}^{-1}}$};
\node[circle,draw, inner sep=0.5pt](5) at (-2,0) {$\scriptstyle{\fk{eq}_{\sigma, \delta}}$};
\node[circle,draw, inner sep=0.5pt](6) at (2,0.5) {$\scriptstyle{{}_{\delta }f_i}$};
\node(7) at (-2,-11) {$\scriptstyle{A}$};
\node(8) at (2,-11) {$\scriptstyle{A}$};

\draw[-] (1) to [out=-90, in=90] (5);
\draw[-] (2) to [out=-90, in=90] (6);
\draw[-] (5) to [out=-90, in=90] (7);
\draw[-] (3) to [out=-90, in=90] (8);
\draw[-] (4) to [out=-90, in=90] (3);
\draw[-] (9) to [out=90, in=-90] (6);
\end{tikzpicture}
\begin{tikzpicture}[x=8pt,y=8pt,thick]\pgfsetlinewidth{0.5pt}
\node[inner sep=1pt] at (0,-3) {$\,=\,$};
\node at (0,-10) {};
\end{tikzpicture}
\begin{tikzpicture}[x=8pt,y=8pt,thick]\pgfsetlinewidth{0.5pt}
\node(1) at (-2,3.5) {$\scriptstyle{{}_{\sigma}J_{\delta}}$};
\node(2) at (2,3.5) {$\scriptstyle{{}_{\delta}J_{\gamma}}$};
\node[circle,draw, inner sep=0.5pt](3) at (2,-7.5) {$\scriptstyle{g_i}$};
\node[circle,draw, inner sep=0.5pt](9) at (2,-3) {$\scriptstyle{\fk{eq}_{\delta, \delta}}$};
\node[circle,draw, inner sep=0.5pt](5) at (-2,0) {$\scriptstyle{\fk{eq}_{\sigma, \delta}}$};
\node[circle,draw, inner sep=0.5pt](6) at (2,0.5) {$\scriptstyle{{}_{\delta }f_i}$};
\node(7) at (-2,-11) {$\scriptstyle{A}$};
\node(8) at (2,-11) {$\scriptstyle{A}$};

\draw[-] (1) to [out=-90, in=90] (5);
\draw[-] (2) to [out=-90, in=90] (6);
\draw[-] (5) to [out=-90, in=90] (7);
\draw[-] (3) to [out=-90, in=90] (8);
\draw[-] (3) to [out=90, in=-90] (9);
\draw[-] (9) to [out=90, in=-90] (6);
\end{tikzpicture}
\begin{tikzpicture}[x=8pt,y=8pt,thick]\pgfsetlinewidth{0.5pt}
\node[inner sep=1pt] at (0,-3) {$\,=\,$};
\node at (0,-10) {};
\end{tikzpicture}
\begin{tikzpicture}[x=8pt,y=8pt,thick]\pgfsetlinewidth{0.5pt}
\node(1) at (-2,3.5) {$\scriptstyle{{}_{\sigma}J_{\delta}}$};
\node(2) at (2,3.5) {$\scriptstyle{{}_{\delta}J_{\gamma}}$};
\node[circle,draw, inner sep=0.5pt](3) at (2,-7.5) {$\scriptstyle{g_i}$};
\node[circle,draw, inner sep=0.5pt](9) at (2,-4.5) {$\scriptstyle{f_i}$};
\node[circle,draw, inner sep=0.5pt](5) at (-2,0) {$\scriptstyle{\fk{eq}_{\sigma, \delta}}$};
\node[circle,draw, inner sep=0.5pt](6) at (2,0) {$\scriptstyle{\fk{eq}_{\delta, \gamma}}$};
\node(7) at (-2,-11) {$\scriptstyle{A}$};
\node(8) at (2,-11) {$\scriptstyle{A}$};

\draw[-] (1) to [out=-90, in=90] (5);
\draw[-] (2) to [out=-90, in=90] (6);
\draw[-] (5) to [out=-90, in=90] (7);
\draw[-] (3) to [out=-90, in=90] (8);
\draw[-] (3) to [out=90, in=-90] (9);
\draw[-] (9) to [out=90, in=-90] (6);
\end{tikzpicture}
\begin{tikzpicture}[x=8pt,y=8pt,thick]\pgfsetlinewidth{0.5pt}
\node[inner sep=1pt] at (0,-3) {$\,=\,$};
\node at (0,-10) {};
\end{tikzpicture}
\begin{tikzpicture}[x=8pt,y=8pt,thick]\pgfsetlinewidth{0.5pt}
\node(1) at (-2,3.5) {$\scriptstyle{{}_{\sigma}J_{\delta}}$};
\node(2) at (2,3.5) {$\scriptstyle{{}_{\delta}J_{\gamma}}$};
\node[circle,draw, inner sep=0.5pt](5) at (-2,0) {$\scriptstyle{\fk{eq}_{\sigma, \delta}}$};
\node[circle,draw, inner sep=0.5pt](6) at (2,0) {$\scriptstyle{\fk{eq}_{\delta, \gamma}}$};
\node(7) at (-2,-11) {$\scriptstyle{A}$};
\node(8) at (2,-11) {$\scriptstyle{A}$};

\draw[-] (1) to [out=-90, in=90] (5);
\draw[-] (2) to [out=-90, in=90] (6);
\draw[-] (5) to [out=-90, in=90] (7);
\draw[-] (8) to [out=90, in=-90] (6);
\end{tikzpicture}
\end{center}
If $\fk{eq}_{\sigma,\delta}\tensor{}\fk{eq}_{\delta,\gamma}$ is a monomorphism, then we  get  $\xi_{\sigma,\gamma}^\delta\circ m_{\sigma,\gamma}^\delta={}_{\sigma}J_{\delta}\tensor{}{}_{\delta}J_{\gamma}$, and so $m_{\sigma,\gamma}^\delta$ is an isomorphism.

The fact that $\fk{eq}_{\sigma,\delta}\tensor{}\fk{eq}_{\delta,\gamma}$ is a monomorphism is proved as follows. First, using Proposition \ref{lema:-1}  and  that $A_{\gamma}| A_{\delta} $, one shows that  $({}_\gamma J_\delta, {}_\delta J_\gamma, \bara{m}_{\delta,\, \delta}^{\gamma}, \bara{\xi}_{\gamma,\gamma}^{\delta})$ is  a right dualizable datum between the monoids ${}_{\gamma}J_{\gamma}$ and ${}_{\delta}J_{\delta}$, where $\bara{\xi}_{\gamma,\gamma}^{\delta}= \chi_{\gamma,\gamma}^{\delta} \circ \xi_{\gamma,\gamma}^{\delta}$ is a  splitting of $\bara{m}_{\gamma,\gamma}^{\delta}$. Secondly, by Proposition \ref{prop:adjunction}, we obtain that  ${}_{\delta}J_{\gamma}$ is left flat as $\I \cong {}_{\delta}J_{\delta}$.
In this way, we deduce that $ \fk{eq}_{\sigma,\delta}\tensor{}\fk{eq}_{\delta,\gamma} = (A\tensor{}\fk{eq}_{\delta,\gamma})  \circ (\fk{eq}_{\sigma,\delta}\tensor{} {}_{\delta}J_{\gamma}) $ is a monomorphism since $A$ is right flat.
\end{proof}

As a consequence of  Theorem \ref{teo:Miy1.2}, we have
\begin{corollary}\label{coro:tau 1}
Let $(A,m,u)$ be a monoid such  that $A$ is right flat
and $u_1:\I\rightarrow {_1 J_1}$  is an isomorphism.
Take $\theta \in \mathrm{Aut}_{\scriptscriptstyle{\Z{\I}}\text{-}alg}(\Z{A})$ such that
$A_\theta \sim A_1$. Then $\theta \in \cat{G}_{\scriptscriptstyle{\Z{\I}}}(\Z{A})$ the subgroup  defined in \eqref{Eq:G}.
\end{corollary}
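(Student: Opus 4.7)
The plan is to unpack the definition of $\cat{G}_{\scriptscriptstyle{\Z{\I}}}(\Z{A})$ from \eqref{Eq:G}: it suffices to show that the two multiplications $\bara{m}^{1}_{\theta,\theta}:{}_{\theta}J_{1}\tensor{{}_1J_1}{}_{1}J_{\theta}\to{}_{\theta}J_{\theta}$ and $\bara{m}^{\theta}_{1,1}:{}_{1}J_{\theta}\tensor{{}_{\theta}J_{\theta}}{}_{\theta}J_{1}\to{}_{1}J_{1}$ of \eqref{Eq:mu5} are isomorphisms. Each of them is, by construction, the factorization through the canonical epimorphism $\chi$ of the corresponding morphism $m^{\delta}_{\sigma,\gamma}$. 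An easy preliminary observation is that the passage from $m^{\delta}_{\sigma,\gamma}$ to $\bara{m}^{\delta}_{\sigma,\gamma}$ preserves the property of being an isomorphism: since $\chi^{\delta}_{\sigma,\gamma}$ is an epimorphism and since $\bara{m}^{\delta}_{\sigma,\gamma}\circ\chi^{\delta}_{\sigma,\gamma}=m^{\delta}_{\sigma,\gamma}$, as soon as $m^{\delta}_{\sigma,\gamma}$ is invertible, the map $\chi^{\delta}_{\sigma,\gamma}$ is split monic and hence iso, so $\bara{m}^{\delta}_{\sigma,\gamma}=m^{\delta}_{\sigma,\gamma}\circ(\chi^{\delta}_{\sigma,\gamma})^{-1}$ is iso as well.

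The second step is to apply Theorem \ref{teo:Miy1.2} twice. For $\bara{m}^{1}_{\theta,\theta}$, I take $(\sigma,\delta,\gamma)=(\theta,1,\theta)$: the required condition $A_{\gamma}|A_{\delta}$ becomes $A_{\theta}|A_{1}$, which holds because $A_{\theta}\sim A_{1}$; right flatness of $A$ is in the hypothesis, and $u_{\delta}=u_{1}$ is an isomorphism by assumption. Therefore $m^{1}_{\theta,\theta}$, and thus $\bara{m}^{1}_{\theta,\theta}$, is an isomorphism. For $\bara{m}^{\theta}_{1,1}$, I take $(\sigma,\delta,\gamma)=(1,\theta,1)$: now $A_{\gamma}|A_{\delta}$ becomes $A_{1}|A_{\theta}$, which again follows from $A_{\theta}\sim A_{1}$.

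The only delicate point in this second application is verifying that $u_{\theta}:\I\to{}_{\theta}J_{\theta}$ is an isomorphism, which is required to invoke Theorem \ref{teo:Miy1.2} with $\delta=\theta$. Here one uses the identification ${}_{\theta}J_{\theta}={}_{1}J_{1}$ from equation \eqref{Eq:egos} (which rests on $\theta$ being a bijection on $\Z{A}$), together with the uniqueness part of Proposition \ref{lema:-1}(iii): both $u_{\theta}$ and $u_{1}$ are characterised by $\fk{eq}_{1,1}\circ(-)=u$ on the common subobject ${}_{1}J_{1}={}_{\theta}J_{\theta}$, and since $\fk{eq}_{1,1}$ is monic, $u_{\theta}=u_{1}$, which is iso by hypothesis. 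This identification is the main (mild) obstacle, and once it is in place, Theorem \ref{teo:Miy1.2} gives the invertibility of $m^{\theta}_{1,1}$, hence of $\bara{m}^{\theta}_{1,1}$, and we conclude that $\theta\in\cat{G}_{\scriptscriptstyle{\Z{\I}}}(\Z{A})$.
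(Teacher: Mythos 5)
Your proof is correct and is exactly the argument the paper intends: the corollary is stated there as an immediate consequence of Theorem \ref{teo:Miy1.2}, and your two applications with $(\sigma,\delta,\gamma)=(\theta,1,\theta)$ and $(1,\theta,1)$, together with the (harmless) passage from $m^{\delta}_{\sigma,\gamma}$ to $\bara{m}^{\delta}_{\sigma,\gamma}$ and the identification $u_{\theta}=u_{1}$ via \eqref{Eq:egos}, supply precisely the details the paper leaves implicit.
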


Under the assumptions of Corollary \ref{coro:tau 1}, we do not know if the invertibility of ${}_{\theta}J_1$, for some $\theta \in \mathrm{Aut}_{\scriptscriptstyle{\Z{\I}}\text{-}alg}(\Z{A})$, is a sufficient condition to have  that $A_{\theta} \sim A_1$. This is why at this level of generality it is convenient   to distinguish between the subset of elements $\theta$ for which ${}_{\theta}J_1$ is invertible and those for which $A_{\theta} \sim A_1$. However, as we will see below, for Azumaya  monoids in $\cat{M}$ where $\cat{Z}$ is faithful, these two subsets coincide as subgroups of $\mathrm{Aut}_{\scriptscriptstyle{\Z{\I}}\text{-}alg}(\Z{A})$. Thus, under these stronger assumptions one obtains the converse of the previous corollary,  as in the classical case (see e.g. \cite{Mi}).

The following result collects useful  properties of Definition \ref{def:divide}.

\begin{proposition}\label{pro:div}
 Let $L,M,N$ be $A$-bimodules and let $\rho,\sigma,\tau:\Z{A}\rightarrow \Z{A}$ be  $\Z{I}$-linear maps. Then we have the following properties:
 \begin{enumerate}
  \item $\hom{A,A}{M}{N}\subseteq\cat{M}_{A,\Z{A}}(M_\sigma,N_\sigma)$.
  \item $M|N$ implies $M_\sigma|N_\sigma$.
  \item $M\cong N$ implies $M_\sigma\sim N_\sigma$.
  \item $M|N$ implies $M\otimes_A L|N\otimes_A L$.
  \item {$\cat{M}_{A,\Z{A}}(M_\sigma,N_\tau)\circ \cat{M}_{A,\Z{A}}(L_\rho,M_\sigma)\subseteq\cat{M}_{A,\Z{A}}(L_\rho,N_\tau)$}, where $\circ$ is the composition in ${}_A\cat{M}$.
  \item $L_\rho|M_\sigma$ and $M_\sigma|N_\tau$ implies $L_\rho|N_\tau$.
 \item If $M_\sigma|N_\tau$, then $M_{\sigma \rho}|N_{\tau \rho}$.
 \end{enumerate}
\end{proposition}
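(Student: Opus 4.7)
The plan is to dispose of items (1)--(4) and (7) as quick consequences of the definitions and elementary naturality, then prove (5) by means of a single naturality identity, and finally deduce (6) from (5).

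For (1), given $f\in \hom{A,A}{M}{N}$, right $A$-linearity of $f$ gives $f\circ (M\triangleleft \sigma(z))=f\triangleleft \sigma(z)$ directly from the definitions of $\triangleleft$ and of the right $A$-action on $M$; combined with left $A$-linearity of $f$ this places $f\in \cat{M}_{A,\Z{A}}(M_\sigma,N_\sigma)$. Items (2) and (3) are then immediate: apply (1) to the splitting morphisms witnessing $M|N$ (respectively to an isomorphism and its inverse). Item (4) is the observation that $-\tensor{A}L$ sends $(A,A)$-bilinear morphisms to $(A,A)$-bilinear morphisms and preserves identities, so any splitting $\sum_i g_if_i=M$ tensors to a splitting exhibiting $M\tensor{A}L\,|\,N\tensor{A}L$. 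Item (7) is tautological: the condition defining $\cat{M}_{A,\Z{A}}(M_{\sigma\rho},N_{\tau\rho})$ is just the defining condition for $\cat{M}_{A,\Z{A}}(M_\sigma,N_\tau)$ evaluated at $\rho(z)\in \Z{A}$, so the same witnesses work.

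The crux is (5). The key observation is a purely formal naturality identity: for \emph{any} morphism $k\colon P\to Q$ in $\cat{M}$ and any $a\in \Z{A}$, naturality of the right unit constraint yields
\[
(Q\triangleleft a)\circ k \;=\; k\triangleleft a,
\]
with no linearity hypothesis on $k$. Using this, for $h\in \cat{M}_{A,\Z{A}}(L_\rho,M_\sigma)$ and $f\in \cat{M}_{A,\Z{A}}(M_\sigma,N_\tau)$ I would chain
\begin{multline*}
(fh)\circ (L\triangleleft \rho(z)) \;=\; f\circ (h\triangleleft \sigma(z)) \;=\; f\circ (M\triangleleft \sigma(z))\circ h \\ \;=\; (f\triangleleft \tau(z))\circ h \;=\; (fh)\triangleleft \tau(z),
\end{multline*}
where the outer equalities apply the naturality identity to $h$ and to $fh$, while the inner two are the membership conditions for $h$ and $f$ respectively. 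Left $A$-linearity of $fh$ is automatic, so $fh\in \cat{M}_{A,\Z{A}}(L_\rho,N_\tau)$.

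Item (6) then follows from (5): if $\sum_i g_if_i=L$ and $\sum_j q_jp_j=M$ witness $L_\rho|M_\sigma$ and $M_\sigma|N_\tau$ respectively, the identity $\sum_{i,j}(g_iq_j)(p_jf_i)=\sum_i g_i M f_i=L$ gives the required witness for $L_\rho|N_\tau$, each $p_jf_i$ and $g_iq_j$ being in the appropriate twisted hom by (5). The only ingredient beyond formal bookkeeping is the naturality identity used in (5); the rest is strictly administrative.
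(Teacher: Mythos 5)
Your proposal is correct and follows essentially the same route as the paper: item (1) by right $A$-linearity, (2)--(4) and (7) as immediate bookkeeping, (5) via the chain $(fh)\circ(L\triangleleft\rho(z))=f\circ(h\triangleleft\sigma(z))=f\circ(M\triangleleft\sigma(z))\circ h=(f\triangleleft\tau(z))\circ h=(fh)\triangleleft\tau(z)$, and (6) by composing the two families of splitting morphisms, exactly as in the paper. The naturality identity $(Q\triangleleft a)\circ k=k\triangleleft a$ that you isolate is the same fact the paper uses implicitly in its computation.
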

\begin{proof}
 $(1)$ Let  $f\in \hom{A,A}{M}{N}$. Then $f\in\lhom{A}{M}{N}$ and, for every $z\in \Z{A}$, we have
$$f\circ(M\triangleleft\sigma(z))=f\circ\varrho_N\circ(M\otimes\sigma(z))=\varrho_M\circ(f\otimes A)\circ(M\otimes\sigma(z))=f\triangleleft \sigma(z).$$

$(2)$ It follows by $(1)$.

$(3)$ Let $f:M\rightarrow N $ be an isomorphism of $A$-bimodules. Set $k:=1,f_1:=f$ and $g_1:=f^{-1}$. Clearly $\sum_{i=1}^k g_{i}\circ f_{i}={M}$ so that, by $(2)$, we get  $M_\sigma|N_\sigma$. If we interchange the roles of $M$ and $N$ we get also $N_\sigma|M_\sigma$.

$(4)$ Let $k\geq 1 $ and let $f_{i}\in\hom{A,A}{M}{N}$, $g_{i}\in\hom{A,A}{N}{M}$, for $i=1,\cdots,k$,
such that $\sum_{i}g_{i}\circ f_{i}={M}$. Then $f_{i}':=f_{i}\otimes_A L\in\hom{A,A}{M\tensor{A} L}{N\tensor{A}L}$, $g_{i}':=g_{i}\tensor{A} L\in\hom{A,A}{N\tensor{A} L}{M\tensor{A} L}$. Moreover $\sum_{i}g_{i}'\circ f_{i}'= {M\tensor{A} L}$

$(5)$ Let {$f\in\cat{M}_{A,\Z{A}}(M_\sigma,N_\tau)$} and $g\in \cat{M}_{A,\Z{A}}(L_\rho,M_\sigma)$. Then { $f\circ g\in \lhom{A}{L}{N}$}. Moreover $$f\circ g\circ(L\triangleleft \rho(z))=f\circ (g\triangleleft \sigma(z))=f\circ (M\triangleleft \sigma(z))\circ g= (f\triangleleft \tau(z))\circ g= (f\circ g)\triangleleft \tau(z).$$

$(6)$ By assumption there is $k\geq 1 $ and morphisms $f_{i}\in\cat{M}_{A,\Z{A}}(M_\sigma,N_\tau)$,
$g_{i}\in\cat{M}_{A,\Z{A}}(N_\tau,M_\sigma)$, for $i=1,\cdots,k$,
such that $\sum_{i}g_{i}\circ f_{i}={M}$. Moreover there  is $k'\geq 1 $ and morphisms $f'_{i}\in\cat{M}_{A,\Z{A}}(L_\rho,M_\sigma)$, $g'_{i}\in\cat{M}_{A,\Z{A}}(M_\sigma,L_\rho)$, for $i=1,\cdots,k'$,
such that $\sum_{i}g'_{i}\circ f'_{i}={L}$. Set $f_{i,j}:=f_i\circ f'_j$ and $g_{i,j}:=g'_i\circ g_j$. Then $\sum_{i,j}g_{i,j}\circ f_{j,i}=\sum_{i,j}g'_i\circ g_j\circ f_j\circ f'_i=L$ and, by $(5)$, we have that $f_{i,j}\in \cat{M}_{A,\Z{A}}(L_\rho,N_\tau)$ and $g_{i,j}\in \cat{M}_{A,\Z{A}}(N_\tau,L_\rho)$.

$(7)$ It follows from the obvious inclusion $\cat{M}_{A,\Z{A}}(M_\sigma,N_\tau) \subseteq \cat{M}_{A,\Z{A}}(M_{\sigma\rho},N_{\tau\rho})$.
\end{proof}

Take $M, N,  L$ to be three $A$-bimodules, and let $\sigma , \tau: \Z{A} \to \Z{A}$ be $\Z{\I}$-linear maps. We define
\begin{multline*}
\cat{M}_{A,\Z{A}} ( L\tensor{}M_{\sigma}, L\tensor{}N_{\tau})
:=\big\{ f \in {\rm Hom}_{A\text{-}}(L\tensor{}M, L\tensor{}N)| \, f \circ (L\tensor{}(M\triangleleft\sigma(z))) =f\triangleleft\tau(z), \forall z \in \Z{A}  \big\}
\end{multline*}
where $f\triangleleft\tau(z)= (L\tensor{}(N\triangleleft\tau(z))) \circ f$ as  the right $A$-module structure of $L\tensor{}N$ is the one coming from $N$. Using this set we define as in Definition \ref{def:divide}, the relations $L\tensor{}M_{\sigma} | L\tensor{}N_{\tau}$ and  $L\tensor{}M_{\sigma} \sim L\tensor{}N_{\tau}$.

\begin{proposition}\label{pro:side}
Let $M$ be an $A$-bimodule and let $\alpha,\beta$ be two monoids automorphisms of $A$. Set $\sigma:=\Z{\alpha}$ and $\tau:=\Z{\beta}$.
Then $M\otimes A_\sigma\sim M\otimes A_{\tau}$.
\end{proposition}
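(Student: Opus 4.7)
The plan is to reduce everything to a single explicit intertwining morphism built from the monoid automorphism $\gamma := \beta \circ \alpha^{-1}$ (with inverse $\gamma^{-1} = \alpha\circ\beta^{-1}$), and to check, with $k=1$, that $f := M\otimes\gamma$ and $g := M\otimes\gamma^{-1}$ provide a splitting datum that witnesses $M\otimes A_\sigma \sim M\otimes A_\tau$ in the sense of Definition \ref{def:divide}.

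First I would observe that $f$ and $g$ are morphisms in $\hom{A\text{-}}{M\tensor{}A}{M\tensor{}A}$: since the left $A$-action on $M\tensor{}A$ is carried by the left $A$-action on $M$ (i.e.\ $l_{M\tensor{}A} = l_M \tensor{} A$), any morphism of the form $M\tensor{}\gamma$ is automatically left $A$-linear. Next, I would verify the twisted intertwining condition $f\circ\bigl((M\tensor{}A)\triangleleft \sigma(z)\bigr) = f\triangleleft \tau(z)$ for every $z\in\Z{A}$. Because the right $A$-action on $M\tensor{}A$ lives on the $A$-factor, this reduces (after factoring out $M\tensor{}-$) to checking the identity
\[
\gamma \circ (A\triangleleft \sigma(z)) \,=\, (A\triangleleft \tau(z)) \circ \gamma \qquad\text{in }\hom{\cat{M}}{A}{A}.
\]
Unfolding $A\triangleleft \sigma(z) = m\circ(A\tensor{}\sigma(z))\circ r_A^{-1}$ and using that $\gamma$ is a monoid morphism (so $\gamma\circ m = m\circ(\gamma\tensor{}\gamma)$), the left-hand side becomes $m\circ(\gamma\tensor{}(\gamma\circ\sigma(z)))\circ r_A^{-1}$. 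By definition $\sigma(z) = \alpha\circ z$ and $\tau(z)=\beta\circ z$, so $\gamma\circ\sigma(z) = \beta\alpha^{-1}\alpha\circ z = \tau(z)$, and one then slides $\gamma$ past the right multiplication by $\tau(z)$ to obtain the right-hand side. The same computation with $(\alpha,\beta)$ interchanged shows $g\in\cat{M}_{A,\Z{A}}(M\tensor{}A_\tau, M\tensor{}A_\sigma)$.

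Finally, $g\circ f = M\tensor{}(\gamma^{-1}\circ\gamma) = M\tensor{}A = \id_{M\tensor{}A}$, which gives $M\tensor{}A_\sigma\,|\,M\tensor{}A_\tau$; interchanging $\sigma$ and $\tau$ yields the reverse divisibility, hence $M\tensor{}A_\sigma \sim M\tensor{}A_\tau$.

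There is essentially no serious obstacle here: the whole argument rests on the single clean fact that $\gamma = \beta\alpha^{-1}$ is a monoid automorphism satisfying $\gamma\circ\alpha = \beta$, i.e.\ $\gamma\circ\sigma = \tau$ on $\Z{A}$. The only point to be slightly careful about is bookkeeping the two different sides of the $A$-action on $M\tensor{}A$ when formulating the twisted intertwining condition, and making sure that the intertwiner is built on the $A$-factor (which is where the twists $\sigma,\tau$ act) and not on $M$ (which would break left $A$-linearity).
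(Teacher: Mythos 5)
Your proposal is correct and follows essentially the same route as the paper: the paper also takes $k=1$, $f_1=M\otimes\beta\alpha^{-1}$, $g_1=f_1^{-1}$, and verifies the twisted intertwining condition by the same computation $\gamma\circ(A\triangleleft\sigma(z))=(A\triangleleft(\gamma\circ\sigma(z)))\circ\gamma$ together with $\gamma\circ\sigma(z)=\tau(z)$. Your reduction of the check to the $A$-factor is just a slightly tidier packaging of the identical argument.
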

\begin{proof}
 Set $k:=1,f_1:=M\otimes \beta\alpha^{-1}$ and $g_1:=f_1^{-1}$. Clearly $f_1\in\cat{M}_{A,-}(M\otimes A,M\otimes A)$. Moreover
 \begin{multline*}
  f_1\circ ((M\otimes A)\triangleleft\sigma(z))=(M\otimes\beta)\circ(M\otimes\alpha^{-1})\circ(M\otimes({ A}\triangleleft\sigma(z)))
   =M\otimes[\beta\circ\alpha^{-1}\circ({ A}\triangleleft\sigma(z))]
\\  =M\otimes[(\beta\circ\alpha^{-1})\triangleleft(\beta\circ\alpha^{-1}\circ\sigma(z))]
 =M\otimes[(\beta\circ\alpha^{-1})\triangleleft(\beta\circ\alpha^{-1}\circ\alpha\circ z)]
 =M\otimes[(\beta\circ\alpha^{-1})\triangleleft (\beta\circ z)]
 \\=M\otimes[(\beta\circ\alpha^{-1})\triangleleft \tau(z)]
 =f_1\triangleleft \tau(z)
 \end{multline*}
 so that $f_1\in\cat{M}_{A,\Z{A}}(M\otimes A_\sigma,M\otimes A_\tau)$. A similar argument shows that $g_1\in\cat{M}_{A,\Z{A}}(M\otimes A_\tau,M\otimes A_\sigma)$. Since $\sum_{i}g_{i}\circ f_{i}={M\otimes A}$, we conclude that $M\otimes A_\sigma|M\otimes A_{\tau}$. If we interchange the roles of $M$ and $N$ and the ones of $\sigma$ and $\alpha$ we get also $M\otimes A_\tau|M\otimes A_{\sigma}$.
\end{proof}

\begin{theorem} \label{teo:azum}
Let $(A,m,u)$ be a monoid in $\cat{M}$ and assume that $A\otimes A\sim A$. Let $\alpha,\beta$ be two monoids automorphisms of $A$ and set $\sigma:=\Z{\alpha}$ and $\tau:=\Z{\beta}$. Then $A_\sigma\sim A_{\tau}$.
\end{theorem}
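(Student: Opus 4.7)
The plan is to deduce the conclusion directly by chaining the two results that immediately precede the statement, namely Proposition \ref{pro:div} and Proposition \ref{pro:side}, applied with $M=A$. The notation $M\otimes A_{\sigma}$ used in Proposition \ref{pro:side} refers to the $A$-bimodule $M\otimes A$ whose center is made to act through $\sigma$ on the right factor, and the condition defining the relevant hom set, $f\circ((M\otimes A)\triangleleft\sigma(z))=f\triangleleft\tau(z)$, coincides with the one defining $\cat{M}_{A,\Z{A}}((M\otimes A)_{\sigma},(M\otimes A)_{\tau})$; so the symbols $A\otimes A_{\sigma}$ and $(A\otimes A)_{\sigma}$ denote one and the same object inside the relation $\sim$.

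First, I would invoke Proposition \ref{pro:side} with $M:=A$, which yields
\[
(A\otimes A)_{\sigma}\,\sim\,(A\otimes A)_{\tau}.
\]
Secondly, using the hypothesis $A\otimes A\sim A$ (an equivalence of $A$-bimodules in the sense of Definition \ref{def:divide}(2)), Proposition \ref{pro:div}(2) upgrades this to
\[
(A\otimes A)_{\sigma}\,\sim\,A_{\sigma}\qquad\text{and}\qquad (A\otimes A)_{\tau}\,\sim\,A_{\tau}.
\]
Finally, transitivity of $\sim$, which is an immediate consequence of Proposition \ref{pro:div}(6), assembles the three chains into
\[
A_{\sigma}\,\sim\,(A\otimes A)_{\sigma}\,\sim\,(A\otimes A)_{\tau}\,\sim\,A_{\tau},
\]
and gives the required equivalence $A_{\sigma}\sim A_{\tau}$.

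I do not foresee any serious obstacle: there is no computation to perform beyond invoking the listed facts. The only point that deserves an explicit remark is the identification of $A\otimes A_{\sigma}$ with $(A\otimes A)_{\sigma}$ as objects of the $\sim$-world, which follows by unfolding both definitions of the corresponding hom sets. Everything else is a purely formal chain in the equivalence relation $\sim$.
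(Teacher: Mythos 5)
Your proof is correct and follows essentially the same strategy as the paper: both arguments sandwich $(A\otimes A)_\sigma$ and $(A\otimes A)_\tau$ between $A_\sigma$ and $A_\tau$ using Proposition \ref{pro:div} together with Proposition \ref{pro:side} applied to $M=A$, and then conclude by transitivity. The only difference is that you obtain $A_\sigma\sim(A\otimes A)_\sigma$ in a single application of Proposition \ref{pro:div}(2) to the hypothesis $A\otimes A\sim A$, whereas the paper reaches the same equivalence through the three-step chain $A_\sigma\sim(A\tensor{A}A)_\sigma\sim(A\otimes A\tensor{A}A)_\sigma\sim(A\otimes A)_\sigma$; since the paper itself uses the identification $(A\otimes A)_\sigma=A\otimes A_\sigma$ (in its step 3), the shortcut you take, together with your explicit remark justifying that identification by unfolding the two hom-set definitions, is legitimate.
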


\begin{proof}
We split the proof in three steps.

1) Since $A\cong A\tensor{A} A$, by Proposition \ref{pro:div}(3), we get $A_\sigma\sim(A\tensor{A}A)_\sigma=A\tensor{A} A_\sigma$.

2) Since $A\sim A\otimes A$, in view of Proposition \ref{pro:div}(4), we have $A\tensor{A} A\sim A\otimes A\tensor{A} A$ and hence $A\tensor{A} A_\sigma\sim A\otimes A\tensor{A} A_\sigma$, by Proposition \ref{pro:div}(2).

3) Since $A\otimes A\tensor{A} A\cong A\otimes A$, by Proposition \ref{pro:div}(3), we get $A\otimes A\tensor{A} A_\sigma\sim A\otimes A_\sigma$.

Now, if we glue together 1), 2) and 3) using Proposition \ref{pro:div}(6), we obtain $A_\sigma\sim A\otimes A_\sigma$. Similarly one gets $A_\tau \sim A\otimes A_\tau.$ The conclusion follows by Propositions \ref{pro:side} and   \ref{pro:div}(6).
\end{proof}

\begin{corollary}\label{coro:Jinv}
Let $(A,m, u)$ be a monoid in $\cat{M}$ with a right flat underlying object $A$. Assume that $A\otimes A\sim A$ and $u_1:\I\rightarrow {_1 J_1}$ is an isomorphism. Let $\alpha$ be a  monoid automorphism of $A$ and set $\sigma:=\Z{\alpha}$.
Then ${_\sigma J_1}\in\mathrm{Inv}_{\I}(A)$ with inverse ${_1 J_\sigma}$. Moreover $\sigma=\Phi_{_\sigma J_1}\in \cat{G}_{\scriptscriptstyle{\Z{\I}}}(\Z{A})$, where $\Phi$ is the morphism of groups $\Phi^{\I}$ given in Proposition \ref{pro: Phi}. In particular we have a group homomorphism $$ \omega: \mathrm{Aut}_{alg}(A)\rightarrow \cat{G}_{\scriptscriptstyle{\Z{\I}}}(\Z{A}), \quad \big\{\gamma \mapsto \Z{\gamma}\big\},$$ where $\mathrm{Aut}_{alg}(A)$ stands for the automorphisms group of the monoid $A$ inside $\cat{M}$.
\end{corollary}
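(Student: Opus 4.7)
The plan is to combine Theorem~\ref{teo:azum} with Theorem~\ref{teo:Miy1.2} to establish the invertibility of ${_\sigma J_1}$, and then invoke Theorem~\ref{thm:mono} to identify its associated Miyashita automorphism with $\sigma$ and to read off the group homomorphism $\omega$.

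First, I would apply Theorem~\ref{teo:azum} with $\beta := \mathrm{Id}_A$ (so $\tau = 1$) to obtain $A_\sigma \sim A_1$; in particular both $A_\sigma \mid A_1$ and $A_1 \mid A_\sigma$ hold. Second, since $\sigma \in \mathrm{Aut}_{\Z{\I}\text{-alg}}(\Z{A})$ is a bijection of $\Z{A}$, the identity \eqref{Eq:egos} (applied in the proof of Theorem~\ref{thm:mono}) yields ${}_\sigma J_\sigma = {}_1 J_1$ as subobjects of $A$, so the unit $u_\sigma$ of the monoid ${}_\sigma J_\sigma$ coincides with $u_1$ and is therefore an isomorphism by hypothesis.

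With these preparations, I would apply Theorem~\ref{teo:Miy1.2} twice. Taking the triple $(\sigma,\gamma,\delta) = (\sigma,\sigma,1)$, the hypotheses $A_\sigma \mid A_1$, $A$ right flat and $u_1$ an isomorphism imply that $m_{\sigma,\sigma}^1:{}_\sigma J_1 \otimes {}_1 J_\sigma \to {}_\sigma J_\sigma$ is an isomorphism. Taking $(\sigma,\gamma,\delta) = (1,1,\sigma)$, the hypotheses $A_1 \mid A_\sigma$ and $u_\sigma$ isomorphism imply that $m_{1,1}^\sigma:{}_1 J_\sigma \otimes {}_\sigma J_1 \to {}_1 J_1$ is an isomorphism. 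Since ${}_1 J_1 \cong \I$ and ${}_\sigma J_\sigma \cong \I$, the canonical comparison maps $\chi_{\sigma,\sigma}^1$ and $\chi_{1,1}^\sigma$ of \eqref{Eq:mu5} are isomorphisms, hence so are the factored multiplications $\bara{m}_{\sigma,\sigma}^1$ and $\bara{m}_{1,1}^\sigma$. By the definition \eqref{Eq:G}, this shows $\sigma \in \cat{G}_{\Z{\I}}(\Z{A})$.

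Finally, Theorem~\ref{thm:mono} provides an inverse to $\widehat{\Phi}$ given by $\theta \mapsto {}_\theta J_1$, so ${}_\sigma J_1 \in \mathrm{Inv}_\I(A)$ with two-sided inverse ${}_1 J_\sigma$, and $\Phi_{{}_\sigma J_1} = \sigma$. For the last assertion, the map $\omega:\gamma \mapsto \Z{\gamma}$ is a group homomorphism from $\mathrm{Aut}_{alg}(A)$ to $\mathrm{Aut}_{\Z{\I}\text{-alg}}(\Z{A})$ because $\Z{-}$ is a functor, and what we have just proved shows that its image lies in $\cat{G}_{\Z{\I}}(\Z{A})$. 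I do not foresee any real obstacle: the substantive work is already done in Theorems~\ref{teo:azum}, \ref{teo:Miy1.2} and \ref{thm:mono}; the only mildly subtle point is the identification ${}_\sigma J_\sigma = {}_1 J_1$, which is what allows the second application of Theorem~\ref{teo:Miy1.2}.
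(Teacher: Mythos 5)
Your proposal is correct and follows essentially the same route as the paper: Theorem~\ref{teo:azum} gives $A_\sigma\sim A_1$, then $\sigma\in\cat{G}_{\scriptscriptstyle{\Z{\I}}}(\Z{A})$ follows (the paper cites Corollary~\ref{coro:tau 1}, which you simply unpack into the two applications of Theorem~\ref{teo:Miy1.2} together with the identification ${}_\sigma J_\sigma={}_1J_1$ from \eqref{Eq:egos}), and Theorem~\ref{thm:mono} then yields $\sigma=\widehat{\Phi}\widehat{\Phi}^{-1}(\sigma)=\Phi_{{}_\sigma J_1}$ and the statement about $\omega$. No gaps.
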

\begin{proof}
By  Theorem \ref{teo:azum}, we have that $A_{\sigma} \sim A_1$. Henceforth, we  apply  Corollary \ref{coro:tau 1} to obtain that $\sigma \in \cat{G}_{\scriptscriptstyle{\Z{\I}}}(\Z{A})$, which  means that ${_\sigma J_1}\in\mathrm{Inv}_{\I}(A)$  with inverse ${_1 J_\sigma}$. By applying now Theorem \ref {thm:mono}, it is clear that $\sigma=\widehat{\Phi} \widehat{\Phi}^{-1} (\sigma) = \Phi_{{}_{\sigma}J_1}$. The particular statement is immediate.
\end{proof}

The following  main result gives conditions under which the Miyashita action is bijective.

\begin{theorem}\label{thm:iso}
Let $(\cat{M},\tensor{}, \I)$ be a Penrose  monoidal abelian, locally small and bicomplete category with  right exact tensor products functors.
Let $(A,m,u)$ be a monoid in $\cat{M}$ with a right flat underlying object $A$.
Assume that $A\otimes A\sim A$ and $u_{1}:\I\rightarrow {_1 J_1}$ is an isomorphism. Consider the map $\omega$ as in Corollary \ref{coro:Jinv} and $\B{\varsigma}, \Phi$ as in Theorem \ref{thm:mono}.
\begin{enumerate}
 \item If the map $\Omega:\mathrm{Aut}_{alg}(A)\rightarrow \mathrm{Aut}_{\scriptscriptstyle{\Z{\I}}\text{-}alg}(\Z{A}), \,\gamma \mapsto \Z{\gamma}$ is surjective, then $\B{\varsigma}$ and $\Phi$ are bijective.
 \item If $\Omega$ is surjective and $\cat{Z}$ is faithful, then both $\Omega$ and $\omega$ are bijective. In this case,  we obtain a chain of isomorphisms of groups:
$$
\mathrm{Inv}_{\I}(A)\cong \mathrm{Aut}_{\scriptscriptstyle{\Z{\I}}\text{-}alg}(\Z{A})\cong  \mathrm{Aut}_{alg}(A) \cong \cat{G}_{\scriptscriptstyle{\Z{\I}}}(\Z{A}).
$$
\end{enumerate}
\end{theorem}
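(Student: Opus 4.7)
The plan is to assemble the pieces already established, recognizing that both $\omega$ and $\Omega$ factor through the chain
\[
\mathrm{Aut}_{alg}(A)\xrightarrow{\;\omega\;}\cat{G}_{\scriptscriptstyle\Z{\I}}(\Z{A})\;\overset{\B{\varsigma}}{\hookrightarrow}\;\mathrm{Aut}_{\scriptscriptstyle\Z{\I}\text{-alg}}(\Z{A}),
\]
so that $\Omega=\B{\varsigma}\circ\omega$. Theorem~\ref{thm:mono} already gives the bijection $\widehat{\Phi}:\mathrm{Inv}_{\I}(A)\to\cat{G}_{\scriptscriptstyle\Z{\I}}(\Z{A})$ together with the factorization $\Phi=\B{\varsigma}\circ\widehat{\Phi}$, and Corollary~\ref{coro:Jinv} shows that $\omega$ is well-defined (i.e.\ that the image of $\Omega$ lies in $\cat{G}_{\scriptscriptstyle\Z{\I}}(\Z{A})$), under the present hypotheses $A\tensor{}A\sim A$, $A$ right flat, and $u_{1}$ an isomorphism.

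For \emph{part (1)}, I would proceed as follows. The containment image$(\Omega)\subseteq\cat{G}_{\scriptscriptstyle\Z{\I}}(\Z{A})$ from Corollary~\ref{coro:Jinv}, combined with the surjectivity of $\Omega$, forces $\cat{G}_{\scriptscriptstyle\Z{\I}}(\Z{A})=\mathrm{Aut}_{\scriptscriptstyle\Z{\I}\text{-alg}}(\Z{A})$; equivalently, the canonical injection $\B{\varsigma}$ is surjective, hence bijective. Since $\widehat{\Phi}$ is a group isomorphism by Theorem~\ref{thm:mono}, the equality $\Phi=\B{\varsigma}\circ\widehat{\Phi}$ yields at once that $\Phi$ is a bijection.

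For \emph{part (2)}, the extra input is the faithfulness of $\cat{Z}$. I would first check injectivity of $\Omega$: if $\gamma_{1},\gamma_{2}\in\mathrm{Aut}_{alg}(A)$ satisfy $\cat{Z}(\gamma_{1})=\cat{Z}(\gamma_{2})$, then the two parallel arrows $\gamma_{1},\gamma_{2}:A\to A$ induce the same map on $\hom{\cat{M}}{\I}{A}$, so faithfulness of $\cat{Z}=\hom{\cat{M}}{\I}{-}$ gives $\gamma_{1}=\gamma_{2}$. Combined with the hypothesis that $\Omega$ is surjective, this shows $\Omega$ is bijective. Since $\B{\varsigma}$ is bijective by part (1) and $\Omega=\B{\varsigma}\circ\omega$, I conclude that $\omega$ is bijective as well. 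Chaining the four isomorphisms $\widehat{\Phi}$, $\B{\varsigma}$, $\omega$ and $\Omega$ delivers the displayed chain of group isomorphisms $\mathrm{Inv}_{\I}(A)\cong\mathrm{Aut}_{\scriptscriptstyle\Z{\I}\text{-}alg}(\Z{A})\cong\mathrm{Aut}_{alg}(A)\cong\cat{G}_{\scriptscriptstyle\Z{\I}}(\Z{A})$.

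The argument is essentially bookkeeping once the heavy machinery is in place; there is no real obstacle, since all of the technical content has been absorbed into Theorem~\ref{thm:mono} (injectivity and the description $\widehat{\Phi}^{-1}(\theta)={}_{\theta}J_{1}$), Theorem~\ref{teo:azum} (the relation $A_{\sigma}\sim A_{\tau}$ flowing from $A\tensor{}A\sim A$) and Corollary~\ref{coro:Jinv} (ensuring $\omega$ lands in $\cat{G}_{\scriptscriptstyle\Z{\I}}(\Z{A})$). The only subtle point worth double-checking is that the injectivity argument for $\Omega$ really requires only the faithfulness of $\cat{Z}$ on morphisms between monoids—and indeed it does, since $\gamma_{1},\gamma_{2}$ are ordinary morphisms in $\cat{M}$.
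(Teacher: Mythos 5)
Your proposal is correct and follows essentially the same route as the paper: both arguments rest on the factorizations $\Omega=\B{\varsigma}\circ\omega$ and $\Phi=\B{\varsigma}\circ\widehat{\Phi}$, deducing surjectivity of $\B{\varsigma}$ from that of $\Omega$, and injectivity of $\Omega$ directly from faithfulness of $\cat{Z}$. The remaining bookkeeping is identical to the published proof.
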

\begin{proof}
(1). Since $\Omega=\B{\varsigma} \circ \omega$ is surjective, it is clear that $\B{\varsigma}$ is surjective, and so bijective as it is by construction injective. Therefore, by Theorem \ref{thm:mono}, this implies that $\Phi=\B{\varsigma} \circ \widehat{\Phi}$ is bijective as well. (2). If $\cat{Z}$ is faithful, then clearly $\Omega$ is injective, hence it is also bijective. Thus $\omega= \B{\varsigma}^{-1} \circ \Omega$ is also bijective. The stated chain of isomorphisms  of groups is now immediate.
\end{proof}

\begin{remark} Under the assumptions made on $A$ in Theorem  \ref{thm:iso},  if we further  assume that
$\cat{Z}$ is full and faithful, then clearly we obtain the chain of isomorphisms stated there. As we will see below,  part of those isomorphisms are also obtained in the special case of certain Azumaya monoids.
\end{remark}

\section{Miyashita action: The Azumaya case}\label{sec:azymayacentral}
The main aim in  this section is to apply the foregoing results to  a certain class of Azumaya monoids (see Definition \ref{def:azumaya} below) when the base category in symmetric. The novelty here is the construction of a homomorphism of groups $\B{\Gamma}$  from $\mathrm{Inv}_{\I}(A)$ to $\mathrm{Aut}_{alg}(A)$, which is shown to be bijective when $A$ is an Azumaya monoid (with a certain property) and $\cat{Z}$ is faithful, see  Corollary \ref{thm:isoAzumaya}.

In all this section $(\cat{M}, \tensor{}, \I)$ is  symmetric  monoidal abelian, locally small and bicomplete category with  right exact tensor products functors. We denote by $\B{\tau}_{\scriptscriptstyle{M,\,N}}: M\tensor{}N \to N\tensor{}M$ the natural isomorphism defining the symmetry of $\cat{M}$.

\vspace{-0.3cm}
\subsection{The map $\B{\Gamma}$}\label{ssec:gamma}
Let $(A,m,u)$ be a monoid in $\cat{M}$ with $u$ a monomorphsm. Consider  $X\in \mathrm{Inv}_{\I}(A)$ with inverse $Y$. Define the morphism
$$
\xymatrix@C=45pt{\B{\Gamma}_{(X,Y)}:  A \ar@{->}^-{\cong}[r] & A\tensor{}X\tensor{}Y \ar@{->}^-{\B{\tau}\tensor{}Y}[r]  & X\tensor{}A\tensor{}Y \ar@{>}^-{i_X\tensor{}A\tensor{}i_Y}[r] &  A\tensor{}A\tensor{}A \ar@{->}^-{m \circ (A\tensor{}m)}[r]  & A.}
$$
An argument analogue to the one used in Proposition \ref{pro: right Miyashita} for $\B{\sigma}_{X,Y}$, shows that this morphism does not depend on the choice of $Y$ (nor on a representing  object of the equivalence class of $(X,i_X)$)  so that we can also denote it by $\B{\Gamma}_{X}$. This, in fact, defines a map to the endomorphisms ring of $A$ with image in the automorphisms group as the following result shows.
First we give the following diagrammatic expression of $\B{\Gamma}_{X}$
\begin{center}
\begin{tikzpicture}[x=7pt,y=7pt,thick]\pgfsetlinewidth{0.5pt}
\node[inner sep=1pt] at (0,0) {$\B{\Gamma}_{X} \,:=\,$};
\node at (0,-7) {};
\end{tikzpicture}
\begin{tikzpicture}[x=7pt,y=7pt,thick]\pgfsetlinewidth{0.5pt}
\node(1) at (-1.5,5) {$\scriptstyle{A}$};
\node(a) at (0,-1.5) {};
\node[circle,draw, inner sep=0.7pt](2) at (1.5,1) {$\scriptstyle{i_X}$};
\node[circle,draw, inner sep=0.7pt](3) at (4.5,1) {$\scriptstyle{i_Y}$};
\node(4) at (0.7,-8) {$\scriptstyle{A}$};

\draw[-] (2) to [out=90,in=-90] (1.5,3);
\draw[-] (3) to [out=90,in=-90] (4.5,3);
\draw[-] (4.5,3) to [out=90,in=90] (1.5,3);

\draw[-] (-1.5,-3) to [out=90,in=200] (a);
\draw[-] (a) to [out=10,in=-90] (1.5,0);
\draw[-] (-1.5,0) to [out=-90,in=90] (1.5,-3);

\draw[-] (3) to [out=-90,in=90] (4.5,-3);
\draw[-] (1.5,-3) to [out=-90,in=-90] (4.5,-3);

\draw[-] (1) to [out=-90,in=90] (-1.5,0);

\draw[-] (3,-4) to [out=-90,in=90] (3,-5);
\draw[-] (-1.5,-3) to [out=-90,in=90] (-1.5,-5);
\draw[-] (-1.5,-5) to [out=-90,in=-90] (3,-5);

\draw[-] (4) to [out=90,in=-90] (0.7,-6.3);
\end{tikzpicture}
\end{center}

\begin{proposition}\label{prop:Gamma}
The map
$$\B{\Gamma}:\mathrm{Inv}_{\I}(A)\longrightarrow \mathrm{Aut}_{alg}(A),\quad \big(X\longmapsto \Gamma_{X}\big)$$
is a homomorphism of groups.  In particular we have
$\Phi\,=\, \Omega \circ \B{\Gamma}$, where $\Omega$ is as in  Theorem \ref{thm:iso} and $\Phi:=\Phi^{\I}$ as in Proposition \ref{pro: Phi}.
\end{proposition}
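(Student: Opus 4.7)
The plan is to decompose the statement into several claims and verify each using the diagrammatic calculus already established in the paper. First I would check, for fixed $X\in\mathrm{Inv}_{\I}(A)$ with two-sided inverse $Y$, that $\B{\Gamma}_X$ is a monoid endomorphism of $A$. Unit preservation is immediate: $\B{\Gamma}_X\circ u$ reduces, after peeling off the symmetry (which acts trivially on morphisms out of $\I$), to $m\circ(i_X\tensor{} m\circ(u\tensor{}i_Y))\circ\mathrm{coev}=m\circ(i_X\tensor{}i_Y)\circ\mathrm{coev}$, which by \eqref{form:ev3} equals $\alpha\circ m_X\circ m_X^{-1}=u$. Multiplicativity $\B{\Gamma}_X\circ m=m\circ(\B{\Gamma}_X\tensor{}\B{\Gamma}_X)$ is the technical heart of the argument: expanding the right-hand side produces a pattern where a factor of $\mas\circ(i_Y\tensor{}i_X)$ appears between the two copies of $a$ and $b$; by \eqref{def: mY} this factor equals $u\circ\mathrm{ev}$, and the resulting $(X\tensor{}\mathrm{ev}\tensor{}Y)\circ(\mathrm{coev}\tensor{}\mathrm{coev})$ collapses to a single $\mathrm{coev}$ by a zig-zag / snake identity. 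Reassembling the diagram (and using the symmetry to put $m(a\tensor{}b)$ back in the middle) yields $\B{\Gamma}_X\circ m$.

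Next I would verify the elementary facts $\B{\Gamma}_{\I}=\id_A$ (trivially, since $\mathrm{coev}=\id_{\I}$ and $\B{\tau}_{A,\I}$ is the unit constraint), and the multiplicativity $\B{\Gamma}_{X\tensor{}X'}=\B{\Gamma}_X\circ\B{\Gamma}_{X'}$. For the latter, by Proposition~\ref{pro: InvTens} the two-sided inverse of $X\tensor{}X'$ is $Y'\tensor{}Y$, and $\mathrm{coev}_{X\tensor{}X'}$ may be realised as $(i_X\tensor{}\mathrm{coev}_{X'}\tensor{}i_Y)\circ\mathrm{coev}_X$ (up to associators). Plugging this into the definition of $\B{\Gamma}_{X\tensor{}X'}$ and applying associativity of $m$ together with naturality of $\B{\tau}$, one rewrites the resulting diagram as the two-layer sandwich that defines $\B{\Gamma}_X\circ\B{\Gamma}_{X'}$. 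Together with $\B{\Gamma}_{\I}=\id_A$ this proves $\B{\Gamma}_X\circ\B{\Gamma}_{Y}=\id_A=\B{\Gamma}_Y\circ\B{\Gamma}_X$, so each $\B{\Gamma}_X$ lies in $\mathrm{Aut}_{alg}(A)$ and $\B{\Gamma}$ is a group homomorphism.

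Finally, the equality $\Phi=\Omega\circ\B{\Gamma}$ amounts to checking $\Phi_X(t)=\B{\Gamma}_X\circ t$ for every $t\in\Z{A}$. Writing out $\B{\Gamma}_X\circ t$ via the definition and using the fact that $t\colon\I\to A$ is a central morphism (so that $\B{\tau}_{A,X}\circ(t\tensor{}X)=X\tensor{}t$ up to the canonical isomorphism $\I\tensor{}X\cong X\tensor{}\I$, by naturality of the symmetry in a Penrose category), one transforms $\B{\Gamma}_X\circ t$ into $m\circ(A\tensor{}m)\circ(i_X\tensor{}t\tensor{}i_Y)\circ\mathrm{coev}$, which by associativity of $m$ is exactly $m\circ(i_X\tensor{}(t\trir{}i_Y))\circ\mathrm{coev}=\Phi_X(t)$ as given in \eqref{Eq:phiXt}.

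The main obstacle is the multiplicativity step for $\B{\Gamma}_X$: it is the only place where the symmetry $\B{\tau}$ is really used in a non-trivial way, and where one must carefully combine the snake identity, equation \eqref{def: mY}, and the associativity of $m$ to collapse the intermediate $Y\tensor{}X$ into a unit. All the other steps are essentially bookkeeping with associators, unit constraints, and the naturality of $\B{\tau}$.
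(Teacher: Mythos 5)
Your proposal is correct and follows essentially the same route as the paper's proof: verify that $\B{\Gamma}_X$ preserves the unit and the multiplication (using \eqref{def: mY} to collapse the middle factor $m\circ(i_Y\tensor{}i_X)$ into $u\circ\mathrm{ev}$ and then the snake identity \eqref{Eq:R}), check $\B{\Gamma}_{\I}=\id_A$ and $\B{\Gamma}_{X\tensor{}X'}=\B{\Gamma}_X\circ\B{\Gamma}_{X'}$ via Proposition \ref{pro: InvTens}, and deduce $\B{\Gamma}_X^{-1}=\B{\Gamma}_Y$. Your explicit verification of $\Phi_X(t)=\Z{\B{\Gamma}_X}(t)$ via naturality of $\B{\tau}$ on morphisms out of $\I$ spells out a step the paper declares ``clear,'' but it is the same argument.
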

\begin{proof}
In view of Corollary \ref{coro:InvGrp} (here the assumption $u$ is monomorphism is used), it suffices to prove that we have a morphism of monoids
$$\B{\Gamma}:(\mathrm{Inv}_{\I}^r(A),\otimes,\I)\longrightarrow (\mathrm{End}_{alg}(A),\circ,\id_A),\quad \Big(X\longmapsto \Gamma_{X}\Big).$$
First we have to check that this map is well-defined, that is $\B{\Gamma}_{X}$ is a monoid endomorphism of $A$.
Let us check that  $\B{\Gamma}_{X}$ is unitary
\begin{center}
\begin{tikzpicture}[x=8pt,y=8pt,thick]\pgfsetlinewidth{0.5pt}
\node[inner sep=1pt] at (0,0) {$\B{\Gamma}_{X} \circ u \,\,=\, $};
\node at (0,-8) {};
\end{tikzpicture}
\begin{tikzpicture}[x=8pt,y=8pt,thick]\pgfsetlinewidth{0.5pt}
\node[circle,draw, inner sep=1.5pt](u) at (0,5) {$\scriptstyle{u}$};
\node(a) at (1.5,-2.5) {};
\node[circle,draw, inner sep=0.7pt](2) at (5,2) {$\scriptstyle{i_X}$};
\node[circle,draw, inner sep=0.7pt](33) at (8,2) {$\scriptstyle{i_Y}$};
\node(4) at (4,-8) {$\scriptstyle{A}$};

\draw[-] (2) to [out=90,in=-90] (5,4);
\draw[-] (33) to [out=90,in=-90] (8,4);
\draw[-] (8,4) to [out=90,in=90] (5,4);

\draw[-] (a) to [out=90,in=200] (2.5,-0.7);
\draw[-] (1.5,-2) to [out=-90,in=90] (1.5,-4);

\draw[-] (u) to [out=-90,in=90] (0,2);

\draw[-] (0,2) to [out=-90,in=90] (5,-3);
\draw[-] (33) to [out=-90,in=90] (8,-3);
\draw[-] (8,-3) to [out=-90,in=-90] (5,-3);

\draw[-] (2) to [out=-90,in=20] (3.2,-0.5);

\draw[-] (1.5,-4) to [out=-90,in=-90] (6.5,-4);

\draw[-] (4) to [out=90,in=-90] (4,-5.5);
\end{tikzpicture}
\begin{tikzpicture}[x=8pt,y=8pt,thick]\pgfsetlinewidth{0.5pt}
\node[inner sep=1pt] at (0,0) {$\,=\,$};
\node at (0,-8) {};
\end{tikzpicture}
\begin{tikzpicture}[x=8pt,y=8pt,thick]\pgfsetlinewidth{0.5pt}
\node[circle,draw, inner sep=1.5pt](u) at (0,0) {$\scriptstyle{u}$};

\node[circle,draw, inner sep=0.7pt](2) at (-2,2) {$\scriptstyle{i_X}$};
\node[circle,draw, inner sep=0.7pt](33) at (2,2) {$\scriptstyle{i_Y}$};
\node(4) at (-0.5,-8) {$\scriptstyle{A}$};

\draw[-] (2) to [out=90,in=-90] (-2,4);
\draw[-] (33) to [out=90,in=-90] (2,4);
\draw[-] (-2,4) to [out=90,in=90] (2,4);

\draw[-] (u) to [out=-90,in=90] (0,-2);
\draw[-] (33) to [out=-90,in=90] (2,-2);
\draw[-] (2,-2) to [out=-90,in=-90] (0,-2);

\draw[-] (1,-2.6) to [out=-90,in=-90] (-2,-2.6);

\draw[-] (2) to [out=-90,in=90] (-2,-2.6);

\draw[-] (4) to [out=90,in=-90] (-0.5,-3.5);
\end{tikzpicture}
\begin{tikzpicture}[x=8pt,y=8pt,thick]\pgfsetlinewidth{0.5pt}
\node[inner sep=1pt] at (0,0) {$\,=\,$};
\node at (0,-8) {};
\end{tikzpicture}
\begin{tikzpicture}[x=8pt,y=8pt,thick]\pgfsetlinewidth{0.5pt}
\node[circle,draw, inner sep=0.7pt](2) at (-2,2) {$\scriptstyle{i_X}$};
\node[circle,draw, inner sep=0.7pt](33) at (2,2) {$\scriptstyle{i_Y}$};
\node(4) at (0,-8) {$\scriptstyle{A}$};

\draw[-] (2) to [out=90,in=-90] (-2,4);
\draw[-] (33) to [out=90,in=-90] (2,4);
\draw[-] (-2,4) to [out=90,in=90] (2,4);
\draw[-] (33) to [out=-90,in=90] (2,-2);
\draw[-] (2,-2) to [out=-90,in=-90] (-2,-2);
\draw[-] (2) to [out=-90,in=90] (-2,-2);
\draw[-] (4) to [out=90,in=-90] (0,-3.2);
\end{tikzpicture}
\begin{tikzpicture}[x=8pt,y=8pt,thick]\pgfsetlinewidth{0.5pt}
\node[inner sep=1pt] at (0,0) {$\,=\, u$};
\node at (0,-8) {};
\end{tikzpicture}
\end{center}

Let us check that $\B{\Gamma}_{X}$ is multiplicative. We start by computing $m \circ (\B{\Gamma}_{X}\tensor{} \B{\Gamma}_{X})\,=\,$
\begin{center}
\begin{tikzpicture}[x=7pt,y=7pt,thick]\pgfsetlinewidth{0.5pt}
\node(1) at (-1.5,5) {$\scriptstyle{A}$};
\node(a) at (0,-1.5) {};
\node[circle,draw, inner sep=0.7pt](2) at (1.5,1) {$\scriptstyle{i_X}$};
\node[circle,draw, inner sep=0.7pt](3) at (4.5,1) {$\scriptstyle{i_Y}$};
\node(4) at (5,-11) {$\scriptstyle{A}$};
\node(5) at (6,5) {$\scriptstyle{A}$};

\draw[-] (2) to [out=90,in=-90] (1.5,3);
\draw[-] (3) to [out=90,in=-90] (4.5,3);
\draw[-] (4.5,3) to [out=90,in=90] (1.5,3);

\draw[-] (-1.5,-3) to [out=90,in=200] (a);
\draw[-] (a) to [out=10,in=-90] (1.5,0);
\draw[-] (-1.5,0) to [out=-90,in=90] (1.5,-3);

\draw[-] (3) to [out=-90,in=90] (4.5,-3);
\draw[-] (1.5,-3) to [out=-90,in=-90] (4.5,-3);

\draw[-] (1) to [out=-90,in=90] (-1.5,0);

\draw[-] (3,-3.9) to [out=-90,in=90] (3,-5);
\draw[-] (-1.5,-3) to [out=-90,in=90] (-1.5,-5);
\draw[-] (-1.5,-5) to [out=-90,in=-90] (3,-5);

\node(b) at (8,-1.5) {};
\node[circle,draw, inner sep=0.7pt](22) at (9.5,1) {$\scriptstyle{i_X}$};
\node[circle,draw, inner sep=0.7pt](33) at (12.5,1) {$\scriptstyle{i_Y}$};

\draw[-] (22) to [out=90,in=-90] (9.5,3);
\draw[-] (33) to [out=90,in=-90] (12.5,3);
\draw[-] (9.5,3) to [out=90,in=90] (12.5,3);

\draw[-] (6.5,-3) to [out=90,in=200] (b);
\draw[-] (b) to [out=10,in=-90] (9.5,0);
\draw[-] (6,0) to [out=-90,in=90] (9.5,-3);
\draw[-] (6,0) to [out=90,in=-90] (5);

\draw[-] (33) to [out=-90,in=90] (12.5,-3);
\draw[-] (9.5,-3) to [out=-90,in=-90] (12.5,-3);

\draw[-] (6.5,-3) to [out=-90,in=90] (6.5,-5);
\draw[-] (11,-3.9) to [out=-90,in=90] (11,-5);
\draw[-] (11,-5) to [out=-90,in=-90] (6.5,-5);

\draw[-] (8.75,-6.3) to [out=-90,in=-90] (0.7,-6.3);

\draw[-] (4) to [out=90,in=-90] (5,-8.6);
\end{tikzpicture}
\begin{tikzpicture}[x=7pt,y=7pt,thick]\pgfsetlinewidth{0.5pt}
\node[inner sep=1pt] at (0,-3) {\,=\,};
\node at (0,-11) {};
\end{tikzpicture}
\begin{tikzpicture}[x=7pt,y=7pt,thick]\pgfsetlinewidth{0.5pt}
\node(1) at (-1.5,5) {$\scriptstyle{A}$};
\node(a) at (0,-1.5) {};
\node[circle,draw, inner sep=0.7pt](2) at (1.5,1) {$\scriptstyle{i_X}$};
\node[circle,draw, inner sep=0.7pt](3) at (4.5,-1) {$\scriptstyle{i_Y}$};
\node(4) at (7,-11) {$\scriptstyle{A}$};
\node(5) at (7.5,5) {$\scriptstyle{A}$};

\draw[-] (2) to [out=90,in=-90] (1.5,3);
\draw[-] (3) to [out=90,in=-90] (4.5,3);
\draw[-] (4.5,3) to [out=90,in=90] (1.5,3);

\draw[-] (-1.5,-3) to [out=90,in=200] (a);
\draw[-] (a) to [out=10,in=-90] (1.5,0);
\draw[-] (-1.5,0) to [out=-90,in=90] (1.5,-3);

\draw[-] (1) to [out=-90,in=90] (-1.5,0);

\draw[-] (1.5,-3) to [out=-90,in=90] (1.5,-4);
\draw[-] (-1.5,-3) to [out=-90,in=90] (-1.5,-4);
\draw[-] (-1.5,-4) to [out=-90,in=-90] (1.5,-4);

\node[circle,draw, inner sep=0.7pt](22) at (7.5,-1) {$\scriptstyle{i_X}$};
\node[circle,draw, inner sep=0.7pt](33) at (12.5,1) {$\scriptstyle{i_Y}$};

\draw[-] (22) to [out=-90,in=90] (7.5,-4);
\draw[-] (3) to [out=-90,in=90] (4.5,-4);
\draw[-] (7.5,-4) to [out=-90,in=-90] (4.5,-4);
\draw[-] (6,-4.8) to [out=-90,in=-90] (0,-4.8);

\draw[-] (5) to [out=-90,in=90] (9.5,-1);
\draw[-] (22) to [out=90,in=200] (8,1.8);
\draw[-] (33) to [out=90,in=-90] (12.5,3);
\draw[-] (12.5,3) to [out=90,in=90] (9.5,3);
\draw[-] (9.5,3) to [out=-90,in=20] (8.5,2);

\draw[-] (33) to [out=-90,in=90] (12.5,-3);
\draw[-] (9.5,-3) to [out=-90,in=-90] (12.5,-3);
\draw[-] (9.5,-1) to [out=90,in=-90] (9.5,-3);

\draw[-] (11,-3.9) to [out=-90,in=90] (11,-6.6);

\draw[-] (3,-6.6) to [out=-90,in=-90] (11,-6.6);

\draw[-] (4) to [out=90,in=-90] (7,-9);
\end{tikzpicture}
\begin{tikzpicture}[x=7pt,y=7pt,thick]\pgfsetlinewidth{0.5pt}
\node[inner sep=1pt] at (0,-1) {\,=\,};
\node at (0,-11) {};
\end{tikzpicture}
\begin{tikzpicture}[x=7pt,y=7pt,thick]\pgfsetlinewidth{0.5pt}
\node(1) at (-1.5,5) {$\scriptstyle{A}$};
\node(a) at (0,-1.5) {};
\node[circle,draw, inner sep=0.7pt](2) at (1.5,1) {$\scriptstyle{i_X}$};

\node(4) at (7,-11) {$\scriptstyle{A}$};
\node(5) at (7.5,5) {$\scriptstyle{A}$};

\draw[-] (2) to [out=90,in=-90] (1.5,3);
\draw[-] (3) to [out=90,in=-90] (4.5,3);
\draw[-] (4.5,3) to [out=90,in=90] (1.5,3);

\draw[-] (-1.5,-3) to [out=90,in=200] (a);
\draw[-] (a) to [out=10,in=-90] (1.5,0);
\draw[-] (-1.5,0) to [out=-90,in=90] (1.5,-3);

\draw[-] (1) to [out=-90,in=90] (-1.5,0);

\draw[-] (1.5,-3) to [out=-90,in=90] (1.5,-4);
\draw[-] (-1.5,-3) to [out=-90,in=90] (-1.5,-4);
\draw[-] (-1.5,-4) to [out=-90,in=-90] (1.5,-4);

\node[circle,draw, inner sep=1.5pt](u) at (6,-3) {$\scriptstyle{u}$};
\node[circle,draw, inner sep=0.7pt](33) at (12.5,1) {$\scriptstyle{i_Y}$};

\draw[-] (4.5,0) to [out=-90,in=-90] (7.5,0);

\draw[-] (u) to [out=-90,in=90] (6,-4.8);
\draw[-] (6,-4.8) to [out=-90,in=-90] (0,-4.8);

\draw[-] (5) to [out=-90,in=90] (9.5,-1);
\draw[-] (22) to [out=90,in=200] (8,1.8);
\draw[-] (33) to [out=90,in=-90] (12.5,3);
\draw[-] (12.5,3) to [out=90,in=90] (9.5,3);
\draw[-] (9.5,3) to [out=-90,in=20] (8.5,2);

\draw[-] (33) to [out=-90,in=90] (12.5,-3);
\draw[-] (9.5,-3) to [out=-90,in=-90] (12.5,-3);
\draw[-] (9.5,-1) to [out=90,in=-90] (9.5,-3);

\draw[-] (11,-3.9) to [out=-90,in=90] (11,-6.6);

\draw[-] (3,-6.6) to [out=-90,in=-90] (11,-6.6);

\draw[-] (4) to [out=90,in=-90] (7,-9);
\end{tikzpicture}

\begin{tikzpicture}[x=7pt,y=7pt,thick]\pgfsetlinewidth{0.5pt}
\node[inner sep=1pt] at (0,-1) {$\,\overset{\eqref{Eq:R}}{=}\,$};
\node at (0,-11) {};
\end{tikzpicture}
\begin{tikzpicture}[x=7pt,y=7pt,thick]\pgfsetlinewidth{0.5pt}
\node(1) at (-1.5,5) {$\scriptstyle{A}$};
\node(a) at (0,-2.5) {};
\node[circle,draw, inner sep=0.7pt](2) at (5,2) {$\scriptstyle{i_X}$};
\node[circle,draw, inner sep=0.7pt](33) at (8,2) {$\scriptstyle{i_Y}$};
\node(4) at (3.25,-11) {$\scriptstyle{A}$};
\node(5) at (1.5,5) {$\scriptstyle{A}$};

\draw[-] (2) to [out=90,in=-90] (5,4);
\draw[-] (33) to [out=90,in=-90] (8,4);
\draw[-] (8,4) to [out=90,in=90] (5,4);

\draw[-] (-1.5,-4) to [out=90,in=200] (a);
\draw[-] (a) to [out=10,in=200] (2.5,-0.7);
\draw[-] (-1.5,-1) to [out=-90,in=90] (1.5,-4);

\draw[-] (1) to [out=-90,in=90] (-1.5,-1);

\draw[-] (1.5,-4) to [out=-90,in=90] (1.5,-5);
\draw[-] (-1.5,-4) to [out=-90,in=90] (-1.5,-5);
\draw[-] (-1.5,-5) to [out=-90,in=-90] (1.5,-5);

\draw[-] (5) to [out=-90,in=90] (1.5,1.3);

\draw[-] (1.5,1.3) to [out=-90,in=90] (5,-3);
\draw[-] (33) to [out=-90,in=90] (8,-3);
\draw[-] (5,-3) to [out=-90,in=-90] (8,-3);

\draw[-] (2) to [out=-90,in=20] (3.2,-0.5);

\draw[-] (6.5,-3.9) to [out=-90,in=90] (6.5,-6);

\draw[-] (0,-6) to [out=-90,in=-90] (6.5,-6);

\draw[-] (4) to [out=90,in=-90] (3.25,-7.9);
\end{tikzpicture}
\begin{tikzpicture}[x=7pt,y=7pt,thick]\pgfsetlinewidth{0.5pt}
\node[inner sep=1pt] at (0,-1) {\,=\,};
\node at (0,-11) {};
\end{tikzpicture}
\begin{tikzpicture}[x=7pt,y=7pt,thick]\pgfsetlinewidth{0.5pt}
\node(1) at (-1.5,5) {$\scriptstyle{A}$};
\node(a) at (0,-2.5) {};
\node[circle,draw, inner sep=0.7pt](2) at (5,2) {$\scriptstyle{i_X}$};
\node[circle,draw, inner sep=0.7pt](33) at (8,2) {$\scriptstyle{i_Y}$};
\node(4) at (2.25,-11) {$\scriptstyle{A}$};
\node(5) at (1.5,5) {$\scriptstyle{A}$};

\draw[-] (2) to [out=90,in=-90] (5,4);
\draw[-] (33) to [out=90,in=-90] (8,4);
\draw[-] (8,4) to [out=90,in=90] (5,4);

\draw[-] (-1.5,-4) to [out=90,in=200] (a);
\draw[-] (a) to [out=10,in=200] (2.5,-0.7);
\draw[-] (-1.5,-1) to [out=-90,in=90] (1.5,-4);

\draw[-] (1) to [out=-90,in=90] (-1.5,-1);

\draw[-] (1.5,-4) to [out=-90,in=90] (1.5,-4);
\draw[-] (-1.5,-4) to [out=-90,in=90] (-1.5,-6.2);

\draw[-] (-1.5,-6.2) to [out=-90,in=-90] (5.7,-6.4);

\draw[-] (5) to [out=-90,in=90] (1.5,1.3);

\draw[-] (1.5,1.3) to [out=-90,in=90] (5,-3);
\draw[-] (33) to [out=-90,in=90] (8,-5);
\draw[-] (5,-3) to [out=-90,in=90] (5,-4);

\draw[-] (5,-4) to [out=-90,in=-90] (1.5,-4);
\draw[-] (2) to [out=-90,in=20] (3.2,-0.5);

\draw[-] (8,-5) to [out=-90,in=-90] (3.25,-5);

\draw[-] (4) to [out=90,in=-90] (2.25,-8.5);
\end{tikzpicture}
\begin{tikzpicture}[x=7pt,y=7pt,thick]\pgfsetlinewidth{0.5pt}
\node[inner sep=1pt] at (0,0) {\,=\,};
\node at (0,-8) {};
\end{tikzpicture}
\begin{tikzpicture}[x=7pt,y=7pt,thick]\pgfsetlinewidth{0.5pt}
\node(1) at (-1.5,5) {$\scriptstyle{A}$};
\node(a) at (1.5,-2.5) {};
\node[circle,draw, inner sep=0.7pt](2) at (5,2) {$\scriptstyle{i_X}$};
\node[circle,draw, inner sep=0.7pt](33) at (8,2) {$\scriptstyle{i_Y}$};
\node(4) at (4,-8) {$\scriptstyle{A}$};
\node(5) at (1.5,5) {$\scriptstyle{A}$};

\draw[-] (2) to [out=90,in=-90] (5,4);
\draw[-] (33) to [out=90,in=-90] (8,4);
\draw[-] (8,4) to [out=90,in=90] (5,4);

\draw[-] (a) to [out=90,in=200] (2.5,-0.7);
\draw[-] (1.5,-2) to [out=-90,in=90] (1.5,-4);

\draw[-] (1) to [out=-90,in=90] (-1.5,3);
\draw[-] (5) to [out=-90,in=90] (1.5,3);
\draw[-] (-1.5,3) to [out=-90,in=-90] (1.5,3);

\draw[-] (0,2) to [out=-90,in=90] (5,-3);
\draw[-] (33) to [out=-90,in=90] (8,-3);
\draw[-] (8,-3) to [out=-90,in=-90] (5,-3);

\draw[-] (2) to [out=-90,in=20] (3.2,-0.5);

\draw[-] (1.5,-4) to [out=-90,in=-90] (6.5,-4);

\draw[-] (4) to [out=90,in=-90] (4,-5.5);
\end{tikzpicture}
\begin{tikzpicture}[x=7pt,y=7pt,thick]\pgfsetlinewidth{0.5pt}
\node[inner sep=1pt] at (0,0) {$\,=\, \B{\Gamma}_{X} \circ m.$};
\node at (0,-8) {};
\end{tikzpicture}
\end{center}
It is clear that $\Gamma_{\I}=\id_A$. Let now $X,X'\in\mathrm{Inv}_{\I}^r(A)$ have right inverses $Y,Y'$ respectively. Then, we compute $\B{\Gamma}_{X\tensor{}X' } \,=\,$
\begin{center}
\begin{tikzpicture}[x=8pt,y=8pt,thick]\pgfsetlinewidth{0.5pt}
\node(1) at (-3,6) {$\scriptstyle{A}$};
\node[circle,draw, inner sep=0.7pt](2) at (0,1) {$\scriptstyle{i_X}$};
\node[circle,draw, inner sep=0.7pt](3) at (3,1) {$\scriptstyle{i_{X'}}$};
\node[circle,draw, inner sep=0.7pt](4) at (7,1) {$\scriptstyle{i_{Y'}}$};
\node[circle,draw, inner sep=0.7pt](5) at (10,1) {$\scriptstyle{i_{Y}}$};
\node(6) at (0.5,-10) {$\scriptstyle{A}$};

\draw[-] (2) to [out=-90,in=-90] (3);
\draw[-] (3) to [out=90,in=90] (4);
\draw[-] (4) to [out=-90,in=-90] (5);
\draw[-] (2) to [out=90,in=90] (5);

\draw[-] (1.5,-0.6) to [out=-90, in=10] (0,-3.1);
\draw[-] (1) to [out=-90,in=90] (-3,-1);
\draw[-] (-3,-1) to [out=-90,in=90] (1.5,-5);
\draw[-] (-2,-5) to [out=90,in=200] (-0.5,-3.3);
\draw[-] (-2,-5) to [out=-90,in=90] (-2,-6);

\draw[-] (8.5,-0.6) to [out=-90,in=90] (5,-5);
\draw[-] (5,-5) to [out=-90,in=-90] (1.5,-5);
\draw[-] (3.5,-6) to [out=-90,in=-90] (-2,-6);

\draw[-] (6) to [out=90,in=-90] (0.5,-7.6);

\end{tikzpicture}
\begin{tikzpicture}[x=8pt,y=8pt,thick]\pgfsetlinewidth{0.5pt}
\node[inner sep=1pt] at (0,0) {$\,=\,$};
\node at (0,-9) {};
\end{tikzpicture}
\begin{tikzpicture}[x=8pt,y=8pt,thick]\pgfsetlinewidth{0.5pt}
\node(1) at (-3,6) {$\scriptstyle{A}$};
\node[circle,draw, inner sep=0.7pt](2) at (0,1) {$\scriptstyle{i_X}$};
\node[circle,draw, inner sep=0.7pt](3) at (3,1) {$\scriptstyle{i_{X'}}$};
\node[circle,draw, inner sep=0.7pt](4) at (7,1) {$\scriptstyle{i_{Y'}}$};
\node[circle,draw, inner sep=0.7pt](5) at (10,1) {$\scriptstyle{i_{Y}}$};
\node(6) at (1,-10) {$\scriptstyle{A}$};

\draw[-] (3) to [out=90,in=90] (4);
\draw[-] (2) to [out=90,in=90] (5);
\draw[-] (4) to [out=-90,in=90] (7,-2);
\draw[-] (5) to [out=-90,in=90] (10,-2);
\draw[-] (7,-2) to [out=-90,in=-90] (10,-2);
\draw[-] (8.5,-2.9) to [out=-90,in=90] (5,-5);

\draw[-] (3) to [out=-90, in=10] (0.6,-3);
\draw[-] (2) to [out=-90,in=10] (-1.5,-2.1);

\draw[-] (1) to [out=-90,in=90] (-3,-0.5);
\draw[-] (-3,-0.5) to [out=-90,in=90] (2,-5);
\draw[-] (-3,-5) to [out=90,in=200] (-2,-2.3);
\draw[-] (-0.5,-5) to [out=90,in=200] (0.3,-3.3);

\draw[-] (-0.5,-5) to [out=-90,in=-90] (-3,-5);
\draw[-] (5,-5) to [out=-90,in=-90] (2,-5);

\draw[-] (3.5,-5.9) to [out=-90,in=90] (2,-7.5);
\draw[-] (-2,-5.7) to [out=-90,in=90] (0,-7.5);
\draw[-] (2,-7.5) to [out=-90,in=-90] (0,-7.5);
\draw[-] (6) to [out=90,in=-90] (1,-8.1);
\end{tikzpicture}
\begin{tikzpicture}[x=8pt,y=8pt,thick]\pgfsetlinewidth{0.5pt}
\node[inner sep=1pt] at (0,0) {$\,=\,$};
\node at (0,-9) {};
\end{tikzpicture}
\begin{tikzpicture}[x=8pt,y=8pt,thick]\pgfsetlinewidth{0.5pt}
\node(1) at (-3,6) {$\scriptstyle{A}$};
\node[circle,draw, inner sep=0.7pt](2) at (0,1) {$\scriptstyle{i_X}$};
\node[circle,draw, inner sep=0.7pt](3) at (3,1) {$\scriptstyle{i_{X'}}$};
\node[circle,draw, inner sep=0.7pt](4) at (7,1) {$\scriptstyle{i_{Y'}}$};
\node[circle,draw, inner sep=0.7pt](5) at (10,1) {$\scriptstyle{i_{Y}}$};
\node(6) at (1,-12) {$\scriptstyle{A}$};

\draw[-] (3) to [out=90,in=90] (4);
\draw[-] (2) to [out=90,in=90] (5);
\draw[-] (4) to [out=-90,in=90] (5,-5);
\draw[-] (5,-5) to [out=-90,in=-90] (2,-5);

\draw[-] (3) to [out=-90, in=10] (0.6,-3);
\draw[-] (2) to [out=-90,in=10] (-1.5,-2.1);

\draw[-] (1) to [out=-90,in=90] (-3,-0.5);
\draw[-] (-3,-0.5) to [out=-90,in=90] (2,-5);
\draw[-] (-3,-5) to [out=90,in=200] (-2,-2.3);
\draw[-] (-0.5,-6) to [out=90,in=200] (0.3,-3.3);

\draw[-] (3.5,-6) to [out=-90,in=-90] (-0.5,-6);
\draw[-] (1.5,-7.1) to [out=-90,in=-90] (7,-7);
\draw[-] (5) to [out=-90,in=90] (7,-7);
\draw[-] (-3,-5) to [out=-90,in=90] (-3,-8);
\draw[-] (1,-10) to [out=0,in=-90] (4,-8.6);
\draw[-] (1,-10) to [out=180,in=-90] (-3,-8);

\draw[-] (6) to [out=90,in=-90] (1,-10);
\end{tikzpicture}

\begin{tikzpicture}[x=8pt,y=8pt,thick]\pgfsetlinewidth{0.5pt}
\node[inner sep=1pt] at (0,0) {$\,=\,$};
\node at (0,-9) {};
\end{tikzpicture}
\begin{tikzpicture}[x=8pt,y=8pt,thick]\pgfsetlinewidth{0.5pt}
\node(1) at (-3,4) {$\scriptstyle{A}$};
\node[circle,draw, inner sep=0.7pt](2) at (0,1) {$\scriptstyle{i_X}$};
\node[circle,draw, inner sep=0.7pt](3) at (6,1) {$\scriptstyle{i_{Y}}$};
\node[ellipse,draw, inner sep=1pt](4) at (0.35,-4.8) {$\scriptstyle{\B{\Gamma}_{X'}}$};
\node(6) at (1,-12) {$\scriptstyle{A}$};

\draw[-] (3) to [out=90,in=90] (2);
\draw[-] (3) to [out=-90,in=90] (6,-7.5);
\draw[-] (4) to [out=-90,in=90] (3,-7.5);
\draw[-] (6,-7.5) to [out=-90,in=-90] (3,-7.5);
\draw[-] (2) to [out=-90,in=10] (-1.5,-1.9);
\draw[-] (1) to [out=-90,in=90] (-3,-0.5);
\draw[-] (-3,-0.5) to [out=-90,in=90] (0,-4);
\draw[-] (-3,-5) to [out=90,in=200] (-2,-2.2);
\draw[-] (4.5,-8.4) to [out=-90,in=0] (1,-10);
\draw[-] (1,-10) to [out=180,in=-90] (-3,-8);
\draw[-] (-3,-8) to [out=90,in=-90] (-3,-5);
\draw[-] (6) to [out=90,in=-90] (1,-10);
\end{tikzpicture}
\begin{tikzpicture}[x=8pt,y=8pt,thick]\pgfsetlinewidth{0.5pt}
\node[inner sep=1pt] at (0,0) {$\,=\,$};
\node at (0,-9) {};
\end{tikzpicture}
\begin{tikzpicture}[x=8pt,y=8pt,thick]\pgfsetlinewidth{0.5pt}
\node(1) at (-3,4) {$\scriptstyle{A}$};
\node[circle,draw, inner sep=0.7pt](2) at (2,-2) {$\scriptstyle{i_X}$};
\node[circle,draw, inner sep=0.7pt](3) at (5,-2) {$\scriptstyle{i_{Y}}$};
\node[ellipse,draw, inner sep=1pt](4) at (-3,1) {$\scriptstyle{\B{\Gamma}_{X'}}$};
\node(6) at (1,-12) {$\scriptstyle{A}$};

\draw[-] (3) to [out=90,in=-90] (5,0);
\draw[-] (2) to [out=90,in=-90] (2,0);
\draw[-] (2,0) to [out=90,in=90] (5,0);

\draw[-] (3) to [out=-90,in=90] (5,-6);
\draw[-] (4) to [out=-90,in=90] (-3,-3);

\draw[-] (5,-6) to [out=-90,in=-90] (2,-6);
\draw[-] (-3,-3) to [out=-90,in=90] (2,-6);
\draw[-] (1) to [out=-90,in=90] (4);

\draw[-] (2) to [out=-90,in=10] (0.3,-4.4);
\draw[-] (-2,-7) to [out=90,in=200] (-0.5,-4.7);
\draw[-] (3.5,-7) to [out=-90,in=0] (1,-9);

\draw[-] (1,-9) to [out=180,in=-90] (-2,-7);
\draw[-] (6) to [out=90,in=-90] (1,-9);
\end{tikzpicture}
\begin{tikzpicture}[x=8pt,y=8pt,thick]\pgfsetlinewidth{0.5pt}
\node[inner sep=1pt] at (0,0) {$\,=\,$};
\node at (0,-9) {};
\end{tikzpicture}
\begin{tikzpicture}[x=8pt,y=8pt,thick]\pgfsetlinewidth{0.5pt}
\node(1) at (0,4) {$A$};
\node[ellipse,draw, inner sep=1pt](2) at (0,0) {$\scriptstyle{\B{\Gamma}_{X'}}$};
\node[ellipse,draw, inner sep=1.2pt](3) at (0,-6) {$\scriptstyle{\B{\Gamma}_{X}}$};
\node(4) at (0,-12) {$\scriptstyle{A}$};

\draw[-] (1) to [out=-90,in=90] (2);
\draw[-] (2) to [out=-90,in=90] (3);
\draw[-] (3) to [out=-90,in=90] (4);
\end{tikzpicture}
\begin{tikzpicture}[x=8pt,y=8pt,thick]\pgfsetlinewidth{0.5pt}
\node[inner sep=1pt] at (0,0) {$\,=\, \B{\Gamma}_{X} \circ \B{\Gamma}_{X'}$};
\node at (0,-9) {};
\end{tikzpicture}
\end{center}
It is now clear that, for $X\in \mathrm{Inv}_{\I}(A)$ with inverse $Y$, one has $\B{\Gamma}_{X}^{-1} = \B{\Gamma}_{Y}$. The particular statement is clear, and this finishes the proof.
\end{proof}

\subsection{Central monoid with a left internal hom functor}\label{ssec:central}
Let $(A, m , u)$ be a monoid in $\cat{M}$ we denote by $A^e$ the monoid $A\tensor{}A^o$ where $A^o$ is the opposite monoid (i.e., $A$ with the multiplication morphism twisted by the symmetry $\B{\tau}$). Since our base category $\cat{M}$ is symmetric, we can as in the classical case, identify the category of $A$-bimodules with the category of left (or right) $A^{e}$-modules.
Assume that the functor $A\tensor{}-:\cat{M} \to \cat{M}$ has a right adjoint functor, which we denote by $[A,-]: \cat{M} \to \cat{M}$. In this case, we say that \emph{$A$ has a left internal hom functor}.  Consider as in  Appendix \ref{ssec:appendix2} the functor  ${}_{A^e}[A,-]: {}_{A^e}\cat{M} \to \cat{M}$ which is the right adjoint of  $A\tensor{}-: \cat{M} \to {}_{A^e}\cat{M}$ with the canonical natural monomorphism $ {}_{A^e}[A,-] \hookrightarrow [A,\mathscr{O}(-)]$, where $\mathscr{O}: {}_{A^e}\cat{M} \to \cat{M}$ is the forgetful functor. On the other hand, notice that $\mathscr{O}$ is faithful and exact.

The monoid $A$   is said to be  \emph{central} provided that the canonical map $\I \to {}_{A^e}[A,A]$ is an isomorphism (this is the counit at $\I$ of the previous adjunction).

\begin{proposition}\label{prop:centers}
Let $(A, m, u)$ be a monoid  in $\cat{M}$  and consider the submonoid ${}_1J_1$ of  equation \eqref{Eq:J} with structure given by Proposition \ref {lema:-1}(iii).  Assume that $A$ has a left internal hom functor. Then
\begin{enumerate}[(i)]
\item there is a commutative diagram of monoids:
$$
\xymatrix@C=40pt{  {}_{A^e}[A,A] \ar@{^(->}[r]  \ar@{->}_-{\lambda}[rd] & A \\ & {}_1J_1 \ar@{_(->}[u]  }
$$
\item Assume that $\I \cong {}_1J_1$ via the unit given in \eqref{Eq:triangle} (thus $u$ is a monomorphism). Then $ {}_{A^e}[A,A] \cong \I$ and so $A$ is central.

\item If $A$ is central and the functor $\cat{Z}$ reflects isomorphisms, then $\I \cong {}_1J_1$ via its unit.
\item When $\cat{Z}$ is faithful (i.e.~ $\I$ is a generator), then $A$ is central if and only if $\,\I \cong {}_1J_1$.
\end{enumerate}
\end{proposition}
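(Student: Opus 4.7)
The overarching plan is to exploit the adjunction $A\tensor{}(-) \dashv {}_{A^e}[A,-]$ together with the universal property of the equalizer defining ${}_1J_1$. Setting $C := {}_{A^e}[A,A]$, for \emph{(i)} I would define $\lambda: C \to A$ as the evaluation at the unit:
\begin{equation*}
\lambda \,:=\, \Big( C \cong \I \tensor{} C \xrightarrow{\,u \tensor{} C\,} A \tensor{} C \xrightarrow{\,\varepsilon_{A}\,} A \Big),
\end{equation*}
where $\varepsilon_{A}$ denotes the counit of the adjunction at the $A$-bimodule $A$. The functor $\cat{Z}$ identifies $\lambda$ with the assignment $\hom{A,A}{A}{A} \to \cat{Z}(A)$, $f \mapsto f\circ u$, and this is multiplicative and unital when $C$ carries the monoid structure inherited from composition of bimodule endomorphisms, so $\lambda$ is a morphism of monoids. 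To obtain the factorization $\lambda = \fk{eq}_{1,1} \circ \bar{\lambda}$, the universal property of ${}_1J_1$ reduces the verification to the equations $(t \triangleright A) \circ \lambda = (A \triangleleft t) \circ \lambda$ for every $t \in \cat{Z}(A)$; these unwind, using the left and right $A$-linearity of $\varepsilon_{A}$ respectively, to a common morphism of the form $\varepsilon_{A} \circ (t \tensor{} C)$ (after routine bookkeeping with the symmetry and the unit constraints).

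For \emph{(ii)}, the assumption that $u_1$ is invertible together with the factorization $\bar{\lambda}: C \to {}_1J_1$ from (i) produces a morphism $p := u_1^{-1} \circ \bar{\lambda}: C \to \I$. The unit $\eta: \I \to C$ of the monoid $C$ satisfies $p \circ \eta = \id_{\I}$ by unitality. Since $\lambda$ is a monomorphism in $\cat{M}$ and $\fk{eq}_{1,1}$ is one too, $\bar{\lambda}$ is monic and hence so is $p$; therefore $p \circ (\eta \circ p) = p = p \circ \id_{C}$ forces $\eta \circ p = \id_{C}$. Thus $\eta$ is invertible and $\I \cong C = {}_{A^e}[A,A]$, i.e.\ $A$ is central.

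For \emph{(iii)}, assume $A$ is central and that $\cat{Z}$ reflects isomorphisms. It suffices to check that $\cat{Z}(u_1): \cat{Z}(\I) \to \cat{Z}({}_1J_1)$ is bijective. Composing with the monomorphism $\cat{Z}(\fk{eq}_{1,1})$ into $\cat{Z}(A)$ yields $\cat{Z}(u) = \cat{Z}(\fk{eq}_{1,1}) \circ \cat{Z}(u_1)$, and $u$ itself factors as $\I \cong C \xrightarrow{\lambda} A$ by (i) together with the centrality isomorphism, whence $u$ is monic and $\cat{Z}(u)$ injective. For surjectivity onto $\cat{Z}({}_1J_1)$, recall that, $\cat{Z}$ being left exact, $\cat{Z}({}_1J_1)$ consists exactly of those $a \in \cat{Z}(A)$ commuting with every element of $\cat{Z}(A)$ under convolution. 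Such centrality of $a$ is precisely the condition making $A \triangleleft a : A \to A$ an $A$-bimodule endomorphism, hence defining an element $f \in \hom{A,A}{A}{A} \cong \cat{Z}(C) \cong \cat{Z}(\I)$ by centrality of $A$; a direct computation using $m \circ (u \tensor{} A) = l_{A}$ yields $\lambda(f) = f \circ u = a$, which exhibits $a$ in the image.

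Finally \emph{(iv)} is an immediate corollary: a faithful additive functor between abelian categories reflects both monomorphisms and epimorphisms, and hence reflects isomorphisms, so faithfulness of $\cat{Z}$ automatically implies the hypothesis of (iii), and then (ii) together with (iii) give the equivalence. The principal obstacle I anticipate lies in \emph{(i)}: verifying that $\lambda$ is truly a monomorphism in $\cat{M}$, and not merely after applying $\cat{Z}$. This is expected to require the explicit construction of ${}_{A^e}[A,-]$ from the Appendix as an equalizer inside $[A,-]$, combined with the principle \eqref{Eq:linTriang}, which asserts that an $A$-bimodule endomorphism of $A$ is determined by its value on the unit and thereby furnishes the retraction phenomenon underlying the monic character of $\lambda$.
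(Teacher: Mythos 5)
Your overall skeleton matches the paper's: realize ${}_{A^e}[A,A]$ inside $A$, factor it through ${}_1J_1$ by the universal property of the equalizer \eqref{Eq:J}, get (ii) by unitality plus cancellation along a monomorphism, and get (iv) from (ii)+(iii) since a faithful functor with abelian domain reflects isomorphisms. However, your part (iii) has a genuine gap at its central step. You claim that for $a\in\cat{Z}({}_1J_1)$, i.e.\ an element of $\Z{A}$ commuting with every $t\in\Z{A}$ under convolution, this commutation ``is precisely the condition making $A\triangleleft a$ an $A$-bimodule endomorphism.'' Only one direction is automatic: if $A\triangleleft a$ is right $A$-linear then $a$ commutes with all global elements. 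The converse asks for the internal identity $m\circ(A\tensor{}a)\circ r_A^{-1}=m\circ(a\tensor{}A)\circ l_A^{-1}$ as morphisms $A\to A$, whereas commutation with all $t\in\Z{A}$ only gives equality of these two morphisms after precomposition with every $t:\I\to A$. Upgrading the latter to the former needs an extra input, typically that $\I$ is a generator (this is exactly how the analogous passage is handled in Proposition \ref{prop:Azumaya1}(iii), via the epimorphism $\I^{(\Lambda)}\to A$), and no such hypothesis is available in (iii). Tellingly, your surjectivity argument never uses the standing assumption that $\cat{Z}$ reflects isomorphisms — you invoke it only at the very end to pass from bijectivity of $\cat{Z}(u_1)$ to invertibility of $u_1$ — whereas in the paper this is precisely the place where that assumption is consumed, in establishing the isomorphism $\Z{{}_1J_1}\cong\lhom{A^e}{A}{A}$ (the ``tedious computation''). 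As written, (iii) is not proved.

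Two smaller points. First, in (i) you infer that $\lambda$ is a morphism of monoids from the behaviour of $\cat{Z}(\lambda)$ on elements; without faithfulness of $\cat{Z}$ (not assumed in (i)) this inference is not legitimate, and the multiplicativity must be checked in $\cat{M}$ itself (routine, and also left to the reader in the paper). Second, the monicity of $\lambda$, which you defer and on which your (ii) and the injectivity half of your (iii) depend, is most cheaply obtained the way the paper does it: by \eqref{Eq:center} (Femi\'c's identification) the object ${}_{A^e}[A,A]$ \emph{is} an equalizer computed inside $A$, so the arrow ${}_{A^e}[A,A]\hookrightarrow A$ is a monomorphism by construction; alternatively one can argue as you anticipate, using left $A$-linearity of the counit to show $\boldsymbol{\varepsilon}_A=m\circ(A\tensor{}\lambda)$ up to unit constraints and then cancel via the adjunction, in the spirit of \eqref{Eq:linTriang}. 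With that repair (i), (ii) and (iv) go through essentially as in the paper; the real defect is the unjustified equivalence in (iii).
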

\begin{proof}
$(i)$. By \cite[Proposition 3.3.]{Femic}, we can identify ${}_{A^e}[A,A]$ with the equalizer
\begin{equation}\label{Eq:center}
\xymatrix@C=40pt{ 0 \ar@{->}[r] & {}_{A^e}[A,A] \ar@{->}[r] &  A  \ar@<.5ex>[rr]^{[A,m] \circ \boldsymbol{\zeta}^A_A} \ar@<-.5ex>[rr]_{[A,m\circ \B{\tau}] \circ \boldsymbol{\zeta}^A_A } &  & [A,A],  }
\end{equation}
where  ${\boldsymbol{\zeta}^A}$ is the unit of the adjunction $A\tensor{}- \dashv [A,-]$.
Now, using the universal property of ${}_1J_1$ given by equation \eqref{eq:equalizer}, one shows the existence of the triangle. We leave to the reader to check that the stated diagram is in fact  a triangle  of monoids. \\
$(ii)$. It follows from the fact that the diagram in $(i)$ commutes and that the morphisms involved are unitary.\\
$(iii)$.  The definition of the functor ${}_{A^e}[A,-]$ obviously  implies the isomorphism $\Z{{}_{A^e}[A,A]} \cong \lhom{A^e}{A}{A}$ of $\Z{\I}$-algebras. On the other hand, under the assumption made on the functor $\cat{Z}$  a tedious computation gives a chase to an isomorphism $\Z{{}_1J_1} \cong  \lhom{A^e}{A}{A}$ of $\Z{\I}$-algebras. Combining these two  isomorphisms  shows that $\cat{Z}(\lambda)$ is an isomorphism. Therefore, $ {}_1J_1\cong {}_{A^e}[A,A]  \cong \I$, since $A$ is central.\\
$(iv)$. We only need to check the direct implication since the converse follows by the second item. This implication clearly follows from part $(iii)$, since a  faithful functor, whose domain is an abelian category, reflects isomorphisms.
\end{proof}

\subsection{The Azumaya case}\label{ssec:azumaya}
The notion of Azumaya monoid  in a monoidal category of modules over a commutative ring (see  \cite[Th\'eor\`eme 5.1]{Knus-Ojanguren} for equivalent definitions), can be extended to any  closed symmetric  monoidal additive category, see for example  \cite{Vitale,Fisher-Palmquist}. However, one can droop the additivity and the closeness conditions on the base monoidal category, as was done in \cite{PareigisIV}, and the definition of Azumaya monoid still makes sense in this context. In our setting $\cat{M}$ is a symmetric abelian category which is possibly not closed, so we can follow  the ideas of \cite{Vitale,PareigisIV}.

\begin{definition}\label{def:azumaya}{\cite{Vitale,PareigisIV,Fisher-Palmquist}}
A monoid $(A,m,u)$ in the monoidal category $\cat{M}$ is called an \emph{Azumaya monoid} if the functor $A\tensor{}-:\cat{M} \to  {}_{A^e}\cat{M}$ establishes an equivalence of categories and the multiplication $m$ splits as a morphism of $A$-bimodules (in other words $A$ is a \emph{separable monoid}). This,  in fact, is the notion of $2$-Azumaya in the sense of \cite{PareigisIV}.
\end{definition}

It is convenient to make some comments and remarks on the conditions of the previous definition.
\begin{remarks}\label{rem:azumaya} Consider  an Azumaya monoid $(A,m,u)$ in $\cat{M}$ as in Definition \ref{def:azumaya}.
\begin{enumerate}[(i)]
\item  It is noteworthy to mention that one can not expect to have for free that $A$ is a dualizable object in  $\cat{M}$, that is dualizable as in Definition \ref{def:dualizable} with the trivial structure of $(\I,\I)$-bimodule. This perhaps happens when $\I$ is a small generator in $\cat{M}$. However, as was show in \cite[Proposition  1.2]{Vitale}, $A$ is in fact a dualizable object in a certain monoidal category of monoids whose dual object is exactly  the opposite monoid $A^o$. On the other hand, one can show as follows that $A$ forms  part  of  a two-sided dualizable datum in the sense of  Definition \ref{def:dualizable} by taking $\I$ and $A^e$ as base monoids.  Following the proof of \cite[Proposition 1.2]{Vitale} since $A$ can be considered either as an $(\I,A^e)$-bimodule or as an $(A^e,\I)$-bimodule,  $A\tensor{}-:\cat{M} \to  {}_{A^e}\cat{M}$ is an equivalence of categories if and only if $A\tensor{A^e}-:  {}_{A^e}\cat{M} \to \cat{M}$ is so. In this way, by applying twice \cite[Proposition 5.1]{
PareigisIII} or \cite[Proposition 1.3]{Vitale}, we obtain the existence of two objects: $[A,\I]$ and $[A,A]$ together with the following  natural isomorphisms $\hom{\cat{M}}{A\tensor{}-}{\I} \cong \hom{\cat{M}}{-}{[A,\I]}$ and $ \hom{\cat{M}}{A\tensor{}-}{A} \cong \hom{\cat{M}}{-}{[A,A]}$. Furthermore, there are isomorphisms: $[A,\I]\tensor{
}A \cong A^e$ of $A^e$-bimodules, $\I\cong A\tensor{A^e}[A,\I]$ of objects in $\cat{M}$ and  $A^e=A\tensor{}A^o \cong [A,A]$ of monoids.
Thus,  the pair $(A, [A,\I])$, within these two first isomorphisms, is a two-sided dualizable datum relating  the monoids $\I$ and $A^e$ in the sense of Definition \ref{def:dualizable}, as claimed above.

\item Since $A\tensor{}-: \cat{M} \to  {}_{A^e}\cat{M}$ is an equivalence of categories, its has a right adjoint functor which, as in Appendix \ref{ssec:appendix2},  is denoted by ${}_{A^e}[A,-]: {}_{A^e}\cat{M} \to \cat{M}$. In contrast with  Proposition  \ref{prop:appII}, here we only know that ${}_{A^e}[A,-]$ exists and there are no indications about its construction. Indeed, at this level of generality, it is not clear whether $A$  has a
left internal hom functor in $\cat{M}$. Thus a very interesting class of Azumaya monoids consists of those for which the underlying object has this property. This is, of course,  the case of the usual class of  Azumaya algebras over commutative rings.
\end{enumerate}
\end{remarks}

\begin{remark}\label{rem:separable}
Let $(A,m,u)$ be a monoid in $\cat{M}$. In Definition \ref{def:azumaya} we can not drop the separability condition  in general as it happens in the classical case. This is due to the fact that  $\I$ is not always projective in $\cat{M}$. However, if we assume that $\I$ is projective in $\cat{M}$   and $A\tensor{}-: \cat{M} \to {}_{A^e}\cat{M}$ is an equivalence, then $A$ must be an Azumaya monoid. Indeed,  since ${}_{A^e}[A,m]:{}_{A^e}[A,A^e]\to{}_{A^e}[A,A]\cong \I$  is an epimorphism being the image of $m$ by the equivalence ${}_{A^e}[A,-]$, then ${}_{A^e}[A,m]$ must split as $\I$ is projective. Therefore,  $m$ itself must split in ${}_{A^e}\cat{M}$, that is,  $A$ should be separable.

On the other hand,  observe that  $A$ is separable  if and only if  it is projective relatively to all morphisms in $_{A^e}\cat{M}$ which split as morphisms in $\cat{M}$, see e.g. \cite[Theorem 1.30 and equality (5)]{AMS-Hoch}. Since this does not mean that $A$ is projective in $_{A^e}\cat{M}$, then $\I$ is not projective in $\cat{M}$ provided that $A\tensor{}-: \cat{M} \to {}_{A^e}\cat{M}$ is an equivalence. Hence if $A$ is Azumaya,  we cannot conclude that $\I$ is projective in $\cat{M}$.

\end{remark}

Before giving more consequences of Definition \ref{def:azumaya}, we give here  conditions under which $\Z{A}$ is an Azumaya $\Z{\I}$-algebra.  Recall from \cite[Proposition 1.2]{May et all:01} that an object $X$ in $\cat{M}$ is said to be a \emph{K\"unneth object} if it is a direct summand of a finite product of copies of $\I$. In the notation of subsection \ref{ssec:iso} this means that $X | \I$ in $\cat{M}$.  In case $\cat{M}$ is closed, any K\"unneth object is obviously a dualizable object cf. \cite{May et all:01}.

\begin{proposition}\label{prop:Kunneth}
Let $(A, m,u)$ be an Azumaya monoid in $\cat{M}$ with  underlying dualizable K\"unneth object $A$.  Then $\Z{A}$ is an Azumaya $\Z{\I}$-algebra in the classical sense.
\end{proposition}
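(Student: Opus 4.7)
The plan is to apply the functor $\cat{Z}$ to the Azumaya structure of $A$ and to check, using the K\"unneth property of $A$, that the resulting data constitute an Azumaya $\Z{\I}$-algebra in the classical sense, that is, according to Definition \ref{def:azumaya} read inside the monoidal category $\rmod{\Z{\I}}$.

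First I would establish a K\"unneth-type natural isomorphism $\kappa_X:\Z{A}\tensor{\Z{\I}}\Z{X}\to \cat{Z}(A\tensor{}X)$. Since $A$ is K\"unneth, there exist $A'\in\cat{M}$ and $n\ge 1$ with $A\oplus A'\cong \I^n$; applying the additive functor $\cat{Z}$ gives $\Z{A}\oplus \Z{A'}\cong \Z{\I}^n$, so that $\Z{A}$ is already a finitely generated projective $\Z{\I}$-module. The obvious canonical map $\kappa$, which is trivially an isomorphism when $A$ is replaced by $\I^n$, descends by naturality and additivity to an isomorphism for any direct summand, in particular for $A$ itself. Specializing to $X=A^o$ then produces an isomorphism $\cat{Z}(A^e)\cong \Z{A}\tensor{\Z{\I}}\Z{A}^o=:\Z{A}^e$ of $\Z{\I}$-algebras, so that $\cat{Z}$ carries $A^e$-module structures to module structures over the classical enveloping algebra $\Z{A}^e$.

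I would then transfer the two ingredients of the Azumaya property. For \emph{separability}: the assumed splitting of $m:A\tensor{}A\to A$ in ${}_{A^e}\cat{M}$ yields, after application of $\cat{Z}$ and use of $\kappa_A$, a splitting of the classical multiplication of $\Z{A}$ as a morphism of $\Z{A}^e$-modules, so $\Z{A}$ is a separable $\Z{\I}$-algebra. For the \emph{equivalence of categories}, I would use the two-sided dualizable datum $(A,[A,\I])$ between $\I$ and $A^e$ recalled in Remark \ref{rem:azumaya}(i), with isomorphisms $[A,\I]\tensor{}A\cong A^e$ and $A\tensor{A^e}[A,\I]\cong\I$. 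Since $A$ is K\"unneth, so is its dual $[A,\I]$ (a direct summand of $(\I^n)^{\ast}\cong\I^n$), and $\Z{[A,\I]}$ identifies naturally with the $\Z{\I}$-linear dual $\Z{A}^{\ast}$. Applying $\cat{Z}$ together with $\kappa$ transports the two isomorphisms above to a two-sided dualizable datum $(\Z{A},\Z{A}^{\ast})$ between $\Z{\I}$ and $\Z{A}^e$ in $\rmod{\Z{\I}}$, and Proposition \ref{prop:adjunction} then yields that the functor $\Z{A}\tensor{\Z{\I}}-:\rmod{\Z{\I}}\to \rmod{\Z{A}^e}$ is an equivalence of categories.

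Combining separability with the equivalence shows that $\Z{A}$ satisfies Definition \ref{def:azumaya} inside $\rmod{\Z{\I}}$, which is precisely the classical notion of Azumaya $\Z{\I}$-algebra (cf.~\cite[Th\'eor\`eme 5.1]{Knus-Ojanguren}). The main technical step I expect to require care is the verification of the monoidal compatibilities of $\kappa$ --- that it is natural in $X$, respects associativity and symmetry of $\tensor{}$, and carries the multiplication and unit of $A^e$ to those of $\Z{A}^e$ --- so that $\cat{Z}$ applied to morphisms of $A^e$-modules produces bona fide morphisms of $\Z{A}^e$-modules, and the transferred datum is really a two-sided dualizable datum in $\rmod{\Z{\I}}$.
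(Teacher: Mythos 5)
Your route is genuinely different from the paper's: you try to verify Definition \ref{def:azumaya} for $\Z{A}$ inside $\rmod{\Z{\I}}$ by transporting both the separability and the equivalence of categories along $\cat{Z}$, whereas the paper verifies the classical characterization, producing a $\Z{\I}$-algebra isomorphism $\Z{A}\otimes_{\Z{\I}}\Z{A}^o\cong\End{\Z{\I}}{\Z{A}}$ from the chain $A^e\cong[A,A]\cong A\tensor{}[A,\I]$, the K\"unneth isomorphism and $\Z{[A,\I]}\cong\Z{A}^*$. Your preparatory steps (finite generation and projectivity of $\Z{A}$, the K\"unneth isomorphism $\kappa$, the identification $\Z{A^e}\cong\Z{A}\otimes_{\Z{\I}}\Z{A}^o$, $\Z{[A,\I]}\cong\Z{A}^*$, and the transfer of the splitting of $m$) agree with, or are routine extensions of, what the paper does, and the compatibility checks you flag at the end are indeed only routine. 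The problem is the equivalence step.

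Concretely: Proposition \ref{prop:adjunction} only produces adjunctions from a dualizable datum, never an equivalence, so to get the equivalence you must transport the fact that \emph{both} structure maps of the datum of Remark \ref{rem:azumaya}(i) are isomorphisms, i.e.\ you need $\Z{A}^*\otimes_{\Z{\I}}\Z{A}\cong\Z{A}\otimes_{\Z{\I}}\Z{A}^o$ \emph{and} $\Z{A}\otimes_{\Z{A}^e}\Z{A}^*\cong\Z{\I}$, writing $\Z{A}^e$ for $\Z{A}\otimes_{\Z{\I}}\Z{A}^o$. The first involves only a tensor product over $\I$ and transports through $\kappa$; the second does not: the isomorphism $A\tensor{A^e}[A,\I]\cong\I$ lives over the tensor product over $A^e$, which is a coequalizer, and $\cat{Z}=\hom{\cat{M}}{\I}{-}$ is merely left exact ($\I$ need not be projective in $\cat{M}$, cf.\ Remark \ref{rem:separable}), while the K\"unneth isomorphism only identifies $\cat{Z}$ of plain tensor products having a K\"unneth factor. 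What you get for free is just a comparison morphism $\Z{A}\otimes_{\Z{A}^e}\Z{A}^*\to\Z{A\tensor{A^e}[A,\I]}\cong\Z{\I}$, and its bijectivity is exactly what would have to be proved; so ``applying $\cat{Z}$ together with $\kappa$'' does not deliver the coevaluation side, hence neither the Morita datum nor the claimed equivalence of categories. (The gap is plausibly repairable: separability gives $A\,|\,A^e$ in ${}_{A^e}\cat{M}$, and a naturality/retract argument then shows the comparison map is invertible --- but no such argument appears in your proposal.) The paper's proof sidesteps this entirely by never forming a tensor product over $A^e$: it stays with tensor products over $\I$ and concludes via $\Z{A}\otimes_{\Z{\I}}\Z{A}^o\cong\Z{[A,A]}\cong\Z{A}\otimes_{\Z{\I}}\Z{A}^*\cong\End{\Z{\I}}{\Z{A}}$.
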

\begin{proof}
It is clear that $\Z{A}$ is a finitely generated and projective $\Z{\I}$-module. Using the characterization of Azumaya algebras (see for example  \cite{DeMeyer-Ingraham, Kadison-NewExamples}), we need to check that ${\rm End}_{\scriptscriptstyle{\Z{\I}}}(\Z{A})$ is isomorphic as a $\Z{\I}$-algebra to $\Z{A}\tensor{\Z{\I}}\Z{A}^o$, which is proved as follows. Following the ideas of \cite[Proposition 1.2]{May et all:01}, for any object $X$ in $\cat{M}$, we know  that $\Z{X}\tensor{\Z{\I}}\Z{\I^n} \,\cong\,\Z{X\tensor{}\I^n}$.  Thus, the same isomorphism is inherited  by any direct summands of some $\I^n$.  Therefore, we have an isomorphism  $\Z{X}\tensor{\Z{\I}}\Z{A} \,\cong\,\Z{X\tensor{}A}$ and by symmetry $\Z{A}\tensor{\Z{\I}}\Z{X} \,\cong\,\Z{A\tensor{}X}$, for any object $X$ in $\cat{M}$.
In particular, we have
$$ \Z{A}\tensor{\Z{\I}}\Z{A} \,\cong\,\Z{A\tensor{}A}, \,\text{ and }\, \Z{A}\tensor{\Z{\I}}\Z{[A,\I]} \,\cong\,\Z{A\tensor{}[A,\I]}.$$
Therefore, $\Z{[A,\I]}\cong \Z{A}^*$ as $\Z{\I}$-modules, where $\Z{A}^*$ is the $\Z{\I}$-linear dual of $\Z{A}$, since $[A,\I]$ is a dual object of $A$ in $\cat{M}$. By Remark \ref{rem:azumaya}(i), we have a chain of isomorphisms $A^e \cong [A,A] \cong A\tensor{}[A,\I]$, from which  we deduce the following isomorphisms
$$\Z{A}\tensor{\Z{\I}}\Z{A}^o\,\cong\, \Z{A\tensor{}A^o} \,\cong \,\Z{[A,A]} \,\cong\, \Z{A}\tensor{\Z{\I}}\Z{A}^* \cong {\rm End}_{\scriptscriptstyle{\Z{\I}}}(\Z{A}),$$
whose composition leads to the desired $\Z{\I}$-algebra isomorphism.
\end{proof}

In view of Remark \ref{rem:azumaya}(ii), \emph{in what follows we only  consider  an Azumaya monoid $(A, m,u)$ for which the  underlying object $A$ has a left internal hom functor in $\cat{M}$}.  This is the case, for instance,  when $A$ is a left dualizable object in $\cat{M}$ (in the sense of Definition \ref{def:dualizable} with $R=S=\I$).  Obviously, the strong assumption of $\cat{M}$ being left closed automatically  guaranties the existence of left internal hom functor for any object; however, this is not the case of our interest.

\begin{corollary}\label{coro:azumaya}
Let  $(A,m,u)$ be an Azumaya monoid in $\cat{M}$. Then $A$ is flat, central and the unit $u: \I \to A$ is a section in $\cat{M}$.
\end{corollary}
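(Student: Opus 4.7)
The plan is to verify the three properties separately, using the two defining attributes of an Azumaya monoid: that $A\tensor{}-\colon\cat{M}\to{}_{A^e}\cat{M}$ is an equivalence of categories, and that the multiplication $m$ splits as a morphism of $A$-bimodules.

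For \emph{flatness}, the equivalence $A\tensor{}-\colon\cat{M}\to{}_{A^e}\cat{M}$ is exact, and the forgetful functor $\mathscr{O}\colon{}_{A^e}\cat{M}\to\cat{M}$ is both exact and faithful. Hence the composition $A\tensor{}-\colon\cat{M}\to\cat{M}$ is exact (and faithful), so $A$ is flat; by symmetry of $\cat{M}$, left and right flatness coincide. As a byproduct, faithfulness implies that the unit $u\colon\I\to A$ is a monomorphism, a fact we shall use below. For \emph{centrality}, since $A\tensor{}-$ is an adjoint equivalence with right adjoint ${}_{A^e}[A,-]$, the unit $\eta_\I\colon\I\to{}_{A^e}[A,A\tensor{}\I]$ is an isomorphism; composing with the canonical iso ${}_{A^e}[A,r_A]\colon{}_{A^e}[A,A\tensor{}\I]\to{}_{A^e}[A,A]$ yields $\I\cong{}_{A^e}[A,A]$.

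For the \emph{section} property, fix an $A$-bimodule splitting $\delta\colon A\to A\tensor{}A$ of $m$ (using the outer bimodule structure on $A\tensor{}A$) coming from separability. By Remark \ref{rem:azumaya}(i), the two-sided dualizable datum relating $\I$ and $A^e$ furnishes an isomorphism $\phi\colon A\tensor{}A\cong [A,\I]\tensor{}A$ of right $A^e$-modules, where the target is the image of $[A,\I]$ under the equivalence $-\tensor{}A\colon\cat{M}\to\cat{M}_{A^e}$. By full faithfulness of $-\tensor{}A$, both $\phi\circ\delta$ and $m\circ\phi^{-1}$ arise from unique morphisms $\tilde\delta\colon\I\to[A,\I]$ and $\tilde m\colon[A,\I]\to\I$ in $\cat{M}$, and the identity $m\circ\delta=\id_A$ translates to $\tilde m\circ\tilde\delta=\id_\I$ by faithfulness. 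Define $\pi\colon A\to\I$ to be the adjoint of $\tilde\delta$ under the internal-hom adjunction $-\tensor{}A\dashv[A,-]$, so that $\pi\circ l_A = \varepsilon_\I\circ(\tilde\delta\tensor{}A)$, where $\varepsilon$ is the counit.

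The main obstacle will be verifying that $\pi\circ u=\id_\I$. After unfolding the definition of $\pi$ and commuting tensor factors past $u$ using naturality of the unit constraints, the check reduces to the identification $\tilde m=\varepsilon_\I\circ([A,\I]\tensor{}u)\circ r_{[A,\I]}^{-1}$; granted this, one obtains $\pi\circ u=\tilde m\circ\tilde\delta=\id_\I$ directly. To establish the identification, I would use that $u$ is already a monomorphism, reducing it to an equality after post-composition with $u$: the relation $(\tilde m\tensor{}A)=m\circ\phi^{-1}$ (defining $\tilde m$) gives $u\circ\tilde m=m\circ\phi^{-1}\circ([A,\I]\tensor{}u)$ up to unit constraints, and the key step is to show that this right hand side factors through $u$, coinciding there with $u\circ\varepsilon_\I\circ([A,\I]\tensor{}u)$. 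This factorization encodes the classical fact that $m\circ\phi^{-1}$, evaluated on elements of the form $\phi\tensor{}u(1)\in[A,\I]\tensor{}A$, lands in the scalar image of $u$ and agrees there with the evaluation pairing of $\phi$ against $u$; the careful diagrammatic bookkeeping making this precise in the abstract monoidal setting, together with the identification of the two distinct iso's ($\phi$ coming from the duality and the adjunction counit $\varepsilon_\I$), is the principal technical difficulty.
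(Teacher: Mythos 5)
Your arguments for flatness and centrality are essentially the paper's own: exactness (and faithfulness) of the composite of the equivalence $A\tensor{}-:\cat{M}\to{}_{A^e}\cat{M}$ with the exact faithful forgetful functor gives flatness, and invertibility of the adjunction unit at $\I$ gives $\I\cong{}_{A^e}[A,A]$. The problem is the third part. There you abandon the adjunction and instead route the retraction through the separability splitting $\delta$, the dualizable datum of Remark \ref{rem:azumaya}(i), an isomorphism $\phi\colon A\tensor{}A\cong[A,\I]\tensor{}A$ of one-sided $A^e$-modules, and induced maps $\tilde\delta,\tilde m$. The entire argument then hinges on the identity $\tilde m=\varepsilon_\I\circ([A,\I]\tensor{}u)\circ r_{[A,\I]}^{-1}$, which you do not prove; you only sketch that the composite $m\circ\phi^{-1}\circ([A,\I]\tensor{}u)$ should ``factor through $u$'' and agree there with the evaluation pairing. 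That factorization is precisely the nontrivial content, and whether it holds at all depends on which isomorphism $\phi$ you take: Remark \ref{rem:azumaya}(i) only asserts the existence of an isomorphism $[A,\I]\tensor{}A\cong A^e$, with no stated compatibility between it and the counit $\varepsilon_\I$ of the internal-hom adjunction, and you also need to reconcile two different $A^e$-structures on $A\tensor{}A$ (the outer one, with respect to which $\delta$ and $m$ are $A^e$-linear, versus the one carried by objects in the image of the equivalence). As written, the key step is asserted, not established, so the proof of the section property is incomplete.

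For comparison, the paper's proof of this point is a one-liner that needs neither separability nor any duality data: the retraction of $u$ is
\begin{equation*}
A\xrightarrow{\ \B{\zeta}_A\ }{}_{A^e}[A,A\tensor{}A]\xrightarrow{\ {}_{A^e}[A,m]\ }{}_{A^e}[A,A]\cong\I ,
\end{equation*}
where $\B{\zeta}$ is the unit of $A\tensor{}-\dashv{}_{A^e}[A,-]$. Naturality of $\B{\zeta}$ along $u$ gives $\B{\zeta}_A\circ u={}_{A^e}[A,A\tensor{}u]\circ\B{\zeta}_\I$, and since $m\circ(A\tensor{}u)=r_A$ one gets ${}_{A^e}[A,m]\circ\B{\zeta}_A\circ u={}_{A^e}[A,r_A]\circ\B{\zeta}_\I$, which under the identification ${}_{A^e}[A,A]\cong\I$ (centrality, already proved) is the identity of $\I$. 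If you want to salvage your approach, you would have to fix $\phi$ explicitly (e.g.\ as the composite of the isomorphisms produced in Remark \ref{rem:azumaya}(i)) and verify the compatibility with $\varepsilon_\I$ by an explicit diagram chase; but the adjunction-unit argument makes this detour unnecessary.
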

\begin{proof}
The functor $A\tensor{}-: \cat{M} \to \cat{M}$ is clearly the composition of the functor $A\tensor{}-: \cat{M} \to {}_{A^e}\cat{M}$ with the forgetful functor $\mathscr{O}: {}_{A^e}\cat{M} \to \cat{M}$, so that it is left exact. Thus $A$ is a flat object.

We know that the  unit and the counit
$$ \B{\varepsilon}_M: A\tensor{}{}_{A^e}[A,M] \longrightarrow M, \quad
\B{\zeta}_X: X \longrightarrow {}_{A^e}[A,A\tensor{}X],$$
of the adjunction  $\xymatrix{ A\tensor{}-: \cat{M} \ar@<.5ex>[r]  &  \ar@<.5ex>[l] {}_{A^e}\cat{M}: {}_{A^e}[A,-]}$
are natural isomorphisms, for every pair of objects $(X,M)$ in $\cat{M} \times {}_{A^e}\cat{M}$. Thus, $\B{\zeta}_{\I}: \I \to {}_{A^e}[A,A]$ is an isomorphism, and so $A$ is central.  The retraction of $u$ is given by the following dashed  arrow
$$\xymatrix{  {}_{A^e}[A,A\tensor{}A]  \ar@{->}^-{{}_{A^e}[A,m]}[rr] & & {}_{A^e}[A,A] \cong \I  \\ A \ar@{->}^-{\B{\zeta}_A}[u] \ar@{-->}_-{}[urr]& &  }  $$
\end{proof}

Next we want to apply the results of section \ref{sec:azymaya}, specially Theorem \ref{thm:iso}, to the case of Azumaya monoid. Most of the assumptions in that Theorem are in fact fulfilled for an  Azumaya monoid $(A,m,u)$. Indeed, from the fact that $m$ splits in ${}_{A^e}\cat{M}$,  we have that  $A|(A\tensor{}A)$ as in Definition \ref{def:divide}(2). By Proposition \ref{prop:centers} and Corollary \ref{coro:azumaya}, we know that $u_1:\I \to {}_1J_1$ is an isomorphism and that $A$ is flat. Henceforth, the only condition on $A$, which  one needs to check is $(A\tensor{}A)|A$.

However, even under the assumption that $\cat{Z}$ is faithful, one can not expect to have for free this last condition, as  was given  in  the classical case of modules over a commutative ring. In our setting, it seems that this condition depends heavily on the fact that  $A$ and $A^e$ should be ``progenerators'' in the category of  $A^e$-bimodules.   To be  more precise,  as was argued in Remark \ref{rem:separable}, the  projectivity of $A$ in ${}_{A^e}\cat{M}$ is, for instance, linked to that of $\I$ in $\cat{M}$ as the following natural  isomorphism  shows: $\lhom{A^e}{A}{-}\cong \cat{Z} \circ {}_{A^e}[A,-]$, see Appendix \ref{ssec:appendix2}.

Recall from Proposition \ref{pro:fX} that, for any element $\theta  \in \cat{G}_{\scriptscriptstyle{\Z{\I}}}(\Z{A})$ the group defined in \eqref{Eq:G}, there is, by Propositions \ref{pro:invdual} and \ref{pro:fX},  an isomorphism of left $A$-modules $f_{{}_{\theta}J_1}:A\tensor{}{}_{\theta}J_1 \to A$.  Considering $A\tensor{}{}_{\theta}J_1$ as left $A^{e}$-module, we also  have
${}_{A^e}[A, A\tensor{}{}_{\theta}J_1] \cong {}_{\theta}J_1$ via the  unit $\B{\zeta}$.

For an element $\theta$ as above, we denote by $\td{\theta}\,:=\,  \B{\Gamma}_{{}_{\theta}J_1}$ the image of ${}_{\theta}J_1$ by  the morphism of groups $\B{\Gamma}$  stated in Proposition \ref{prop:Gamma} (recall here that $i_{{}_{\theta}J_1}=\fk{eq}_{\theta,1}$, see \eqref{Eq:strmaps}). In this way, to each $A$-bimodule $M$, we associate the $A$-bimodule $M_{\td{\theta}}$ whose underlying object is $M$  where the left action is unchanged while the right action is twisted by $\td{\theta}$. Precisely,  we have $\rho_{M_{\td{\theta}}}: =\rho_M \circ (M\tensor{}\td{\theta})$, where $\rho_M: M\tensor{}A \to M$ is the right structure morphism of $M$.

Now, given another element $\sigma \in \cat{G}_{\scriptscriptstyle{\Z{\I}}}(\Z{A})$ and another $A$-bimodule $N$, we have two $\Z{\I}$-modules under consideration. Namely, the first one is $\cat{M}_{A,\Z{A}} (M_{\theta}, N_{\sigma})$ defined in the same way as in Subsection \ref{ssec:iso}, and the other is the module of $A$-bimodules morphisms  $\hom{A,A}{M_{\td{\theta}}}{N_{\td{\sigma}}}$.

\begin{proposition}\label{prop:Azumaya1}
Let $(A,m,u)$ be an Azumaya monoid in $\cat{M}$. Consider elements $\theta, \sigma \in \cat{G}_{\scriptscriptstyle{\Z{\I}}}(\Z{A})$ and their respective associated  images $\td{\theta}, \td{\sigma} \in \mathrm{Aut}_{alg}(A)$. Assume that the functor $\cat{Z}$ is faithful. Then
\begin{enumerate}[(i)]
\item For every $t \in \Z{A}$, we have $ \Z{\td{\theta} } (t)\,=\, {\Phi}_{{}_{\theta}J_1}(t)\,=\, \theta(t)$, that is,
 $\Z{\td{\theta}}\,=\, \theta$.
\item The homomorphism of groups $\omega$ stated in Corollary \ref{coro:Jinv} is surjective.
\item  There is an equality $\hom{A,A}{A_{\td{\theta}}}{A_{\td{\sigma}}}  \,=\,   \cat{M}_{A,\Z{A}} (A_{\theta}, A_{\sigma})$.
\end{enumerate}
\end{proposition}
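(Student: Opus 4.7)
The plan is to dispose of (i) and (ii) as quick consequences of earlier results, and to devote the bulk of the work to (iii). For (i), since $\theta \in \cat{G}_{\scriptscriptstyle{\Z{\I}}}(\Z{A})$, the sub-bimodule ${}_{\theta}J_{1}$ belongs to $\mathrm{Inv}_{\I}(A)$, so $\td{\theta} := \B{\Gamma}_{{}_{\theta}J_{1}}$ is defined and lies in $\mathrm{Aut}_{alg}(A)$ by Proposition \ref{prop:Gamma}. That proposition also records the identity $\Phi = \Omega \circ \B{\Gamma}$, so
\[\Z{\td{\theta}}\,=\,\Omega(\td{\theta})\,=\,\Phi_{{}_{\theta}J_{1}},\]
while diagram \eqref{Eq:phitheta} has already computed $\Phi_{{}_{\theta}J_{1}}(t) = \theta(t)$ for every $t \in \Z{A}$, using only the universal property of the equalizer defining ${}_{\theta}J_{1}$; combining the two equalities yields $\Z{\td{\theta}} = \theta$. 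Part (ii) is then immediate: for any $\theta \in \cat{G}_{\scriptscriptstyle{\Z{\I}}}(\Z{A})$, the element $\td{\theta} \in \mathrm{Aut}_{alg}(A)$ satisfies $\omega(\td{\theta}) = \Z{\td{\theta}} = \theta$ by (i), so $\omega$ is surjective.

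For the analytic step (iii), I would begin by observing that any left $A$-linear morphism $f : A \to A$ is uniquely encoded, via \eqref{Eq:linTriang}, by the element $b := f \circ u \in \Z{A}$ through $f = A \triangleleft b$. Membership of such an $f$ in $\hom{A,A}{A_{\td{\theta}}}{A_{\td{\sigma}}}$ is the equation
\[f \circ m \circ (A \tensor{} \td{\theta}) \,=\, m \circ (f \tensor{} \td{\sigma})\]
of (necessarily left $A$-linear) morphisms $A \tensor{} A \to A$. The next step is to exploit the fact that $A \tensor{} -: \cat{M} \to {}_{A}\cat{M}$ is left adjoint to the forgetful functor, so that any left $A$-linear map out of $A \tensor{} A$ is determined by precomposition with $u \tensor{} A$; this collapses the above equation to the single identity $(A \triangleleft b) \circ \td{\theta} = (b \triangleright A) \circ \td{\sigma}$ between morphisms $A \to A$. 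Since $\cat{Z}$ is faithful, $\I$ is a generator in $\cat{M}$, so this identity is equivalent to its precompositions with arbitrary $z : \I \to A$. Using (i) in the form $\td{\theta} \circ z = \theta(z)$ and $\td{\sigma} \circ z = \sigma(z)$, the resulting system reads $\theta(z) \cdot b = b \cdot \sigma(z)$ in $\Z{A}$ for every $z \in \Z{A}$. A parallel elementwise inspection shows that $f \in \cat{M}_{A,\Z{A}}(A_{\theta}, A_{\sigma})$ amounts to exactly the same family of equalities: both $f \circ (A \triangleleft \theta(z))$ and $f \triangleleft \sigma(z)$ are left $A$-linear maps $A \to A$ taking values $\theta(z) b$ and $b \sigma(z)$ at $u$. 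The two hom-sets therefore coincide.

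The main obstacle is (iii). The bimodule-linearity equation lives on the tensor square $A \tensor{} A$, where faithfulness of $\cat{Z}$ alone -- that is, testing against morphisms $\I \to A \tensor{} A$ -- need not suffice to separate morphisms. The argument must therefore proceed in two stages: first peel off one tensor factor of $A$ via the adjunction $A \tensor{} - \dashv$ forgetful to land in $\hom{\cat{M}}{A}{A}$, and only then exploit the generator property of $\I$ to reduce to a family of equalities indexed by $\Z{A}$. Parts (i) and (ii), by contrast, are essentially formal once Proposition \ref{prop:Gamma} and diagram \eqref{Eq:phitheta} are in place.
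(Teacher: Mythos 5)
Your proposal is correct, and for parts (i) and (ii) it is essentially the paper's own argument (Proposition \ref{prop:Gamma} together with the computation \eqref{Eq:phitheta} from the proof of Theorem \ref{thm:mono}); the one thing you leave tacit is the preliminary identification $\I\cong{}_1J_1$, which the paper gets from Corollary \ref{coro:azumaya} and Proposition \ref{prop:centers}(iv) (using faithfulness of $\cat{Z}$) and which is exactly what allows you to regard ${}_{\theta}J_1$ as an element of $\Inv{\I}{A}$ rather than of $\mathrm{Inv}_{{}_1J_1}(A)$ and to invoke \eqref{Eq:phitheta}; you should state that line. For (iii) your route is genuinely different from the paper's. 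The paper takes $f\in\cat{M}_{A,\Z{A}}(A_{\theta},A_{\sigma})$, uses (i) to rewrite its defining identities as $m\circ(f\tensor{}\td{\sigma})\circ(A\tensor{}t)=f\circ m\circ(A\tensor{}\td{\theta})\circ(A\tensor{}t)$ for all $t\in\Z{A}$, and then tests against a generator epimorphism $\pi_A:\I^{(\Lambda)}\to A$: since $A\tensor{}\pi_A$ is an epimorphism the bimodule identity on $A\tensor{}A$ follows (this implicitly uses that $A\tensor{}-$ preserves the coproduct $\I^{(\Lambda)}$, available here because $A$ has a left internal hom). You instead note that both sides of the bimodule equation are left $A$-linear morphisms out of the free left $A$-module $A\tensor{}A$, so by the free--forgetful adjunction their equality is detected by precomposition with $u\tensor{}A$; this collapses the condition to the single identity $(A\triangleleft b)\circ\td{\theta}=(b\triangleright A)\circ\td{\sigma}$ with $b=f\circ u$, and then faithfulness of $\cat{Z}$ (i.e.\ $\I$ a generator) together with (i) reduces it to the family $\theta(z)\ast b=b\ast\sigma(z)$ in $\Z{A}$ for all $z\in\Z{A}$, which by \eqref{Eq:linTriang} is exactly the condition for membership in $\cat{M}_{A,\Z{A}}(A_{\theta},A_{\sigma})$. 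This buys you a uniform treatment of both inclusions, an explicit description of the common hom-set as the left multiplications $A\triangleleft b$ with $b$ intertwining $\theta$ and $\sigma$, and independence from any coproduct-preservation property of $A\tensor{}-$, at the cost of an extra (routine) adjunction step; the paper's epimorphism argument is shorter once that preservation is granted.
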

\begin{proof} By Corollary \ref{coro:azumaya}, we know that $A$ is central, hence $\I \cong {}_1J_1$ by Proposition \ref{prop:centers}(iv). \\
$(i)$.  It is a direct consequence of Proposition \ref{prop:Gamma} and Theorem \ref{thm:mono} (see equation \eqref{Eq:phitheta}). \\ $(ii)$.  It follows by item $(i)$.
\\$(iii)$. The direct inclusion follows as in Proposition \ref{pro:div}(1).
Conversely, take an element $f \in  \cat{M}_{A,\Z{A}} (A_{\theta}, A_{\sigma})$. This element belongs to $\hom{A,A}{A_{\td{\theta}}}{A_{\td{\sigma}}}$ if and only if
\begin{equation}\label{Eq:seraloqsera2}
m \circ (f\tensor{}\td{\sigma}) \,= \, f \circ m \circ (A\tensor{}\td{\theta}).
\end{equation}
By the equalities $m \circ (f\tensor{}\sigma(t)) \,= \, f \circ m \circ (A\tensor{}\theta(t))$, $t \in \Z{A}$ derived from the definition of $f$,  we have, using part $(i)$, that
$$
m \circ (f\tensor{}\td{\sigma}) \circ (A\tensor{}t) \,= \, f \circ m \circ (A\tensor{}\td{\theta}) \circ (A\tensor{}t),
$$
for every $t \in \Z{A}$. Using these equalities and the fact that $\I$ is a generator we are able to show that $m \circ (f\tensor{}\td{\sigma}) \circ (A\tensor{}  \pi_A)\,=\, f \circ m \circ (A\tensor{}\td{\theta}) \circ (A\tensor{}  \pi_A)$, where $\pi_A: \I^{(\Lambda)} \to A$ is the canonical epimorphism. Now, equality \eqref{Eq:seraloqsera2} follows since $A\tensor{}\pi_A$ is an epimorphism, which completes the proof.
\end{proof}

Now, with notations as in Definition  \ref{def:divide},  we set
$$
\cat{H}_{\scriptscriptstyle{\Z{\I}}}(\Z{A}) \,:=\, \left\{\underset{}{}  \theta \in \mathrm{Aut}_{\scriptscriptstyle{\Z{\I}}\text{-}alg}(\Z{A})| \,\, A_{\theta} \sim A_1 \right\}.
$$
Using Proposition \ref{pro:div}(7), we easily check that this is a subgroup of $\mathrm{Aut}_{\scriptscriptstyle{\Z{\I}}\text{-}alg}(\Z{A})$. Now, it is clear from Corollary  \ref{coro:tau 1}, that under the assumptions of $A$ being Azumaya and $u_1: \I \to {}_1J_1$ is an isomorphism, we   have an inclusion $\cat{H}_{\scriptscriptstyle{\Z{\I}}}(\Z{A}) \subseteq \cat{G}_{\scriptscriptstyle{\Z{\I}}}(\Z{A})$ of groups.

\begin{corollary}\label{thm:isoAzumaya}
Let $(A,m,u)$ be an Azumaya monoid in $\cat{M}$ such that $A\tensor{}A | A$.  Assume that the functor  $\cat{Z}$ is faithful. Then the maps  $\omega$ and $\B{\Gamma}$ are bijective so that we have the following commutative diagram
\begin{small}
$$
\xymatrix@C=40pt{ & \mathrm{Aut}_{alg}(A)\ar@/_1,5pc/_<<<<<<{\omega}^<<<<<<{\cong}[dd]  \ar@{^{(}->}^-{\Omega}[dr]  & \\ \Inv{\I}{A} \ar@{->}^-{\B{\Gamma}}_-{\cong}[ru]  \ar@{->}^>>>>>>>>>>>>>>>>{\Phi}[rr]|(.385)\hole \ar@{->}_-{\cong}^{\widehat{\Phi}}[rd] &  & \Aut{\scriptscriptstyle{\Z{\I}}\text{-alg}}{\Z{A}}  \\  & \cat{G}_{\scriptscriptstyle{\Z{\I}}}(\Z{A}) = \cat{H}_{\scriptscriptstyle{\Z{\I}}}(\Z{A})  \ar@{^(->}_-{\B{\varsigma}}[ur] &  }
$$
\end{small}
of homomorphisms of groups.
\end{corollary}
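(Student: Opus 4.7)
The plan is to show that the hypotheses of the corollary bring everything proved so far into force, and then to assemble the diagram from the factorisations already available. First I would verify all standing assumptions. Being Azumaya, the multiplication $m$ splits in $\,{}_{A^e}\cat{M}$, which yields $A\mid A\tensor{}A$ (Definition \ref{def:divide}(2)); combined with the hypothesis $A\tensor{}A \mid A$ this gives $A\tensor{}A\sim A$. By Corollary \ref{coro:azumaya}, $A$ is flat and central, so Proposition \ref{prop:centers}(iv) (applied because $\cat{Z}$ is faithful) makes the unit $u_1\colon\I\to {}_1J_1$ an isomorphism. Hence the hypotheses of Theorem \ref{thm:mono}, Corollary \ref{coro:Jinv}, Theorem \ref{teo:azum} and Proposition \ref{prop:Azumaya1} are all satisfied, so we may freely quote them.

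Next I would prove bijectivity of $\B{\Gamma}$. Injectivity follows from the identity $\Phi=\Omega\circ\B{\Gamma}$ of Proposition \ref{prop:Gamma}, together with the fact that $\Phi=\B{\varsigma}\circ\widehat{\Phi}$ is injective (Theorem \ref{thm:mono}) and that $\Omega$ is injective because $\cat{Z}$ is faithful. For surjectivity, take $\gamma\in\mathrm{Aut}_{alg}(A)$ and set $\theta:=\Z{\gamma}$. Corollary \ref{coro:Jinv} places $\theta$ in $\cat{G}_{\scriptscriptstyle{\Z{\I}}}(\Z{A})$ with ${}_{\theta}J_1\in\mathrm{Inv}_{\I}(A)$, and then
\[
\Omega\bigl(\B{\Gamma}({}_{\theta}J_1)\bigr)\,=\,\Phi({}_{\theta}J_1)\,=\,\B{\varsigma}\bigl(\widehat{\Phi}({}_{\theta}J_1)\bigr)\,=\,\theta\,=\,\Omega(\gamma),
\]
so $\B{\Gamma}({}_{\theta}J_1)=\gamma$ by injectivity of $\Omega$. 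Bijectivity of $\omega$ is then immediate: $\omega$ is injective because $\Omega=\B{\varsigma}\circ\omega$ is injective, while its surjectivity onto $\cat{G}_{\scriptscriptstyle{\Z{\I}}}(\Z{A})$ is precisely Proposition \ref{prop:Azumaya1}(ii) (which produces $\td{\theta}=\B{\Gamma}_{{}_{\theta}J_1}$ with $\Z{\td{\theta}}=\theta$).

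Finally I would establish the equality $\cat{G}_{\scriptscriptstyle{\Z{\I}}}(\Z{A})=\cat{H}_{\scriptscriptstyle{\Z{\I}}}(\Z{A})$. The inclusion $\cat{H}\subseteq\cat{G}$ is Corollary \ref{coro:tau 1}. Conversely, given $\theta\in\cat{G}_{\scriptscriptstyle{\Z{\I}}}(\Z{A})$, the surjectivity of $\omega$ just proved yields some $\gamma\in\mathrm{Aut}_{alg}(A)$ with $\theta=\Z{\gamma}$; Theorem \ref{teo:azum}, applicable thanks to $A\tensor{}A\sim A$, then gives $A_{\theta}=A_{\Z{\gamma}}\sim A_1$, so $\theta\in\cat{H}_{\scriptscriptstyle{\Z{\I}}}(\Z{A})$. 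The commutativity of the diagram is a formal consequence of the factorisations $\Phi=\Omega\circ\B{\Gamma}$ (Proposition \ref{prop:Gamma}), $\Phi=\B{\varsigma}\circ\widehat{\Phi}$ (Theorem \ref{thm:mono}) and $\Omega=\B{\varsigma}\circ\omega$ (definition of $\omega$), together with the injectivity of $\B{\varsigma}$ which forces $\widehat{\Phi}=\omega\circ\B{\Gamma}$. The delicate point of the whole argument, already absorbed by the earlier machinery, is the surjectivity of $\B{\Gamma}$: it hinges on being able to lift any automorphism of $A$ to an invertible sub-bimodule via the equaliser construction ${}_{\theta}J_1$, which is exactly what the Azumaya hypothesis together with $A\tensor{}A\sim A$ and the faithfulness of $\cat{Z}$ buys us through Corollary \ref{coro:Jinv}.
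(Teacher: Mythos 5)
Your proof is correct and follows essentially the same route as the paper: both rest on the factorisations $\Phi=\Omega\circ\B{\Gamma}$, $\Phi=\B{\varsigma}\circ\widehat{\Phi}$, $\Omega=\B{\varsigma}\circ\omega$, on Proposition \ref{prop:Azumaya1} for the bijectivity of $\omega$, and on Corollary \ref{coro:tau 1} together with Theorem \ref{teo:azum} for $\cat{G}_{\scriptscriptstyle{\Z{\I}}}(\Z{A})=\cat{H}_{\scriptscriptstyle{\Z{\I}}}(\Z{A})$. The only (harmless) difference is that you verify injectivity and surjectivity of $\B{\Gamma}$ directly via ${}_{\theta}J_1$ and injectivity of $\Omega$, whereas the paper deduces bijectivity of $\B{\Gamma}$ at once from the identity $\omega\circ\B{\Gamma}=\widehat{\Phi}$ with $\omega$ and $\widehat{\Phi}$ bijective.
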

\begin{proof}
The map $\omega$ is bijective since by Proposition \ref{prop:Azumaya1}(ii) it is surjective and it is injective as $\cat{Z}$ is faithful. We know from Proposition \ref{prop:Gamma}, that $\Omega \circ \B{\Gamma} = {\Phi}$. Therefore,  Theorem \ref{thm:mono} implies that $\B{\varsigma} \circ \omega \circ \B{\Gamma} = \B{\varsigma}\circ \widehat{\Phi}$, and so $\omega \circ \B{\Gamma} = \widehat{\Phi}$ which implies that  $\B{\Gamma} $ is  bijective as well. Lastly, the  inclusion $\cat{G}_{\scriptscriptstyle{\Z{\I}}}(\Z{A}) \subseteq \cat{H}_{\scriptscriptstyle{\Z{\I}}}(\Z{A})$  is deduced from  Proposition \ref{prop:Azumaya1}(i) in combination with Theorem \ref{teo:azum}.
\end{proof}

\section{Application to the category of comodules over a flat Hopf algebroid}\label{sec:application}

All Hopf algebroids which will be considered here are commutative and flat over the base ring, for the axiomatic  definitions and basic properties, we refer the reader to \cite[Appendix 1]{Ravenel:1986}.

Let $(R, H)$ be a commutative Hopf  algebroid with base ring $R$ and  structure maps  $ s,t:R\to H$, $\varepsilon:H\to R$,  $\Delta:H\to H\tensor{R} H$ and $\mathscr{S}:H\to H$. An  $H$-comodule stands for  right $H$-comodule, we denote the category of $H$-comodules by $\rcomod{H}$. It is well-known, see for instance \cite{Bruguieres:1994} or \cite{Hovey:2004},   that any  $H$-comodule $P$ whose underlying  $R$-module  is finitely generated and projective is a dualizable object in the monoidal category of comodules with dual the $R$-module $P^*=\hom{R}{P}{R}$. The comodule structure of $P^*$ is given by
$$
P^* \to P^*\tensor{R}H, \quad \LR{ \varphi \longmapsto e_i^*\tensor{R}t(\varphi(e_{i,0}))\mathscr{S}(e_{i,1}) },
$$
where $\{e_i,e_i^*\}$ is a dual basis for $P_R$ and $\varrho_P(p)=p_0\tensor{R}p_1$ is the $H$-coaction of $P$ (the summation is understood). Notice that the converse also holds true which means that any dualizable object in $\rcomod{H}$ is a finitely generated and projective $R$-module. This is due to the fact  that the forgetful functor $\rcomod{H} \to\rmod{R}$ is a strict monoidal  functor and the unit object in $\rcomod{H}$ is $R[1]$, that is $R$ with structure of comodule  given by the grouplike element $1_H$, via the target map $\mathsf{t}: R \to H \cong R\tensor{R}H$.

Next we want to apply the results of Subsection \ref{ssec:azumaya} to the category of comodules $\rcomod{H}$. Observe that this category is a symmetric monoidal Grothendieck category with respect to the canonical flip over $R$, where the tensor product is right exact on both factors. Moreover, the forgetful functor $\rcomod{H} \to \rmod{R}$ is faithful and exact. So the category ${\rcomod{H}}$ fits in the context of that subsection.

In the previous notations, the functor $\cat{Z}$ is identified with $\Z{M}=M^{\mathrm{co}(H)}$,  for an  $H$-comodule $M$, where $$M^{\mathrm{co}(H)}:=\{ m \in M|\, \varrho_M(m)=m\tensor{R}1\} $$ is the submodule of coinvariant elements. Therefore, the condition that $R[1]$ is a generator in $\rcomod{H}$, means that the sudmodule of coinvariant elements $M^{\mathrm{co}(H)}$ is not zero, for every right $H$-comodule $M$.  For simplicity we denote by $R^{\mathrm{co}(H)}:=(R[1])^{\mathrm{co}(H)}$ the subalgebra of $R$ of coinvariant elements, which is explicitly given by $R^{\mathrm{co}(H)}=\{ r \in R| s(r)=t(r)\}$.

\begin{lemma}
Let $A$ be an $H$-comodule with coaction $\varrho_A: A \to A\tensor{R}H$ . Then $A$ admits a structure of monoid in $\rcomod{H}$ if and only if  $\varrho_A$ is a morphism of $R$-algebras, where $H$ is considered as an $R$-algebra via its source map $s$.
\end{lemma}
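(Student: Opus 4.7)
The plan is to unpack the defining axioms of a monoid in the symmetric monoidal category $\rcomod{H}$ and match them against the two axioms (multiplicativity and unitality) of an $R$-algebra map $\varrho_A:A\to A\tensor{R}H$. Since $(R,H)$ is a commutative flat Hopf algebroid, $\rcomod{H}$ is a symmetric monoidal abelian category with tensor product $\tensor{R}$ and unit $R[1]$ (the module $R$ viewed as an $H$-comodule via the grouplike $1_H$, i.e.~$r\mapsto r\tensor{R}1_H$). A monoid structure on $A$ in $\rcomod{H}$ is therefore the data of a multiplication $m_A:A\tensor{R}A\to A$ and a unit $u_A:R[1]\to A$, both required to be $H$-colinear, together with the usual associativity and unitality constraints. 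Forgetting the coaction, these constraints are precisely an $R$-algebra structure on $A$ (with respect to the right $R$-action determined by $\varrho_A$), so the only real content beyond that is the $H$-colinearity of $m_A$ and $u_A$.

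For the forward direction I would record the coactions explicitly. The coaction on $A\tensor{R}A$ is the diagonal one, $a\tensor{R}a'\mapsto a_0\tensor{R}a'_0\tensor{R}a_1 a'_1$, using the multiplication of $H$; the coaction on $R[1]$ sends $r$ to $r\tensor{R}1_H$; and $A\tensor{R}H$ carries its canonical $R$-algebra structure $(a\tensor{R}h)(a'\tensor{R}h')=aa'\tensor{R}hh'$, which is well defined because $H$ is commutative and $s$ is central in $A\tensor{R}H$. With these descriptions, $H$-colinearity of $m_A$, written as $\varrho_A\circ m_A=(m_A\tensor{R}H)\circ \varrho_{A\tensor{R}A}$, translates exactly into the identity $\varrho_A(aa')=a_0 a'_0\tensor{R}a_1 a'_1$, i.e.~$\varrho_A$ is multiplicative; and $H$-colinearity of $u_A$, i.e.~$\varrho_A\circ u_A=(u_A\tensor{R}H)\circ \varrho_{R[1]}$, becomes $\varrho_A(1_A)=1_A\tensor{R}1_H$, i.e.~$\varrho_A$ preserves the unit. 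Together these say $\varrho_A$ is a morphism of $R$-algebras.

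For the converse, I would run the same identifications in the opposite direction: assuming $\varrho_A$ is an $R$-algebra morphism immediately recovers both colinearity equations above, so $m_A$ and $u_A$ are morphisms in $\rcomod{H}$ and $(A,m_A,u_A)$ becomes a monoid in $\rcomod{H}$. The whole argument is a direct unpacking of definitions; the only bookkeeping step to be careful about is the compatibility between the source map $s$ and the right $R$-action on $A$ coming from $\varrho_A$, which is exactly what makes $A\tensor{R}H$ into an $R$-algebra in the natural way and what makes the two colinearity conditions equivalent to $\varrho_A$ being a ring homomorphism over $R$. There is no serious obstacle beyond this careful matching of structures.
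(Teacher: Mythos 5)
Your proposal is correct: unpacking the $H$-colinearity of the multiplication and unit against the diagonal coaction on $A\tensor{R}A$ and the coaction $r\mapsto r\tensor{R}1_H$ on $R[1]$ is exactly what identifies the monoid axioms in $\rcomod{H}$ with $\varrho_A$ being multiplicative and unital into $A\tensor{R}H$ (with $H$ an $R$-algebra via $s$). This is precisely the argument the paper has in mind, where the proof is simply declared ``straightforward.''
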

\begin{proof}
Straightforward.
\end{proof}
We refer to such an object as an \emph{$H$-comodule $R$-algebra}.

\begin{corollary}\label{coro:fgp}
Let $A$ be an  Azumaya $H$-comodule $R$-algebra.
\begin{enumerate}[(i)]
\item If $A$ is finitely generated and projective $R$-module, then $A$ is an Azumaya  $R$-algebra.
\item If the underlying comodule of $A$ is a direct summand of finite products of copies of $R[1]$, then  $A^{\mathrm{co}(H)}$ is an Azumaya $R^{\mathrm{co}(H)}$-algebra.
\end{enumerate}
\end{corollary}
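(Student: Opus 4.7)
The plan is to treat the two items separately, reducing each one to results already established in the paper.

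For $(i)$, I will first observe that since $A$ is finitely generated and projective as an $R$-module, the comment at the beginning of this section ensures that $A$ is a dualizable object in $\rcomod{H}$, with dual given by $A^{*}=\hom{R}{A}{R}$ endowed with its canonical $H$-comodule structure. Consequently, $A$ admits a left internal hom functor in $\rcomod{H}$, namely $[A,-]\cong A^{*}\tensor{R}(-)$, so that the setup of Subsection~\ref{ssec:azumaya} applies. Remark~\ref{rem:azumaya}(i) then produces a canonical isomorphism of monoids $A^{e}=A\tensor{R}A^{o}\cong [A,A]$ in $\rcomod{H}$. Forgetting the comodule structure, $[A,A]$ is identified with $A^{*}\tensor{R}A\cong \mathrm{End}_{R}(A)$ as $R$-algebras, whence the classical canonical map $A\tensor{R}A^{o}\to \mathrm{End}_{R}(A)$ turns out to be an isomorphism. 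Separability is even easier: by Definition~\ref{def:azumaya}, the multiplication $m$ splits as a morphism of $A$-bimodules inside $\rcomod{H}$, and this splitting remains valid after the exact monoidal forgetful functor $\rcomod{H}\to\rmod{R}$. Combined with the f.g.\ projectivity, this is one of the classical characterizations of an Azumaya $R$-algebra.

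For $(ii)$, the assumption that $A$ is a direct summand of a finite product of copies of $R[1]$ is precisely the statement that $A$ is a K\"unneth object in $\rcomod{H}$ (in the sense recalled before Proposition~\ref{prop:Kunneth}). Since the unit $R[1]$ is trivially self-dual in $\rcomod{H}$ and K\"unneth objects are direct summands of finite products of the unit, any such object is automatically dualizable. Therefore all the hypotheses of Proposition~\ref{prop:Kunneth} are satisfied with $\cat{M}=\rcomod{H}$ and $\I=R[1]$, so applying it gives directly that $\cat{Z}(A)=A^{\mathrm{co}(H)}$ is an Azumaya $\cat{Z}(\I)=R^{\mathrm{co}(H)}$-algebra.

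The only genuinely delicate point is, in $(i)$, to verify that the isomorphism $A^{e}\cong [A,A]$ coming from Remark~\ref{rem:azumaya}(i) coincides, after forgetting the comodule structure, with the classical Azumaya morphism $a\tensor{R}b\mapsto (x\mapsto axb)$ from $A\tensor{R}A^{o}$ to $\mathrm{End}_{R}(A)$. This will be the main thing to check, but it amounts to a naturality argument unravelling how the counit $A\tensor{R}[A,A]\to A$ of the adjunction $A\tensor{R}-\dashv [A,-]$ evaluates, together with the fact that the source of the Azumaya map is constructed from the two-sided multiplication. No further obstacle is expected.
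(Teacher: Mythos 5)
Your part (ii) is essentially the paper's own proof: the hypothesis says precisely that $A$ is a K\"unneth (hence dualizable) object of $\rcomod{H}$, and Proposition \ref{prop:Kunneth} applied with $\cat{M}=\rcomod{H}$, $\I=R[1]$ gives the claim. For part (i), however, you take a genuinely different route. The paper never looks at the canonical map $A\tensor{R}A^{o}\to \mathrm{End}_{R}(A)$: it uses the equalizer \eqref{Eq:center} (with $[A,-]\cong -\tensor{R}A^{*}$ available because $A$ is a dualizable comodule) to identify the underlying $R$-algebra of ${}_{A^e}[A,A]$ with the centre of the underlying $R$-algebra of $A$, invokes Corollary \ref{coro:azumaya} (Azumaya implies central) to conclude that this centre is $R$, and combines this with separability (the splitting of $m$ survives the exact monoidal forgetful functor) to conclude that $A$ is central separable, hence Azumaya. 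Your argument instead extracts from Remark \ref{rem:azumaya}(i) a monoid isomorphism $A^{e}\cong [A,A]\cong \mathrm{End}_{R}(A)$ and aims at the ``canonical map is an isomorphism'' characterization. This is viable, but it carries exactly the burden you flag: an abstract algebra isomorphism $A\tensor{R}A^{o}\cong\mathrm{End}_{R}(A)$ proves nothing by itself, and you must check that the isomorphism of Remark \ref{rem:azumaya}(i) is the adjoint of the $A^{e}$-action on $A$, i.e.\ the classical map $a\tensor{R}b\mapsto (x\mapsto axb)$. The paper's centre-based argument sidesteps this matching (at the cost of its own routine identification of the equalizer \eqref{Eq:center} with the centre), which is what it buys.

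One further point needs fixing in (i): the characterization you invoke (``f.g.\ projective, canonical map an isomorphism, separable'') is not sufficient as stated. For instance, take $R=k\times k$ acting on $A=k$ through the first projection: $A$ is f.g.\ projective over $R$, separable, and the canonical map $A\tensor{R}A^{o}\to\mathrm{End}_{R}(A)$ is an isomorphism, yet $A$ is not Azumaya over $R$ because it is not a faithful $R$-module. So you must also record that $A$ is faithful (indeed faithfully projective) over $R$; in your situation this is immediate, since by Corollary \ref{coro:azumaya} the unit $u:R[1]\to A$ is a section in $\rcomod{H}$, so $R$ is an $R$-module direct summand of $A$. With that sentence added, and the identification of the canonical map carried out as you sketch, your proof of (i) is correct.
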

\begin{proof}
$(i)$. Since $A$ is a dualizable $H$-comodule, the right adjoint of the functor $A\tensor{}-:\rcomod{H} \to \rcomod{H}$ is given by the functor $[A,-]\cong -\tensor{R}A^*$ defined using the tensor product of two $H$-comodules. Using this adjunction and equation \eqref{Eq:center}, we can show that the underlying  $R$-algebra of the $H$-comodule algebra ${}_{A^e}[A,A]$  coincides with the centre of the underlying $R$-algebra of $A$. Therefore,  the centre of $A$ coincides with $R \cong {}_{A^e}[A,A]$  as $A$ is a central $H$-comodule $R$-algebra.  From this we conclude that $A$ is a central separable $R$-algebra, that is, an Azumaya $R$-algebra.
\\ $(ii)$. It follows directly from Proposition \ref{prop:Kunneth}, since $A$ is a dualizable comodule.
\end{proof}

\begin{example}\label{exam:H}
Assume that $R[1]$ is a projective $H$-comodule and take $P$ an $H$-comodule such that $P_R$ is finitely generated and projective module. Consider in a canonical way $\mathrm{End}_R(P)\cong [P,P]$ as an $H$-comodule $R$-algebra. Assume that the evaluation map $P\tensor{\mathrm{End}_R(P)}P^* \to R[1]$ is an isomorphism of $H$-comodules. Since $R[1]$ is projective this isomorphism implies that $P$ is a progenerator in $\rcomod{H}$ in the sense of \cite[page 113]{PareigisIV} whence, by \cite[Theorem 14]{PareigisIV}, $\mathrm{End}_R(P)$ is an Azumaya $H$-comodule $R$-algebra.
\end{example}

\begin{example}
Assume that $(R,H)$ is a split Hopf algebroid, that is  $H=R\tensor{\mathbb{K}}B$, where $B$ is a flat commutative Hopf algebra over a ground commutative ring $\mathbb{K}$ and $R$ is a (right) $B$-comodule commutative  $\mathbb{K}$-algebra.  Let $A$ be an Azumaya $H$-comodule $R$-algebra which finitely generated and projective as an $R$-module. Then, by Corollary \ref{coro:fgp}(i), $A$ is an Azumaya $R$-algebra. If $R=\mathbb{K}$, then $A$ is in particular an $B$-comodule Azumaya algebra in the sense of \cite[page 328]{Caenepeel:book}.
\end{example}

The fact that $A$ is an $H$-comodule $R$-algebra leads to different groups so far treated here.  On the one hand, we have $\mathrm{Inv}_{R}(A)$ and $\mathrm{Aut}_{R\text{-}alg}(A)$, where $A$ is considered as an $R$-algebra i.e. a monoid in the category of $R$-modules. On the other hand, we have $\mathrm{Inv}_{R}^H(A)$ and $\mathrm{Aut}_{R\text{-}alg}^H(A)$, where $A$ is considered as a monoid in the category of $H$-comodules. Obviously we have the following  inclusions of groups
$$
\mathrm{Inv}_{R}^H(A) \subseteq \mathrm{Inv}_{R}(A), \quad  \mathrm{Aut}_{R\text{-}alg}^H(A) \subseteq \mathrm{Aut}_{R\text{-}alg}(A).
$$

By a direct application of Corollary \ref{thm:isoAzumaya}, we obtain the following.
\begin{corollary}\label{coro:Halgd}
Let $A$ be an  Azumaya $H$-comodule $R$-algebra such that $A\tensor{R}A|A$ simultaneously in the category of $H$-comodules and of $A$-bimodules.  Assume that $R[1]$ is a generator in $\rcomod{H}$. Then  there is an isomorphism of groups
$$\mathrm{Inv}_{R}^H(A) \, \cong  \, \mathrm{Aut}_{R\text{-}alg}^H(A).$$
\end{corollary}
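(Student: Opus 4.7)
The plan is to apply Corollary \ref{thm:isoAzumaya} to the symmetric monoidal Grothendieck category $\cat{M}=\rcomod{H}$, whose unit object is $R[1]$. The verification splits naturally into (a) checking that all hypotheses of that corollary are met in $\rcomod{H}$ for the monoid $A$, and (b) translating the resulting isomorphism $\B{\Gamma}:\mathrm{Inv}_{R[1]}(A)\longrightarrow\mathrm{Aut}_{alg}(A)$ (computed internally to $\rcomod{H}$) into the statement involving the groups $\mathrm{Inv}_{R}^H(A)$ and $\mathrm{Aut}_{R\text{-}alg}^H(A)$.

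For (a), as already noted in the paragraph preceding the statement, $\rcomod{H}$ is Penrose abelian, bicomplete, locally small, symmetric and has tensor products right exact on both factors. The functor $\cat{Z}=\hom{\rcomod{H}}{R[1]}{-}$ is identified with the coinvariants functor $(-)^{\mathrm{co}(H)}$, and it is faithful exactly when $R[1]$ is a generator in $\rcomod{H}$, which is assumed. Under our hypotheses, $A$ is already an Azumaya monoid in $\rcomod{H}$, and the standing assumption $A\tensor{R}A\,|\,A$ \emph{simultaneously in the category of $H$-comodules and of $A$-bimodules} is exactly the requirement that $A\tensor{}A\,|\,A$ in the category ${}_A(\rcomod{H})_A$ of $A$-bimodules internal to $\rcomod{H}$, which is the form in which Definition \ref{def:divide}(2) is applied inside Corollary \ref{thm:isoAzumaya}.

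For (b), all the hypotheses of Corollary \ref{thm:isoAzumaya} being satisfied, we obtain an isomorphism of groups
\begin{equation*}
\B{\Gamma}:\mathrm{Inv}_{R[1]}(A)\stackrel{\cong}{\longrightarrow}\mathrm{Aut}_{alg}(A),
\end{equation*}
where both sides are computed inside $\rcomod{H}$. It remains to identify the left-hand side with $\mathrm{Inv}_{R}^H(A)$ and the right-hand side with $\mathrm{Aut}_{R\text{-}alg}^H(A)$. The identification on the right is immediate: a monoid automorphism of $A$ inside $\rcomod{H}$ is, by the unwinding of the definition, precisely an $R$-algebra automorphism of $A$ which is also a morphism of $H$-comodules. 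For the identification on the left, $\mathrm{Inv}_{R[1]}(A)$ is defined via $(R[1],R[1])$-sub-bimodules of $A$ inside $\rcomod{H}$, i.e.\ $H$-subcomodules $X\hookrightarrow A$ which are $R$-sub-bimodules admitting a two-sided $R$-linear $H$-colinear inverse in the sense of Definition \ref{def:inverse}; this is exactly the description of $\mathrm{Inv}_{R}^H(A)$.

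The main technical point — which should not however be an obstacle, but a careful bookkeeping matter — will be part (a): namely checking that Definition \ref{def:azumaya} for $A$ inside $\rcomod{H}$ really holds with a \emph{left internal hom functor} ${}_{A^e}[A,-]$ in $\rcomod{H}$ (as required after Proposition \ref{prop:Kunneth} for the corollary to apply), and that the divisibility hypothesis we have assumed is the correct internal one. The first follows because, being Azumaya, the underlying object of $A$ is dualizable in $\rcomod{H}$ (so $A\tensor{R}-$ has the right adjoint $-\tensor{R}A^*$ computed with the diagonal $H$-coaction), and the second is precisely what is meant by ``simultaneously in the category of $H$-comodules and of $A$-bimodules.'' Once these identifications are in place, Corollary \ref{thm:isoAzumaya} yields the desired isomorphism $\mathrm{Inv}_{R}^H(A)\cong\mathrm{Aut}_{R\text{-}alg}^H(A)$.
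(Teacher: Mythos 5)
Your overall route is exactly the paper's: Corollary \ref{coro:Halgd} is obtained by a direct application of Corollary \ref{thm:isoAzumaya} to $\cat{M}=\rcomod{H}$ with unit object $R[1]$, using the identifications already set up in Section \ref{sec:application} (the functor $\cat{Z}$ is the coinvariants functor $(-)^{\mathrm{co}(H)}$, which is faithful precisely when $R[1]$ is a generator; $\mathrm{Inv}_{R}^H(A)$ and $\mathrm{Aut}_{R\text{-}alg}^H(A)$ are by definition $\mathrm{Inv}_{\I}(A)$ and $\mathrm{Aut}_{alg}(A)$ computed inside $\rcomod{H}$; and the assumption ``$A\tensor{R}A|A$ simultaneously in $H$-comodules and $A$-bimodules'' is the divisibility of Definition \ref{def:divide}(2) internal to $\rcomod{H}$). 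That part of your bookkeeping is correct and coincides with the paper's (one-line) argument.

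The flaw is in your justification of the standing hypothesis of Subsection \ref{ssec:azumaya}, namely that the underlying object $A$ admits a left internal hom functor in $\rcomod{H}$: you assert that, being Azumaya, $A$ is dualizable in $\rcomod{H}$, so that $[A,-]\cong -\tensor{R}A^*$. This is false in general and is explicitly warned against in Remark \ref{rem:azumaya}(i); moreover, the opening of Section \ref{sec:application} recalls that the dualizable objects of $\rcomod{H}$ are exactly the comodules that are finitely generated and projective over $R$, a hypothesis the corollary does not make (it appears only as an \emph{extra} assumption in Corollary \ref{coro:fgp}(i)). The step can be repaired without dualizability: $\rcomod{H}$ is a Grothendieck category (hence locally presentable) and $A\tensor{R}-$ preserves all colimits, so it has a right adjoint $[A,-]$ by the adjoint functor theorem. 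With this correction, the internal hom needed in Proposition \ref{prop:centers} (and hence in Proposition \ref{prop:Azumaya1} and Corollary \ref{thm:isoAzumaya}) is available, and the rest of your argument goes through as stated.
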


\begin{remark}\label{rem:final}
Consider an Azumaya $H$-comodule $R$-algebra $A$ which is finitely generated and projective  as $R$-module. One can expect to deduce Corollary \ref{coro:Halgd} directly by using Corollary \ref{coro:fgp}(i) in conjunction with \cite[Corollary of Theorem 1.4]{Mi}. This could be  so simple if  one succeeds to show, for instance, that the following diagram is commutative
$$
\xymatrix{  \mathrm{Inv}_{R}^H(A) \ar@{_{(}->}[d]  \ar@{->}^-{\B{\psi}}[r] &  \mathrm{Aut}_{R\text{-}alg}^H(A) \ar@{^{(}->}[d] \\
\mathrm{Inv}_{R}(A) \ar@{->}^-{\B{\varphi}}[r]  &  \mathrm{Aut}_{R\text{-}alg}(A) }
$$
where the map $\B{\psi}$ is the isomorphism of Corollary \ref{coro:Halgd} while $\B{\varphi}$ is the isomorphism of \cite[page 100]{Mi} which for any element $X \in \mathrm{Inv}_{R}(A)$ with inverse $Y$ and decomposition of unit  $1_R=\sum_ix_iy_i$ ($x_i \in X, y_i \in Y$), the associated automorphism  is given by $\B{\varphi}_X(a)=\sum_i x_iay_i$, for any $a \in A$.   That is, to show that the map $\B{\psi}$ is the restriction of $\B{\varphi}$. However this is not  clear at all.  Or perhaps by showing that the map $\B{\varphi}_X$ is $H$-colinear whenever $X$ belongs to the subgroup
$\mathrm{Inv}_{R}^H(A)$. This is also not clear at all.  In any case, both ways  will only lead to the injectivity and one has to check the surjectivity which is perhaps much more complicated by using  elementary methods.
\end{remark}

\appendix

\section{More results on invertible and dualizable bimodules}\label{ssec:appendix1}
In all this appendix  $(\cat{M},\tensor{}, \I,l,r)$ will be a Penrose  monoidal abelian, locally small and bicomplete category, where tensor products are right exact on both factors.  Let $(A,m_A,u_A),  (R,m_R,u_R), (S,m_S,u_S)$ be monoids in $\cat{M}$ with two morphisms of monoids $\alpha: R \rightarrow A \leftarrow S: \beta$. In the sequel, we will use the notation of Section  \ref{sec:biobjects}.

\subsection{From right dualizable datum to right inverse}\label{ssec:converse}
This subsection is devoted to discuss the converse of Proposition \ref{pro:invdual}, that is, trying to give conditions under which a right invertible sub-bimodule can be extracted from a right dualizable datum in the sense of Definition \ref{def:dualizable}.

The following corollary is complementary to Proposition \ref{pro:fX}.
\begin{corollary}
\label{coro:fX} If in Proposition \ref{pro:fX}, we assume $\mathrm{ev}$ and $%
\mathrm{coev}$ are isomorphisms, then so are the following morphisms%
\begin{equation*}
f_{Y} :=\mar\circ \left( i_{Y}\tensor{R}A\right) :Y\tensor{R}A\rightarrow A, \quad
g_{X} :=\mar\circ \left( A\tensor{R}i_{X}\right) :A\tensor{R}X\rightarrow A.
\end{equation*}
\end{corollary}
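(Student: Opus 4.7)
The plan is to exploit the symmetry that emerges once $\mathrm{ev}$ and $\mathrm{coev}$ become invertible, and then reduce the statement to Proposition \ref{pro:fX} applied to a suitably reversed right dualizable datum. Concretely, I would set
$\widetilde{\mathrm{ev}} := \mathrm{coev}^{-1}: X \tensor{S} Y \to R$ and $\widetilde{\mathrm{coev}} := \mathrm{ev}^{-1}: S \to Y \tensor{R} X$, and verify that $(Y, X, \widetilde{\mathrm{ev}}, \widetilde{\mathrm{coev}})$ is itself a right dualizable datum, now viewed with the roles of the monoids $R$ and $S$ interchanged: $Y \in \mathscr{P}({}_SA_R)$ plays the role of the primary bimodule and $X \in \mathscr{P}({}_RA_S)$ plays the role of its dual.

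The first step would be to verify the triangle identities \eqref{form:ev1} and \eqref{form:ev2} for this reversed datum. These are obtained just by inverting the original identities: since the composites appearing in \eqref{form:ev1} and \eqref{form:ev2} equal the identities of $X$ and $Y$ respectively and all factors involved have become isomorphisms, reading those equations backwards produces exactly the two triangle identities needed for $(Y, X, \widetilde{\mathrm{ev}}, \widetilde{\mathrm{coev}})$. Second, I would transport the compatibility relations \eqref{form:ev3} and \eqref{form:ev4} across the reversal: composing \eqref{form:ev4} on the right with $\mathrm{ev}^{-1}$ yields $\mar \circ (i_Y \tensor{R} i_X) \circ \widetilde{\mathrm{coev}} = \beta$, which is the analogue of \eqref{form:ev3} for the reversed datum; dually, composing \eqref{form:ev3} with $\mathrm{coev}^{-1}$ gives $\mas \circ (i_X \tensor{S} i_Y) = \alpha \circ \widetilde{\mathrm{ev}}$, the analogue of \eqref{form:ev4}.

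With these verifications in hand, Proposition \ref{pro:fX} applies verbatim to $(Y, X, \widetilde{\mathrm{ev}}, \widetilde{\mathrm{coev}})$, the multiplication playing the role of $\mas$ now being $\mar$. The conclusion of that proposition in this reversed setting is precisely that $f_Y = \mar \circ (i_Y \tensor{R} A)$ and $g_X = \mar \circ (A \tensor{R} i_X)$ are isomorphisms, with inverses expressible explicitly in terms of $\widetilde{\mathrm{coev}} = \mathrm{ev}^{-1}$ by the very formulas of Proposition \ref{pro:fX}. The main subtlety, rather than a genuine obstacle, is the bookkeeping: one must be careful that the various left and right structures over $R$ and $S$, together with the unit and associativity constraints implicit in the tangle diagrams, are correctly interchanged when passing to the reversed datum, so that the hypotheses of Proposition \ref{pro:fX} really are met. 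Once this bookkeeping is set up the rest is purely formal and no further diagrammatic computation is required.
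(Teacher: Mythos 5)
Your proposal is correct and takes essentially the same route as the paper: the paper's proof likewise observes that $\big(Y,X,(\mathrm{coev})^{-1},(\mathrm{ev})^{-1}\big)$ is again a right dualizable datum and then applies Proposition \ref{pro:fX} to it. The only difference is that you spell out the verification of the triangle identities and the transport of \eqref{form:ev3} and \eqref{form:ev4} to the reversed datum, which the paper leaves implicit.
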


\begin{proof}
Since $\left( X,Y,\mathrm{ev},\mathrm{coev}\right) $ is a right
dualizable datum then $\big( Y,X,( \mathrm{coev}) ^{-1},(
\mathrm{ev}) ^{-1}\big) $ is a right dualizable datum. By
Proposition \ref{pro:fX}, we get that $f_{Y}$ and $g_{X}$ are isomorphisms
too.
\end{proof}

\begin{proposition}
Consider a right dualizable datum $\left( X,Y,\mathrm{ev},\mathrm{coev}\right) $ which satisfies equations \eqref{form:ev3} and \eqref{form:ev4}.  Assume that $g_X$ is an isomorphism and  the functor  $A\tensor{R}(-): {}_R\cat{M} \to {}_A\cat{M}$ is faithful. Then $X$ is a two-sided invertible sub-bimodule.
\end{proposition}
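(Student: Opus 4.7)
The plan is to produce a two-sided inverse for $X$ by setting $m_{X}:=\mathrm{coev}^{-1}$ and $m_{Y}:=\mathrm{ev}$, in analogy with Proposition~\ref{pro:invdual}. With these assignments, equation \eqref{def: mY} reduces to \eqref{form:ev4}, and equation \eqref{def: mX} reduces to \eqref{form:ev3} once $\mathrm{coev}$ is known to be iso. The proof therefore splits into two tasks: showing that $\mathrm{coev}$ is iso, and showing that $\mathrm{ev}$ is iso.

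For $\mathrm{coev}$, I first establish that $A\tensor{R}\mathrm{coev}$ is an isomorphism. A diagrammatic calculation, using associativity of $m_{A}$ and equation \eqref{form:ev3}, gives
\[
g_{Y}\circ(g_{X}\tensor{S}Y)\circ(A\tensor{R}\mathrm{coev})=r_{A}^{R}\colon A\tensor{R}R\longrightarrow A.
\]
Since $g_{Y}$ is iso by Proposition~\ref{pro:fX}, $g_{X}$ is iso by hypothesis, and $r_{A}^{R}$ is iso, the remaining factor $A\tensor{R}\mathrm{coev}$ must also be iso. I then descend to $\mathrm{coev}$ iso via the following elementary fact: any faithful additive functor between abelian categories reflects isomorphisms, because $F(f)$ mono implies that any $k$ with $f\circ k=0$ satisfies $F(k)=0$ (by mono-cancellation of $F(f)$) hence $k=0$ (by faithfulness), so $f$ is mono; the dual argument gives reflection of epimorphisms. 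Applied to the faithful functor $A\tensor{R}(-)$, this yields $\mathrm{coev}$ iso in ${}_R\cat{M}$, and iso in ${}_R\cat{M}_R$ follows because the set-theoretic inverse of a bimodule morphism that is left-linear-iso is automatically bilinear.

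For $\mathrm{ev}$, once $\mathrm{coev}$ is iso the triangle identities \eqref{form:ev1} and \eqref{form:ev2} force both $X\tensor{S}\mathrm{ev}$ and $\mathrm{ev}\tensor{S}Y$ to be isomorphisms, as the mutual inverses (up to unit constraints) of the iso's $\mathrm{coev}\tensor{R}X$ and $Y\tensor{R}\mathrm{coev}$. My plan for promoting this to $\mathrm{ev}$ iso is to consider the $R$-bimodule morphism $X\tensor{S}\mathrm{ev}\tensor{S}Y\colon X\tensor{S}Y\tensor{R}X\tensor{S}Y\to X\tensor{S}Y$, identify its source and target with $R$ via $\mathrm{coev}$, apply $A\tensor{R}(-)$, and combine $g_{X}$ iso with the diagrammatic identity $A\tensor{S}\mathrm{ev}=g_{X}\circ(g_{Y}\tensor{R}X)$ derived from \eqref{form:ev4}, then descend by faithfulness to obtain $\mathrm{ev}$ iso.

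The main obstacle I anticipate is this last step, since the faithful functor $A\tensor{R}(-)$ applies directly only to $R$-module morphisms while $\mathrm{ev}$ is an $S$-bimodule morphism; the trick will be to trade $\mathrm{ev}$ for its $R$-bilinearization $X\tensor{S}\mathrm{ev}\tensor{S}Y$, descend through $A\tensor{R}(-)$, and reconstruct $\mathrm{ev}$ using the already-established iso $\mathrm{coev}$ and the triangle identities, all without invoking any extra flatness hypothesis on $X$, $Y$ or $S$.
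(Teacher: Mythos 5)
Your treatment of $\mathrm{coev}$ is correct and is essentially the paper's own argument: the identity $g_Y\circ\left(g_X\tensor{S}Y\right)\circ\left(A\tensor{R}\mathrm{coev}\right)=r^{R}_{A}$ is just a rearrangement of the explicit formula for $g_Y^{-1}$ in Proposition \ref{pro:fX}, and the descent ``$A\tensor{R}\mathrm{coev}$ iso, $A\tensor{R}(-)$ faithful, hence $\mathrm{coev}$ mono and epi, hence iso'' is exactly the step taken in the paper. Your derivation of the invertibility of $A\tensor{S}\mathrm{ev}$ from $r^{S}_{A}\circ\left(A\tensor{S}\mathrm{ev}\right)=g_X\circ\left(g_Y\tensor{R}X\right)$ (a consequence of \eqref{form:ev4} and associativity) is also valid, and is if anything slightly slicker than the paper's route, which uses $X\tensor{S}\mathrm{ev}$ iso together with the naturality square $\left(g_X\tensor{S}S\right)\circ\left(A\tensor{R}X\tensor{S}\mathrm{ev}\right)=\left(A\tensor{S}\mathrm{ev}\right)\circ\left(g_X\tensor{S}Y\tensor{R}X\right)$.

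The genuine gap is the final step, which you only sketch: passing from ``$A\tensor{S}\mathrm{ev}$ (equivalently $X\tensor{S}\mathrm{ev}$, $\mathrm{ev}\tensor{S}Y$) is an isomorphism'' to ``$\mathrm{ev}$ is an isomorphism'', which you do need, since two-sided invertibility requires $m_Y=\mathrm{ev}$ itself to be invertible. The trick you outline cannot deliver this: $X\tensor{S}\mathrm{ev}\tensor{S}Y$ is an isomorphism for trivial reasons (it is the already-known isomorphism $X\tensor{S}\mathrm{ev}$ tensored with the identity of $Y$), so applying the faithful functor $A\tensor{R}(-)$ to it and ``descending'' produces no information about $\mathrm{ev}$ that you do not already have; and there is no mechanism to reconstruct $\mathrm{ev}$ from this bilinearization, because the only identities at your disposal are \eqref{form:ev1}--\eqref{form:ev2}, which merely express $\mathrm{ev}$ as $\mathrm{ev}$ precomposed with isomorphisms built from $\mathrm{coev}$ --- a tautology. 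The functor $X\tensor{S}(-)$ is only a right adjoint here (Proposition \ref{prop:adjunction}), and nothing in the hypothesis on $A\tensor{R}(-)$ gives it, or the base change $A\tensor{S}(-)$, any faithfulness or reflection property. At exactly this point the paper's proof simply repeats the mono/epi-reflection argument for $A\tensor{S}\mathrm{ev}$ (``we conclude as above''), i.e.\ it descends along the $S$-side base change $A\tensor{S}(-)$ rather than along $A\tensor{R}(-)$; your observation that the stated hypothesis names the $R$-side functor is a fair reading of the wording, but the resolution used there is to run the reflection argument on the $S$-side, not to re-route everything through $A\tensor{R}(-)$, and the re-routing you propose does not close the gap.
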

\begin{proof}
By Proposition \ref{pro:fX}, $g_Y$ is an isomorphism with inverse
\begin{equation*}
g_{Y}^{-1} =\left( \mar\tensor{S}Y\right) \circ \left( A\tensor{R}i_{X}\tensor{S}Y\right) \circ \left( A\tensor{R}\mathrm{coev}\right)
\circ \left( \rar\right) ^{-1}
=\left( g_{X}\tensor{S}Y\right) \circ \left( A\tensor{R}\mathrm{coev}%
\right) \circ \left( \rar\right) ^{-1},
\end{equation*}
Therefore, we get that
\begin{equation*}
A\tensor{R}\mathrm{coev}=\left( \left( g_{X}\right) ^{-1}\tensor{S}Y\right) \circ g_{Y}^{-1}\circ \rar
\end{equation*}%
is an isomorphism. Hence, since the functor $A\tensor{R}\left( -\right) $ is
faithful, then $\mathrm{coev}$ is an epimorphism as well as a monomorphism, so that it is an isomorphism.
By (\ref{form:ev1}), we have
\begin{equation*}
\rxs\circ \left( X\tensor{S}\mathrm{ev}\right) \circ \left( \mathrm{%
coev}\tensor{R}X\right) \circ \left( \lxr\right) ^{-1}=\mathrm{Id}%
_{X}.
\end{equation*}%
Since $\mathrm{coev}$ is an isomorphism, we get that $X\tensor{S}\mathrm{%
ev}$ is an isomorphism too. Now, from the equality
\begin{equation*}
\left( g_{X}\tensor{S}S\right) \circ \left( A\tensor{R}X\tensor{S}%
\mathrm{ev}\right) =\left( A\tensor{S}\mathrm{ev}\right) \circ \left(
g_{X}\tensor{S}Y\tensor{R}X\right)
\end{equation*}%
we deduce that $A\tensor{S}\mathrm{ev}$ is an isomorphism. Therefore,   we conclude as above  that $\mathrm{ev}$ is  an isomorphism.
\end{proof}

\subsection{Base change and dualizable datum}\label{ssec:A1}
Keep the notation of Section \ref{sec:biobjects}, and assume that $\alpha, \beta$ are  monomorphisms in $\cat{M}$.
The main aim here is to prove that an $(R,S)$-bimodule $X$ such that $X\tensor{S}A\cong A$ and which fits into a right dualizable datum admits a right inverse if we change the ring $R$ by a suitable extension $R'$ of it. First we need to prove the following technical result.

\begin{proposition} \label{pro:tau}
Let $(X,Y, {\rm ev}, {\rm coev})$ be a right dualizable datum (Definition \ref{def:dualizable}) such that $Y \in \mathscr{P}\left({}_{S}A_{R}\right) $ and $X \in \mathscr{P}\left( _{R}A_{S}\right)$. Assume that the
morphism $f_{X}=\mas\circ \left( i_{X}\tensor{S}A\right) :X\tensor{S}A\rightarrow A$ is an isomorphism. Define
\begin{equation*}
\gamma :=\Big( Y\overset{\left( r_{Y}^{R}\right) ^{-1}}{\longrightarrow }%
Y\tensor{R}R\overset{Y\tensor{R}\alpha }{\longrightarrow }Y\tensor{R}A%
\overset{Y\tensor{R}f_{X}^{-1}}{\longrightarrow }Y\tensor{R}X\otimes
_{S}A\overset{\mathrm{ev}\tensor{S}A}{\longrightarrow }S\tensor{S}A%
\overset{l_{A}^{S}}{\longrightarrow }A\Big) .
\end{equation*}%
Then
\begin{eqnarray}
f_{X}\circ \left( X\tensor{S}\gamma \right) \circ \mathrm{coev} &=&\alpha
\qquad \text{(i.e. }\mas\circ \left( i_{X}\tensor{S}\gamma \right)
\circ \mathrm{coev}=\alpha \text{),}  \label{Eq:a} \\
\mar\circ \left( \gamma \tensor{R}i_{X}\right) &=&\beta \circ \mathrm{ev%
}.  \label{Eq:b}
\end{eqnarray}
\end{proposition}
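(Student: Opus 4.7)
My plan is to prove both equations by a direct computation, unwinding the definition of $\gamma$ and systematically applying the dualizability axioms \eqref{form:ev1}, \eqref{form:ev2} together with the invertibility of $f_X$. I will first introduce the shorthand $\delta := f_X^{-1}\circ\alpha\colon R\to X\tensor{S}A$, so that
$$
\gamma \,=\, l_A^S\circ(\mathrm{ev}\tensor{S}A)\circ(Y\tensor{R}\delta)\circ(r_Y^R)^{-1}.
$$

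For \eqref{Eq:a}, since $f_X$ is invertible, it suffices to show $(X\tensor{S}\gamma)\circ\mathrm{coev}=\delta$. Substituting $\gamma$ and repeatedly invoking naturality of the unit constraints $r^R, l^R$ to push $\delta$ out on the right rewrites the left-hand side (modulo the associator) as
$$
(X\tensor{S}l_A^S)\circ\bigl(\bigl((X\tensor{S}\mathrm{ev})\circ(\mathrm{coev}\tensor{R}X)\circ(l_X^R)^{-1}\bigr)\tensor{S}A\bigr)\circ\delta.
$$
The inner bracket collapses to $(r_X^S)^{-1}$ by \eqref{form:ev1}, and the triangle coherence axiom then identifies $(X\tensor{S}l_A^S)\circ\bigl((r_X^S)^{-1}\tensor{S}A\bigr)$ with the identity on $X\tensor{S}A$, leaving exactly $\delta$. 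Applying $f_X$ yields \eqref{Eq:a}.

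For \eqref{Eq:b}, I would first rewrite $m_A^R\circ(\gamma\tensor{R}i_X)=g_X\circ(\gamma\tensor{R}X)$, where $g_X:=m_A^R\circ(A\tensor{R}i_X)$ is as in Remark~\ref{rem:converse}. The crux is the auxiliary identity
$$
(X\tensor{S}g_X)\circ(f_X^{-1}\tensor{R}X)\,=\,f_X^{-1}\circ g_X,
$$
which I will prove by postcomposing with $f_X$: the associativity of $m_A$ yields $f_X\circ(X\tensor{S}g_X)=g_X\circ(f_X\tensor{R}X)$, and cancelling $f_X\circ f_X^{-1}$ then produces $g_X$ on both sides. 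Combined with the two easily verified unital reductions $g_X\circ(\alpha\tensor{R}X)=i_X\circ l_X^R$ and $f_X^{-1}\circ i_X=(X\tensor{S}\beta)\circ(r_X^S)^{-1}$ (both obtained by postcomposing with $f_X$ and using that $\alpha,\beta$ are units of $A$), the composite $g_X\circ(\gamma\tensor{R}X)$ simplifies step by step: $g_X$ absorbs the outer $l_A^S$, the auxiliary identity moves $f_X^{-1}$ past $g_X$, the unital reductions kill $\alpha$ and introduce $\beta$, and finally naturality of $l^S, r^S$ together with the triangle axiom produces $\beta\circ\mathrm{ev}$.

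The main obstacle will be the constant bookkeeping forced by the simultaneous presence of $\tensor{R}$ and $\tensor{S}$, their distinct unit and associator constraints, and the two incarnations $m_A^R, m_A^S$ of the underlying multiplication of $A$. The only genuinely non-formal ingredient is the auxiliary identity above, which encodes the compatibility of $f_X^{-1}$ with the left $A$-action coming from $m_A^R$; everything else is coherence bookkeeping, which a tangle-diagrammatic presentation along the lines of Section~\ref{sec:biobjects} would make more transparent.
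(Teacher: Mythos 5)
Your proposal is correct and follows essentially the same route as the paper's proof: for \eqref{Eq:a} both arguments reduce, via \eqref{form:ev1} and coherence, to cancelling $f_X$ against $f_X^{-1}\circ\alpha$, and for \eqref{Eq:b} your auxiliary identity $(X\tensor{S}g_X)\circ(f_X^{-1}\tensor{R}X)=f_X^{-1}\circ g_X$ is exactly the right $A$-linearity of $f_X^{-1}$ that the paper invokes, combined with the same unital reductions (in particular $f_X^{-1}\circ i_X=(X\tensor{S}\beta)\circ(r_X^S)^{-1}$). The only difference is presentational: you compute algebraically and factor out $f_X$ at the start, whereas the paper carries $f_X$ through its tangle diagrams.
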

\begin{proof}
Using the diagrammatic convention of Section \ref{sec:biobjects}, we have
\begin{center}
\begin{tikzpicture}[x=8pt,y=8pt,thick]\pgfsetlinewidth{0.5pt}
\node[circle,draw, inner sep=1pt](1) at (0,1) {$\scriptstyle{\gamma}$}; \node[rectangle,draw,inner sep=3pt, text width=1.5cm, text centered](2) at (-2,-3) {$\scriptstyle{f_X}$};
\node(3) at (-2,-7) {$\scriptstyle{A}$};

\draw[-](1) to [out=90,in=0] (-2,3);
\draw[-] (1) to [out=-90,in=90] (0,-2.2);
\draw[-] (-4,1) to [out=90,in=180] (-2,3);
\draw[-] (-4,1) to [out=-90,in=90] (-4,-2.2);
\draw[-] (3) to [out=90,in=-90] (2);
\end{tikzpicture}
\begin{tikzpicture}[x=8pt,y=8pt,thick]\pgfsetlinewidth{0.5pt}
 \node[inner sep=1pt] at (0,0) {$\,=\,$};
 \node at(0,-5) {};
\end{tikzpicture}
\begin{tikzpicture}[x=8pt,y=8pt,thick]\pgfsetlinewidth{0.5pt} \node[circle,draw, inner sep=1.5pt](1) at (-0.5,5) {$\scriptstyle{\alpha}$};
\node[rectangle,draw, inner sep=3pt, text width=1.5cm, text centered](2) at (-3.5,-2.2) {$\scriptstyle{f_X}$};
\node[rectangle,draw, inner sep=3pt, text width=1.5cm, text centered](3) at (-0.5,1.5) {$\scriptstyle{f_X^{-1}}$};
\node(4) at (-3.5,-7) {$\scriptstyle{A}$};

\draw[-] (1) to [out=-90,in=90] (3);
\draw[-] (-6,-1.2) to [out=-90,in=90] (-6,3.9);
\draw[-] (-6,4) to [out=90,in=90] (-5,4);
\draw[-] (-5,4) to [out=-90,in=90] (-5,0.5);
\draw[-] (-5,0.5) to [out=-90,in=-90](-2,0.6);

\draw[-] (0,0.6) to [out=-90,in=90] (-2,-1.5);
\draw[-] (4) to [out=90,in=-90] (2);
\end{tikzpicture}%
\begin{tikzpicture}[x=8pt,y=8pt,thick]\pgfsetlinewidth{0.5pt}
 \node[inner sep=1pt] at (0,0) {$\,=\,$}; \node at (0,-5) {};
\end{tikzpicture}%
\begin{tikzpicture}[x=8pt,y=8pt,thick]\pgfsetlinewidth{0.5pt} \node[circle,draw, inner sep=1.5pt](1) at (-0.5,5) {$\scriptstyle{\alpha}$};
\node[rectangle,draw, inner sep=3pt, text width=1.5cm, text centered](2) at (-3.5,-4) {$\scriptstyle{f_X}$};
\node[rectangle,draw, inner sep=3pt, text width=1.5cm, text centered](3) at (-0.5,2) {$\scriptstyle{f_X^{-1}}$};
\node(4) at (-3.5,-7) {$\scriptstyle{A}$};

\draw[-] (1) to [out=-90,in=90] (3);
\draw[-] (-6,-3) to [out=-90,in=90] (-6,1.4);
\draw[-] (-6,1.5) to [out=90,in=90] (-5,1.5);
\draw[-] (-5,1.5) to [out=-90,in=90] (-5,0);
\draw[-] (-5,0) to [out=-90,in=-90](-2,1.1);

\draw[-] (0,1.1) to [out=-90,in=90] (-2,-3.2);
\draw[-] (4) to [out=90,in=-90] (2);
\end{tikzpicture}%
\begin{tikzpicture}[x=8pt,y=8pt,thick]\pgfsetlinewidth{0.5pt}
\node[inner sep=1pt] at (0,0) {$\,\overset{\eqref{form:ev1}}{=}\,$};
\node at (0,-5) {};
\end{tikzpicture}
\begin{tikzpicture}[x=8pt,y=8pt,thick]\pgfsetlinewidth{0.5pt}
\node[circle,draw, inner sep=1.5pt](1) at (0,5) {$\scriptstyle{\alpha}$};
\node[rectangle,draw, inner sep=3pt, text width=1.5cm, text centered](2) at (0,-2) {$\scriptstyle{f_X}$};
\node[rectangle,draw, inner sep=3pt, text width=1.5cm, text centered](3) at (0,2) {$\scriptstyle{f_X^{-1}}$};
\node(4) at (0,-5) {$\scriptstyle{A}$};

\draw[-] (1) to [out=-90,in=90] (3);
\draw[-] (-1,1.1) to[out=-90,in=90] (-1,-1.2);
\draw[-] (1,1.1) to[out=-90,in=90] (1,-1.2);
\draw[-] (4) to [out=90,in=-90] (2);
\end{tikzpicture}%
\begin{tikzpicture}[x=8pt,y=8pt,thick]\pgfsetlinewidth{0.5pt}
 \node[inner sep=1pt] at (0,0) {$\,=\,$};
\node at (0,-5) {};
\end{tikzpicture}%
\begin{tikzpicture}[x=8pt,y=8pt,thick]\pgfsetlinewidth{0.5pt}
\node[circle,draw, inner sep=1.5pt](1) at (0,5) {$\scriptstyle{\alpha}$};
\node(4) at (0,-5) {$\scriptstyle{A}$};
\draw[-] (4) to [out=90,in=-90] (1);
\end{tikzpicture}
\end{center}
which gives equality \eqref{Eq:a}. On the other hand, we have
\begin{center}
\begin{tikzpicture}[x=8pt,y=8pt,thick]\pgfsetlinewidth{0.5pt}
\node(1) at(0,5) {$\scriptstyle{X}$};
\node[rectangle,draw, inner sep=1.2pt, text width=1.5cm, text centered](2) at (2,-1) {$\scriptstyle{f_X}$};
\node[circle,draw, inner sep=1pt](3) at (4,3) {$\scriptstyle{\beta}$};
 \node(4) at (2,-5) {$\scriptstyle{A}$};

\draw[-] (1) to [out=-90,in=90] (0,-0.4);
\draw[-] (3) to [out=-90,in=90] (4,-0.4);
\draw[-] (4) to [out=90,in=-90] (2);
\end{tikzpicture}%
\begin{tikzpicture}[x=8pt,y=8pt,thick]\pgfsetlinewidth{0.5pt}
\node[inner sep=1pt] at (0,0) {$\,=\,$}; \node at (0,-5) {};
\end{tikzpicture}%
\begin{tikzpicture}[x=8pt,y=8pt,thick]\pgfsetlinewidth{0.5pt}
\node(1) at (0,5) {$\scriptstyle{X}$};
\node[circle,draw, inner sep=0.7pt](2) at (0,1) {$\scriptstyle{i_X}$};
\node[circle,draw, inner sep=1pt](3) at (2,3) {$\scriptstyle{\beta}$};
\node(4) at (1,-5) {$\scriptstyle{A}$};

\draw[-] (1) to [out=-90,in=90] (2);
\draw[-] (2) to [out=-90,in=90] (0,-1);
\draw[-] (0,-1) to [out=-90,in=-90] (2,-1);
\draw[-] (3) to [out=-90,in=90] (2,-1);
\draw[-] (4) to [out=90,in=-90] (1,-1.6);
\end{tikzpicture}\begin{tikzpicture}[x=8pt,y=8pt,thick]\pgfsetlinewidth{0.5pt} \node[inner sep=1pt] at (0,0) {$\,=\,$};
\node at (0,-5) {};
\end{tikzpicture}
\begin{tikzpicture}[x=8pt,y=8pt,thick]\pgfsetlinewidth{0.5pt}
\node(1) at (0,5) {$\scriptstyle{X}$};
\node[circle,draw, inner sep=0.7pt](2) at (0,2) {$\scriptstyle{i_X}$};
\node[circle,draw, inner sep=1pt](3) at (2,0) {$\scriptstyle{\beta}$};
\node(4) at (1,-5) {$\scriptstyle{A}$};

\draw[-] (1) to [out=-90,in=90] (2);
\draw[-] (3) to [out=-90,in=90] (2,-1.5);
\draw[-] (2) to [out=-90,in=90] (0,-1.5);
\draw[-] (2,-1.5) to[out=-90,in=-90] (0,-1.5);
 \draw[-] (4) to [out=90,in=-90] (1,-2);
\end{tikzpicture}%
\begin{tikzpicture}[x=8pt,y=8pt,thick]\pgfsetlinewidth{0.5pt}
\node[inner sep=1pt] at (0,0) {$\,=\,$};
\node at (0,-5) {};
\end{tikzpicture}%
\begin{tikzpicture}[x=8pt,y=8pt,thick]\pgfsetlinewidth{0.5pt}
\node(1) at (0,5) {$\scriptstyle{X}$};
\node[circle,draw, inner sep=0.7pt](3) at (0,0) {$\scriptstyle{i_X}$};
\node(4) at (0,-5) {$\scriptstyle{A}$};

\draw[-] (3) to [out=90,in=-90] (1);
\draw[-] (4) to [out=90,in=-90] (3);
\end{tikzpicture}
\begin{tikzpicture}[x=8pt,y=8pt,thick]\pgfsetlinewidth{0.5pt}
\node[inner sep=1pt] at (0,0) {$\quad \Longrightarrow\quad$};
\node at (0,-5) {};
\end{tikzpicture}%
\begin{tikzpicture}[x=8pt,y=8pt,thick]\pgfsetlinewidth{0.5pt}
\node(1) at (0,5) {$\scriptstyle{X}$};
\node[rectangle,draw, inner sep=1.2pt, text width=1.5cm, text centered](2) at (0,-1) {$\scriptstyle{f_X^{-1}}$};
\node[circle,draw, inner sep=0.7pt](3) at (0,2) {$\scriptstyle{i_X}$};
\node(4) at (1.5,-5){$\scriptstyle{A}$};
\node(5) at (-1.5,-5) {$\scriptstyle{X}$};

 \draw[-] (1) to [out=-90,in=90] (3);
\draw[-] (3) to [out=-90,in=90] (2);
\draw[-] (4) to [out=90,in=-90] (1.5,-1.7);
\draw[-] (5) to [out=90,in=-90] (-1.5,-1.7);
\end{tikzpicture}%
\begin{tikzpicture}[x=8pt,y=8pt,thick]\pgfsetlinewidth{0.5pt}
\node[inner sep=1pt] at (0,0) {$\,=\,$};
\node at (0,-5) {};
\end{tikzpicture}%
\begin{tikzpicture}[x=8pt,y=8pt,thick]\pgfsetlinewidth{0.5pt}
\node(1) at (0,5) {$\scriptstyle{X}$};
\node[circle,draw, inner sep=1pt](3) at (2,2) {$\scriptstyle{\beta}$};
\node(4) at (2,-5) {$\scriptstyle{A}$};
\node(5) at (0,-5) {$\scriptstyle{X}$};

\draw[-] (1) to[out=-90,in=90] (5);
\draw[-] (4) to [out=90,in=-90] (3);
\end{tikzpicture}
\end{center}

Using this equality and the fact that $f_{X}^{-1}$ is right $A$-linear
(being the inverse of a right $A$-linear morphism) we compute
\begin{center}
\begin{tikzpicture}[x=8pt,y=8pt,thick]\pgfsetlinewidth{0.5pt}
 \node(1) at (0,5) {$\scriptstyle{Y}$};
\node[circle,draw, inner sep=1pt](2) at (0,1) {$\scriptstyle{\gamma}$};
\node(3) at (2,5) {$\scriptstyle{X}$};
\node[circle,draw, inner sep=0.7pt](4) at (2,1){$\scriptstyle{i_X}$};
\node(5) at (1,-5) {$\scriptstyle{A}$};

\draw[-] (1) to [out=-90,in=90] (2);
\draw[-] (3) to [out=-90,in=90] (4);
\draw[-] (2) to [out=-90,in=90](0,-1);
\draw[-] (4) to [out=-90,in=90](2,-1);
\draw[-] (0,-1) to [out=-90,in=-90] (2,-1);
\draw[-] (1,-1.5) to[out=-90,in=90] (5);
\end{tikzpicture}%
\begin{tikzpicture}[x=8pt,y=8pt,thick]\pgfsetlinewidth{0.5pt}
\node[inner sep=1pt] at (0,0) {$\,=\,$};
\node at (0,-5) {};
 \end{tikzpicture}%
\begin{tikzpicture}[x=8pt,y=8pt,thick]\pgfsetlinewidth{0.5pt}
\node(1) at (-1,5) {$\scriptstyle{Y}$};
\node[circle,draw, inner sep=1.5pt](2) at (3,3) {$\scriptstyle{\alpha}$};
\node(3) at (7,5) {$\scriptstyle{X}$};
\node[rectangle,draw, inner sep=1.2pt, text width=1.2cm, text centered](6) at (3,0) {$\scriptstyle{f_X^{-1}}$};
\node[circle,draw, inner sep=0.7pt](4) at (7,2) {$\scriptstyle{i_X}$};
\node(5) at (5.7,-5) {$\scriptstyle{A}$};

\draw[-] (1) to [out=-90,in=90] (-1,-1.5);
\draw[-] (-1,-1.5) to [out=-90,in=-90] (1.5,-1.5);
\draw[-] (1.5,-1.5) to [out=90,in=-90] (1.5,-0.7);
\draw[-] (4.5,-1.5) to [out=-90,in=-90] (7,-1.5);
\draw[-] (4.5,-0.7) to [out=-90,in=90] (4.5,-1.5);
\draw[-] (7,-1.5) to [out=90,in=-90] (4);
\draw[-] (2) to [out=-90,in=90] (6);
\draw[-] (4) to [out=90,in=-90] (3);
\draw[-] (5) to [out=90,in=-90] (5.7,-2.2);
\end{tikzpicture}
\begin{tikzpicture}[x=8pt,y=8pt,thick]\pgfsetlinewidth{0.5pt}
\node[inner sep=1pt] at (0,0) {$\,=\,$};
\node at (0,-5) {};
\end{tikzpicture}
\begin{tikzpicture}[x=8pt,y=8pt,thick]\pgfsetlinewidth{0.5pt}
\node(1) at (1,5) {$\scriptstyle{Y}$};

\node[circle,draw, inner sep=1.5pt](2) at (3,2.5) {$\scriptstyle{\alpha}$};
\node(3) at (6,5) {$\scriptstyle{X}$};
\node[rectangle,draw, inner sep=1.2pt, text width=1.2cm, text centered](6) at (4.5,-1) {$\scriptstyle{f_X^{-1}}$};
\node[circle,draw, inner sep=0.7pt](4) at (6,2.5) {$\scriptstyle{i_X}$};
\node(5) at (6,-5) {$\scriptstyle{A}$};

\draw[-] (1) to [out=-90,in=90] (1,-3);
\draw[-] (1,-3) to [out=-90,in=-90] (3,-3);
\draw[-] (3,-3) to [out=90,in=-90] (3,-1.7);
\draw[-] (2) to [out=-90,in=90] (3,1.5);
\draw[-] (4) to [out=-90,in=90] (6,1.5);
\draw[-] (3,1.5) to [out=-90,in=-90] (6,1.5);
\draw[-] (6) to [out=90,in=-90] (4.5,0.6);
\draw[-] (3) to [out=-90,in=90] (4);
\draw[-] (5) to [out=90,in=-90] (6,-1.7);
\end{tikzpicture}%
\begin{tikzpicture}[x=8pt,y=8pt,thick]\pgfsetlinewidth{0.5pt}
\node[inner sep=1pt] at (0,0) {$\,=\,$};
\node at (0,-5) {};
\end{tikzpicture}%
\begin{tikzpicture}[x=8pt,y=8pt,thick]\pgfsetlinewidth{0.5pt}
\node(1) at (1,5) {$\scriptstyle{Y}$};
\node(3) at (4.5,5) {$\scriptstyle{X}$};
\node[rectangle,draw, inner sep=1.2pt, text width=1.2cm, text centered](6) at (4.5,-1) {$\scriptstyle{f_X^{-1}}$};
\node[circle,draw, inner sep=0.7pt](4) at (4.5,2.5) {$\scriptstyle{i_X}$};
\node(5) at (6,-5) {$\scriptstyle{A}$};

\draw[-] (1) to [out=-90,in=90] (1,-3);
\draw[-] (1,-3) to [out=-90,in=-90] (3,-3);
\draw[-] (3,-3) to [out=90,in=-90] (3,-1.7);
\draw[-] (5) to [out=90,in=-90] (6,-1.7);
\draw[-] (4) to [out=-90,in=90] (6);
\draw[-] (3) to [out=-90,in=90] (4);
\end{tikzpicture}
\begin{tikzpicture}[x=8pt,y=8pt,thick]\pgfsetlinewidth{0.5pt}
\node[inner sep=1pt] at (0,0) {$\,=\,$}; \node at (0,-5) {};
\end{tikzpicture}
\begin{tikzpicture}[x=8pt,y=8pt,thick]\pgfsetlinewidth{0.5pt}
\node(1) at (1,5) {$\scriptstyle{Y}$};
\node(3) at (3,5) {$\scriptstyle{X}$};
\node[circle,draw, inner sep=1pt](2) at (4.5,1) {$\scriptstyle{\beta}$};
\node(5) at (4.5,-5) {$\scriptstyle{A}$};

\draw[-] (1) to [out=-90,in=90] (1,-1);
\draw[-] (1,-1) to [out=-90,in=-90] (3,-1);
\draw[-] (3) to [out=-90,in=90] (3,-1);
\draw[-] (2) to [out=-90,in=90] (5);
\end{tikzpicture}%
\begin{tikzpicture}[x=8pt,y=8pt,thick]\pgfsetlinewidth{0.5pt}
\node[inner sep=1pt] at (0,0) {$\,=\,$};
\node at (0,-5) {};
\end{tikzpicture}
\begin{tikzpicture}[x=8pt,y=8pt,thick]\pgfsetlinewidth{0.5pt}
\node(1) at (1,5) {$\scriptstyle{Y}$};
\node(3) at (3,5) {$\scriptstyle{X}$};
\node[circle,draw, inner sep=1pt](2) at (2,-1) {$\scriptstyle{\beta}$};
\node(5) at (2,-5) {$\scriptstyle{A}$};

\draw[-] (1) to [out=-90,in=90] (1,1);
\draw[-] (1,1) to [out=-90,in=-90] (3,1);
\draw[-] (3) to [out=-90,in=90] (3,1);
\draw[-] (2) to [out=-90,in=90] (5);
\end{tikzpicture}
\end{center}
which gives the desired equality \eqref{Eq:b}.
\end{proof}

Next result extends a result by Miyashita, see \cite[Proposition 1.1,
implication (ii) imlies (i)]{Mi}.
\begin{theorem}\label{thm:Rp}
Keep the assumptions and notations of Proposition \ref{pro:tau}.
Let $\left( Y',i_{Y'}:Y'\rightarrow A\right) :=\mathrm{Im}\left( \gamma \right) $
be the image of $\gamma $ in $\left( S,R\right) $-bimodules, and let $\left(
R^{\prime },\alpha ^{\prime }:R^{\prime }\rightarrow A\right) :=\mathrm{Im}
( m_{X}^{\prime }) $ be the image of $m_{X}^{\prime }$ in $\left(
R,R\right) $-bimodules, where%
\begin{equation*}
m_{X}^{\prime }:=\Big( X\tensor{S}Y'\overset{X\tensor{S}i_{Y'}}{%
\longrightarrow }X\tensor{S}A\overset{f_{X}}{\longrightarrow }A\Big)
=\Big( X\tensor{S}Y'\overset{i_{X}\tensor{S}i_{Y'}}{\longrightarrow }%
A\tensor{S}A\overset{\mas}{\longrightarrow }A\Big)
\end{equation*}%
Denote by $p_{Y'}:Y\rightarrow Y'$ the canonical projection so that $\gamma
=i_{Y'}\circ p_{Y'}.$ Similarly denote by $m_{X}:X\tensor{S}Y'\rightarrow
R^{\prime }$ the canonical projection such that $\alpha ^{\prime }\circ
m_{X}=m_{X}^{\prime }$. Then

1) $R^{\prime }$ is a monoid in $\left( R,R\right) $-bimodules such that $%
\alpha ^{\prime }:R^{\prime }\rightarrow A$ is a morphism of monoids therein;

2)\ $X \in \mathscr{P}\left( _{R^{\prime }}A_{S}\right) $ has a
right inverse given by $Y' \in \mathscr{P}\left( _{S}A_{R^{\prime
}}\right) $.
\end{theorem}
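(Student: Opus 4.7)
\medskip

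My plan is to construct the multiplication and unit of $R'$ from the given data and then equip $X$ with a compatible left $R'$-action, so that $Y'$ becomes its right inverse in the sense of Definition \ref{def:inverse}. The essential ingredients are Propositions \ref{pro:invdual}, \ref{prop:adjunction}, \ref{pro:fX} and \ref{pro:tau}, combined with the fact that $p_{Y'}$ is an epimorphism and that $\alpha,\beta$ are monomorphisms.

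First I would extract a morphism $m_{Y}\colon Y'\tensor{R}X\to S$ in ${}_{S}\cat{M}_{S}$ satisfying equation \eqref{def: mY} of Definition \ref{def:inverse}. The idea is to rewrite equation \eqref{Eq:b} of Proposition \ref{pro:tau} as $\mar\circ (i_{Y'}\tensor{R}i_{X})\circ (p_{Y'}\tensor{R}X)=\beta\circ\mathrm{ev}$, observe that $p_{Y'}\tensor{R}X$ is an epimorphism (since $-\tensor{R}X$ is right exact) and that $\beta$ is a monomorphism, and use an image-argument to produce a unique $m_{Y}$ with $\beta\circ m_{Y}=\mar\circ(i_{Y'}\tensor{R}i_{X})$. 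Symmetrically, observe that $m'_{X}=f_{X}\circ (X\tensor{S}i_{Y'})$ is already a monomorphism, because $f_{X}$ is an isomorphism and $X\tensor{S}-$ is a right adjoint (hence left exact) by Proposition \ref{prop:adjunction}; consequently $m_{X}\colon X\tensor{S}Y'\to R'$ is an isomorphism, and equation \eqref{def: mX} reads $\alpha\circ m_{X}\circ (X\tensor{S}p_{Y'})\circ \mathrm{coev}=\alpha$, which, after setting $u_{R'}:=m_{X}\circ(X\tensor{S}p_{Y'})\circ\mathrm{coev}$, becomes $\alpha'\circ u_{R'}=\alpha$ by \eqref{Eq:a}. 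This produces the unit of $R'$.

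Next I would construct the multiplication on $R'$. The informal computation $xy'x'y''=x\beta(m_{Y}(y'\tensor{R}x'))y''$ suggests that
\[\mar\circ (m'_{X}\tensor{R}m'_{X}) = m'_{X}\circ \big(X\tensor{S}(\las\circ(m_{Y}\tensor{S}Y'))\big)\circ\chi,\]
where $\chi\colon X\tensor{S}Y'\tensor{R}X\tensor{S}Y'\to X\tensor{S}(Y'\tensor{R}X)\tensor{S}Y'$ is the canonical morphism. This identity is proved by diagrammatic manipulation (inserting the already-established $\beta\circ m_Y=\mar\circ(i_{Y'}\tensor{R}i_{X})$ in the middle, and using associativity of $m_{A}$). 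The right-hand side factors through $\alpha'$ because $m'_{X}=\alpha'\circ m_{X}$, and since $m_{X}\tensor{R}m_{X}$ is an epimorphism onto $R'\tensor{R}R'$, I obtain a unique $m_{R'}\colon R'\tensor{R}R'\to R'$ with $\alpha'\circ m_{R'}=\mar\circ(\alpha'\tensor{R}\alpha')$. Associativity and unitality of $(R',m_{R'},u_{R'})$, together with the fact that $\alpha'$ becomes a monoid morphism, then follow by diagrammatic chases using that $\alpha'$ is a monomorphism.

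Finally, to prove $(2)$, I would equip $X$ with a left $R'$-action by transport of structure along the isomorphism $m_{X}$: define the action as the composite
\[R'\tensor{}X\xrightarrow{m_{X}^{-1}\tensor{}X}X\tensor{S}Y'\tensor{}X\twoheadrightarrow X\tensor{S}Y'\tensor{R}X\xrightarrow{X\tensor{S}m_{Y}}X\tensor{S}S\xrightarrow{r^{S}_{X}}X,\]
and check that $i_{X}$ is $(R',S)$-bilinear, symmetrically turning $Y'$ into an $(S,R')$-bimodule with $i_{Y'}$ bilinear. With these structures in place, $(X,Y',m_{X},m_{Y})$ satisfies all the conditions of Definition \ref{def:inverse}, and $m_{X}$ is an isomorphism by construction, so $Y'$ is a right inverse of $X$ in $\mathscr{P}({}_{R'}A_{S})$. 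The main obstacle will be Step 4, i.e.\ showing that the triple product $\mar\circ (m'_{X}\tensor{R}m'_{X})$ genuinely descends along the epimorphism $m_{X}\tensor{R}m_{X}$ in a way compatible with the image monomorphism $\alpha'$; a secondary technicality is verifying that the $R'$-action on $X$ constructed by transport is well defined and makes $i_{X}$ an $R'$-module morphism, which ultimately reduces to the identity $i_{Y'}\cdot i_{X}=\beta\circ m_{Y}$ obtained in the first step.
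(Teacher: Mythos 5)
Your plan follows essentially the same route as the paper: descend the evaluation along $p_{Y'}\tensor{R}X$ using that $\beta$ is a monomorphism, obtain the unit of $R'$ from equation \eqref{Eq:a}, descend $\mar\circ(m'_X\tensor{R}m'_X)$ along the epimorphism $m_X\tensor{R}m_X$ to get the multiplication, induce the $R'$-actions on $X$ and $Y'$, and conclude that $m_X$ is an isomorphism because it is epi by construction and mono since $f_X$ is invertible and $X\tensor{S}(-)$ is a right adjoint (Proposition \ref{prop:adjunction}). All of these steps are sound and coincide, up to packaging, with the constructions $\mathrm{ev}'$, $\theta$, $\mrrp$, $\murpx$, $\mu^{R'}_{Y'}$ of the paper; your observation that $m_X$ is invertible can even be exploited earlier than the paper does, which slightly simplifies the descent of the multiplication.

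There is, however, one concrete omission at the very end. You take $m_Y$ to be defined on $Y'\tensor{R}X$ and then assert that $(X,Y',m_X,m_Y)$ "satisfies all the conditions of Definition \ref{def:inverse}". But the definition, applied with the new base monoid $R'$, requires a morphism $m_{Y',X}\colon Y'\tensor{R'}X\to S$ of $S$-bimodules, and equation \eqref{def: mY} must be read with the multiplication $m_A^{R'}$ on $A\tensor{R'}A$. So after you have equipped $X$ and $Y'$ with their $R'$-actions $\mu^{R'}_X$ and $\mu^{R'}_{Y'}$, you still have to verify that your $m_Y$ is balanced over $R'$, i.e.\ that $m_Y\circ(\mu^{R'}_{Y'}\tensor{R}X)=m_Y\circ(Y'\tensor{R}\mu^{R'}_X)$, so that it factors through the coequalizer $Y'\tensor{R}X\twoheadrightarrow Y'\tensor{R'}X$; only the induced map on $Y'\tensor{R'}X$ qualifies as the $m_{Y'}$ of the definition, and the identity $\beta\circ m_{Y'}=m_A^{R'}\circ(i_{Y'}\tensor{R'}i_X)$ must then be checked on that quotient. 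This is exactly the extra coequalizer computation the paper carries out (using the defining relations of $\mu^{R'}_{Y'}$, $\mu^{R'}_X$ and $\mathrm{ev}'$), and it does go through with the data you already have, but as written your proposal skips the step in which the base-change content of the theorem actually resides.
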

\begin{proof}
Unfortunately it not possible to present here a smart proof using diagrammatic notation,  since we are dealing with different  type of multiplications $m'_X,m_X, \overline{m_X}$ and it could be difficult to distinguish them properly.\\
1). Set $\overline{m_{X}} :=\left( X\tensor{S}p_{Y'}\right) \circ \mathrm{coev}%
:R\rightarrow X\tensor{S}Y'$ and $\theta :=m_{X}\circ \overline{m_{X}}$.
Note that
\begin{equation*}
\alpha ^{\prime }\circ \theta \,=\,\alpha ^{\prime }\circ m_{X}\circ \overline{m_{X}}\,=\,m_{X}^{\prime }\circ \overline{m_{X}} \, =\, f_{X}\circ \left( X\tensor{S}i_{Y'}\right) \circ \left( X\tensor{S}p_{Y'}\right) \circ \mathrm{coev}
\,=\,f_{X}\circ \left( X\tensor{S}\gamma \right) \circ \mathrm{coev}\overset{(\ref{Eq:a})}{=}\alpha.
\end{equation*}
so that $\alpha ^{\prime }\circ \theta =\alpha$.
Hence we have a commutative diagram
\begin{small}
\begin{equation*}
\xymatrix@R=20pt@C=30pt{ R \ar@{->}^-{\bara{m}_X}[rr]
\ar@{-->}^-{\theta}[rd] \ar@/_2pc/_-{\alpha}[rrdd] & & X\tensor{S}Y'
\ar@{->}^-{m_X}[ld] \ar@{->}^-{m'_X}[dd] \\ & R' \ar@{->}^-{\alpha'}[rd] &
\\ & & A}
\end{equation*}
\end{small}
So the unit of $R^{\prime }$ is $\theta \circ u_{R}$. The construction of
the multiplication is more involved. Let $\left( Q,i_{Q}:Q\rightarrow
Y\right) $ be the kernel of $p_{Y'}$. Tensoring by $X$ on the right we get
the exact sequence%
\begin{equation*}
\xymatrix@C=50pt{Q\tensor{R}X\ar@{->}^-{i_{Q}\tensor{R}X}[r] & Y\tensor{R}X
\ar@{->}^-{p_{Y'}\tensor{R}X}[r] & Y'\tensor{R}X \ar@{->}[r] & 0.}
\end{equation*}%

We then  have
\begin{equation*}
\beta \circ \mathrm{ev}\circ \left( i_{Q}\tensor{R}X\right) \overset{\eqref{Eq:b}}{=}\mar\circ \left( \gamma \tensor{R}i_{X}\right) \circ
\left( i_{Q}\tensor{R}X\right) \,=\, \mar\circ \left( i_{Y'}\tensor{R}i_{X}\right) \circ \left(
p_{Y'}\tensor{R}X\right) \circ \left( i_{Q}\tensor{R}X\right) =0.
\end{equation*}%
Since $\beta $ is a monomorphism we get $\mathrm{ev}\circ \left(
i_{Q}\tensor{R}X\right) =0$ so that there is a morphism of $S$-bimodules $%
\mathrm{ev}^{\prime }:Y'\tensor{R}X\rightarrow S$ such that
\begin{equation}
\mathrm{ev}^{\prime }\circ (p_{Y'}\tensor{R}X)=\mathrm{ev}.  \label{Eq:ev'}
\end{equation}%
Let us check that there is a morphism $m_{R^{\prime }}^{R}$ which turns the
following diagram commutative
\begin{small}
\begin{equation*}
\xymatrix@R=15pt@C=30pt{ (X\tensor{S}Y')\tensor{R}(X\tensor{S}Y')
\ar@{->}^-{m'_X\tensor{R}m'_X}[rr] \ar@{->}_-{X\tensor{S}ev'\tensor{S}Y'}[dd]
\ar@{->>}^-{}[rd] & & A\tensor{R}A \ar@{->}^-{m_A^R}[dddd] \\ & R'\tensor{R}
R' \ar@{->}^-{}[ru] \ar@{-->}^-{m_{R'}^R}[dd] & \\ X\tensor{S}S\tensor{S}Y'
\ar@{->}_-{r_X^S\tensor{S}Y'}^-{\cong}[dd] & & \\ &R' \ar@{->}^-{}[rd] & \\
X\tensor{S}Y' \ar@{->}_-{m_X'}[rr] \ar@{->}^-{}[ru] & & A }
\end{equation*}
\end{small}
On the one hand we have
\begin{equation*}
m_{X}^{\prime }\circ ((\rxs\circ \left( X\tensor{S}\mathrm{ev}%
^{\prime }\right) )\tensor{S}Y')\circ (X\tensor{S}p_{Y'}\tensor{R}X\tensor{S}Y')\overset{\eqref{Eq:ev'}}{=}m_{X}^{\prime }\circ
((\rxs\circ \left( X\tensor{S}\mathrm{ev}\right) )\tensor{S}Y').
\end{equation*}%
Since $X\tensor{S}p_{Y'}\tensor{R}X\tensor{S}Y'$ is an epimorphism, it
suffice to check that
\begin{equation*}
m_{X}^{\prime }\circ ((\rxs\circ \left( X\tensor{S}\mathrm{ev}\right)
)\tensor{S}Y'\,)\,=\,\,\mar\circ (m_{X}^{\prime }\tensor{R}m_{X}^{\prime })\circ \left( X\tensor{S}p_{Y'}\tensor{R}X\tensor{S}Y'\right) ,
\end{equation*}%
which follows from definitions by using equation \eqref{Eq:b}.
From this, one proves that $\mar\circ \left( \alpha ^{\prime }\tensor{R}\alpha ^{\prime }\right) $ factors through a map $\mrrp$ ($%
R^{\prime }$ is a kernel) such that $\alpha ^{\prime }\circ \mrrp=\mar\circ \left( \alpha
^{\prime }\tensor{R}\alpha ^{\prime }\right) $. We have%
\begin{eqnarray*}
\alpha ^{\prime }\circ \mrrp\circ \left( \mrrp\tensor{R}R^{\prime }\right) &=&\mar\circ \left( \alpha
^{\prime }\tensor{R}\alpha ^{\prime }\right) \circ \left(\mrrp\tensor{R}R^{\prime }\right) =\mar\circ \left( \mar\tensor{R}A\right) \circ \left( \alpha ^{\prime }\tensor{R}\alpha ^{\prime
}\tensor{R}\alpha ^{\prime }\right) \\
&=&\mar\circ \left( A\tensor{R}\mar\right) \circ \left( \alpha
^{\prime }\tensor{R}\alpha ^{\prime }\tensor{R}\alpha ^{\prime }\right)
=\alpha ^{\prime }\circ \mrrp\circ \left( R^{\prime }\tensor{R}\mrrp\right) , \\
\alpha ^{\prime }\circ \mrrp\circ \left( \theta \tensor{R}R^{\prime }\right) &=&\mar\circ \left( \alpha ^{\prime }\tensor{R}\alpha ^{\prime }\right) \circ \left( \theta \tensor{R}R^{\prime
}\right) =\mar\circ \left( \alpha \tensor{R}A\right) \circ \left( R\tensor{R}\alpha ^{\prime }\right) \\
&=&\lar\circ \left( R\tensor{R}\alpha ^{\prime }\right)=\alpha ^{\prime }\circ \lrrp
\end{eqnarray*}
and hence $\mrrp\circ \left( \theta \tensor{R}R^{\prime }\right) =\lrrp$. Similarly $\alpha ^{\prime }\circ \mrrp\circ \left( R^{\prime}\tensor{R}\theta \right) =\alpha ^{\prime }\circ \rrrp.$
Since $\alpha ^{\prime }$ is a monomorphism we conclude that $\mrrp$ is associative and unitary.

2) We have to prove that $X \in \mathscr{P}\left( _{R^{\prime
}}A_{S}\right) $. First, let $\left( K,i_{K}\right) $ be the kernel of $%
m_{X}^{\prime }$.
By definition of $m_{X}$ we have the exact sequence%
\begin{equation}\label{SeqRprime}
\xymatrix{ 0\ar@{->}^-{}[r] & K \ar@{->}^-{i_{K}}[r] &   X\tensor{S}Y' \ar@{->}^-{
m_{X}}[r] & R^{\prime} \ar@{->}[r] & 0.}
\end{equation}%
and hence
\begin{equation*}
\xymatrix@C=35pt{ K\tensor{R}X \ar@{->}^-{i_{K}\tensor{R}X}[r] &   X\tensor{S}Y'\tensor{R}X \ar@{->}^-{
m_{X}\tensor{R}X}[r] & R^{\prime}\tensor{R}X \ar@{->}[r] & 0.}
\end{equation*}%
Now,  using the unitality of $\mas$, the definitions of the morphisms   involved  and equation \eqref{Eq:b},  we get
$$
i_{X}\circ \rxs\circ \left( X\tensor{S}\mathrm{ev}^{\prime }\right)
\circ \left( X\tensor{S}p_{Y'}\tensor{R}X\right)  \,=\, \mas\circ \left( A\tensor{S}\mar\right) \circ \left(
i_{X}\tensor{S}i_{Y'}p_{Y'}\tensor{R}i_{X}\right)
$$
so that
\begin{multline*}
i_{X}\circ \rxs\circ \left( X\tensor{S}\mathrm{ev}^{\prime }\right)
= \\ \mas\circ \left( A\tensor{S}\mar\right) \circ \left(
i_{X}\tensor{S}i_{Y'}\tensor{R}i_{X}\right)
=\mar\circ \left( \mas\tensor{R}A\right) \circ \left(
i_{X}\tensor{S}i_{Y'}\tensor{R}i_{X}\right)
=\mar\circ \left( m_{X}^{\prime }\tensor{R}i_{X}\right) .
\end{multline*}%
Hence%
\begin{equation}
i_{X}\circ \rxs\circ \left( X\tensor{S}\mathrm{ev}^{\prime }\right)
=\mar\circ \left( m_{X}^{\prime }\tensor{R}i_{X}\right) .
\label{form:iXev'}
\end{equation}%
Therefore,
\begin{equation*}
i_{X}\circ \rxs\circ \left( X\tensor{S}\mathrm{ev}^{\prime }\right)
\circ \left( i_{K}\tensor{R}X\right) =\mar\circ \left( m_{X}^{\prime
}\tensor{R}i_{X}\right) \circ \left( i_{K}\tensor{R}X\right) =0
\end{equation*}%
and so $\rxs\circ \left( X\tensor{S}\mathrm{ev}^{\prime }\right) \circ \left(
i_{K}\tensor{R}X\right) =0$. By exactness of the last sequence displayed above, there is a morphism $\murpx:R^{\prime }\tensor{R}X\rightarrow X$ such that $
\murpx\circ \left( m_{X}\tensor{R}X\right) =\rxs\circ
\left( X\tensor{S}\mathrm{ev}^{\prime }\right) $. Let us check it is unitary. First, we have%
\begin{equation*}
i_{X}\circ \murpx\circ \left( m_{X}\tensor{R}X\right)
= i_{X}\circ \rxs\circ \left( X\tensor{S}\mathrm{ev}^{\prime
}\right) \overset{(\ref{form:iXev'})}{=}\mar\circ \left( m_{X}^{\prime
}\tensor{R}i_{X}\right)
= \mar\circ \left( \alpha ^{\prime }\tensor{R}i_{X}\right) \circ
\left( m_{X}\tensor{R}X\right) .
\end{equation*}%
Since $m_{X}\tensor{R}X$ is an epimorphism, we obtain%
\begin{equation}
i_{X}\circ \murpx=\mar\circ \left( \alpha ^{\prime
}\tensor{R}i_{X}\right) .  \label{form:imu}
\end{equation}%
We get%
\begin{gather*}
i_{X}\circ \murpx \circ \left( \theta \tensor{R}X\right)
\overset{(\ref{form:imu})}{=}\mar\circ \left( \alpha ^{\prime }\tensor{R}i_{X}\right) \circ \left( \theta \tensor{R}X\right) =\mar\circ
\left( \alpha \tensor{R}A\right) \circ \left( R\tensor{R}i_{X}\right) \\
=\lar \circ \left( R\tensor{R}i_{X}\right)=i_{X}\circ \lxr
\end{gather*}%
and hence $\murpx\circ \left( \theta \tensor{R}X\right)
=\lxr.$ Let us check that $\murpx$ is associative:%
\begin{gather*}
i_{X}\circ \murpx\circ \left( R^{\prime }\tensor{R}\mu
_{X}^{R^{\prime }}\right) \overset{(\ref{form:imu})}{=} \mar\circ \left(
\alpha ^{\prime }\tensor{R}i_{X}\right) \circ \left( R^{\prime }\tensor{R}\murpx\right)
\overset{(\ref{form:imu})}{=} \mar\circ \left( A\tensor{R}\mar\right) \circ \left( \alpha ^{\prime }\tensor{R}\alpha
^{\prime }\tensor{R}i_{X}\right) \\
=\mar\circ \left( \mar\tensor{R}A\right) \circ \left( \alpha
^{\prime }\tensor{R}\alpha ^{\prime }\tensor{R}i_{X}\right)
=\mar\circ \left( \alpha ^{\prime }\tensor{R}i_{X}\right) \circ
\left( \mrrp\tensor{R}X\right) \overset{(\ref{form:imu})}{=}%
i_{X}\circ \murpx\circ \left( \mrrp \tensor{R}X\right)
\end{gather*}%
so that $\murpx\circ \left( R^{\prime }\tensor{R}\murpx\right) =\murpx\circ \left( \mrrp\tensor{R}X\right) .$ The properties we proved imply that $\left(
X,i_{X}\right) \in \mathscr{P}\left( _{R^{\prime }}A_{S}\right) .$

Next aim is to check that $Y'\in \mathscr{P}\left(
_{S}A_{R^{\prime }}\right) $ and it is a right inverse of $X$. We need a morphism
$\mu_{\scriptscriptstyle{Y'}}^{\scriptscriptstyle{R'}}:Y'\tensor{R}R^{\prime}\rightarrow Y'$. Consider again the exact sequence \eqref{SeqRprime} and the induced one
\begin{equation*}
\xymatrix@C=40pt{ Y'\tensor{R}K\ar@{->}^-{Y'\tensor{R}i_{K}}[r] & Y'\tensor{R}X\tensor{S}Y' \ar@{->}^-{Y'\tensor{R}m_{X}}[r] &  Y'\tensor{R}R^{\prime } \ar@{->}^-{}[r] & 0.}
\end{equation*}%
As before  using the unitality of $\mas$, the definitions of the morphisms  involved  and equation \eqref{Eq:b},  we get
$$
i_{Y'}\circ \lys\circ \left( \mathrm{ev}^{\prime }\tensor{S}Y'\right)
\circ \left( p_{Y'}\tensor{R}X\tensor{S}Y'\right) \,=\,  \mas\circ \left( \mar\tensor{S}A\right) \circ \left(i_{Y'}p_{Y'}\tensor{R}i_{X}\tensor{S}i_{Y'}\right)
$$
and hence
\begin{multline*}
i_{Y'}\circ \lys\circ \left( \mathrm{ev}^{\prime }\tensor{S}Y'\right) \\
=\mas\circ \left( \mar\tensor{S}A\right) \circ \left(
i_{Y'}\tensor{R}i_{X}\tensor{S}i_{Y'}\right)
=\mar\circ \left( A\tensor{R}\mas\right) \circ \left(
i_{Y'}\tensor{R}i_{X}\tensor{S}i_{Y'}\right)
=\mar\circ \left( i_{Y'}\tensor{R}m_{X}^{\prime }\right) .
\end{multline*}
Thus we get
\begin{equation}
i_{Y'}\circ \lys\circ \left( \mathrm{ev}^{\prime }\tensor{S}Y'\right)
=\mar\circ \left( i_{Y'}\tensor{R}m_{X}^{\prime }\right) .
\label{form:iY'ev'}
\end{equation}%
Coming back to the exact sequence, we compute%
\begin{equation*}
i_{Y'}\circ \lys\circ \left( \mathrm{ev}^{\prime }\tensor{S}Y'\right)
\circ \left( Y'\tensor{R}i_{K}\right) \overset{(\ref{form:iY'ev'})}{=}%
\mar\circ \left( i_{Y'}\tensor{R}m_{X}^{\prime }\right) \circ \left(
Y'\tensor{R}i_{K}\right) =0.
\end{equation*}%
Therefore,  there is a unique morphism $\mu_{\scriptscriptstyle{Y'}}^{\scriptscriptstyle{R'}}:Y'\tensor{R}R^{\prime }\rightarrow Y'$ such that $\mu_{\scriptscriptstyle{Y'}}^{\scriptscriptstyle{R'}}\circ \left( Y'\tensor{R}m_{X}\right) =\lys\circ \left( \mathrm{ev}^{\prime}\tensor{S}Y'\right) $. Using this equality we compute%
\begin{equation*}
i_{Y'}\circ \mu_{\scriptscriptstyle{Y'}}^{\scriptscriptstyle{R'}}\circ \left( Y'\tensor{R}m_{X}\right)
\,=\,i_{Y'}\circ \lys\circ \left( \mathrm{ev}^{\prime }\otimes
_{S}Y'\right) \overset{(\ref{form:iY'ev'})}{=}\mar\circ \left(
i_{Y'}\tensor{R}m_{X}^{\prime }\right) \,=\,\mar\circ \left( i_{Y'}\tensor{R}\alpha ^{\prime }\right) \circ
\left( Y'\tensor{R}m_{X}\right).
\end{equation*}%
Since $Y'\tensor{R}m_{X}$ is an epimorphism, we obtain%
\begin{equation}
i_{Y'}\circ \mu_{\scriptscriptstyle{Y'}}^{\scriptscriptstyle{R'}} =\mar\circ \left( i_{Y'}\otimes
_{R}\alpha ^{\prime }\right) .  \label{form:imuY'}
\end{equation}%
Using this formula, as above, one proves that $\mu_{\scriptscriptstyle{Y'}}^{\scriptscriptstyle{R'}}$ is
unitary and associative and hence $Y' \in \mathscr{P}\left(
_{S}A_{R^{\prime }}\right) $.

Let us check that $Y'$ is a right inverse for $
X$. Consider the coequalizer
\begin{equation*}
\xymatrix@C=50pt{ Y'\tensor{R}R^{\prime }\tensor{R}X \ar@<0.5ex>[r]^-{\mu _{Y'}^{R^{\prime }}\tensor{R}X}  \ar@<-0.5ex>[r]_-{Y'\tensor{R}\murpx} &
Y'\tensor{R}X \ar@{->}^-{Y'\tensor{\theta}X}[r] & Y'\tensor{R^{\prime }}X  \ar@{->}[r] & 0.}
\end{equation*}
We  have
\begin{multline*}
\mathrm{ev}^{\prime }\circ \left(\mu_{\scriptscriptstyle{Y'}}^{\scriptscriptstyle{R'}}\tensor{R}X\right) \circ \left( Y'\tensor{R}m_{X}\tensor{R}X\right)
=\mathrm{ev}^{\prime }\circ \left( \lys\tensor{R}X\right) \circ
\left( \mathrm{ev}^{\prime }\tensor{S}Y'\tensor{R}X\right)
=\mathrm{ev}^{\prime }\circ l_{\scriptscriptstyle{Y'\tensor{R}X}}^{\scriptscriptstyle{S}}\circ \left( \mathrm{ev}
^{\prime }\tensor{S}Y'\tensor{R}X\right)
\\  =l_{\scriptscriptstyle{S}}^{\scriptscriptstyle{S}}\circ \left( S\tensor{S}\mathrm{ev}^{\prime }\right) \circ
\left( \mathrm{ev}^{\prime }\tensor{S}Y'\tensor{R}X\right)
=r_{\scriptscriptstyle{S}}^{\scriptscriptstyle{S}}\circ \left( \mathrm{ev}^{\prime }\tensor{S}S\right) \circ
\left( Y'\tensor{R}X\tensor{S}\mathrm{ev}^{\prime }\right)
=\mathrm{ev}^{\prime }\circ r_{Y'\tensor{R}X}^{S}\circ \left( Y'\tensor{R}X\tensor{S}\mathrm{ev}^{\prime }\right) \\
=\mathrm{ev}^{\prime }\circ \left( Y'\tensor{R}\rxs\right) \circ
\left( Y'\tensor{R}X\tensor{S}\mathrm{ev}^{\prime }\right)
=\mathrm{ev}^{\prime }\circ \left( Y'\tensor{R}\mu_{\scriptscriptstyle{X}}^{\scriptscriptstyle{R'}}\right) \circ \left( Y'\tensor{R}m_{X}\tensor{R}X\right)
\end{multline*}
so that $\mathrm{ev}^{\prime }\circ \left( \mu_{\scriptscriptstyle{Y'}}^{\scriptscriptstyle{R'}}\tensor{R}X\right)
=\mathrm{ev}^{\prime }\circ \left( Y'\tensor{R}\murpx\right)$
and hence there exists $m_{Y'}:Y'\tensor{R^{\prime }}X\rightarrow S$ such
that $m_{Y'}\circ \left( Y'\otimes _{\theta }X\right) =\mathrm{ev}^{\prime }$.   Therefore,
\begin{multline*}
\beta \circ m_{Y'}\circ \left( Y'\tensor{\theta }X\right) \circ \left(
p_{Y'}\tensor{R}X\right) =\beta \circ \mathrm{ev}^{\prime }\circ \left(
p_{Y'}\tensor{R}X\right)
=\beta \circ \mathrm{ev}\overset{(\ref{Eq:b})}{=}\mar\circ \left(
\gamma \tensor{R}i_{X}\right)
=\mar\circ \left( i_{Y'}\tensor{R}i_{X}\right) \circ \left(
p_{Y'}\tensor{R}X\right) \\
=m_{\scriptscriptstyle{A}}^{\scriptscriptstyle{R'}}\circ \left( A\tensor{{\theta }}A\right) \circ
\left( i_{Y'}\tensor{R}i_{X}\right) \circ \left( p_{Y'}\tensor{R}X\right)
=m_{\scriptscriptstyle{A}}^{\scriptscriptstyle{R'}}\circ \left( i_{Y'}\tensor{R^{\prime }}i_{X}\right)
\circ \left( Y'\tensor{\theta }X\right) \circ \left( p_{Y'}\tensor{R}X\right)
\end{multline*}%
and hence $\beta \circ m_{Y'}=m_{\scriptscriptstyle{A}}^{\scriptscriptstyle{R'}}\circ \left( i_{Y'}\tensor{R^{\prime
}}i_{X}\right)$. Moreover $\alpha ^{\prime }\circ m_{X}=m_{X}^{\prime }=\mas\circ \left(i_{X}\tensor{S}i_{Y'}\right)$.

It remains to check that $m_{X}$ is an isomorphism. It is an epimorphism by
construction. Since $\alpha ^{\prime }\circ m_{X}=m_{X}^{\prime }$ and $%
\alpha ^{\prime }$ is a monomorphism by construction, we have that $m_{X}$
is a monomorphism if and only if $m_{X}^{\prime }$ is. Now $m_{X}^{\prime
}=f_{X}\circ \left( X\tensor{S}i_{Y'}\right) $ and $f_{X}$ is an
isomorphism by assumption. Thus $m_{X}^{\prime }$ is a monomorphism if and
only if $X\tensor{S}i_{Y'}$ is a monomorphism. Since by Proposition \ref{prop:adjunction}, we know that the functor $X\tensor{S}\left( -\right) $ is a right adjoint,
hence  $X\tensor{S}i_{Y'}$ is a
monomorphism. Thus $m_{X}$ is both an epimorphism and a monomorphism and
hence it is an isomorphism, and this completes the proof.
\end{proof}

\section{Internals hom for modules and bimodules in monoidal categories}\label{ssec:appendix2}
In this appendix we  show the main steps  to construct  the internal homs  functors which were  implicitly used  in Subsections \ref{ssec:central} and \ref{ssec:azumaya}.
To this aim, consider a symmetric monoidal abelian bicomplete category $(\cat{M},\tensor{},\I)$ with right exact tensor products. Let $(A,m,u)$ be a monoid in $\cat{M}$ and denote by $A^e:=A\tensor{}A^o$ its enveloping monoid, where $A^o$ is the opposite monoid. Assume that the functor $A\tensor{}-: \cat{M} \to \cat{M}$ has a right adjoint functor $[A,-]: \cat{M} \to \cat{M}$ with unit and counit
$$ \B{\varepsilon}^{A}_Y: A\tensor{}[A,Y] \longrightarrow Y \in \cat{M}, \quad
\B{\zeta}^{A}_X: \cat{M} \ni X \longrightarrow [A,A\tensor{}X].$$ The functor $[A,-]$ is referred to as the \emph{left internal hom functor}.
It is clear that $[A\tensor{}\cdots\tensor{}A,-]$ also exists and so is in particular  for $[A^e,-]$. We will use similar notation for the unit and counit of the corresponding adjunctions. Given any two objects $X, Y$ in $\cat{M}$, one can define a  morphism $[m,X]: [A,X] \to [A\tensor{}A,X]$ which naturally turns commutative the following diagram:
\begin{small}
$$
\xymatrix@C=50pt@R=20pt{  \hom{\cat{M}}{A\tensor{}A\tensor{}X}{Y}   \ar@{->}^-{\Phi^{A\tensor{}A}_{X,\,Y}}_-{\cong}[rr]& & \hom{\cat{M}}{X}{[A\tensor{}A,Y]}
\\  \hom{\cat{M}}{A\tensor{}X}{Y} \ar@{->}^{\Psi^{A}_{X,Y}}_-{\cong}[rr] \ar@{->}^-{\hom{\cat{M}}{m\tensor{}X}{Y}}[u] & & \hom{\cat{M}}{X}{[A,Y]}   \ar@{-->}_-{\hom{\cat{M}}{X}{[m,X]}}[u] }
$$
\end{small}

On the other hand, for any object $X$ in $ \cat{M}$ we can define the following morphism:
\begin{small}
\begin{equation*}
\xymatrix@C=45pt@R=20pt{ [A,X] \ar_-{\B{\zeta}^{A^e\tensor{}A}_{[A,X]}}[dr] \ar@{-->}^-{\Omega^{A^e}_{A,\, X}}[rr] & &  [A^e\tensor{}A,A^e\tensor{}X] \\  & \Lr{A^e\tensor{}A,A^e\tensor{}A\tensor{}[A,X]}. \ar_-{[A^e\tensor{}A,\,A^e\tensor{}\B{\varepsilon}^A_X]}[ru] & }
\end{equation*}
\end{small}
Clearly $ \Omega^{A^e}_{A,\, -}$ is a natural transformation as a composition of natural transformations.

Now,  for every  left $A^e$-module $M$ (i.e.~ an $A$-bimodule) with action $\lambda_M: A^e\tensor{}M \to M$ we can consider the morphism
$
\chi_{A,\, M}:= [A^e\tensor{}A,\lambda_M] \circ \Omega^{A^e}_{A,\, M},
$
and so the following equalizer:
\begin{equation*}
\xymatrix@C=50pt{0 \ar@{->}[r] & {}_{A^e}[A,M] \ar@{->}^-{\fk{eq}}[r] &  [A,M] \ar@<.5ex>[r]^-{[m,M]}  \ar@<-.5ex>[r]_-{\chi_{A,\,M}} & [A\tensor{}A,M],}
\end{equation*}
which leads to a natural monomorphism  $ {}_{A^e}[A,-] \hookrightarrow [A,\mathscr{O}(-)]$, where $\mathscr{O}: {}_{A^e}\cat{M} \to \cat{M}$ is the forgetful functor.

The following result whose proof is similar to that of \cite[Proposition 3.10]{PareigisI}, summarize the relation between the functors constructed above. Indeed, its shows that, if the functor $A\tensor{}-:\cat{M} \to \cat{M}$ has a right adjoint, then so is $A\tensor{}-: \cat{M} \to {}_{A^e}\cat{M}$.

\begin{proposition}\label{prop:appII}
Let $(A,m,u)$ be a monoid in a symmetric monoidal abelian and bicomplete category $(\cat{M},\tensor{},\I)$ whose tensor products are right exact.  Assume that there is an adjunction  $A\tensor{}-\dashv [A,-]$, where $\xymatrix{  A\tensor{}-: \cat{M} \ar@<.5ex>[r]  &  \cat{M}: [A,-]. \ar@<.5ex>[l] }$ Then there is a commutative diagram of natural transformations:
\begin{small}
$$
\xymatrix@C=50pt@R=20pt{  \hom{\cat{M}}{A\tensor{}X}{M}   \ar@{->}^-{\Phi^A_{X,\,M}}_-{\cong}[rr]& & \hom{\cat{M}}{X}{[A,M]}
\\  \lhom{A^e}{A\tensor{}X}{M} \ar@{-->}^{\Psi^A_{X,M}}_-{\cong}[rr] \ar@{^{(}->}[u] & & \hom{\cat{M}}{X}{{}_{A^e}[A,M]}   \ar@{_{(}->}[u] }
$$
\end{small}
where the vertical arrows denote the canonical injections and $X \in \cat{M}$, $M \in {}_{A^e}\cat{M}$. That is, we have an adjunction  $A\tensor{}-\dashv {}_{A^e}[A,-]$, where $\xymatrix{  A\tensor{}-: \cat{M} \ar@<.5ex>[r]  &  {}_{A^e}\cat{M}:  {}_{A^e}[A,-]. \ar@<.5ex>[l] }$
\end{proposition}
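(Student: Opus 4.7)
The plan is to obtain the desired adjunction by restricting the given adjunction $A\tensor{}-\dashv [A,-]$ on both sides: to the subset of $A^e$-linear morphisms on the left, and to the subobject ${}_{A^e}[A,M]\hookrightarrow [A,M]$ on the right. Showing that such a restriction is well-defined and bijective will simultaneously establish the commutativity of the claimed square and the existence of the natural isomorphism $\Psi^A_{X,M}$.

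First I would use the universal property of the equalizer defining ${}_{A^e}[A,M]$ to identify morphisms $h: X \to {}_{A^e}[A,M]$ in $\cat{M}$ with those morphisms $g: X \to [A,M]$ satisfying the equalizer condition $[m,M]\circ g = \chi_{A,M}\circ g$. Then, using the bijection $\Phi^A_{X,M}$, I would translate this condition into a condition on the corresponding morphism $f := (\Phi^A_{X,M})^{-1}(g): A\tensor{}X \to M$ in $\cat{M}$.

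The main step is thus to verify that, writing $g = \Phi^A_{X,M}(f)$, the equalizer equation $[m,M]\circ g = \chi_{A,M}\circ g$ is equivalent to $A^e$-linearity of $f$. To establish this equivalence I would unpack both sides through the adjunctions $A\tensor{}A\tensor{}- \dashv [A\tensor{}A,-]$ and $A^e\tensor{}A\tensor{}- \dashv [A^e\tensor{}A,-]$, tracking what the two morphisms correspond to after application of the respective counits. Using the defining triangular identities of $\B{\varepsilon}^A$ and $\B{\zeta}^A$, the morphism $[m,M]\circ g$ pulls back (together with the post-composition with the appropriate counit) precisely to $f\circ (m\tensor{}X): A\tensor{}A\tensor{}X \to M$. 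The morphism $\chi_{A,M}\circ g$, after the same procedure and thanks to its construction through $\Omega^{A^e}_{A,M} = [A^e\tensor{}A, A^e\tensor{}\B{\varepsilon}^A_M]\circ \B{\zeta}^{A^e\tensor{}A}_{[A,M]}$ and the left $A^e$-action $\lambda_M$, pulls back to $\lambda_M\circ(A^e\tensor{}f)$ precomposed with the canonical symmetry that expresses the $A^e$-module structure of $A\tensor{}X$. The equality of these two resulting morphisms in $\hom{\cat{M}}{A^e\tensor{}A\tensor{}X}{M}$ is, by construction of the $A^e$-module structure on $A\tensor{}X$, exactly the statement that $f$ is $A^e$-linear.

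Once this equivalence is in hand, the map $\Psi^A_{X,M}$ is defined by sending $f\in\lhom{A^e}{A\tensor{}X}{M}$ to the unique factorization of $\Phi^A_{X,M}(f)$ through $\fk{eq}: {}_{A^e}[A,M]\hookrightarrow [A,M]$; commutativity of the square is then automatic, and bijectivity follows from bijectivity of $\Phi^A_{X,M}$ combined with the equivalence just proved. Naturality of $\Psi^A_{X,M}$ in both arguments reduces to naturality of $\Phi^A$ together with functoriality of the equalizer construction $M\mapsto {}_{A^e}[A,M]$. The main obstacle I expect is the bookkeeping in the key equivalence, particularly keeping track of the symmetry $\B{\tau}$ used to turn $A\tensor{}X$ into an $A^e$-module and ensuring it correctly matches, after adjunction, the action $\lambda_M$ appearing inside $\chi_{A,M}$.
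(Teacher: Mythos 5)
Your proof is correct and follows essentially the same route as the paper, which simply invokes the argument of Pareigis's Proposition 3.10: restrict the adjunction $A\tensor{}-\dashv [A,-]$ via the universal property of the equalizer defining ${}_{A^e}[A,M]$, and check by passing to mates that the equalizer condition on $\Phi^A_{X,M}(f)$ is exactly left $A^e$-linearity of $f$. Your bookkeeping of the symmetry giving the $A^e$-structure on $A\tensor{}X$, and of the identification of codomains that the paper's equalizer leaves implicit, matches the intended argument, so nothing essential is missing.
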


\smallskip

\textbf{Acknowledgements}  We would like to thank Claudia Menini for helpful discussions on a preliminary version of the paper. We are also grateful to the referee for carefully reading the manuscript and for several useful comments.
L. El Kaoutit would like to thank all the members of the  Department of Mathematics of University of Ferrara
for a warm hospitality during his visit.

\end{document}